\newtheorem{theorem}{Theorem}[section]
\numberwithin{equation}{section}
\newtheorem{remark}[theorem]{Remark}
\newtheorem{lemma}[theorem]{Lemma}
\newtheorem{algorithm}[theorem]{Algorithm}
\titleformat{\section}{\normalfont\scshape\centering}{\thesection.}{0.5em}{}
\titleformat*{\subsection}{\itshape}
\titleformat*{\subsubsection}{\itshape}
\providecommand{\keywords}[1]
{
	{\small\emph{Keywords:} #1}
}
\providecommand{\MSC}[1]
{
	{\small\emph{AMS MSC (2020):~~} #1}
}
\definecolor{denim}{rgb}{0.08, 0.38, 0.74}
\definecolor{byzantium}{rgb}{0.44, 0.16, 0.39} 
\definecolor{shamrockgreen}{rgb}{0.0, 0.62, 0.38} 
\providecommand{\jumptmp}[2]{#1\llbracket{#2}#1\rrbracket}
\providecommand{\jump}[1]{\jumptmp{}{#1}}
\DeclareFontFamily{U}{stix2bb}{}
\DeclareFontShape{U}{stix2bb}{m}{n} {<-> stix2-mathbb}{}
\NewDocumentCommand{\indicator}{}{\text{\usefont{U}{stix2bb}{m}{n}1}}
\begin{document}
	\setlength{\abovedisplayskip}{5.5pt}
	\setlength{\belowdisplayskip}{5.5pt}
	\setlength{\abovedisplayshortskip}{5.5pt}
	\setlength{\belowdisplayshortskip}{5.5pt}

	\title{\vspace{-15mm}Duality-Based Algorithm and Numerical Analysis for Optimal Insulation Problems on Non-Smooth Domains\thanks{This work is partially supported by the Office of Naval Research (ONR) under Award NO: N00014-24-1-2147, NSF grant DMS-2408877, the Air Force Office of Scientific Research (AFOSR) under Award NO: FA9550-22-1-0248.}\vspace{-1mm}}
	\author[1]{Harbir Antil\thanks{Email: \url{hantil@gmu.edu}}}
	\author[2]{Alex Kaltenbach\thanks{Email: \url{kaltenbach@math.tu-berlin.de}}}
	\author[3]{Keegan L. A. Kirk\thanks{Email: \url{kkirk6@gmu.edu}}\vspace{-4mm}}
	\date{\today\vspace{-5mm}} 
	\affil[1,3]{\small{Department of Mathematical Sciences and the Center for Mathematics and Artificial Intelligence (CMAI), George Mason University, Fairfax, VA 22030, USA}}
	\affil[2]{\small{Institute of Mathematics, Technical University of Berlin, Stra\ss e des 17.\ Juni 135, 10623 Berlin\vspace{-1mm}}}
	\maketitle

	\pagestyle{fancy}
	\fancyhf{}
	\fancyheadoffset{0cm}
	\addtolength{\headheight}{-0.25cm}
	\renewcommand{\headrulewidth}{0pt} 
	\renewcommand{\footrulewidth}{0pt}
	\fancyhead[CO]{\textsc{Duality-based algorithm for optimal insulation}}
	\fancyhead[CE]{\textsc{H. Antil, A. Kaltenbach, and K. Kirk}}
	\fancyhead[R]{\thepage}
	\fancyfoot[R]{}
	
	\begin{abstract}
        This article develops  a numerical approximation of a convex non-local~and~\mbox{non-smooth} minimization problem. The physical problem involves determining the optimal~\mbox{distribution}, given by $h\colon \Gamma_I\to [0,+\infty)$, of a given amount $m\in \mathbb{N}$ of insulating~\mbox{material}~attached~to~a boundary part $\Gamma_I\subseteq  \partial\Omega$ of a thermally conducting body $\Omega\hspace{-0.1em}\subseteq\hspace{-0.1em}\mathbb{R}^d$,~$d\hspace{-0.1em}\in\hspace{-0.1em}  \mathbb{N}$,
        subject to conductive heat transfer.
         To~tackle~the~\hspace{-0.1mm}\mbox{non-local} \hspace{-0.1mm}and \hspace{-0.1mm}non-smooth \hspace{-0.1mm}character \hspace{-0.1mm}of \hspace{-0.1mm}the \hspace{-0.1mm}problem,~\hspace{-0.1mm}the~\hspace{-0.1mm}\mbox{article}~\hspace{-0.1mm}introduces~\hspace{-0.1mm}a~\hspace{-0.1mm}(Fenchel)~\hspace{-0.1mm}\mbox{duality}~\hspace{-0.1mm}\mbox{framework}:

        \textit{(a)} 
        At the continuous level, 
        using (Fenchel) duality relations, we derive an \textit{a~posteriori} error identity that can handle arbitrary admissible approximations of the primal and dual formulations of the convex non-local and non-smooth~minimization~problem;
        
        \textit{(b)} At the discrete level, using discrete (Fenchel) duality relations, we derive an~\textit{a~priori} error identity that applies
		to a Crouzeix--Raviart discretization of the primal formulation and a Raviart--Thomas discretization of the dual formulation. The proposed framework leads to error decay rates that are optimal with~respect~to~the~\mbox{specific}~\mbox{regularity}~of~a~minimizer. In addition, we prove convergence of the numerical approximation under minimal regularity assumptions.
        Since the discrete dual formulation can be written as a quadratic program, it is solved using a primal-dual active set strategy interpreted as semi-smooth~Newton~method. A solution of the discrete primal formulation is reconstructed from the solution of the discrete dual formulation by means of an inverse generalized Marini formula. This is the first such formula for this 
         class of convex non-local~and~\mbox{non-smooth}~minimization~problems.
	\end{abstract}
	
	\keywords{optimal insulation; Crouzeix--Raviart element; Raviart--Thomas element, \emph{a priori} error identity; \emph{a posteriori} error identity; (Fenchel) duality theory; semi-smooth Newton method.}
	
	\MSC{35J20; 49J40; 49M29; 65N30; 65N15; 65N50.}
	
	\section{Introduction}\thispagestyle{empty}\vspace{-1.5mm}

    \hspace{5mm}The present paper is interested in determining the \textit{`best'} distribution of a~given~amount~of an insulating material attached to parts of a thermally conducting body $\Omega\subseteq\mathbb{R}^d$,~${d\in  \mathbb{N}}$.~To~this~end,\linebreak we study a non-local and non-smooth  convex minimization problem first proposed by \mbox{\textsc{Buttazzo}}   (\textit{cf}.~\cite{Buttazzo1988}) and recently extended by the authors to the case of bounded polyhedral Lipschitz~domains as well as to a mixed boundary setting (\textit{i.e.}, Dirichlet, Neumann, and insulated boundary, \textit{cf}.~\cite{AKKInsulationModel}): Let $\Omega\subseteq \mathbb{R}^d$, $d\in \mathbb{N}$, be a bounded polyhedral Lipschitz domain
    representing~the~\textit{\mbox{thermally}~conduc-ting body}, with (topological) boundary $\partial\Omega$ decomposed into an \textit{insulation part} (\textit{i.e.}, $\Gamma_I$)  (to which the insulating material is attached), a Dirichlet part (\textit{i.e.}, $\Gamma_D$),~and~a~Neumann~part~(\textit{i.e.},~$\Gamma_N$). Then, for a given \textit{amount of insulating material} $m>0$, a given 
    \textit{heat~source~density}~${f\in L^2(\Omega)}$, a given \textit{heat flux} $g\hspace{-0.1em}\in\hspace{-0.1em}  H^{-\smash{\frac{1}{2}}}(\Gamma_N)$, and given \textit{Dirichlet boundary temperature distribution}~${u_D\hspace{-0.1em}\in  \hspace{-0.1em}H^{\smash{\frac{1}{2}}}(\Gamma_D)}$ with~boundary~lift~${\widehat{u}_D\in  H^1(\Omega)}$, we seek a \textit{temperature distribution} $u\in \widehat{u}_D+H^1_D(\Omega)$ that minimizes the energy functional $I\colon \widehat{u}_D+H^1_D(\Omega)\to \mathbb{R}$, for every $v\in H^1(\Omega) $ defined by\enlargethispage{17.5mm}
\begin{align}\label{eq:primal_intro} 
		\smash{I(v) \coloneqq  \tfrac{1}{2}\| \nabla v\|_{\Omega}^2+\tfrac{1}{2m}\|v\|_{1,\Gamma_I}^2-(f,v)_{\Omega}-\langle g,v\rangle_{\Gamma_N}\,.} 
\end{align}\newpage
%The associated Euler--Lagrange equations read
%\begin{align*}
%    \begin{aligned}
%    -\Delta u &= f&&\quad \text{ a.e.\ in }\Omega\,,\\[-0.5mm]
%    -\nabla u\cdot n&\in 
%   \smash{\tfrac{1}{m}} (\partial\vert\cdot\vert)(u)\|u\|_{1,\Gamma_I}&&\quad\text{ a.e.\ on }\Gamma_I\,,\\[-0.5mm]
%    u&=u_D&&\quad \text{ a.e.\ on }\Gamma_D\,,\\[-0.5mm]
%    \nabla u\cdot n&=g&&\quad \text{ a.e.\ on }\Gamma_N\,,
%    \end{aligned}
%\end{align*}
%where $ (\partial\,\vert\cdot\vert)(t)=\textup{sign}(t)$ if $t\neq 0$ and  $(\partial\,\vert\cdot\vert)(t)=[-1,1]$.%$\partial\vert\cdot\vert\colon \hspace{-0.1em}\mathbb{R}\hspace{-0.1em}\to\hspace{-0.1em} 2^{\mathbb{R}}$ is the subdifferential of the absolute value $\vert\cdot\vert\colon\hspace{-0.1em} \mathbb{R}\hspace{-0.1em}\to\hspace{-0.1em}[0,+\infty)$, \textit{i.e.},~for~every~$t\hspace{-0.1em}\in\hspace{-0.1em} \mathbb{R}$, we have that
%\begin{align*}
%    (\partial\,\vert\cdot\vert)(t)\coloneqq\begin{cases}
%        \textup{sign}(t)&\text{ if }t\neq 0\,,\\
%        [-1,1]&\text{ if }t=0\,.
%    \end{cases}
%\end{align*}

Once a temperature distribution $u\in H^1(\Omega)$ minimizing the functional \eqref{eq:primal_intro} is determined, the optimal distribution of the given amount insulating material can be calculated~as~follows: 
\begin{itemize}[noitemsep,topsep=2pt,leftmargin=!,labelwidth=\widthof{2. Step:}]
    \item[\textit{Step 1:}]  Identify a Lipschitz continuous  (globally) transversal vector field $k\in (C^{0,1}(\partial\Omega))^d$ of unit-length, \textit{i.e.}, there exists a constant $\kappa\in (0,1]$ (the transversality~constant)~such~that
    \begin{align}\label{intro:transversal}
        \begin{aligned} 
            %\vert k\vert&=1&&\quad \text{ on }\partial\Omega\,,\\
            k\cdot n &\ge \kappa&&\quad \text{ a.e.\ on }\partial\Omega\,.
        \end{aligned}
    \end{align}
    Note that, for each bounded Lipschitz domain, one can establish the existence of a smooth (globally) transversal vector field of unit-length (\textit{cf}.\ \cite[Cor.\ 2.13]{HMT07}). If $\Omega$ is star-shaped with respect to a ball $B_r^d(x_0)\subseteq \Omega$,  a smooth~(globally)~transversal~vector~field~of unit-length is given via $k\coloneqq \frac{\textrm{id}_{\smash{\mathbb{R}^d}}-x_0}{\vert \textrm{id}_{\smash{\mathbb{R}^d}}-x_0\vert}\in(C^{\infty}(\partial\Omega))^d$ (\textit{cf}.\ \cite[Cor.~4.21]{HMT07}); %If $\partial\Omega\in \smash{C^{1,1}}$, the outward unit normal vector field $n\colon \partial\Omega\to \smash{\mathbb{S}^{d-1}}$~is~a~Lipschitz~\mbox{continuous}~\mbox{(globally)} transversal vector field (with transversality constant $1$) (\textit{cf}.\ \cite[Thm.~2.19,~(2.75)]{HMT07});

    \item[\textit{Step 2:}] Compute the optimal distribution of the  insulating material via the explicit formula 
    \begin{align}\label{intro:distribution_function}
    h_u\coloneqq \tfrac{m}{\|u\|_{1,\Gamma_I}}\tfrac{\vert u\vert}{k\cdot n}\in L^1(\Gamma_I)\,.
    \end{align}
    More precisely, the distribution function \eqref{intro:distribution_function} represents the distribution in direction of the transversal vector field $k\in (C^{0,1}(\partial\Omega))^d$ (rather that in direction of $n\in (L^\infty(\partial\Omega))^d$).\linebreak This \hspace{-0.1mm}enables \hspace{-0.1mm}to \hspace{-0.1mm}determine \hspace{-0.1mm}the \hspace{-0.1mm}optimal \hspace{-0.1mm}distribution \hspace{-0.1mm}of \hspace{-0.1mm}the \hspace{-0.1mm}insulating~\hspace{-0.1mm}material,~\hspace{-0.1mm}in~\hspace{-0.1mm}\mbox{particular},  at kinks and edges of the thermally conducting body and to avoid gaps (\textit{i.e.}, no insulating material is attached) and self-intersections (\textit{i.e.}, insulating material~is~attached~twice) in the arbitrarily thin insulated boundary layer, see \cite{AKKInsulationModel} for a more~detailed~discussion.
\end{itemize}

In this paper, we are interested in the numerical approximation of the minimization of \eqref{eq:primal_intro}. Here, the main challenge arises from the 
non-local and non-smooth character~of~the~functional~\eqref{eq:primal_intro}.
To tackle this, we resort to a (Fenchel) duality framework. The main contributions~of~the~present paper as well as related contributions are summarized next:\vspace{-1.75mm}\enlargethispage{4mm}

\subsection{Main contributions}\vspace{-1mm}

\begin{enumerate}[noitemsep,topsep=2pt,leftmargin=!,labelwidth=\widthof{6.},font=\itshape]

    \item A (Fenchel) dual problem (in the sense of \cite[Rem.\ 4.2, p.\ 60/61]{ET99}) to the minimization~of~\eqref{eq:primal_intro} as well as convex optimality relations and a strong duality relation are identified in Theorem~\ref{thm:duality};

    \item On the basis of (Fenchel) duality relations, an \textit{a posteriori} error identity, which is applicable to  arbitrary admissible approximations of the primal and dual problem, is derived in  Theorem~\ref{thm:prager_synge_identity};

    \item  A Crouzeix--Raviart approximation of \eqref{eq:primal_intro} is proposed and an associated (Fenchel) dual problem defined on the Raviart--Thomas element is identified. In particular, discrete convex optimality and discrete  strong duality  relations~are~derived~in~\mbox{Theorem}~\ref{prop:discrete_duality};

    \item  On the basis of  discrete convex (Fenchel) duality relations, a discrete \textit{a posteriori} error identity,\linebreak which applies to arbitrary discrete admissible approximations of the discrete~primal~and discrete dual problem, is derived~in Theorem~\ref{thm:discrete_prager_synge_identity}. This is followed by \emph{a priori} error estimates~in Theorem~\ref{thm:apriori_identity}, whose \hspace{-0.1mm}optimality \hspace{-0.1mm}is \hspace{-0.1mm}confirmed \hspace{-0.1mm}in \hspace{-0.1mm}some \hspace{-0.1mm}cases~\hspace{-0.1mm}via~\hspace{-0.1mm}numerical~\hspace{-0.1mm}\mbox{experiments}~\hspace{-0.1mm}in~\hspace{-0.1mm}\mbox{Section}~\hspace{-0.1mm}\ref{sec:experiments};

    \item The discrete dual problem is equivalent to a quadratic program  solved with~a~\mbox{semi-smooth} Newton \hspace{-0.1mm}scheme. \hspace{-0.1mm}A \hspace{-0.1mm}Lagrange \hspace{-0.1mm}multiplier \hspace{-0.1mm}obtained \hspace{-0.1mm}as \hspace{-0.1mm}a \hspace{-0.1mm}by-product \hspace{-0.1mm}is \hspace{-0.1mm}used \hspace{-0.1mm}in~\hspace{-0.1mm}a reconstruction formula for a discrete primal solution 
     from the discrete~dual~solution~in~\mbox{Lemma}~\ref{lem:marini};
     
     \item Numerical %test cases and 
     experiments confirming the theoretical findings are presented~in~Section~\ref{sec:experiments}.~Moreover, Section \ref{sec:experiments} considers
     a real-world test case of a 3D geometry
modelling~a~simple~house~with~garage.\vspace{-1.75mm}
\end{enumerate}

 \subsection{Related contributions}\vspace{-1mm}

\hspace{5mm}The existing literature either focuses on the theoretical analysis of the minimization~of~\eqref{eq:primal_intro}~or on the numerical analysis of a related eigenvalue problem (each in the case $\partial\Omega\in C^{1,1}$~and~${\Gamma_I=\partial\Omega}$):

\begin{itemize} 
[noitemsep,topsep=2pt,leftmargin=!,labelwidth=\widthof{6.},font=\itshape]

    \item \textit{Theoretical analysis:} There are several contributions addressing the derivation of the functional \eqref{eq:primal_intro} in a suitable sense as asymptotic limit (as the thickness of the insulating layer tends~to~zero) (\textit{cf}.\ \cite{BrezisCaffarelliFriedman1980,CaffarelliFriedman1980,AcerbiButtazzo1986b,ButtazzoKohn1987,ButtazzoDalMasoMosco1989,BoutkridaMossinoMoussa1999,BoutkridaGrenonMossinoMoussa2002,MossinoVanninathan2002,PietraNitschScalaTrombetti2021,AcamporaCristoforoniNitschTrombetti2024}) and contributions addressing related analytical studies of the functional \eqref{eq:primal_intro} (\textit{cf}.\ \cite{Buttazzo1988c,Buttazzo1988,ButtazzoZeine1997,EspositoRiey2003,Buttazzo2017,HuangLiLi2022});

    \item \textit{Numerical \hspace{-0.1mm}analysis:} \hspace{-0.1mm}A \hspace{-0.1mm}purely \hspace{-0.1mm}experimental \hspace{-0.1mm}study \hspace{-0.1mm}of \hspace{-0.1mm}the \hspace{-0.1mm}minimization \hspace{-0.1mm}of \hspace{-0.1mm}\eqref{eq:primal_intro} \hspace{-0.1mm}can~\hspace{-0.1mm}be~\hspace{-0.1mm}found~\hspace{-0.1mm}in~\hspace{-0.1mm}\cite{Munoz2007}.  Thorough 
    numerical analyses of a related eigenvalue problem can found in the papers 
    \cite{BartelsButtazzo2019,BartelsKellerWachsmuth2022,BartelsButtazzoKeller2024}.
\end{itemize}

\newpage
	\section{Preliminaries}\label{sec:preliminaries}
    
    \subsection{Assumptions on the thermally conducting body and insulated boundary}\enlargethispage{10mm}
	
	\hspace{5mm}Throughout the  article,  let ${\Omega\subseteq \mathbb{R}^d}$, ${d\in\mathbb{N}}$, be a bounded simplicial Lipschitz domain such that $\partial\Omega$ is split into three 
	 (relatively) open boundary parts: an insulated boundary~${\Gamma_I\hspace{-0.15em}\subseteq\hspace{-0.15em} \partial\Omega}$~with~${\Gamma_I\hspace{-0.15em}\neq\hspace{-0.15em}\emptyset}$, a~\mbox{Dirichlet}~boundary~${\Gamma_D\subseteq \partial\Omega}$, and a Neumann boundary $\Gamma_N\subseteq \partial\Omega$ such that $\partial\Omega=\overline{\Gamma}_D\cup\overline{\Gamma}_N\cup\overline{\Gamma}_C$.\vspace{-1.5mm}

     \begin{figure}[H]
         \centering
         
\tikzset{every picture/.style={line width=0.75pt}} %set default line width to 0.75pt        

\begin{tikzpicture}[x=0.95pt,y=0.95pt,yscale=-1,xscale=1]
%uncomment if require: \path (0,300); %set diagram left start at 0, and has height of 300

%Shape: Grid [id:dp8507570588732107] 
\draw  [draw opacity=0] (59.63,51.01) -- (209.63,51.01) -- (209.63,201.01) -- (59.63,201.01) -- cycle ; \draw  [color={rgb, 255:red, 155; green, 155; blue, 155 }  ,draw opacity=0.25 ] (69.63,51.01) -- (69.63,201.01)(79.63,51.01) -- (79.63,201.01)(89.63,51.01) -- (89.63,201.01)(99.63,51.01) -- (99.63,201.01)(109.63,51.01) -- (109.63,201.01)(119.63,51.01) -- (119.63,201.01)(129.63,51.01) -- (129.63,201.01)(139.63,51.01) -- (139.63,201.01)(149.63,51.01) -- (149.63,201.01)(159.63,51.01) -- (159.63,201.01)(169.63,51.01) -- (169.63,201.01)(179.63,51.01) -- (179.63,201.01)(189.63,51.01) -- (189.63,201.01)(199.63,51.01) -- (199.63,201.01) ; \draw  [color={rgb, 255:red, 155; green, 155; blue, 155 }  ,draw opacity=0.25 ] (59.63,61.01) -- (209.63,61.01)(59.63,71.01) -- (209.63,71.01)(59.63,81.01) -- (209.63,81.01)(59.63,91.01) -- (209.63,91.01)(59.63,101.01) -- (209.63,101.01)(59.63,111.01) -- (209.63,111.01)(59.63,121.01) -- (209.63,121.01)(59.63,131.01) -- (209.63,131.01)(59.63,141.01) -- (209.63,141.01)(59.63,151.01) -- (209.63,151.01)(59.63,161.01) -- (209.63,161.01)(59.63,171.01) -- (209.63,171.01)(59.63,181.01) -- (209.63,181.01)(59.63,191.01) -- (209.63,191.01) ; \draw  [color={rgb, 255:red, 155; green, 155; blue, 155 }  ,draw opacity=0.25 ] (59.63,51.01) -- (209.63,51.01) -- (209.63,201.01) -- (59.63,201.01) -- cycle ;
%Shape: Square [id:dp040553399727301764] 
\draw   (59.83,50.75) -- (209.92,50.75) -- (209.92,200.83) -- (59.83,200.83) -- cycle ;
%Straight Lines [id:da3901014496760229] 
\draw [color={denim}  ,draw opacity=0.5 ]   (210.25,50.8) -- (220.25,60) ;
%Straight Lines [id:da15181644894843793] 
\draw [color={denim}  ,draw opacity=0.5 ]   (210,60.75) -- (220,69.95) ;
%Straight Lines [id:da15340904997210703] 
\draw [color={denim}  ,draw opacity=0.5 ]   (210,55.75) -- (220,64.95) ;
%Straight Lines [id:da2983628874519848] 
\draw [color={denim}  ,draw opacity=0.5 ]   (210.25,65.8) -- (220.25,75) ;
%Straight Lines [id:da15674498817696292] 
\draw [color={denim}  ,draw opacity=0.5 ]   (210,75.75) -- (220,84.95) ;
%Straight Lines [id:da8482927965969174] 
\draw [color={denim}  ,draw opacity=0.5 ]   (210,70.75) -- (220,79.95) ;
%Straight Lines [id:da8864742610135565] 
\draw [color={denim}  ,draw opacity=0.5 ]   (210.25,81.3) -- (220.25,90.5) ;
%Straight Lines [id:da09826637938690475] 
\draw [color={denim}  ,draw opacity=0.5 ]   (210,91.25) -- (220,100.45) ;
%Straight Lines [id:da14725576270386553] 
\draw [color={denim}  ,draw opacity=0.5 ]   (210,86.25) -- (220,95.45) ;
%Straight Lines [id:da09553207684375264] 
\draw [color={denim}  ,draw opacity=0.5 ]   (210.25,96.3) -- (220.25,105.5) ;
%Straight Lines [id:da9625563377854753] 
\draw [color={denim}  ,draw opacity=0.5 ]   (210,106.25) -- (220,115.45) ;
%Straight Lines [id:da820270091243533] 
\draw [color={denim}  ,draw opacity=0.5 ]   (210,101.25) -- (220,110.45) ;
%Straight Lines [id:da30636232065989155] 
\draw [color={denim}  ,draw opacity=0.5 ]   (210,110.75) -- (220,119.95) ;
%Straight Lines [id:da8621862247392325] 
\draw [color={denim}  ,draw opacity=0.5 ]   (209.75,121.5) -- (219.75,130.7) ;
%Straight Lines [id:da30517612256098436] 
\draw [color={denim}  ,draw opacity=0.5 ]   (209.75,116.5) -- (214,120.5) ;
%Straight Lines [id:da34483454137696823] 
\draw [color={denim}  ,draw opacity=0.5 ]   (210,126.55) -- (220,135.75) ;
%Straight Lines [id:da1894939126899633] 
\draw [color={denim}  ,draw opacity=0.5 ]   (209.75,136.5) -- (219.75,145.7) ;
%Straight Lines [id:da9904995659979547] 
\draw [color={denim}  ,draw opacity=0.5 ]   (209.75,131.5) -- (219.75,140.7) ;
%Straight Lines [id:da8795800415091686] 
\draw [color={denim}  ,draw opacity=0.5 ]   (210,142.05) -- (220,151.25) ;
%Straight Lines [id:da11775863291433852] 
\draw [color={denim}  ,draw opacity=0.5 ]   (209.75,152) -- (219.75,161.2) ;
%Straight Lines [id:da46670967733996904] 
\draw [color={denim}  ,draw opacity=0.5 ]   (209.75,147) -- (219.75,156.2) ;
%Straight Lines [id:da20493952797910064] 
\draw [color={denim}  ,draw opacity=0.5 ]   (210,157.05) -- (220,166.25) ;
%Straight Lines [id:da17525284728956314] 
\draw [color={denim}  ,draw opacity=0.5 ]   (209.75,167) -- (219.75,176.2) ;
%Straight Lines [id:da47827486826918864] 
\draw [color={denim}  ,draw opacity=0.5 ]   (209.75,162) -- (219.75,171.2) ;
%Straight Lines [id:da2121850703780379] 
\draw [color={denim}  ,draw opacity=0.5 ]   (210,177) -- (220,186.2) ;
%Straight Lines [id:da9866588938538248] 
\draw [color={denim}  ,draw opacity=0.5 ]   (210,172) -- (220,181.2) ;
%Straight Lines [id:da6326102517608956] 
\draw [color={denim}  ,draw opacity=0.5 ]   (210.25,182.05) -- (220.25,191.25) ;
%Straight Lines [id:da07526234450660985] 
\draw [color={denim}  ,draw opacity=0.5 ]   (210,192) -- (219.63,201) ;
%Straight Lines [id:da044809785580841144] 
\draw [color={denim}  ,draw opacity=0.5 ]   (210,187) -- (220,196.2) ;
%Straight Lines [id:da06864703152329721] 
\draw [color={denim}  ,draw opacity=0.5 ]   (210.25,197.5) -- (214.13,201.5) ;
%Straight Lines [id:da2637690722116206] 
\draw [color={denim}  ,draw opacity=0.5 ]   (210.05,50.5) -- (200.9,40.47) ;
%Straight Lines [id:da6385948590147901] 
\draw [color={denim}  ,draw opacity=0.5 ]   (200.12,50.72) -- (190.97,40.69) ;
%Straight Lines [id:da9619933472603119] 
\draw [color={denim}  ,draw opacity=0.5 ]   (205.11,50.73) -- (195.96,40.7) ;
%Straight Lines [id:da7555298303878732] 
\draw [color={denim}  ,draw opacity=0.5 ]   (195.08,50.45) -- (185.94,40.43) ;
%Straight Lines [id:da9034009950301467] 
\draw [color={denim}  ,draw opacity=0.5 ]   (185.16,50.67) -- (176.01,40.65) ;
%Straight Lines [id:da04248370826653036] 
\draw [color={denim}  ,draw opacity=0.5 ]   (190.15,50.69) -- (181,40.66) ;
%Straight Lines [id:da30066354636301473] 
\draw [color={denim}  ,draw opacity=0.5 ]   (179.62,50.41) -- (170.48,40.38) ;
%Straight Lines [id:da7042598570906102] 
\draw [color={denim}  ,draw opacity=0.5 ]   (169.7,50.63) -- (160.55,40.6) ;
%Straight Lines [id:da9096989173319716] 
\draw [color={denim}  ,draw opacity=0.5 ]   (174.69,50.64) -- (165.54,40.61) ;
%Straight Lines [id:da05211905483129642] 
\draw [color={denim}  ,draw opacity=0.5 ]   (164.66,50.36) -- (155.52,40.33) ;
%Straight Lines [id:da9097247185825477] 
\draw [color={denim}  ,draw opacity=0.5 ]   (154.74,50.58) -- (145.59,40.56) ;
%Straight Lines [id:da9146153999324316] 
\draw [color={denim}  ,draw opacity=0.5 ]   (159.73,50.6) -- (150.58,40.57) ;
%Straight Lines [id:da48229115907619513] 
\draw [color={denim}  ,draw opacity=0.5 ]   (149.45,50.57) -- (140.31,40.54) ;
%Straight Lines [id:da7702684795427426] 
\draw [color={denim}  ,draw opacity=0.5 ]   (139.53,50.79) -- (130.38,40.76) ;
%Straight Lines [id:da5825209819731263] 
\draw [color={denim}  ,draw opacity=0.5 ]   (144.51,50.8) -- (135.37,40.77) ;
%Straight Lines [id:da14411383352993012] 
\draw [color={denim}  ,draw opacity=0.5 ]   (134.49,50.52) -- (125.34,40.49) ;
%Straight Lines [id:da7906051257777778] 
\draw [color={denim}  ,draw opacity=0.5 ]   (124.57,50.74) -- (115.42,40.72) ;
%Straight Lines [id:da006188146341178591] 
\draw [color={denim}  ,draw opacity=0.5 ]   (129.55,50.76) -- (120.41,40.73) ;
%Straight Lines [id:da22238635417074581] 
\draw [color={denim}  ,draw opacity=0.5 ]   (119.03,50.48) -- (109.88,40.45) ;
%Straight Lines [id:da2203146224443524] 
\draw [color={denim}  ,draw opacity=0.5 ]   (109.11,50.7) -- (99.96,40.67) ;
%Straight Lines [id:da4037030226642271] 
\draw [color={denim}  ,draw opacity=0.5 ]   (114.09,50.71) -- (104.95,40.68) ;
%Straight Lines [id:da19093721353956683] 
\draw [color={denim}  ,draw opacity=0.5 ]   (104.07,50.43) -- (94.92,40.4) ;
%Straight Lines [id:da8893398885042718] 
\draw [color={denim}  ,draw opacity=0.5 ]   (94.15,50.65) -- (85,40.62) ;
%Straight Lines [id:da42992541066902357] 
\draw [color={denim}  ,draw opacity=0.5 ]   (99.13,50.67) -- (89.99,40.64) ;
%Straight Lines [id:da06016272435003489] 
\draw [color={denim}  ,draw opacity=0.5 ]   (84.17,50.37) -- (75.03,40.35) ;
%Straight Lines [id:da4981213952079808] 
\draw [color={denim}  ,draw opacity=0.5 ]   (89.16,50.39) -- (80.01,40.36) ;
%Straight Lines [id:da7287456739815834] 
\draw [color={denim}  ,draw opacity=0.5 ]   (79.14,50.11) -- (69.99,40.08) ;
%Straight Lines [id:da1716841379983325] 
\draw [color={denim}  ,draw opacity=0.5 ]   (68.71,50.33) -- (59.56,40.3) ;
%Straight Lines [id:da516445289034698] 
\draw [color={denim}  ,draw opacity=0.5 ]   (74.2,50.34) -- (65.05,40.32) ;
%Straight Lines [id:da19108720085568387] 
\draw [color={denim}  ,draw opacity=0.5 ]   (63.75,51) -- (59.13,46.5) ;
%Straight Lines [id:da3916499394065682] 
\draw [color={denim}  ,draw opacity=0.5 ]   (209.63,210.75) -- (200.15,200.72) ;
%Straight Lines [id:da14849644278690866] 
\draw [color={denim}  ,draw opacity=0.5 ]   (199.37,210.97) -- (190.22,200.94) ;
%Straight Lines [id:da480054404884968] 
\draw [color={denim}  ,draw opacity=0.5 ]   (204.36,210.98) -- (195.21,200.95) ;
%Straight Lines [id:da5816008783965672] 
\draw [color={denim}  ,draw opacity=0.5 ]   (194.33,210.7) -- (185.19,200.68) ;
%Straight Lines [id:da4665681838349083] 
\draw [color={denim}  ,draw opacity=0.5 ]   (184.41,210.92) -- (175.26,200.9) ;
%Straight Lines [id:da9111100484955967] 
\draw [color={denim}  ,draw opacity=0.5 ]   (189.4,210.94) -- (180.25,200.91) ;
%Straight Lines [id:da6664092786777638] 
\draw [color={denim}  ,draw opacity=0.5 ]   (178.87,210.66) -- (169.73,200.63) ;
%Straight Lines [id:da38747872439604514] 
\draw [color={denim}  ,draw opacity=0.5 ]   (168.95,210.88) -- (159.8,200.85) ;
%Straight Lines [id:da1091464545341072] 
\draw [color={denim}  ,draw opacity=0.5 ]   (173.94,210.89) -- (164.79,200.86) ;
%Straight Lines [id:da7578957950563772] 
\draw [color={denim}  ,draw opacity=0.5 ]   (163.91,210.61) -- (154.77,200.58) ;
%Straight Lines [id:da18179833492884212] 
\draw [color={denim}  ,draw opacity=0.5 ]   (153.99,210.83) -- (144.84,200.81) ;
%Straight Lines [id:da7187215515613472] 
\draw [color={denim}  ,draw opacity=0.5 ]   (158.98,210.85) -- (149.83,200.82) ;
%Straight Lines [id:da3752127855437537] 
\draw [color={denim}  ,draw opacity=0.5 ]   (148.7,210.82) -- (139.56,200.79) ;
%Straight Lines [id:da05359373721132288] 
\draw [color={denim}  ,draw opacity=0.5 ]   (138.78,211.04) -- (129.63,201.01) ;
%Straight Lines [id:da3401618276862941] 
\draw [color={denim}  ,draw opacity=0.5 ]   (143.76,211.05) -- (134.62,201.02) ;
%Straight Lines [id:da16192203195371047] 
\draw [color={denim}  ,draw opacity=0.5 ]   (133.74,210.77) -- (124.59,200.74) ;
%Straight Lines [id:da015004216547931382] 
\draw [color={denim}  ,draw opacity=0.5 ]   (123.82,210.99) -- (114.67,200.97) ;
%Straight Lines [id:da8217024770563819] 
\draw [color={denim}  ,draw opacity=0.5 ]   (128.8,211.01) -- (119.66,200.98) ;
%Straight Lines [id:da6508840877440396] 
\draw [color={denim}  ,draw opacity=0.5 ]   (118.28,210.73) -- (109.13,200.7) ;
%Straight Lines [id:da1147947803115954] 
\draw [color={denim}  ,draw opacity=0.5 ]   (108.36,210.95) -- (99.21,200.92) ;
%Straight Lines [id:da41375885260483236] 
\draw [color={denim}  ,draw opacity=0.5 ]   (113.34,210.96) -- (104.2,200.93) ;
%Straight Lines [id:da8582222085594853] 
\draw [color={denim}  ,draw opacity=0.5 ]   (103.32,210.68) -- (94.17,200.65) ;
%Straight Lines [id:da4133381673758856] 
\draw [color={denim}  ,draw opacity=0.5 ]   (93.4,210.9) -- (84.25,200.87) ;
%Straight Lines [id:da0646034045738908] 
\draw [color={denim}  ,draw opacity=0.5 ]   (98.38,210.92) -- (89.24,200.89) ;
%Straight Lines [id:da8851708899090724] 
\draw [color={denim}  ,draw opacity=0.5 ]   (83.42,210.62) -- (74.28,200.6) ;
%Straight Lines [id:da22648951965692854] 
\draw [color={denim}  ,draw opacity=0.5 ]   (88.41,210.64) -- (79.26,200.61) ;
%Straight Lines [id:da16298686845463073] 
\draw [color={denim}  ,draw opacity=0.5 ]   (78.39,210.36) -- (69.24,200.33) ;
%Straight Lines [id:da1227467130791684] 
\draw [color={denim}  ,draw opacity=0.5 ]   (68.9,210.23) -- (59.75,200.2) ;
%Straight Lines [id:da7197780081015817] 
\draw [color={denim}  ,draw opacity=0.5 ]   (73.63,210.5) -- (64.8,200.82) ;
%Straight Lines [id:da7047752231655877] 
\draw [color={denim}  ,draw opacity=0.5 ]   (64,210.25) -- (62.83,208.89) -- (59.75,205.3) ;
%Straight Lines [id:da2693035941891506] 
\draw [color={denim}  ,draw opacity=0.5 ]   (210.38,205.25) -- (205.75,200.63) ;
%Straight Lines [id:da37415739117851166] 
\draw [color={denim}  ,draw opacity=0.5 ]   (50,49.8) -- (60,59) ;
%Straight Lines [id:da8703642813087511] 
\draw [color={denim}  ,draw opacity=0.5 ]   (49.75,59.75) -- (59.75,68.95) ;
%Straight Lines [id:da25380000559698535] 
\draw [color={denim}  ,draw opacity=0.5 ]   (49.75,54.75) -- (59.75,63.95) ;
%Straight Lines [id:da014488532027728152] 
\draw [color={denim}  ,draw opacity=0.5 ]   (50,64.8) -- (60,74) ;
%Straight Lines [id:da7287154168620511] 
\draw [color={denim}  ,draw opacity=0.5 ]   (49.75,75) -- (59.75,84.2) ;
%Straight Lines [id:da850921318366465] 
\draw [color={denim}  ,draw opacity=0.5 ]   (49.75,69.75) -- (59.75,78.95) ;
%Straight Lines [id:da7190213032465287] 
\draw [color={denim}  ,draw opacity=0.5 ]   (50,80.3) -- (60,89.5) ;
%Straight Lines [id:da48798300193534416] 
\draw [color={denim}  ,draw opacity=0.5 ]   (49.75,90.25) -- (59.75,99.45) ;
%Straight Lines [id:da6096271690573143] 
\draw [color={denim}  ,draw opacity=0.5 ]   (49.75,85) -- (59.75,94.2) ;
%Straight Lines [id:da834179426282011] 
\draw [color={denim}  ,draw opacity=0.5 ]   (50,95.3) -- (60,104.5) ;
%Straight Lines [id:da9822312327704144] 
\draw [color={denim}  ,draw opacity=0.5 ]   (49.5,105.5) -- (59.5,114.7) ;
%Straight Lines [id:da8856184174474138] 
\draw [color={denim}  ,draw opacity=0.5 ]   (49.75,100.25) -- (59.75,109.45) ;
%Straight Lines [id:da7738257496914653] 
\draw [color={denim}  ,draw opacity=0.5 ]   (49.75,110.55) -- (59.75,119.75) ;
%Straight Lines [id:da29462581116621656] 
\draw [color={denim}  ,draw opacity=0.5 ]   (49.5,120.5) -- (59.5,129.7) ;
%Straight Lines [id:da08515222031774505] 
\draw [color={denim}  ,draw opacity=0.5 ]   (49.5,115.5) -- (59.5,124.7) ;
%Straight Lines [id:da6296424863685477] 
\draw [color={denim}  ,draw opacity=0.5 ]   (49.75,125.55) -- (59.75,134.75) ;
%Straight Lines [id:da8311487884398499] 
\draw [color={denim}  ,draw opacity=0.5 ]   (50,136) -- (60,145.2) ;
%Straight Lines [id:da5054446405610953] 
\draw [color={denim}  ,draw opacity=0.5 ]   (49.5,130.5) -- (59.5,139.7) ;
%Straight Lines [id:da5432208621392958] 
\draw [color={denim}  ,draw opacity=0.5 ]   (49.75,141.05) -- (59.75,150.25) ;
%Straight Lines [id:da36290315844855936] 
\draw [color={denim}  ,draw opacity=0.5 ]   (49.5,151) -- (59.5,160.2) ;
%Straight Lines [id:da4500076345946531] 
\draw [color={denim}  ,draw opacity=0.5 ]   (49.5,146) -- (59.5,155.2) ;
%Straight Lines [id:da4904233600700443] 
\draw [color={denim}  ,draw opacity=0.5 ]   (49.75,156.05) -- (59.75,165.25) ;
%Straight Lines [id:da27249017893015703] 
\draw [color={denim}  ,draw opacity=0.5 ]   (49.5,166.25) -- (59.5,175.45) ;
%Straight Lines [id:da4211146028314845] 
\draw [color={denim}  ,draw opacity=0.5 ]   (49.5,161) -- (59.5,170.2) ;
%Straight Lines [id:da7605569272928743] 
\draw [color={denim}  ,draw opacity=0.5 ]   (49.75,176) -- (59.75,185.2) ;
%Straight Lines [id:da779084256878755] 
\draw [color={denim}  ,draw opacity=0.5 ]   (49.75,171) -- (59.75,180.2) ;
%Straight Lines [id:da5197529200510105] 
\draw [color={denim}  ,draw opacity=0.5 ]   (50,181.05) -- (60,190.25) ;
%Straight Lines [id:da9878071242062256] 
\draw [color={denim}  ,draw opacity=0.5 ]   (49.75,191) -- (59.75,200.2) ;
%Straight Lines [id:da8437225437090057] 
\draw [color={denim}  ,draw opacity=0.5 ]   (49.75,186) -- (59.75,195.2) ;
%Straight Lines [id:da5341847705978953] 
\draw [color={denim}  ,draw opacity=0.5 ]   (50.13,196.5) -- (54.75,200.25) ;
%Straight Lines [id:da7419899284159641] 
\draw [color={denim}  ,draw opacity=0.5 ]   (55.88,50.25) -- (59.75,53.75) ;
%Rounded Rect [id:dp46867517823252336] 
\draw  [color={rgb, 255:red, 0; green, 0; blue, 0 }  ,draw opacity=1 ] (154.63,67.01) .. controls (154.63,66.18) and (155.3,65.5) .. (156.14,65.5) -- (193.36,65.5) .. controls (194.2,65.5) and (194.88,66.18) .. (194.88,67.01) -- (194.88,84.15) .. controls (194.88,84.99) and (194.2,85.67) .. (193.36,85.67) -- (156.14,85.67) .. controls (155.3,85.67) and (154.63,84.99) .. (154.63,84.15) -- cycle ;
%Straight Lines [id:da4661901878722403] 
\draw [color={denim}  ,draw opacity=0.5 ]   (159.75,70.75) -- (169.75,80.75) ;
%Straight Lines [id:da8634775717218319] 
\draw [color={denim}  ,draw opacity=0.5 ]   (164.5,70.5) -- (169.88,76) ;
%Straight Lines [id:da963510161967424] 
\draw [color={denim}  ,draw opacity=0.5 ]   (159.5,75.5) -- (164.88,81) ;
%Straight Lines [id:da8197176995936073] 
\draw [color={denim}  ,draw opacity=0.5 ]   (214,120.5) -- (220.25,126.2) ;
%Straight Lines [id:da35959297209005214] 
\draw [color={denim}  ,draw opacity=0.5 ]   (214.75,50.05) -- (220.5,55.25) ;
%Straight Lines [id:da7793392289127927] 
\draw [color={denim}  ,draw opacity=0.5 ]   (205.5,40.3) -- (209.75,45.25) ;
%Straight Lines [id:da4894862057448417] 
\draw [color={denim}  ,draw opacity=0.5 ]   (209.25,40.05) -- (210.25,41.25) ;
%Straight Lines [id:da8120271513133643] 
\draw [color={denim}  ,draw opacity=0.5 ]   (219.75,50.3) -- (220.75,51.25) ;
%Shape: Path Data [id:dp43165472823785733] 
\draw  [draw opacity=0][fill={denim}  ,fill opacity=0.2 ] (60.24,30.8) -- (60.24,30.79) -- (129.39,30.96) -- (129.39,31.21) -- (209.45,31.21) -- (209.45,31.3) .. controls (209.59,31.3) and (209.74,31.29) .. (209.88,31.29) .. controls (209.89,31.29) and (209.9,31.29) .. (209.92,31.29) -- (209.92,31.29) .. controls (210.02,31.29) and (210.13,31.29) .. (210.24,31.29) .. controls (221.05,31.29) and (229.82,40.1) .. (229.82,50.97) .. controls (229.82,51.03) and (229.82,51.08) .. (229.82,51.14) -- (229.82,51.14) -- (229.58,93.7) -- (229.58,119.79) -- (229.44,119.79) -- (228.99,200.42) -- (228.87,200.42) .. controls (228.87,200.51) and (228.88,200.6) .. (228.89,200.69) -- (228.67,200.69) .. controls (228.67,200.81) and (228.68,200.93) .. (228.68,201.04) .. controls (228.68,211.72) and (219.99,220.38) .. (209.28,220.38) .. controls (209.25,220.38) and (209.21,220.38) .. (209.17,220.37) -- (209.17,220.38) -- (203.35,220.34) -- (60.3,220.67) -- (60.3,220.64) .. controls (59.99,220.66) and (59.68,220.67) .. (59.37,220.67) .. controls (48.74,220.67) and (40.12,211.98) .. (40.12,201.25) .. controls (40.12,201.18) and (40.12,201.1) .. (40.12,201.03) -- (40.12,201.03) -- (39.96,132.14) -- (40.3,132.14) -- (40.5,50.85) -- (40.62,50.85) .. controls (40.54,50.15) and (40.5,49.44) .. (40.5,48.72) .. controls (40.5,48.62) and (40.5,48.52) .. (40.5,48.43) .. controls (41.71,38.58) and (50.07,30.95) .. (60.2,30.95) .. controls (60.2,30.95) and (60.2,30.95) .. (60.2,30.95) -- (60.2,30.8) -- (60.24,30.8) -- cycle (59.79,201.03) -- (59.37,201.03) -- (59.37,201.25) -- (59.79,201.25) -- (59.79,201.03) -- cycle (209.92,50.85) -- (210.01,50.85) -- (210.01,50.69) -- (209.92,50.69) -- (209.92,50.85) -- cycle (209.28,200.69) -- (209.28,200.69) -- (209.39,200.69) -- (209.39,200.69) -- (209.28,200.69) -- cycle (209.92,50.85) -- (80.04,50.85) .. controls (80.04,50.87) and (80.04,50.88) .. (80.04,50.89) -- (60.2,50.89) -- (60.15,70.83) .. controls (60.15,70.83) and (60.15,70.83) .. (60.15,70.83) -- (59.83,200.02) -- (59.79,200.06) -- (59.79,201.03) -- (140.13,200.85) -- (140.13,200.39) -- (199.14,200.71) -- (209.28,200.69) -- (209.92,87.7) -- (209.92,50.85) -- cycle ;
%Straight Lines [id:da36015868325790246] 
\draw [color={denim}  ,draw opacity=1 ]   (61.02,30.63) -- (209.78,30.79) ;
%Shape: Arc [id:dp2772439029742704] 
\draw  [draw opacity=0] (209.14,30.81) .. controls (209.39,30.8) and (209.66,30.79) .. (209.92,30.79) .. controls (220.57,30.79) and (229.24,39.41) .. (229.52,50.16) -- (209.92,50.69) -- cycle ; \draw  [color={denim}  ,draw opacity=1 ] (209.14,30.81) .. controls (209.39,30.8) and (209.66,30.79) .. (209.92,30.79) .. controls (220.57,30.79) and (229.24,39.41) .. (229.52,50.16) ;  
%Shape: Arc [id:dp8114325388675698] 
\draw  [draw opacity=0] (40.5,50.85) .. controls (40.49,50.73) and (40.49,50.61) .. (40.49,50.48) .. controls (40.49,39.49) and (49.09,30.59) .. (59.7,30.59) .. controls (60.14,30.59) and (60.58,30.6) .. (61.02,30.63) -- (59.7,50.48) -- cycle ; \draw  [color={denim}  ,draw opacity=1 ] (40.5,50.85) .. controls (40.49,50.73) and (40.49,50.61) .. (40.49,50.48) .. controls (40.49,39.49) and (49.09,30.59) .. (59.7,30.59) .. controls (60.14,30.59) and (60.58,30.6) .. (61.02,30.63) ;  
%Straight Lines [id:da701636105793817] 
\draw [color={denim}  ,draw opacity=1 ]   (228.3,201.08) -- (229.52,50.21) ;
%Straight Lines [id:da4242406393737195] 
\draw [color={denim}  ,draw opacity=1 ]   (40.17,200.88) -- (40.46,50.57) ;
%Shape: Arc [id:dp31688739829491985] 
\draw  [draw opacity=0] (228.37,200.15) .. controls (228.38,200.38) and (228.38,200.61) .. (228.38,200.83) .. controls (228.38,211.53) and (220.51,220.25) .. (210.65,220.66) -- (209.92,200.83) -- cycle ; \draw  [color={denim}  ,draw opacity=1 ] (228.37,200.15) .. controls (228.38,200.38) and (228.38,200.61) .. (228.38,200.83) .. controls (228.38,211.53) and (220.51,220.25) .. (210.65,220.66) ;  
%Shape: Arc [id:dp9274192794190577] 
\draw  [draw opacity=0] (58.72,220.67) .. controls (48.41,220.4) and (40.13,211.62) .. (40.13,200.84) .. controls (40.13,200.8) and (40.13,200.77) .. (40.13,200.73) -- (59.21,200.84) -- cycle ; \draw  [color={denim}  ,draw opacity=1 ] (58.72,220.67) .. controls (48.41,220.4) and (40.13,211.62) .. (40.13,200.84) .. controls (40.13,200.8) and (40.13,200.77) .. (40.13,200.73) ;  
%Straight Lines [id:da7778452467995949] 
\draw [color={denim}  ,draw opacity=1 ]   (58.43,220.67) -- (210.65,220.66) ;
%Shape: Rectangle [id:dp5220774784888251] 
\draw   (59.83,50.75) -- (209.92,50.75) -- (209.92,200.83) -- (59.83,200.83) -- cycle ;
%Shape: Path Data [id:dp24133627747919872] 
\draw  [draw opacity=0][fill={denim}  ,fill opacity=0.2 ] (260.16,30.07) -- (260.16,30.07) -- (329.31,30.24) -- (329.31,30.49) -- (409.37,30.49) -- (409.37,30.58) .. controls (409.51,30.57) and (409.65,30.57) .. (409.79,30.56) .. controls (409.81,30.56) and (409.82,30.56) .. (409.83,30.56) -- (409.83,30.56) .. controls (409.94,30.56) and (410.05,30.56) .. (410.15,30.56) .. controls (420.97,30.56) and (429.74,39.37) .. (429.74,50.25) .. controls (429.74,50.3) and (429.74,50.36) .. (429.73,50.41) -- (429.74,50.41) -- (429.5,92.98) -- (429.5,119.06) -- (429.35,119.06) -- (429.35,119.42) -- (409.65,119.42) -- (409.83,86.97) -- (409.83,50.13) -- (279.95,50.13) .. controls (279.95,50.14) and (279.95,50.15) .. (279.95,50.17) -- (260.33,50.17) -- (246.17,35.98) .. controls (249.75,32.42) and (254.67,30.23) .. (260.11,30.23) .. controls (260.11,30.23) and (260.11,30.23) .. (260.11,30.23) -- (260.11,30.07) -- (260.16,30.07) -- cycle (409.83,50.13) -- (409.92,50.13) -- (409.92,49.96) -- (409.83,49.96) -- (409.83,50.13) -- cycle ;
%Shape: Grid [id:dp2934542325905436] 
\draw  [draw opacity=0] (260.5,50.22) -- (410.5,50.22) -- (410.5,200.22) -- (260.5,200.22) -- cycle ; \draw  [color={rgb, 255:red, 155; green, 155; blue, 155 }  ,draw opacity=0.25 ] (270.5,50.22) -- (270.5,200.22)(280.5,50.22) -- (280.5,200.22)(290.5,50.22) -- (290.5,200.22)(300.5,50.22) -- (300.5,200.22)(310.5,50.22) -- (310.5,200.22)(320.5,50.22) -- (320.5,200.22)(330.5,50.22) -- (330.5,200.22)(340.5,50.22) -- (340.5,200.22)(350.5,50.22) -- (350.5,200.22)(360.5,50.22) -- (360.5,200.22)(370.5,50.22) -- (370.5,200.22)(380.5,50.22) -- (380.5,200.22)(390.5,50.22) -- (390.5,200.22)(400.5,50.22) -- (400.5,200.22) ; \draw  [color={rgb, 255:red, 155; green, 155; blue, 155 }  ,draw opacity=0.25 ] (260.5,60.22) -- (410.5,60.22)(260.5,70.22) -- (410.5,70.22)(260.5,80.22) -- (410.5,80.22)(260.5,90.22) -- (410.5,90.22)(260.5,100.22) -- (410.5,100.22)(260.5,110.22) -- (410.5,110.22)(260.5,120.22) -- (410.5,120.22)(260.5,130.22) -- (410.5,130.22)(260.5,140.22) -- (410.5,140.22)(260.5,150.22) -- (410.5,150.22)(260.5,160.22) -- (410.5,160.22)(260.5,170.22) -- (410.5,170.22)(260.5,180.22) -- (410.5,180.22)(260.5,190.22) -- (410.5,190.22) ; \draw  [color={rgb, 255:red, 155; green, 155; blue, 155 }  ,draw opacity=0.25 ] (260.5,50.22) -- (410.5,50.22) -- (410.5,200.22) -- (260.5,200.22) -- cycle ;
%Shape: Square [id:dp314166732665379] 
\draw   (260.33,50.17) -- (410.42,50.17) -- (410.42,200.25) -- (260.33,200.25) -- cycle ;
%Straight Lines [id:da7659367171118718] 
\draw [color={denim}  ,draw opacity=0.5 ]   (410.75,50.22) -- (420.75,59.42) ;
%Straight Lines [id:da6510697839942092] 
\draw [color={denim}  ,draw opacity=0.5 ]   (410.5,60.17) -- (420.5,69.37) ;
%Straight Lines [id:da6276923129493011] 
\draw [color={denim}  ,draw opacity=0.5 ]   (410.5,55.17) -- (420.5,64.37) ;
%Straight Lines [id:da4351004853348148] 
\draw [color={denim}  ,draw opacity=0.5 ]   (410.75,65.22) -- (420.75,74.42) ;
%Straight Lines [id:da3876185030246686] 
\draw [color={denim}  ,draw opacity=0.5 ]   (410.5,75.17) -- (420.5,84.37) ;
%Straight Lines [id:da6674565852265526] 
\draw [color={denim}  ,draw opacity=0.5 ]   (410.5,70.17) -- (420.5,79.37) ;
%Straight Lines [id:da36771757517682513] 
\draw [color={denim}  ,draw opacity=0.5 ]   (410.75,80.72) -- (420.75,89.92) ;
%Straight Lines [id:da7521837001908436] 
\draw [color={denim}  ,draw opacity=0.5 ]   (410.5,90.67) -- (420.5,99.87) ;
%Straight Lines [id:da7462606586761806] 
\draw [color={denim}  ,draw opacity=0.5 ]   (410.5,85.67) -- (420.5,94.87) ;
%Straight Lines [id:da28156032765677463] 
\draw [color={denim}  ,draw opacity=0.5 ]   (410.75,95.72) -- (420.75,104.92) ;
%Straight Lines [id:da5472546196656172] 
\draw [color={denim}  ,draw opacity=0.5 ]   (410.5,105.67) -- (420.5,114.87) ;
%Straight Lines [id:da4159335710397434] 
\draw [color={denim}  ,draw opacity=0.5 ]   (410.5,100.67) -- (420.5,109.87) ;
%Straight Lines [id:da18656978459515772] 
\draw [color={denim}  ,draw opacity=0.5 ]   (410.5,110.17) -- (420.5,119.37) ;
%Straight Lines [id:da7228869184448987] 
\draw [color={shamrockgreen}  ,draw opacity=0.5 ]   (410.25,120.92) -- (420.25,130.12) ;
%Straight Lines [id:da9138694402045633] 
\draw [color={denim}  ,draw opacity=0.5 ]   (410.25,115.92) -- (414.5,119.92) ;
%Straight Lines [id:da04480699986031267] 
\draw [color={shamrockgreen}  ,draw opacity=0.5 ]   (410.5,125.97) -- (420.5,135.17) ;
%Straight Lines [id:da22988985121150107] 
\draw [color={shamrockgreen}  ,draw opacity=0.5 ]   (410.25,135.92) -- (420.25,145.12) ;
%Straight Lines [id:da5262920517006697] 
\draw [color={shamrockgreen}  ,draw opacity=0.5 ]   (410.25,130.92) -- (420.25,140.12) ;
%Straight Lines [id:da10517795689102272] 
\draw [color={shamrockgreen}  ,draw opacity=0.5 ]   (410.5,141.47) -- (420.5,150.67) ;
%Straight Lines [id:da8652075832534545] 
\draw [color={shamrockgreen}  ,draw opacity=0.5 ]   (410.25,151.42) -- (420.25,160.62) ;
%Straight Lines [id:da5952800959183444] 
\draw [color={shamrockgreen}  ,draw opacity=0.5 ]   (410.25,146.42) -- (420.25,155.62) ;
%Straight Lines [id:da7096147465983154] 
\draw [color={shamrockgreen}  ,draw opacity=0.5 ]   (410.5,156.47) -- (420.5,165.67) ;
%Straight Lines [id:da2812641141025789] 
\draw [color={shamrockgreen}  ,draw opacity=0.5 ]   (410.25,166.42) -- (420.25,175.62) ;
%Straight Lines [id:da32015021720963466] 
\draw [color={shamrockgreen}  ,draw opacity=0.5 ]   (410.25,161.42) -- (420.25,170.62) ;
%Straight Lines [id:da21754040488490833] 
\draw [color={shamrockgreen}  ,draw opacity=0.5 ]   (410.5,176.42) -- (420.5,185.62) ;
%Straight Lines [id:da4780548942000711] 
\draw [color={shamrockgreen}  ,draw opacity=0.5 ]   (410.5,171.42) -- (420.5,180.62) ;
%Straight Lines [id:da12682073453834364] 
\draw [color={shamrockgreen}  ,draw opacity=0.5 ]   (410.75,181.47) -- (420.75,190.67) ;
%Straight Lines [id:da6357723441165608] 
\draw [color={shamrockgreen}  ,draw opacity=0.5 ]   (410.5,191.42) -- (420.13,200.42) ;
%Straight Lines [id:da2263152279649685] 
\draw [color={shamrockgreen}  ,draw opacity=0.5 ]   (410.5,186.42) -- (420.5,195.62) ;
%Straight Lines [id:da9658494500610555] 
\draw [color={shamrockgreen}  ,draw opacity=0.5 ]   (410.75,196.92) -- (414.63,200.92) ;
%Straight Lines [id:da7736166755894005] 
\draw [color={denim}  ,draw opacity=0.5 ]   (410.55,49.91) -- (401.4,39.89) ;
%Straight Lines [id:da8175737708930739] 
\draw [color={denim}  ,draw opacity=0.5 ]   (400.62,50.13) -- (391.47,40.11) ;
%Straight Lines [id:da4911079904967903] 
\draw [color={denim}  ,draw opacity=0.5 ]   (405.61,50.15) -- (396.46,40.12) ;
%Straight Lines [id:da5784355666870553] 
\draw [color={denim}  ,draw opacity=0.5 ]   (395.58,49.87) -- (386.44,39.84) ;
%Straight Lines [id:da3659101370743032] 
\draw [color={denim}  ,draw opacity=0.5 ]   (385.66,50.09) -- (376.51,40.06) ;
%Straight Lines [id:da9818288324774307] 
\draw [color={denim}  ,draw opacity=0.5 ]   (390.65,50.1) -- (381.5,40.08) ;
%Straight Lines [id:da8176763614865523] 
\draw [color={denim}  ,draw opacity=0.5 ]   (380.12,49.82) -- (370.98,39.8) ;
%Straight Lines [id:da789757952166102] 
\draw [color={denim}  ,draw opacity=0.5 ]   (370.2,50.04) -- (361.05,40.02) ;
%Straight Lines [id:da26410512294891975] 
\draw [color={denim}  ,draw opacity=0.5 ]   (375.19,50.06) -- (366.04,40.03) ;
%Straight Lines [id:da11228651789040045] 
\draw [color={denim}  ,draw opacity=0.5 ]   (365.16,49.78) -- (356.02,39.75) ;
%Straight Lines [id:da36662507195728744] 
\draw [color={denim}  ,draw opacity=0.5 ]   (355.24,50) -- (346.09,39.97) ;
%Straight Lines [id:da898336413770435] 
\draw [color={denim}  ,draw opacity=0.5 ]   (360.23,50.01) -- (351.08,39.99) ;
%Straight Lines [id:da2940965537373925] 
\draw [color={denim}  ,draw opacity=0.5 ]   (349.95,49.98) -- (340.81,39.96) ;
%Straight Lines [id:da18644624996920212] 
\draw [color={denim}  ,draw opacity=0.5 ]   (340.03,50.2) -- (330.88,40.18) ;
%Straight Lines [id:da8299381362848626] 
\draw [color={denim}  ,draw opacity=0.5 ]   (345.01,50.22) -- (335.87,40.19) ;
%Straight Lines [id:da22822949366195777] 
\draw [color={denim}  ,draw opacity=0.5 ]   (334.99,49.94) -- (325.84,39.91) ;
%Straight Lines [id:da3760050005990292] 
\draw [color={denim}  ,draw opacity=0.5 ]   (325.07,50.16) -- (315.92,40.13) ;
%Straight Lines [id:da08000508618710733] 
\draw [color={denim}  ,draw opacity=0.5 ]   (330.05,50.17) -- (320.91,40.15) ;
%Straight Lines [id:da14129535111941904] 
\draw [color={denim}  ,draw opacity=0.5 ]   (319.53,49.89) -- (310.38,39.87) ;
%Straight Lines [id:da44920328384293806] 
\draw [color={denim}  ,draw opacity=0.5 ]   (309.61,50.11) -- (300.46,40.09) ;
%Straight Lines [id:da5506354394079329] 
\draw [color={denim}  ,draw opacity=0.5 ]   (314.59,50.13) -- (305.45,40.1) ;
%Straight Lines [id:da10345541148308146] 
\draw [color={denim}  ,draw opacity=0.5 ]   (304.57,49.85) -- (295.42,39.82) ;
%Straight Lines [id:da6727456127759501] 
\draw [color={denim}  ,draw opacity=0.5 ]   (294.65,50.07) -- (285.5,40.04) ;
%Straight Lines [id:da7687122163170721] 
\draw [color={denim}  ,draw opacity=0.5 ]   (299.63,50.08) -- (290.49,40.06) ;
%Straight Lines [id:da7214531676187297] 
\draw [color={denim}  ,draw opacity=0.5 ]   (284.67,49.79) -- (275.53,39.76) ;
%Straight Lines [id:da9832076466558728] 
\draw [color={denim}  ,draw opacity=0.5 ]   (289.66,49.8) -- (280.51,39.78) ;
%Straight Lines [id:da7029062439465927] 
\draw [color={denim}  ,draw opacity=0.5 ]   (279.64,49.52) -- (270.49,39.5) ;
%Straight Lines [id:da553460809368356] 
\draw [color={denim}  ,draw opacity=0.5 ]   (269.21,49.74) -- (260.06,39.72) ;
%Straight Lines [id:da19242847669525154] 
\draw [color={denim}  ,draw opacity=0.5 ]   (274.7,49.76) -- (265.55,39.73) ;
%Straight Lines [id:da27915799413288833] 
\draw [color={denim}  ,draw opacity=0.5 ]   (264.25,50.42) -- (259.63,45.92) ;
%Straight Lines [id:da6891599625103695] 
\draw [color={byzantium}  ,draw opacity=0.5 ]   (410.13,210.17) -- (400.65,200.14) ;
%Straight Lines [id:da43520827239917215] 
\draw [color={byzantium}  ,draw opacity=0.5 ]   (399.87,210.38) -- (390.72,200.36) ;
%Straight Lines [id:da8575234434170025] 
\draw [color={byzantium}  ,draw opacity=0.5 ]   (404.86,210.4) -- (395.71,200.37) ;
%Straight Lines [id:da4724607226018156] 
\draw [color={byzantium}  ,draw opacity=0.5 ]   (394.83,210.12) -- (385.69,200.09) ;
%Straight Lines [id:da10645565027786041] 
\draw [color={byzantium}  ,draw opacity=0.5 ]   (384.91,210.34) -- (375.76,200.31) ;
%Straight Lines [id:da0894510100296333] 
\draw [color={byzantium}  ,draw opacity=0.5 ]   (389.9,210.35) -- (380.75,200.33) ;
%Straight Lines [id:da28404572842158693] 
\draw [color={byzantium}  ,draw opacity=0.5 ]   (379.37,210.07) -- (370.23,200.05) ;
%Straight Lines [id:da21873502884103524] 
\draw [color={byzantium}  ,draw opacity=0.5 ]   (369.45,210.29) -- (360.3,200.27) ;
%Straight Lines [id:da9907893552087681] 
\draw [color={byzantium}  ,draw opacity=0.5 ]   (374.44,210.31) -- (365.29,200.28) ;
%Straight Lines [id:da4080701629508745] 
\draw [color={byzantium}  ,draw opacity=0.5 ]   (364.41,210.03) -- (355.27,200) ;
%Straight Lines [id:da641802930095642] 
\draw [color={byzantium}  ,draw opacity=0.5 ]   (354.49,210.25) -- (345.34,200.22) ;
%Straight Lines [id:da641008783195927] 
\draw [color={byzantium}  ,draw opacity=0.5 ]   (359.48,210.26) -- (350.33,200.24) ;
%Straight Lines [id:da6978155015678866] 
\draw [color={byzantium}  ,draw opacity=0.5 ]   (349.2,210.23) -- (340.06,200.21) ;
%Straight Lines [id:da10883629945163054] 
\draw [color={byzantium}  ,draw opacity=0.5 ]   (339.28,210.45) -- (330.13,200.43) ;
%Straight Lines [id:da351963275757357] 
\draw [color={byzantium}  ,draw opacity=0.5 ]   (344.26,210.47) -- (335.12,200.44) ;
%Straight Lines [id:da7238599027190344] 
\draw [color={byzantium}  ,draw opacity=0.5 ]   (334.24,210.19) -- (325.09,200.16) ;
%Straight Lines [id:da14679286198386943] 
\draw [color={byzantium}  ,draw opacity=0.5 ]   (324.32,210.41) -- (315.17,200.38) ;
%Straight Lines [id:da5468833524975325] 
\draw [color={byzantium}  ,draw opacity=0.5 ]   (329.3,210.42) -- (320.16,200.4) ;
%Straight Lines [id:da44854162413583465] 
\draw [color={byzantium}  ,draw opacity=0.5 ]   (318.78,210.14) -- (309.63,200.12) ;
%Straight Lines [id:da33905430516312896] 
\draw [color={byzantium}  ,draw opacity=0.5 ]   (308.86,210.36) -- (299.71,200.34) ;
%Straight Lines [id:da3244804698313719] 
\draw [color={byzantium}  ,draw opacity=0.5 ]   (313.84,210.38) -- (304.7,200.35) ;
%Straight Lines [id:da5777219041355182] 
\draw [color={byzantium}  ,draw opacity=0.5 ]   (303.82,210.1) -- (294.67,200.07) ;
%Straight Lines [id:da9948747448012218] 
\draw [color={byzantium}  ,draw opacity=0.5 ]   (293.9,210.32) -- (284.75,200.29) ;
%Straight Lines [id:da2383337394982119] 
\draw [color={byzantium}  ,draw opacity=0.5 ]   (298.88,210.33) -- (289.74,200.31) ;
%Straight Lines [id:da32095915898648397] 
\draw [color={byzantium}  ,draw opacity=0.5 ]   (283.92,210.04) -- (274.78,200.01) ;
%Straight Lines [id:da6659208262880123] 
\draw [color={byzantium}  ,draw opacity=0.5 ]   (288.91,210.05) -- (279.76,200.03) ;
%Straight Lines [id:da7229845827793053] 
\draw [color={byzantium}  ,draw opacity=0.5 ]   (278.89,209.77) -- (269.74,199.75) ;
%Straight Lines [id:da7912168325300053] 
\draw [color={byzantium}  ,draw opacity=0.5 ]   (269.4,209.64) -- (260.25,199.62) ;
%Straight Lines [id:da6173782262866117] 
\draw [color={byzantium}  ,draw opacity=0.5 ]   (274.13,209.92) -- (265.3,200.23) ;
%Straight Lines [id:da31239987698798344] 
\draw [color={byzantium}  ,draw opacity=0.5 ]   (410.88,204.67) -- (406.25,200.05) ;
%Straight Lines [id:da3301724816906815] 
\draw [color={shamrockgreen}  ,draw opacity=0.51 ]   (250.5,49.22) -- (260.5,58.42) ;
%Straight Lines [id:da4101172031407887] 
\draw [color={shamrockgreen}  ,draw opacity=0.51 ]   (250.25,59.17) -- (260.25,68.37) ;
%Straight Lines [id:da6539988047865604] 
\draw [color={shamrockgreen}  ,draw opacity=0.51 ]   (250.25,54.17) -- (260.25,63.37) ;
%Straight Lines [id:da9143403943223125] 
\draw [color={shamrockgreen}  ,draw opacity=0.51 ]   (250.5,64.22) -- (260.5,73.42) ;
%Straight Lines [id:da5234031418874026] 
\draw [color={shamrockgreen}  ,draw opacity=0.51 ]   (250.25,74.42) -- (260.25,83.62) ;
%Straight Lines [id:da05441312585340641] 
\draw [color={shamrockgreen}  ,draw opacity=0.51 ]   (250.25,69.17) -- (260.25,78.37) ;
%Straight Lines [id:da793270738330121] 
\draw [color={shamrockgreen}  ,draw opacity=0.51 ]   (250.5,79.72) -- (260.5,88.92) ;
%Straight Lines [id:da6373458382817443] 
\draw [color={shamrockgreen}  ,draw opacity=0.51 ]   (250.25,89.67) -- (260.25,98.87) ;
%Straight Lines [id:da8304955645612395] 
\draw [color={shamrockgreen}  ,draw opacity=0.51 ]   (250.25,84.42) -- (260.25,93.62) ;
%Straight Lines [id:da32505108626529067] 
\draw [color={shamrockgreen}  ,draw opacity=0.51 ]   (250.5,94.72) -- (260.5,103.92) ;
%Straight Lines [id:da9722853299647627] 
\draw [color={shamrockgreen}  ,draw opacity=0.51 ]   (250,104.92) -- (260,114.12) ;
%Straight Lines [id:da2314428765944161] 
\draw [color={shamrockgreen}  ,draw opacity=0.51 ]   (250.25,99.67) -- (260.25,108.87) ;
%Straight Lines [id:da22544347980635693] 
\draw [color={shamrockgreen}  ,draw opacity=0.51 ]   (250.25,109.97) -- (260.25,119.17) ;
%Straight Lines [id:da42455845240081125] 
\draw [color={shamrockgreen}  ,draw opacity=0.51 ]   (250,119.92) -- (260,129.12) ;
%Straight Lines [id:da8777203213741758] 
\draw [color={shamrockgreen}  ,draw opacity=0.51 ]   (250,114.92) -- (260,124.12) ;
%Straight Lines [id:da7144312522331873] 
\draw [color={shamrockgreen}  ,draw opacity=0.51 ]   (250.25,124.97) -- (260.25,134.17) ;
%Straight Lines [id:da8029436653073065] 
\draw [color={shamrockgreen}  ,draw opacity=0.51 ]   (250.5,135.42) -- (260.5,144.62) ;
%Straight Lines [id:da7607099683166241] 
\draw [color={shamrockgreen}  ,draw opacity=0.51 ]   (250,129.92) -- (260,139.12) ;
%Straight Lines [id:da6407918535310042] 
\draw [color={shamrockgreen}  ,draw opacity=0.51 ]   (250.25,140.47) -- (260.25,149.67) ;
%Straight Lines [id:da855576841946758] 
\draw [color={shamrockgreen}  ,draw opacity=0.51 ]   (250,150.42) -- (260,159.62) ;
%Straight Lines [id:da21818295532843757] 
\draw [color={shamrockgreen}  ,draw opacity=0.51 ]   (250,145.42) -- (260,154.62) ;
%Straight Lines [id:da9264328488916471] 
\draw [color={shamrockgreen}  ,draw opacity=0.51 ]   (250.25,155.47) -- (260.25,164.67) ;
%Straight Lines [id:da06516263371747932] 
\draw [color={shamrockgreen}  ,draw opacity=0.51 ]   (250,165.67) -- (260,174.87) ;
%Straight Lines [id:da43742174771326203] 
\draw [color={shamrockgreen}  ,draw opacity=0.51 ]   (250,160.42) -- (260,169.62) ;
%Straight Lines [id:da1717051661694431] 
\draw [color={shamrockgreen}  ,draw opacity=0.51 ]   (250.25,175.42) -- (260.25,184.62) ;
%Straight Lines [id:da5393249756290295] 
\draw [color={shamrockgreen}  ,draw opacity=0.51 ]   (250.25,170.42) -- (260.25,179.62) ;
%Straight Lines [id:da07947465299695389] 
\draw [color={shamrockgreen}  ,draw opacity=0.51 ]   (250.5,180.47) -- (260.5,189.67) ;
%Straight Lines [id:da8125826484982019] 
\draw [color={shamrockgreen}  ,draw opacity=0.51 ]   (250.25,190.42) -- (260.25,199.62) ;
%Straight Lines [id:da8234523639853091] 
\draw [color={shamrockgreen}  ,draw opacity=0.51 ]   (250.25,185.42) -- (260.25,194.62) ;
%Straight Lines [id:da9928544745254733] 
\draw [color={shamrockgreen}  ,draw opacity=0.51 ]   (250.63,195.92) -- (255.25,199.67) ;
%Straight Lines [id:da34684091255796146] 
\draw [color={shamrockgreen}  ,draw opacity=0.51 ]   (256.38,49.67) -- (260.25,53.17) ;
%Rounded Rect [id:dp7711346612930092] 
\draw  [color={rgb, 255:red, 0; green, 0; blue, 0 }  ,draw opacity=1 ] (355.13,67.94) .. controls (355.13,66.27) and (356.48,64.92) .. (358.14,64.92) -- (392.36,64.92) .. controls (394.02,64.92) and (395.38,66.27) .. (395.38,67.94) -- (395.38,121.65) .. controls (395.38,123.32) and (394.02,124.67) .. (392.36,124.67) -- (358.14,124.67) .. controls (356.48,124.67) and (355.13,123.32) .. (355.13,121.65) -- cycle ;
%Straight Lines [id:da6597945938329319] 
\draw [color={denim}  ,draw opacity=0.5 ]   (360.25,70.17) -- (370.25,80.17) ;
%Straight Lines [id:da3877192804374967] 
\draw [color={denim}  ,draw opacity=0.5 ]   (365,69.92) -- (370.38,75.42) ;
%Straight Lines [id:da16944299674860352] 
\draw [color={shamrockgreen}  ,draw opacity=0.5 ]   (360.25,90.17) -- (370.25,100.17) ;
%Straight Lines [id:da675172077945206] 
\draw [color={shamrockgreen}  ,draw opacity=0.5 ]   (365.38,90.42) -- (370.13,95.42) ;
%Straight Lines [id:da5945729164180005] 
\draw [color={shamrockgreen}  ,draw opacity=0.5 ]   (360.75,95.17) -- (365.13,99.92) ;
%Straight Lines [id:da12902308530885542] 
\draw [color={denim}  ,draw opacity=0.5 ]   (360,74.92) -- (365.38,80.42) ;
%Straight Lines [id:da937318961496602] 
\draw [color={shamrockgreen}  ,draw opacity=0.51 ]   (414.5,119.92) -- (420.75,125.62) ;
%Straight Lines [id:da36920446924297945] 
\draw [color={denim}  ,draw opacity=0.5 ]   (415.25,49.47) -- (421,54.67) ;
%Straight Lines [id:da9115926437925137] 
\draw [color={denim}  ,draw opacity=0.5 ]   (406,39.72) -- (410.25,44.67) ;
%Straight Lines [id:da9912790405180716] 
\draw [color={denim}  ,draw opacity=0.5 ]   (409.75,39.47) -- (410.75,40.67) ;
%Straight Lines [id:da19418825449983568] 
\draw [color={denim}  ,draw opacity=0.5 ]   (420.25,49.72) -- (421.25,50.67) ;
%Straight Lines [id:da6235064658910423] 
\draw [color={byzantium}  ,draw opacity=0.5 ]   (360.75,109.67) -- (370.75,119.67) ;
%Straight Lines [id:da3064384313086357] 
\draw [color={byzantium}  ,draw opacity=0.5 ]   (366.13,109.42) -- (370.88,114.42) ;
%Straight Lines [id:da5899852844405518] 
\draw [color={byzantium}  ,draw opacity=0.5 ]   (360.63,114.67) -- (365.38,119.67) ;
%Straight Lines [id:da48830242663637047] 
\draw [color={byzantium}  ,draw opacity=0.5 ]   (264.5,209.67) -- (263.33,208.31) -- (260.25,204.72) ;
%Straight Lines [id:da6957122497594981] 
\draw [color={rgb, 255:red, 208; green, 2; blue, 27 }  ,draw opacity=1 ] [dash pattern={on 0.84pt off 2.51pt}]  (245.83,35.28) -- (260.5,50.22) ;
%Straight Lines [id:da5335494613055176] 
\draw [color={rgb, 255:red, 208; green, 2; blue, 27 }  ,draw opacity=1 ] [dash pattern={on 0.84pt off 2.51pt}]  (410.55,119.06) -- (429.35,119.06) ;
%Straight Lines [id:da5355870367796329] 
\draw [color={denim}  ,draw opacity=1 ]   (260.5,30.22) -- (410.08,30.21) ;
%Straight Lines [id:da6330098675214677] 
\draw [color={denim}  ,draw opacity=1 ]   (429.5,119.06) -- (430.21,49.91) ;
%Shape: Arc [id:dp23791862650430784] 
\draw  [draw opacity=0] (246.44,35.8) .. controls (250.06,32.32) and (255.01,30.18) .. (260.46,30.18) .. controls (260.57,30.18) and (260.67,30.18) .. (260.78,30.18) -- (260.46,49.91) -- cycle ; \draw  [color={denim}  ,draw opacity=1 ] (246.44,35.8) .. controls (250.06,32.32) and (255.01,30.18) .. (260.46,30.18) .. controls (260.57,30.18) and (260.67,30.18) .. (260.78,30.18) ;  
%Shape: Arc [id:dp08001739280116027] 
\draw  [draw opacity=0] (409.53,30.23) .. controls (409.79,30.22) and (410.05,30.21) .. (410.31,30.21) .. controls (421.25,30.21) and (430.13,39.13) .. (430.13,50.14) .. controls (430.13,50.15) and (430.12,50.16) .. (430.12,50.17) -- (410.31,50.14) -- cycle ; \draw  [color={denim}  ,draw opacity=1 ] (409.53,30.23) .. controls (409.79,30.22) and (410.05,30.21) .. (410.31,30.21) .. controls (421.25,30.21) and (430.13,39.13) .. (430.13,50.14) .. controls (430.13,50.15) and (430.12,50.16) .. (430.12,50.17) ;  
% Text Node
\draw (176,71) node [anchor=north west][inner sep=0.75pt]  [color={rgb, 255:red, 0; green, 0; blue, 0 }  ,opacity=1 ]  {$\partial \Omega $};
% Text Node
\draw (127.5,118) node [anchor=north west][inner sep=0.75pt]  [font=\LARGE]  {$\Omega $};
% Text Node
\draw (377,69.5) node [anchor=north west][inner sep=0.75pt]  [color={rgb, 255:red, 0; green, 0; blue, 0 }  ,opacity=1 ]  {$\Gamma _{I}$};
% Text Node
\draw (377,90) node [anchor=north west][inner sep=0.75pt]  [color={rgb, 255:red, 0; green, 0; blue, 0 }  ,opacity=1 ]  {$\Gamma _{D}$};
% Text Node
\draw (377,110) node [anchor=north west][inner sep=0.75pt]  [color={rgb, 255:red, 0; green, 0; blue, 0 }  ,opacity=1 ]  {$\Gamma _{N}$};
% Text Node
\draw (327.5,118) node [anchor=north west][inner sep=0.75pt]  [font=\LARGE]  {$\Omega $};

\end{tikzpicture}\vspace{-1mm}
         \caption{A thermally conducting body with fully \textit{(left)} and partly \textit{(right)}~\mbox{insulated}~\mbox{boundary}.}\vspace{-2.5mm}
         \label{fig:domain}
     \end{figure}
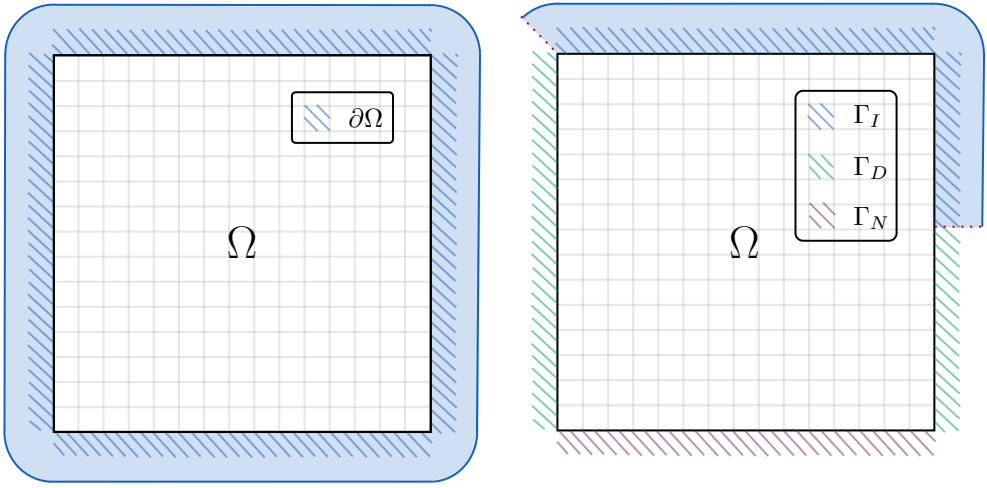
	
	\subsection{Classical function spaces}

    \hspace{5mm}Let $\omega\subseteq \mathbb{R}^d$, $d\in \mathbb{N}$, be a (Lebesgue) measurable set. Then, for (Lebesgue) measurable functions or vector fields $v,w\colon \omega\to \mathbb{R}^{\ell}$, $\ell\in\{1,d\}$, we employ~the~inner~product~${(v,w)_{\omega}\coloneqq \int_{\omega}{v\odot w\,\mathrm{d}x}}$, 
	whenever the right-hand side is well-defined, where $\odot\colon \mathbb{R}^{\ell}\times \mathbb{R}^{\ell}\to \mathbb{R}$ either denotes~scalar~multiplication or the Euclidean inner product. If $\vert \omega\vert\coloneqq \int_{\omega}{1\,\mathrm{d}x}\in  (1,+\infty)$, the average~of~an~\mbox{integrable} function or vector field $v\colon \omega\to \mathbb{R}^{\ell}$, $\ell\in\{1,d\}$,~is~defined~by~${\langle v\rangle_\omega\coloneqq \smash{\frac{1}{\vert \omega\vert}\int_{\omega}{v\,\mathrm{d}x}}}$. For $p\in [1,+\infty]$, we employ the notation $\|\cdot\|_{p,\omega}\coloneqq (\int_\omega{\vert \cdot\vert^p\,\mathrm{d}x})^{\smash{\frac{1}{p}}}$~if~$p\in [1,+\infty)$ and $\|\cdot\|_{\infty,\omega}\coloneqq 
    \textup{ess\,sup}_{x\in \omega}{\vert (\cdot)(x)\vert}$~else. Moreover, in the particular case $p=2$, we employ the abbreviated notation $\|\cdot\|_{\omega}\coloneqq \|\cdot\|_{2,\omega}$. 
    
    We employ the same notation in the case that $\omega$ is replaced by a (relatively) open boundary part $\gamma\subseteq \partial\Omega$, in which case the Lebesgue measure $\mathrm{d}x$ is replaced by the~surface~\mbox{measure}~$\mathrm{d}s$ 
    (\textit{e.g.}, we employ the notation $\vert \gamma\vert\coloneqq \int_{\gamma}{1\,\mathrm{d}s}$). 
	
	For $m\in \mathbb{N}$ and an open set $\omega\subseteq \mathbb{R}^d$, $d\in \mathbb{N}$,~we~define 
	\begin{align*} 
		\smash{H^m(\omega)\coloneqq \big\{v\in L^2(\omega)\mid \mathrm{D}^{\boldsymbol{\alpha}} v\in L^2(\omega)\textup{ for all }\boldsymbol{\alpha}\in (\mathbb{N}_0)^d\text{ with }\vert \boldsymbol{\alpha}\vert \leq m\big\}\,,}
	\end{align*}
	where $\mathrm{D}^{\boldsymbol{\alpha}}\coloneqq \smash{\frac{\partial^{\vert \boldsymbol{\alpha}\vert }}{\partial x_1^{\alpha_1}\cdot\ldots\cdot \partial x_d^{\alpha_d}}}$ and $\vert \boldsymbol{\alpha}\vert \coloneqq\sum_{i=1}^d{\alpha_i}$ 
	for each multi-index 
    ${\boldsymbol{\alpha}\hspace{-0.1em}\coloneqq \hspace{-0.1em}(\alpha_1,\ldots,\alpha_d)^\top\in (\mathbb{N}_0)^d}$, and the \emph{Sobolev~semi-norm}
	\begin{align*} 
		\vert\cdot\vert _{m,\omega}\coloneqq\Bigg(\sum_{\boldsymbol{\alpha}\in \mathbb{N}^d\,:\,\vert\boldsymbol{\alpha}\vert= m }{\|\mathrm{D}^{\boldsymbol{\alpha}}(\cdot)\|_{\omega}^2}\Bigg)^{\smash{\frac{1}{2}}}\,.
	\end{align*}  
	For $s \in (0,\infty)\setminus \mathbb{N}$ and an open set $\omega\subseteq \mathbb{R}^d$, $d\in \mathbb{N}$, the  \emph{Sobolev--Slobodeckij~semi-norm}~is~defined~by
	\begin{align*}
		\vert \cdot \vert_{s,\omega}\coloneqq \Bigg(\sum_{\boldsymbol{\alpha}\in \mathbb{N}^d\,:\,\vert \boldsymbol{\alpha}\vert= \lfloor s\rfloor}{\int_{\omega}{\int_{\omega}{\frac{\vert(\mathrm{D}^{\boldsymbol{\alpha}} (\cdot))(x)-(\mathrm{D}^{\boldsymbol{\alpha}}(\cdot))(y)\vert^2}{\vert x-y\vert^{2(s-\lfloor s\rfloor)+d}}\,\mathrm{d}x}\,\mathrm{d}y}}\Bigg)^{\smash{\frac{1}{2}}}\,.
	\end{align*} 
	Then, for $s \in (0,\infty)\setminus \mathbb{N}$ 
    and an open set $\omega\subseteq \mathbb{R}^n$, $n\in \mathbb{N}$,  the \emph{Sobolev--Slobodeckij space}~is~defined~by
	\begin{align*}
	      \smash{ H^s(\omega)\coloneqq \big\{v\in H^{ \lfloor s\rfloor}(\omega)\mid \vert v\vert _{s,\omega}<\infty\big\}\,.}
	\end{align*}\newpage
 
    The assumption $\smash{\Gamma_I\neq\emptyset}$ ensures the validity of a 
    \emph{Friedrich inequality} (\textit{cf}.\ \cite[Ex.\ II.5.13]{Galdi}), which states that there exists a constant $c_{\mathrm{F}}>0$ such that for every ${v\in H^1(\Omega)}$,~there~holds
        \begin{align}\label{lem:poin_cont}
            \|v\|_{\Omega}\leq \smash{c_{\mathrm{F}}\,\big\{\|\nabla v\|_{\Omega}+\vert \langle v\rangle_{\Gamma_I}\vert\big\}}\,.\\[-6mm]\notag
        \end{align}

	\subsubsection{Integration-by-parts formula and trace spaces}\vspace{-1mm}\enlargethispage{5mm}
	\hspace*{5mm}We define the space
	\begin{align*} 
		\smash{H(\textup{div};\Omega)\coloneqq \big\{y\in (L^2(\Omega))^d\mid \textup{div}\,y\in L^2(\Omega)\big\}\,.}
	\end{align*}
    %In addition, for $f\in L^2(\Omega)$, we define
    %\begin{align*}  
    %    H(\textup{div}\!=\!f;\Omega)\coloneqq \big\{y\in H(\textup{div};\Omega)\mid \textup{div}\,y=f\text{ a.e.\ in }\Omega\big\}\,.
	%\end{align*}
	Denote by $\textup{tr}(\cdot)\colon \hspace*{-0.15em}H^1(\Omega)\hspace*{-0.15em}\to\hspace*{-0.15em} \smash{H^{\smash{\frac{1}{2}}}(\partial\Omega)}$ the trace operator  and by ${\textup{tr}((\cdot)\cdot n)\colon \hspace*{-0.15em} H(\textup{div};\Omega)\hspace*{-0.15em}\to\hspace*{-0.15em} \smash{H^{-\smash{\frac{1}{2}}}(\partial\Omega)}}$~the normal trace operator. %, where $n\colon \partial\Omega\to \mathbb{S}^{d-1}$ denotes the outward unit normal vector~field~to~$\partial\Omega$.
	Then,  for every $v\hspace*{-0.1em}\in\hspace*{-0.1em} H^1(\Omega)$~and~${y\hspace*{-0.1em}\in\hspace*{-0.1em} H(\textup{div};\Omega)}$, there holds %the integration-by-parts formula 
    (\textit{cf}.~\mbox{\cite[Sec.~4.3]{EG21I}}) 
	\begin{align}\label{eq:pi_cont}
		(\nabla v,y)_{\Omega}+(v,\textup{div}\,y )_{\Omega}=\langle \textup{tr}(y)\cdot n,\textup{tr}(v)\rangle_{\partial\Omega}\,,
	\end{align}
	 where, for every $\widehat{y}\in \smash{H^{-\smash{\frac{1}{2}}}(\gamma)}$, $\widehat{v}\in \smash{H^{\smash{\frac{1}{2}}}(\gamma)}$, and $\gamma\in \{\Gamma_I,\Gamma_D,\Gamma_N,\partial\Omega\}$, we  abbreviate 
	\begin{align}\label{eq:abbreviation}
		\smash{\langle \widehat{y},\textup{tr}(\widehat{v})\rangle_{\gamma}\coloneqq \langle \widehat{y},\textup{tr}(\widehat{v})\rangle_{H^{\smash{\frac{1}{2}}}(\gamma)}\,.}
	\end{align}
	In \eqref{eq:abbreviation}, for  $\gamma\subseteq \partial \Omega$ and $s >0$, the  space $H^s(\gamma)$~is~defined~as~the~range~of~the~\mbox{restricted}~trace operator~$\textup{tr}(\cdot)|_{\gamma}$ defined on $H^{s+\smash{\frac{1}{2}}}(\Omega)$
	 endowed with %~the~image~norm, \textit{i.e.}, 
     ${\|\cdot\|_{s,\gamma}\hspace{-0.15em}\coloneqq\hspace{-0.15em} \inf\{\|v\|_{s +\smash{\frac{1}{2}},\Omega}\mid v\hspace{-0.15em}\in\hspace{-0.15em} H^{s+\smash{\frac{1}{2}}}(\Omega) : \textup{tr}(v)|_{\gamma}\hspace{-0.15em}=\hspace{-0.15em}(\cdot)\}}$, and $H^{-s}(\gamma)\coloneqq (H^s(\gamma))^*$  as the associated (topological) dual space. 
	
	Moreover, for every $\mathrm{X}\in \{I,D,N\}$, we employ the notation
	\begin{align*} 
		 	\smash{H^1_{\mathrm{X}}(\Omega)\coloneqq  \big\{v\in 	H^1(\Omega) \mid \textup{tr}(v)=0\textup{ a.e.\ on }\Gamma_{\mathrm{X}}\big\}\,.}  
	\end{align*}

    If $y\hspace{-0.15em}\in\hspace{-0.15em} H(\textup{div};\Omega)$ is such that there exists a constant $c\hspace{-0.15em}>\hspace{-0.15em}0$ such that 
    for every $\smash{v\hspace{-0.15em}\in\hspace{-0.15em}  H^1_D(\Omega)\hspace{-0.1em}\cap\hspace{-0.1em} H^1_N(\Omega)}$, there holds\vspace{-1mm}
    \begin{align*}
        \vert \langle \textup{tr}(y)\cdot n,\textup{tr}(v)\rangle_{\partial\Omega}\vert \leq c\,\|\textup{tr}(v)\|_{1,\Gamma_I}\,,
    \end{align*}
    then, by the Hahn--Banach theorem, there exists an  extension $\smash{\overline{\textup{tr}(y)\cdot n}}\in L^\infty(\Gamma_I)\cong (L^1(\Gamma_I))^*$, \textit{i.e.},  for every $v\in H^1_D(\Omega)\cap H_N^1(\Omega)$, we have that
    \begin{align*}
        (\smash{\overline{\textup{tr}(y)\cdot n}}, \textup{tr}(v))_{\Gamma_I}=\langle \textup{tr}(y)\cdot n,\textup{tr}(v)\rangle_{\partial\Omega}\,.
    \end{align*}
    In light of the previous argument, we introduce the space\enlargethispage{2.5mm}
    \begin{align*}
        \smash{\overline{H}_I(\textup{div};\Omega)\coloneqq \big\{y\in H(\textup{div};\Omega)\mid \exists \,\smash{\overline{\textup{tr}(y)\cdot n}}\in L^\infty(\Gamma_I)\big\}\,,}
    \end{align*}
    which turns out to be the natural energy space of an associated (Fenchel) dual problem to \eqref{eq:primal_intro}.
    This is primarily a consequence of the following lemma.
 
    \begin{lemma}\label{lem:normal_trace}
        Let $y\in H(\textup{div};\Omega)$ and $g\in H^{-\frac{1}{2}}(\Gamma_N)$ be  such that there exists a constant $c>0$ such that for every $v\in  H^1_D(\Omega)$, there holds
        \begin{align}\label{lem:normal_trace.1}
            \vert \langle \textup{tr}(y)\cdot n,\textup{tr}(v)\rangle_{\partial\Omega}-\langle g,\textup{tr}(v)\rangle_{\Gamma_N}\vert \leq c\,\|\textup{tr}(v)\|_{1,\Gamma_I}\,.
        \end{align} 
        \if0
        the following statements apply:
        \begin{itemize}[noitemsep,topsep=2pt,leftmargin=!,labelwidth=\widthof{(ii)}]
            \item[(i)]\hypertarget{lem:normal_trace.i}{} There exists a constant $c>0$ such that $\vert \langle \textup{tr}(y)\cdot n,\textup{tr}(v)\rangle_{\partial\Omega}-\langle g,\textup{tr}(v)\rangle_{\Gamma_N}\vert \leq c\,\|\textup{tr}(v)\|_{1,\Gamma_I\cup\Gamma_N}$ for all ${v\in  H^1_D(\Omega)}$;
            \item[(ii)]\hypertarget{lem:normal_trace.ii}{}  For every $v\in H^1_I(\Omega)\cap H^1_D(\Omega)$, there holds $\langle \textup{tr}(y)\cdot n,\textup{tr}(v)\rangle_{\partial\Omega}-\langle g,\textup{tr}(v)\rangle_{\Gamma_N}=0$. \todo{why is the Neumann part zero?}
        \end{itemize}
        \fi
        %\begin{itemize}[noitemsep,topsep=2pt,leftmargin=!,labelwidth=\widthof{(ii)}]
        %    \item[(i)] There exists a constant $c>0$ such that for every $v\in H^1_D(\Omega)$, it holds that
        %    \begin{align*}
        %         \vert \langle \textup{tr}(y)\cdot n,\textup{tr}(v)\rangle_{\partial\Omega}\vert \leq c\,\|\textup{tr}(v)\|_{1,\Gamma_I\cup \Gamma_N}\,;
        %    \end{align*}
        %    \item[(ii)]  For every $v\in H^1_I(\Omega)\cap H^1_D(\Omega)$, it holds that
        %    \begin{align*}
        %        \langle \textup{tr}(y)\cdot n,\textup{tr}(v)\rangle_{\partial\Omega}=0\,.
        %    \end{align*}
        %\end{itemize}
      %  If $y\in H(\textup{div};\Omega)$ is such that 
   % for every $v\in H^1_D(\Omega)$, it holds that
   % \begin{align*}
  %      \vert \langle \textup{tr}(y)\cdot n,\textup{tr}(v)\rangle_{\partial\Omega}\vert \leq c\,\|\textup{tr}(v)\|_{1,\Gamma_I\cup \Gamma_N}\,.
 %   \end{align*}
    Then, we have that $y\in \overline{H}_I(\textup{div};\Omega)$ and for every $v\in H^1_D(\Omega)$, there holds
    \begin{align}\label{lem:normal_trace.2}
         (\smash{\overline{\textup{tr}(y)\cdot n}}, \textup{tr}(v))_{\Gamma_I}=\langle \textup{tr}(y)\cdot n,\textup{tr}(v)\rangle_{\partial\Omega}-\langle g,\textup{tr}(v)\rangle_{\Gamma_N}\,.
    \end{align} 
    \end{lemma}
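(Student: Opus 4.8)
The plan is to recognize the right-hand side of \eqref{lem:normal_trace.2}, viewed as a function of $\textup{tr}(v)|_{\Gamma_I}$, as a bounded linear functional on a linear subspace of $L^1(\Gamma_I)$, and then to invoke the Hahn--Banach theorem together with the identification $(L^1(\Gamma_I))^*\cong L^\infty(\Gamma_I)$ (valid since $\Gamma_I$ carries a finite surface measure). Concretely, I would introduce the linear subspace $W\coloneqq \{\textup{tr}(v)|_{\Gamma_I}\mid v\in H^1_D(\Omega)\}\subseteq L^2(\Gamma_I)\subseteq L^1(\Gamma_I)$ and the functional $\ell\colon W\to\mathbb{R}$ given, for $v\in H^1_D(\Omega)$, by $\ell(\textup{tr}(v)|_{\Gamma_I})\coloneqq \langle \textup{tr}(y)\cdot n,\textup{tr}(v)\rangle_{\partial\Omega}-\langle g,\textup{tr}(v)\rangle_{\Gamma_N}$.

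The first point to establish is that $\ell$ is well-defined: if $v_1,v_2\in H^1_D(\Omega)$ have the same trace on $\Gamma_I$, then $v_1-v_2\in H^1_D(\Omega)\cap H^1_I(\Omega)$, so $\|\textup{tr}(v_1-v_2)\|_{1,\Gamma_I}=0$, and \eqref{lem:normal_trace.1} applied to $v_1-v_2$ forces $\langle \textup{tr}(y)\cdot n,\textup{tr}(v_1-v_2)\rangle_{\partial\Omega}-\langle g,\textup{tr}(v_1-v_2)\rangle_{\Gamma_N}=0$; linearity of both pairings in $v$ then shows that the value of $\ell$ is independent of the chosen representative and that $\ell$ is linear on $W$. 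Since $\|\textup{tr}(v)\|_{1,\Gamma_I}=\|\textup{tr}(v)|_{\Gamma_I}\|_{L^1(\Gamma_I)}$, the estimate \eqref{lem:normal_trace.1} reads $\vert \ell(w)\vert\leq c\,\|w\|_{L^1(\Gamma_I)}$ for all $w\in W$, so $\ell$ is bounded on $W$ with norm at most $c$. The Hahn--Banach theorem then yields a norm-preserving extension $\widetilde\ell\in (L^1(\Gamma_I))^*$, and the duality $(L^1(\Gamma_I))^*\cong L^\infty(\Gamma_I)$ produces $\smash{\overline{\textup{tr}(y)\cdot n}}\in L^\infty(\Gamma_I)$ with $\widetilde\ell(\phi)=(\smash{\overline{\textup{tr}(y)\cdot n}},\phi)_{\Gamma_I}$ for all $\phi\in L^1(\Gamma_I)$.

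It then remains to read off both assertions. Choosing $\phi=\textup{tr}(v)|_{\Gamma_I}\in W$ for arbitrary $v\in H^1_D(\Omega)$ gives $(\smash{\overline{\textup{tr}(y)\cdot n}},\textup{tr}(v))_{\Gamma_I}=\widetilde\ell(\textup{tr}(v)|_{\Gamma_I})=\ell(\textup{tr}(v)|_{\Gamma_I})=\langle \textup{tr}(y)\cdot n,\textup{tr}(v)\rangle_{\partial\Omega}-\langle g,\textup{tr}(v)\rangle_{\Gamma_N}$, which is exactly \eqref{lem:normal_trace.2}; restricting this identity to $v\in H^1_D(\Omega)\cap H^1_N(\Omega)$, where $\textup{tr}(v)=0$ a.e.\ on $\Gamma_N$ makes $\langle g,\textup{tr}(v)\rangle_{\Gamma_N}$ vanish, recovers precisely the relation defining the extension in $\overline{H}_I(\textup{div};\Omega)$, so $y\in \overline{H}_I(\textup{div};\Omega)$. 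The only genuinely delicate step is the well-definedness of $\ell$ in the second paragraph: this is where the full strength of hypothesis \eqref{lem:normal_trace.1} is used, namely that its right-hand side vanishes on the kernel of $v\mapsto \textup{tr}(v)|_{\Gamma_I}$. Everything else is routine functional analysis, relying only on the finiteness of the surface measure of $\Gamma_I$ so that $L^2(\Gamma_I)\hookrightarrow L^1(\Gamma_I)$ and $(L^1(\Gamma_I))^*\cong L^\infty(\Gamma_I)$.
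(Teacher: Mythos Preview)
Your proof is correct and rests on the same core tool as the paper (Hahn--Banach together with $(L^1)^*\cong L^\infty$), but your organization is more direct. The paper first extends the functional to $L^\infty(\Gamma_I\cup\Gamma_N)$, obtaining $E\in L^\infty(\Gamma_I\cup\Gamma_N)$ with $(E,\textup{tr}(v))_{\Gamma_I\cup\Gamma_N}$ equal to the right-hand side of \eqref{lem:normal_trace.2}, and then uses the hypothesis applied to $v\in H^1_I(\Omega)\cap H^1_D(\Omega)$ together with the density of $(\textup{tr}(\cdot)|_{\Gamma_N})(H^1_I(\Omega)\cap H^1_D(\Omega))$ in $L^1(\Gamma_N)$ to conclude $E|_{\Gamma_N}=0$ and set $\overline{\textup{tr}(y)\cdot n}\coloneqq E|_{\Gamma_I}$. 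You instead build the extension directly on $L^1(\Gamma_I)$, using the very same observation (vanishing of the functional on $H^1_I(\Omega)\cap H^1_D(\Omega)$, which is exactly the kernel of $v\mapsto\textup{tr}(v)|_{\Gamma_I}$ within $H^1_D(\Omega)$) earlier, as the well-definedness check for $\ell$; this sidesteps the density argument on $\Gamma_N$ altogether. Both routes hinge on the same key fact, but yours packages it more economically.
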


    \begin{proof}
    By the Hahn--Banach theorem, there exists some $E\in \smash{L^\infty(\Gamma_I\cup\Gamma_N)}\cong (L^1(\Gamma_I\cup\Gamma_N))^* $ such that  for every $v\in H^1_D(\Omega)$, we have that
    \begin{align}\label{lem:normal_trace.3}
        (E, \textup{tr}(v))_{\Gamma_I\cup\Gamma_N}=\langle \textup{tr}(y)\cdot n,\textup{tr}(v)\rangle_{\partial\Omega}-\langle g,\textup{tr}(v)\rangle_{\Gamma_N}\,.
    \end{align}
    Moreover,  for every $v\in H^1_I(\Omega)\cap H^1_D(\Omega)$, from \eqref{lem:normal_trace.1}, it follows that 
    \begin{align}\label{lem:normal_trace.4}
        \langle \textup{tr}(y)\cdot n,\textup{tr}(v)\rangle_{\partial\Omega}-\langle g,\textup{tr}(v)\rangle_{\Gamma_N}=0\, ,
    \end{align}
        which, due to \eqref{lem:normal_trace.3} and the density of  $(\textup{tr}(\cdot)|_{\Gamma_N})(H^1_I(\Omega)\cap H^1_D(\Omega))$ in $L^1(\Gamma_N)$, implies~that~${E=0}$~a.e. on~$\Gamma_N$, so that from \eqref{lem:normal_trace.3}, it follows that $y\hspace{-0.15em}\in\hspace{-0.15em} \overline{H}_I(\textup{div};\Omega)$ with \eqref{lem:normal_trace.2}, where ${\overline{\textup{tr}(y)\cdot n}\hspace{-0.15em}=\hspace{-0.15em}E|_{\Gamma_I}\hspace{-0.15em}\in\hspace{-0.15em} L^\infty(\Gamma_I)}$.

    \if0
       Due to (\hyperlink{lem:normal_trace.i}{i}), by the Hahn--Banach theorem, there exists an extension~$\smash{\overline{\textup{tr}(y)\cdot n}}\in \smash{L^\infty(\Gamma_I\cup\Gamma_N)}$ $\cong (L^1(\Gamma_I\cup\Gamma_N))^* $, \textit{i.e.},  for every $v\in H^1_D(\Omega)$, we have that
    \begin{align}\label{lem:normal_trace.1}
        (\smash{\overline{\textup{tr}(y)\cdot n}}, \textup{tr}(v))_{\Gamma_I\cup \Gamma_N}=\langle \textup{tr}(y)\cdot n,\textup{tr}(v)\rangle_{\partial\Omega}\,.
    \end{align}
    In particular, we have that $y\in \overline{H}_I(\textup{div};\Omega)$.
        %In addition, 
        Due to (\hyperlink{lem:normal_trace.ii}{ii}), for every $v\in H^1_I(\Omega)\cap H^1_D(\Omega)$,~there~holds
        \begin{align*}
            (\smash{\overline{\textup{tr}(y)\cdot n}}, \textup{tr}(v))_{\Gamma_N}&=(\smash{\overline{\textup{tr}(y)\cdot n}}, \textup{tr}(v))_{\Gamma_I\cup \Gamma_N}
            \\&=\langle \textup{tr}(y)\cdot n,\textup{tr}(v)\rangle_{\partial\Omega}=\langle g,\textup{tr}(v)\rangle_{\Gamma_N}\,,
        \end{align*}
        which, due to  $(\textup{tr}(\cdot)|_{\Gamma_N})(H^1_I(\Omega)\cap H^1_D(\Omega))=H^{\frac{1}{2}}(\Gamma_N)$, implies that
        %\begin{align}\label{lem:normal_trace.2}
        %    \smash{\overline{\textup{tr}(y)\cdot n}}=0\textup{ a.e.\ on }\Gamma_N\,.
        %\end{align}
        $\smash{\overline{\textup{tr}(y)\cdot n}}=0$~a.e.~on~$\Gamma_N$ and, consequently, together with \eqref{lem:normal_trace.1}, the assertion. 
        \fi
    \end{proof}
   % \fi 

  In the following, we will in most cases refrain from writing $\textup{tr}(\cdot)$ or $\textup{tr}((\cdot)\cdot n)$.\pagebreak

	\subsection{Triangulations and standard finite element spaces}\vspace{-0.5mm}\enlargethispage{6mm}
	
	\hspace{5mm}In what follows, we denote by $\{\mathcal{T}_h\}_{h>0}$ a family of shape-regular triangulations~of $\Omega$ (\textit{cf}.\  \cite{EG21I}). Here, the parameter
	$h>0$ refers to the \textit{averaged mesh-size}, \textit{i.e.},~we~define~${h 
	\coloneqq (\vert \Omega\vert/\textup{card}(\mathcal{N}_h))^{\frac{1}{d}}}
	$, where $\mathcal{N}_h$  is the set of vertices of $\mathcal{T}_h$. We define  the following sets of sides of $\mathcal{T}_h$:
	\begin{align*}
	 	\mathcal{S}_h&\coloneqq \mathcal{S}_h^{i}\cup \mathcal{S}_h^{\partial}\,,\\
	 	\mathcal{S}_h^{i}&\coloneqq \{T\cap T'\mid T,T'\in\mathcal{T}_h\,,\text{dim}_{\mathscr{H}}(T\cap T')=d-1\}\,,\\
	 	\mathcal{S}_h^{\partial}&\coloneqq\{T\cap \partial\Omega\mid T\in \mathcal{T}_h\,,\text{dim}_{\mathscr{H}}(T\cap \partial\Omega)=d-1\}\,,\\
	 	\mathcal{S}_h^{\mathrm{X}}&\coloneqq\{S\in \mathcal{S}_h^{\partial}\mid \textup{int}(S)\subseteq \Gamma_\mathrm{X}\}\text{ for } \mathrm{X}\in \{I,D,N\}\,,
	 \end{align*}
	 where the Hausdorff dimension is defined by $\text{dim}_{\mathscr{H}}(\omega)\coloneqq\inf\{d'\geq 0\mid \mathscr{H}^{d'}(\omega)=0\}$~for~all~${\omega\subseteq \mathbb{R}^d}$.
	 We also assume that  $\{\mathcal{T}_h\}_{h>0}$ and $\Gamma_I$, $\Gamma_D$,~and~$\Gamma_N$~are~chosen in such a way that  ${\mathcal{S}_h^{\partial}=\mathcal{S}_h^I\dot{\cup}\mathcal{S}_h^D\dot{\cup} \mathcal{S}_h^N}$.%, \textit{e.g.}, in the case $d=2$,  $\overline{\Gamma}_D$, $\overline{\Gamma}_C$, and $\overline{\Gamma}_N$~touch~only~in~vertices.

	For $n\in \mathbb{N}_0$ and $T\in \mathcal{T}_h$, let $\mathbb{P}^n(T)$ denote the set of polynomials of maximal~degree~$n$~on~$T$. Then, for $n\in \mathbb{N}_0$, the \textit{space of  element-wise polynomial functions (of order $n$)} is defined by
	\begin{align*}
		\smash{\mathcal{L}^n(\mathcal{T}_h)\coloneqq \big\{v_h\in L^\infty(\Omega)\mid v_h|_T\in\mathbb{P}^k(T)\text{ for all }T\in \mathcal{T}_h\big\}\,.}
	\end{align*} 
	For $\ell \hspace{-0.15em}\in \hspace{-0.15em}\{1,d\}$, the (local) $L^2$-projection $\Pi_h\colon  \hspace{-0.15em}(L^1(\Omega))^{\ell}\hspace{-0.15em} \to\hspace{-0.15em} (\mathcal{L}^0(\mathcal{T}_h))^{\ell} $ onto element-wise~\mbox{constant}~functions or vector fields, respectively, for every 
	$v\in  (L^1(\Omega))^{\ell} $ is defined by $\Pi_h v|_T\coloneqq\langle v\rangle_T$~for~all~${T\in \mathcal{T}_h}$.  
	
	For $m\in \mathbb{N}_0$ and $S\in \mathcal{S}_h$, let $\mathbb{P}^m(S)$ denote the set of polynomials of maximal degree~$m$~on~$S$. Then, for $m\hspace*{-0.175em}\in\hspace*{-0.175em} \mathbb{N}_0$ and $\widehat{\mathcal{S}}_h\hspace*{-0.175em}\in\hspace*{-0.175em} \{\mathcal{S}_h,\mathcal{S}^{i}_h,\mathcal{S}^{\partial}_h,\mathcal{S}^D_h,\mathcal{S}^I_h,\mathcal{S}^N_h\}$, the \textit{space of side-wise~polynomial functions (of order $m$)}  is defined by
	\begin{align*}
	\smash{\mathcal{L}^m(\widehat{\mathcal{S}}_h)\coloneqq  \big\{v_h\in L^\infty(\cup\widehat{\mathcal{S}}_h)\mid v_h|_S\in\mathbb{P}^m(S)\text{ for all }S\in \widehat{\mathcal{S}}_h\big\}\,.}
	\end{align*} 
	For $\ell  \hspace*{-0.15em}\in  \hspace*{-0.15em}\{1,d\}$, the (local) $L^2$-projection $\pi_h\colon  \hspace*{-0.15em}(L^1(\cup\mathcal{S}_h))^{\ell} \hspace*{-0.15em}\to \hspace*{-0.15em} (\mathcal{L}^0(\mathcal{S}_h))^{\ell}$ onto side-wise constant functions or vector  fields, respectively,  for every 
	$v\hspace*{-0.15em}\in  \hspace*{-0.15em}(L^1(\cup\mathcal{S}_h))^{\ell} $ is defined by ${\pi_h v|_S\hspace*{-0.15em}\coloneqq \hspace*{-0.15em}\langle v\rangle_S}$~for~all~${S\hspace*{-0.15em}\in \hspace*{-0.15em}\mathcal{S}_h}$.

	\subsubsection{Crouzeix--Raviart element}\vspace{-0.5mm}
	
	\hspace{5mm}The \textit{Crouzeix--Raviart space} (\textit{cf}.\ \cite{CR73}) is defined as  
	\begin{align}
		\mathcal{S}^{1,cr}(\mathcal{T}_h)\coloneqq \big\{v_h\in \mathcal{L}^1(\mathcal{T}_h)\mid \pi_h\jump{v_h}=0\text{ a.e.\ on }\cup \mathcal{S}_h^{i}\big\}\,,\label{def:CR}
	\end{align}
    where, for  every $v_h\in \mathcal{L}^1(\mathcal{T}_h)$, the \textit{jump (across $\mathcal{S}_h$)} $\jump{v_h}\in \mathcal{L}^1(\mathcal{S}_h)$,~is~defined~by~${\jump{v_h}|_S\coloneqq \jump{v_h}_S}$ for all $S\in\mathcal{S}_h$, where for every  $S\in\mathcal{S}_h$, the \textit{jump (across $S$)} $\jump{v_h}_S\in \mathbb{P}^1(S)$ is defined by 
	\begin{align*}
		\jump{v_h}_S\coloneqq\begin{cases}
			v_h|_{T_+}-v_h|_{T_-}&\text{ if }S\in \mathcal{S}_h^{i}\,,\text{ where }T_+, T_-\in \mathcal{T}_h\text{ satisfy }\partial T_+\cap\partial T_-=S\,,\\
			v_h|_T&\text{ if }S\in\mathcal{S}_h^{\partial}\,,\text{ where }T\in \mathcal{T}_h\text{ satisfies }S\subseteq \partial T\,.
		\end{cases}
	\end{align*}
	Denote by 	$\varphi_S \in \smash{\mathcal{S}^{1,cr}(\mathcal{T}_h)}$, $S \in \mathcal{S}_h$, satisfying 
	$\langle\varphi_S\rangle_{S'} = \delta_{S,S'}$~for~all~$S,S' \in \mathcal{S}_h$,~a~\mbox{basis}~of~$\smash{\mathcal{S}^{1,cr}(\mathcal{T}_h)}$.
	Then, the canonical interpolation operator $\smash{\Pi_h^{cr}\colon H^1(\Omega)\to \smash{\mathcal{S}^{1,\textit{cr}}(\mathcal{T}_h)}}$  (\textit{cf}.\ \cite[Secs.\ 36.2.1, 36.2.2]{EG21II}), for every $v\in H^1(\Omega)$ defined by
	\begin{align}
		\Pi_h^{cr}v\coloneqq \sum_{S\in \mathcal{S}_h}{\langle v\rangle_S\,\varphi_S}\,,\label{CR-interpolant}
	\end{align}
	preserves averages of gradients and moments (on sides), \textit{i.e.}, for every $v\in H^1(\Omega)$, there holds
	\begin{alignat}{2}
		\nabla_h\Pi_h^{cr}v&=\Pi_h\nabla v&& \quad\text{ a.e.\ in }\Omega\,,\label{eq:grad_preservation}\\
		\pi_h\Pi_h^{cr}v&=\pi_h  v&&\quad \text{ a.e.\ on }\cup\mathcal{S}_h\, ,\label{eq:trace_preservation}
	\end{alignat}
	where $\nabla_h\colon \mathcal{L}^1(\mathcal{T}_h)\to (\mathcal{L}^0(\mathcal{T}_h))^d$ is  defined by $(\nabla_hv_h)|_T\coloneqq \nabla(v_h|_T)$ for all $v_h\in \mathcal{L}^1(\mathcal{T}_h)$~and~${T\in \mathcal{T}_h}$.
    
    %For every $s \in [0,1]$, there  exists a constant $c>0$ (\emph{cf}.\ \cite[Lem.\ 36.1]{EG21II}), independent of $h>0$, 
 %such that for every 
	%$v\in H^{1+s}(\Omega)$ and $T\in \mathcal{T}_h$,~it~holds~that
	%\begin{align}\label{eq:CR-Interpolant-Rate}
	%	\|v-\Pi_h^{cr} v\|_T+h_T\, \|\nabla v-\nabla \Pi_h^{cr} v\|_T\leq c\,h_T^{1+s}\,\vert v\vert_{1+s,T}\,.
	%\end{align}  

    The assumption $\Gamma_I\neq\emptyset$ also ensures the validity of a
    \emph{discrete Friedrich inequality}.\vspace{-1mm}

    \begin{lemma}\label{lem:poin_discrete}
        There exists a constant $c_{\mathrm{F}}^{cr}>0$ such that for every $v_h\in \mathcal{S}^{1,cr}(\mathcal{T}_h)$, there holds
        \begin{align*}
            \|v_h\|_{\Omega}\leq c_{\mathrm{F}}^{cr}\,\smash{\big\{\|\nabla_h v_h\|_{\Omega}+\vert \langle \pi_h v_h\rangle_{\Gamma_I}\vert\big\}}\,. 
        \end{align*}
    \end{lemma}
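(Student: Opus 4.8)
The plan is to deduce the discrete inequality from its continuous counterpart \eqref{lem:poin_cont} by lifting a Crouzeix--Raviart function to a conforming one via a nodal averaging (enrichment) operator and controlling the error by the broken gradient. Concretely, I would introduce the averaging operator $E_h\colon \mathcal{S}^{1,cr}(\mathcal{T}_h)\to H^1(\Omega)$ that maps an element-wise affine (possibly discontinuous) function to the continuous element-wise affine function whose value at each vertex $z\in \mathcal{N}_h$ equals the arithmetic mean of the element values $v_h|_T(z)$ over the elements $T\in \mathcal{T}_h$ containing $z$ (see, e.g., \cite{EG21II}). On the shape-regular family $\{\mathcal{T}_h\}_{h>0}$, this operator satisfies the standard estimate
\begin{align*}
	\sum_{T\in \mathcal{T}_h}{\smash{h_T^{-2}}\|v_h-E_hv_h\|_T^2}+\|\nabla_h(v_h-E_hv_h)\|_{\Omega}^2\leq c\sum_{S\in \mathcal{S}_h^i}{\smash{h_S^{-1}}\|\jump{v_h}\|_S^2}\,,
\end{align*}
with $c>0$ depending only on the shape-regularity constant and $d$, where $h_T$, $h_S$ denote the diameters of $T$, $S$. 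By the triangle inequality, this also bounds $\|\nabla E_hv_h\|_{\Omega}$ by $\|\nabla_h v_h\|_{\Omega}$ plus the square root of the right-hand side above.

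First I would bound the jump term using the Crouzeix--Raviart constraint $\pi_h\jump{v_h}=0$. For every $S\in \mathcal{S}_h^i$, the affine function $\jump{v_h}_S$ then has vanishing mean on $S$, so a Poincaré inequality on the $(d-1)$-simplex $S$ gives $\|\jump{v_h}_S\|_S\leq c\,h_S\|\nabla_S\jump{v_h}_S\|_S$, where $\nabla_S$ denotes the surface gradient. Since $\nabla_S\jump{v_h}_S$ is the tangential part of the jump of the element-wise constant field $\nabla_h v_h$ across $S$, an inverse estimate (together with $\vert S\vert/\vert T\vert\leq c\,h_S^{-1}$ for $S\subseteq \partial T$, by shape-regularity) yields $h_S^{-1}\|\jump{v_h}_S\|_S^2\leq c\,h_S\|\nabla_S\jump{v_h}_S\|_S^2\leq c\,(\|\nabla_h v_h\|_{T_+}^2+\|\nabla_h v_h\|_{T_-}^2)$ for the elements $T_\pm$ adjacent to $S$. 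Summing over $S\in \mathcal{S}_h^i$ and using bounded overlap gives $\sum_{S\in \mathcal{S}_h^i}h_S^{-1}\|\jump{v_h}\|_S^2\leq c\,\|\nabla_h v_h\|_{\Omega}^2$, and together with $h_T\leq \operatorname{diam}(\Omega)$ and the estimate for $E_h$,
\begin{align*}
	\|v_h-E_hv_h\|_{\Omega}\leq c\,\|\nabla_h v_h\|_{\Omega}\,,\qquad \|\nabla_h(v_h-E_hv_h)\|_{\Omega}\leq c\,\|\nabla_h v_h\|_{\Omega}\,,\qquad \|\nabla E_hv_h\|_{\Omega}\leq c\,\|\nabla_h v_h\|_{\Omega}\,.
\end{align*}
For the boundary mean I would use $\vert\langle w\rangle_{\Gamma_I}\vert\leq \vert\Gamma_I\vert^{-1/2}\|w\|_{\Gamma_I}$ together with the scaled trace inequality $\|w\|_{\partial T\cap \Gamma_I}^2\leq c\,(h_T^{-1}\|w\|_T^2+h_T\|\nabla w\|_T^2)$, applied with $w=v_h-E_hv_h$ and summed over the elements $T$ with a face on $\Gamma_I$; inserting the bounds above gives $\vert\langle v_h-E_hv_h\rangle_{\Gamma_I}\vert\leq c\,\|\nabla_h v_h\|_{\Omega}$.

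It then remains to assemble. Since $\pi_h$ preserves side-wise means, $\langle \pi_h v_h\rangle_{\Gamma_I}=\langle v_h\rangle_{\Gamma_I}$ (with the trace taken element-wise on $\Gamma_I$); and since $E_hv_h\in H^1(\Omega)$, \eqref{lem:poin_cont} gives $\|E_hv_h\|_{\Omega}\leq c_{\mathrm{F}}\{\|\nabla E_hv_h\|_{\Omega}+\vert\langle E_hv_h\rangle_{\Gamma_I}\vert\}$. Combining this with $\|v_h\|_{\Omega}\leq \|v_h-E_hv_h\|_{\Omega}+\|E_hv_h\|_{\Omega}$, with $\vert\langle E_hv_h\rangle_{\Gamma_I}\vert\leq \vert\langle \pi_h v_h\rangle_{\Gamma_I}\vert+\vert\langle v_h-E_hv_h\rangle_{\Gamma_I}\vert$, and with the displayed estimates yields the claim with $c_{\mathrm{F}}^{cr}$ depending only on $c_{\mathrm{F}}$, the shape-regularity constant, $\operatorname{diam}(\Omega)$, and $d$ — in particular independent of $h$, as required for the \emph{a priori} analysis. (A compactness/contradiction argument is also available, but would \emph{a priori} produce a triangulation-dependent constant, so the enrichment route is preferable.) The main obstacle is the second step: showing that the approximation and stability errors of $E_h$ are controlled solely by $\|\nabla_h v_h\|_{\Omega}$, which is exactly where the Crouzeix--Raviart constraint enters through the zero-mean Poincaré inequality on sides; controlling the boundary-average discrepancy $\langle E_hv_h-v_h\rangle_{\Gamma_I}$ by a scaled trace inequality is the remaining, more routine, point.
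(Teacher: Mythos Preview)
Your proposal is correct and follows essentially the same route as the paper: both lift $v_h$ to $H^1(\Omega)$ via a node-averaging enrichment operator, apply the continuous Friedrich inequality \eqref{lem:poin_cont} to the enriched function, and close the argument by controlling $\|v_h-E_hv_h\|_{\Omega}$, $\|\nabla E_hv_h\|_{\Omega}$, and $\vert\langle v_h-E_hv_h\rangle_{\Gamma_I}\vert$ in terms of $\|\nabla_h v_h\|_{\Omega}$, together with $\langle v_h\rangle_{\Gamma_I}=\langle\pi_h v_h\rangle_{\Gamma_I}$. The only difference is that the paper cites the enrichment estimate $\|v_h-I_h^{p1}v_h\|_{\Omega}+h\|\nabla I_h^{p1}v_h\|_{\Omega}\le c\,h\|\nabla_h v_h\|_{\Omega}$ and the boundary-average bound as black boxes from \cite{EG21I}, whereas you unpack them via the zero-mean Poincar\'e inequality on interior sides and the scaled trace inequality.
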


    \begin{proof}
        Let $I_h^{p1}\colon\hspace{-0.15em} \mathcal{S}^{1,cr}(\mathcal{T}_h)\hspace{-0.15em}\to \hspace{-0.15em}\mathcal{S}^{1,cr}(\mathcal{T}_h)\cap H^1(\Omega)$ be an $H^1$-enriching operator (\textit{e.g.}, the~\mbox{node-averag-} ing\hspace{-0.15mm} quasi-interpolation \hspace{-0.15mm}operator \hspace{-0.15mm}$\Pi_h^{av}$, \hspace{-0.15mm}\textit{cf}.\ \hspace{-0.15mm}\cite[Sec.\ \hspace{-0.15mm}22.2]{EG21I}) \hspace{-0.15mm}such \hspace{-0.1mm}that \hspace{-0.15mm}for \hspace{-0.1mm}every \hspace{-0.15mm}$v_h\hspace{-0.175em}\in\hspace{-0.175em} \mathcal{S}^{1,cr}(\mathcal{T}_h)$,~\hspace{-0.15mm}there~\hspace{-0.15mm}holds
        \begin{align}\label{lem:poin_discrete.1}
            \|v_h-I_h^{p1}v_h\|_{\Omega}+h\,\|\nabla I_h^{p1}v_h\|_{\Omega}\leq c^{p1}h\,\|\nabla_h v_h\|_{\Omega}\,,
        \end{align}
        where $c^{p1}>0$ is independent of $h>0$. Using \eqref{lem:poin_discrete.1} and  the Friedrich~inequality~\eqref{lem:poin_cont},~we~find~that
        \begin{align*}
            \|v_h\|_{\Omega}& \leq \|I_h^{p1}v_h\|_{\Omega}+\|v_h-I_h^{p1}v_h\|_{\Omega}
            \\&\leq 
            (c_{\mathrm{F}}+c^{p1}h)\,\|\nabla I_h^{p1}v_h\|_{\Omega}+ c_{\mathrm{F}}\,\vert \langle I_h^{p1} v_h\rangle_{\Gamma_I}\vert
            \\&\leq 
             (c_{\mathrm{F}}+c^{p1}h)\,c^{p1}\|\nabla_h v_h\|_{\Omega}+c_{\mathrm{F}}\,\vert \langle v_h\rangle_{\Gamma_I}\vert
             +c_{\mathrm{F}}\,\vert \langle v_h-I_h^{p1} v_h\rangle_{\Gamma_I}\vert\,.
        \end{align*}
        Eventually, the claimed discrete Friedrich inequality 
        follows from $\vert \langle v_h-I_h^{p1} v_h\rangle_{\Gamma_I}\vert%\leq c\,\|v_h-I_h^{p1} v_h\|_{1,\Gamma_I}
            %\\
            %&\leq c\,\|\nabla_h v_h\|_{1,\Omega}
            %\\&
            \leq c_1\,h^{\frac{1}{2}}\|\nabla_h v_h\|_{\Omega}$ (\textit{cf}.\ \cite[Rem.\ 12.17]{EG21I}),
        %\begin{align*}
        %    \vert \langle v_h-I_h^{p1} v_h\rangle_{\Gamma_I}\vert%\leq c\,\|v_h-I_h^{p1} v_h\|_{1,\Gamma_I}
        %    %\\
        %    %&\leq c\,\|\nabla_h v_h\|_{1,\Omega}
        %    %\\&
        %    \leq c_1\,\|\nabla_h v_h\|_{\Omega}\,,
        %\end{align*}
        where $c_1>0$ is independent of $h>0$, 
        and $\langle v_h\rangle_{\Gamma_I}=\langle \pi_hv_h\rangle_{\Gamma_I} $.%, we conclude the validity of the claimed discrete Friedrich inequality.
    \end{proof}

    If $\Gamma_D\neq \emptyset$, in the discrete Friedrich inequality (\textit{cf}.\ Lemma \ref{lem:poin_discrete}) the boundary integral on the right-hand side can be omitted,  
    when restricted~to~the~space
    \begin{align*}
    \mathcal{S}^{1,cr}_D(\mathcal{T}_h)\coloneqq\big\{v_h\in \mathcal{S}^{1,cr}(\mathcal{T}_h)\mid \pi_hv_h=0\text{ a.e.\ on }\Gamma_D\big\}\,.
    \end{align*}
	
	\subsubsection{Raviart--Thomas element}\vspace{-0.5mm}
	
	\hspace{5mm}The \textit{(lowest order) Raviart--Thomas space} (\textit{cf}.\ \cite{RT75}) is defined as 
	\begin{align}
		\mathcal{R}T^0(\mathcal{T}_h)\coloneqq \bigg\{y_h\in(\mathcal{L}^1(\mathcal{T}_h))^d\;\bigg|\; \begin{aligned}
			y_h|_T\cdot n_T&=\textup{const}\text{ on }\partial T\text{ for all }T\in \mathcal{T}_h\,,\\[-1mm] 
			\jump{y_h\cdot n}_S&=0\text{ on }S\text{ for all }S\in \mathcal{S}_h^{i}
		\end{aligned}\,\bigg\}\,, \label{def:RT}  
	\end{align}  
    where, for  every $y_h\in (\mathcal{L}^1(\mathcal{T}_h))^d$ and $S\in\mathcal{S}_h$, the \emph{normal jump (across} $S$)~is~defined~by 
	\begin{align*}
		\jump{y_h\cdot n}_S\coloneqq\begin{cases}
			y_h|_{T_+}\!\cdot n_{T_+}+y_h|_{T_-}\!\cdot n_{T_-}&\text{ if }S\in \mathcal{S}_h^{i}\,,\text{ where }T_+, T_-\in \mathcal{T}_h\text{ satisfy }\partial T_+\cap\partial T_-=S\,,\\
			y_h|_T\cdot n&\text{ if }S\in\mathcal{S}_h^{\partial}\,,\text{ where }T\in \mathcal{T}_h\text{ satisfies }S\subseteq \partial T\,,
		\end{cases}
	\end{align*}
	where, for every $T\in \mathcal{T}_h$, $\smash{n_T\colon\partial T\to \mathbb{S}^{d-1}}$ denotes the outward unit normal vector field to $ T$.
    %Note that $\mathcal{R}T^0(\mathcal{T}_h)\subseteq H(\textup{div};\Omega)$.
    %In addition, for $f_h\in \mathcal{L}^0(\mathcal{T}_h)$, we define
    %\begin{align*}
    %    \mathcal{R}T^0_{\textup{div}=f_h}(\mathcal{T}_h)\coloneqq \big\{y_h\in \mathcal{R}T^0(\mathcal{T}_h)\mid \textup{div}\,y_h=f_h\text{ a.e.\ in }\Omega\big\}\,.
    %\end{align*}
    %Then, %for $f_h\in \mathcal{L}^0(\mathcal{T}_h)$,  we have that $\mathcal{R}T^0_{\textup{div}=f_h}(\mathcal{T}_h)\subseteq H(\textup{div}\!=\!f_h;\Omega)$. 
	Denote by $\psi_S\hspace{-0.15em}\in\hspace{-0.15em} \mathcal{R}T^0(\mathcal{T}_h)$, $S\hspace{-0.15em}\in\hspace{-0.15em} \mathcal{S}_h$, satisfying  $\psi_S|_{S'}\cdot n_{S'}\hspace{-0.15em}=\hspace{-0.15em}\delta_{S,S'}$ on $S'$ for all ${S'\hspace{-0.15em}\in\hspace{-0.15em} \mathcal{S}_h}$,~a~basis~of~$\mathcal{R}T^0(\mathcal{T}_h)$,  where~$n_S$ is the unit normal vector on $S$ pointing from $T_-$ to $T_+$~if~${T_+,T_-\hspace{-0.15em}\in\hspace{-0.15em} \mathcal{T}_h}$~with~${S\hspace{-0.15em}=\hspace{-0.15em}\partial T_+\cap \partial T_-}$. Then,  
	\hspace{-0.1mm}the \hspace{-0.1mm}canonical \hspace{-0.1mm}interpolation \hspace{-0.1mm}operator \hspace{-0.1mm}${\Pi_h^{rt}\colon \hspace{-0.175em}V_{p,q}(\Omega)\hspace{-0.175em}\coloneqq \hspace{-0.175em}\{y\hspace{-0.175em}\in\hspace{-0.175em} (L^p(\Omega))^d\hspace{-0.15em}\mid\hspace{-0.15em}  \textup{div}\,y\hspace{-0.175em}\in \hspace{-0.175em}L^q(\Omega)\}\hspace{-0.175em}\to \hspace{-0.175em}\smash{\mathcal{R}T^{0}(\mathcal{T}_h)}}$ (\textit{cf}.~\mbox{\cite[Sec.~16.1]{EG21I}}), where $p>2$ and $q>\frac{2d}{d+2}$, for every $y\in V_{p,q}(\Omega)$~defined~by
	\begin{align}
		\Pi_h^{rt} y\coloneqq \sum_{S\in \mathcal{S}_h}{\langle y\cdot n_S\rangle_S\,\psi_S}\,,\label{RT-interpolant}
	\end{align}
	preserves averages of divergences and normal traces (on sides), \textit{i.e.}, for every $y\hspace{-0.175em}\in\hspace{-0.175em} V_{p,q}(\Omega)$,~there~holds 
	\begin{alignat}{2}
		\textup{div}\,\Pi_h^{rt}y&=\Pi_h\textup{div}\,y&&\quad \text{ a.e.\ in }\Omega\,,\label{eq:div_preservation}\\
		\Pi_h^{rt}y\cdot n&=\pi_h(y\cdot n)&&\quad \text{ a.e.\ on }\cup\mathcal{S}_h\,.\label{eq:normal_trace_preservation}
	\end{alignat}
    In definition \eqref{RT-interpolant}, the local averages $(\langle y\cdot n_S\rangle_S)_{S\in \mathcal{S}_h}$  are defined via local lifting~as~in~\mbox{\cite[(12.12)]{EG21I}} and, in \eqref{eq:normal_trace_preservation}, the function $\pi_h(y\cdot n)\in  \mathcal{L}^0(\mathcal{S}_h)$ is defined by $\pi_h(y\cdot n)|_S=\langle y\cdot n_S\rangle_S$~for~all~${S\in \mathcal{S}_h}$.
    From \hspace{-0.15mm}the \hspace{-0.15mm}structure-preserving \hspace{-0.15mm}properties \hspace{-0.15mm}\eqref{eq:div_preservation},\eqref{eq:normal_trace_preservation} \hspace{-0.15mm}of \hspace{-0.15mm}the \hspace{-0.15mm}canonical \hspace{-0.15mm}interpolation~\hspace{-0.15mm}\mbox{operator}~\hspace{-0.15mm}\eqref{RT-interpolant}, it readily follows the surjectivity of the divergence operator  from\enlargethispage{5mm}
    \begin{align*}
        \mathcal{R}T^0_N(\mathcal{T}_h)\coloneqq \smash{\big\{y_h\in \mathcal{R}T^0_N(\mathcal{T}_h)\mid y_h\cdot n=0\text{ a.e.\ in }\Gamma_N\big\}}\,,
    \end{align*}
    into $\mathcal{L}^0(\mathcal{T}_h)$  if $\Gamma_N\neq\partial\Omega$ and into $\mathcal{L}^0(\mathcal{T}_h)/\mathbb{R}$ else.\medskip
    
%\textit{i.e.}, %due to $\Gamma_N\neq \partial\Omega$ (as $\Gamma_I\neq \emptyset)$, 
%we have that\enlargethispage{7.5mm}
%\begin{align*}
%    \textup{div}(\mathcal{R}T^0_N(\mathcal{T}_h))
%    =\begin{cases}
%        \mathcal{L}^0(\mathcal{T}_h)\text{ if }\Gamma_N=\partial\Omega\,,\\
%        \mathcal{L}^0(\mathcal{T}_h)/\mathbb{R}\text{ else}\,.
%{cases}
%\end{align*}
    %For every $s\hspace*{-0.15em} \in \hspace*{-0.15em}(\frac{1}{2},1]$, there exists a constant $c\hspace*{-0.15em}>\hspace*{-0.15em}0$ (\emph{cf}.\ \cite[Thms.\ 16.4, 16.6]{EG21I}),~\mbox{independent}~of~${h\hspace*{-0.15em}>\hspace*{-0.15em}0}$, such that for every $y\in (H^s(\Omega))^d\cap H(\textup{div};\Omega)$ and $T\in \mathcal{T}_h$, it holds that
	%\begin{align}
	%	\|y-\Pi_h^{rt} y\|_T\leq c\,h_T^{s}\,\vert y\vert_{s,T}\,.\label{eq:RT-Interpolant-Rate}
	%\end{align} 

	%\subsubsection{Discrete integration-by-parts formula}\vspace{-0.5mm}

    The Crouzeix--Raviart element (\textit{cf}.\ \eqref{def:CR})   the Raviart--Thomas element (\textit{cf}.~\eqref{def:RT}) are deeply connected, in particular, through a  \textit{discrete integration-by-parts formula}, which states that
	 for every $v_h\in \mathcal{S}^{1,cr}(\mathcal{T}_h)$ and $y_h\in  \mathcal{R}T^0(\mathcal{T}_h)$,~there~holds
	\begin{align}
		(\nabla_hv_h,\Pi_h y_h)_{\Omega}+(\Pi_h v_h,\,\textup{div}\,y_h)_{\Omega}=(\pi_h v_h,y_h\cdot n)_{\partial\Omega}\,.\label{eq:pi}
	\end{align} 
    \newpage 
	
	\section{A (Fenchel) duality framework for an optimal insulation problem}\label{sec:continuous} 

    \hspace{5mm}In this section, we discuss a generalization of an optimal insulation problem originally proposed by \textsc{Buttazzo} (\textit{cf}.\ \cite{Buttazzo1988}) to bounded polyhedral Lipschitz domains and the possible presence of non-trivial Dirichlet and Neumann boundary data. For a detailed derivation,~we~refer~the~reader~to~\cite{AKKInsulationModel}.\medskip
    
	$\bullet$ \textit{Primal \hspace{-0.1mm}problem.} \hspace{-0.25mm}Given \hspace{-0.1mm}an \hspace{-0.1mm}\textit{amount \hspace{-0.1mm}of \hspace{-0.1mm}insulating \hspace{-0.1mm}material} \hspace{-0.1mm}$m\hspace{-0.175em}>\hspace{-0.175em}0$, \hspace{-0.1mm}a \hspace{-0.1mm}\textit{heat \hspace{-0.1mm}source~\hspace{-0.1mm}\mbox{density}}~\hspace{-0.1mm}${f\hspace{-0.175em}\in\hspace{-0.175em} L^2(\Omega)}$, a \textit{heat flux} $g\in H^{-\smash{\frac{1}{2}}}(\Gamma_N)$, and a \textit{Dirichlet boundary temperature distribution} $u_D\hspace{-0.1em}\in\hspace{-0.1em} H^{\smash{\frac{1}{2}}}(\Gamma_D)$~such that there exists a trace lift $\widehat{u}_D\hspace{-0.1em}\in \hspace{-0.1em}H^1(\Omega)$ (\textit{i.e.}, $\widehat{u}_D\hspace{-0.1em}=\hspace{-0.1em}u_D$ a.e.\ on $\Gamma_I$), the \textit{primal problem}~is~defined~as
    the  \hspace{-0.1mm}minimization \hspace{-0.1mm}of \hspace{-0.1mm}the \hspace{-0.1mm}\textit{primal \hspace{-0.1mm}energy \hspace{-0.1mm}functional} \hspace{-0.1mm}${I\colon  H^1(\Omega)\to\mathbb{R}\cup\{+\infty\}}$,~for~\mbox{every}~${v\in  H^1(\Omega)}$ \mbox{defined} by 
	\begin{align} \label{eq:primal}
		\begin{aligned} 
			I(v)&\coloneqq  \tfrac{1}{2}\| \nabla v\|_{\Omega}^2+\tfrac{1}{2m}\|v\|_{1,\Gamma_I}^2-(f,v)_{\Omega}-\langle g,v\rangle_{\Gamma_N}+\smash{I_{\{u_D\}}^{\Gamma_D}}(v) \,,
		\end{aligned}
	\end{align} 
    where $\smash{I_{\{u_D\}}^{\Gamma_D}}\colon H^{\frac{1}{2}}(\partial\Omega)\to \mathbb{R}\cup\{+\infty\}$, for every $\widehat{v}\in H^{\frac{1}{2}}(\partial\Omega)$, is defined by 
	\begin{align*}
		\smash{I_{\{u_D\}}^{\Gamma_D}}(\widehat{v})
		&\coloneqq 
		\begin{cases}
			0&\text{ if }\widehat{v}=u_D\text{ a.e.\ on }\Gamma_D\,,\\[-0.5mm]
			+\infty &\text{ else}\,.
		\end{cases}
	\end{align*}
    Then, the effective domain of the primal energy functional \eqref{eq:primal}  is given via 
    \begin{align*}
        K\coloneqq \textup{dom}(I)=\widehat{u}_D+H_D^1(\Omega)\,.
    \end{align*}
    Since the functional \eqref{eq:primal} is proper,  convex, weakly coercive, and lower semi-continuous,
the direct method in the calculus of variations yields the existence~of~a~minimizer~$u\in K$,~called~\textit{primal solution}. Here, the weak coercivity is a consequence of the Friedrich inequality~\eqref{lem:poin_cont}.~More~precisely, for every $v\in H^1(\Omega)$, one uses that
\begin{align*}
   \| \nabla v\|_{\Omega}^2+\tfrac{1}{m}\|v\|_{1,\Gamma_I}^2
    &\ge \min\big\{1,\tfrac{\vert \Gamma_I\vert}{m}\big\}\big\{\| \nabla v\|_{\Omega}^2+\vert\langle v\rangle_{\Gamma_I}\vert^2\big\}
    \\&\ge \tfrac{1}{2c_{\mathrm{F}}^2}\min\big\{1,\tfrac{\vert \Gamma_I\vert}{m}\big\}\|v\|_{\Omega}^2
    \,.
\end{align*}
In what follows, we always employ the notation $u\in K$ for primal solutions. In this connection,
%We reserve the notation $u\in K$ for primal solutions. 
note that, if $ \Gamma_D\neq \emptyset$ or $\Omega$ is connected, analogously to \cite[Sec.\ 5]{Buttazzo2017},  the~\mbox{functional} \eqref{eq:primal} is strictly convex and, consequently, the primal solution $u\in K$~is~uniquely~determined.\medskip\enlargethispage{4mm}

%$\bullet$ \emph{Primal variational inequality.} The primal solution $u\in K$ equivalently is a solution of the following variational inequality: 
%	for every $v\in K$, it holds that
%	\begin{align}
%	\tfrac{1}{2m}\|u\|_{1,\Gamma_I}^2+	(\nabla u,\nabla u-\nabla v)_{\Omega}\leq \tfrac{1}{2m}\|v\|_{1,\Gamma_I}^2+(f,u-v)_{\Omega}+\langle g,u-v\rangle_{\Gamma_N}\,.\label{eq:variational_ineq}
%	\end{align}

	$\bullet$  \textit{Dual problem.} A \textit{(Fenchel) dual problem} (in the sense of \cite[Rem.\ 4.2, p.\ 60/61]{ET99}) to the minimization of \eqref{eq:primal} 
    is given via the maximization of the \textit{dual energy functional} $D\colon \overline{H}_I(\textup{div};\Omega)\to \mathbb{R}\cup\{-\infty\}$, for every $y\in \overline{H}_I(\textup{div};\Omega)$ defined by
	\begin{align} \label{eq:dual} 
			D(y)\coloneqq   \left\{
        \begin{aligned} &-\tfrac{1}{2}\|y\|_{\Omega}^2-\tfrac{m}{2}\|\overline{y\cdot n}\|_{\infty,\Gamma_I}^2\\
            &+\langle y\cdot n,\widehat{u}_D\rangle_{\partial\Omega}-(\overline{y\cdot n},\widehat{u}_D)_{\Gamma_I}-\langle g,\widehat{u}_D\rangle_{\Gamma_N}\\&-I_{\{-f\}}^{\Omega}(\textup{div}\,y)-I_{\{g\}}^{\Gamma_N}(y\cdot n)  \,,
		\end{aligned}\right.
	\end{align} 
    where 
	$	\smash{I_{\{-f\}}^{\Omega}}\colon  L^2(\Omega)\to \mathbb{R}\cup \{+\infty\}$, for every $\widehat{v}\in  L^2(\Omega)$, is defined by
	\begin{align*}
		\smash{I_{\{-f\}}^{\Omega}}(\widehat{v}) \coloneqq
		\begin{cases}
			0&\text{ if }\widehat{v}=-f\text{ a.e.\ in }\Omega\,,\\[-0.5mm]
			+\infty& \text{ else}\,,
		\end{cases}
	\end{align*}
	and $	\smash{I_{\{g\}}^{\Gamma_N}}\colon  H^{-\smash{\frac{1}{2}}}(\partial\Omega)\to \mathbb{R}\cup \{+\infty\}$, for every $\widehat{v}\in H^{-\smash{\frac{1}{2}}}(\partial\Omega)$, is defined by
	\begin{align*}
		\smash{I_{\{g\}}^{\Gamma_N}}(\widehat{v})&\coloneqq \begin{cases}
			0&\text{ if }\langle \widehat{v},v\rangle_{\partial\Omega}=\langle g, v\rangle_{\Gamma_N}\text{ for all }v\in H^1_I(\Omega)\cap H^1_D(\Omega)\,,\\[-0.5mm]
			+\infty &\text{ else}\,.
		\end{cases}
	\end{align*}
    Then, the effective domain of the negative of the dual energy functional \eqref{eq:dual} is given via
    \begin{align*}
        K^*\coloneqq \textup{dom}(-D)%=\textup{dom}(\smash{I_{\{-f\}}^{\Omega}}\circ \textup{div})\cap \textup{dom}(\smash{I_{\{g\}}^{\Gamma_N}}\circ ((\cdot)\cdot n) )
        =
        \left\{y\in \overline{H}_I(\textup{div};\Omega)\;\bigg|\; \begin{aligned}
            \textup{div}\,y&=-f&&\text{a.e.\ in }\Omega\,,\\[-0.5mm]
            \langle y\cdot n ,v\rangle_{\partial\Omega}&=\langle g,v\rangle_{\Gamma_N}&&\text{for all }v\in H_I^1(\Omega)\cap H_D^1(\Omega)
        \end{aligned}\right\}\,.
    \end{align*}

  %$\bullet$ \emph{Dual variational inequality.} The dual solution $z\in K^*$ equivalently is a solution of the following variational inequality: 
	%for every $y\in K^*$, it holds that
	%\begin{align}
	%\tfrac{m}{2}\|\overline{z\cdot n}\|_{1,\Gamma_I}^2+	(z,z-y)_{\Omega}\geq \tfrac{m}{2}\|\overline{y\cdot n}\|_{\infty,\Gamma_I}^2-\langle z\cdot n-y\cdot n ,\widehat{u}_D\rangle_{\partial\Omega}+(\overline{z\cdot n}-\overline{y\cdot n} ,\widehat{u}_D)_{\Gamma_I}\,.\label{eq:variational_ineq}
	%\end{align}

    \newpage
    The following theorem proves that
    the maximization of \eqref{eq:dual} is  the 
    (Fenchel)~dual~\mbox{problem} (in the sense of \cite[Rem.\ 4.2, p.\ 60/61]{ET99}) to the minimization of \eqref{eq:primal}. In addition, it establishes the existence of a unique dual solution as well as the validity of a strong duality relation and convex optimality relations.\enlargethispage{7.5mm}
	
	\begin{theorem}[strong \hspace*{-0.1mm}duality \hspace*{-0.1mm}and \hspace*{-0.1mm}convex \hspace*{-0.1mm}optimality \hspace*{-0.1mm}relations]\label{thm:duality} \hspace*{-0.1mm}The \hspace*{-0.1mm}following \hspace*{-0.1mm}statements~\hspace*{-0.1mm}\mbox{apply}:
		\begin{itemize}[noitemsep,topsep=2pt,leftmargin=!,labelwidth=\widthof{(ii)}]
			\item[(i)]\hypertarget{thm:duality.i}{}  A (Fenchel) 
            dual problem to the minimization of \eqref{eq:primal} is given via the \mbox{maximization}~of~\eqref{eq:dual};  
			\item[(ii)]\hypertarget{thm:duality.ii}{}  There exists a unique maximizer $z\hspace{-0.1em}\in \hspace{-0.1em}\overline{H}_I(\textup{div};\Omega)$ of \eqref{eq:dual} satisfying the \emph{admissibility~\mbox{conditions}}
			\begin{alignat}{2}\label{eq:admissibility.1}
				\textup{div}\,z&=-f&&\quad\text{ a.e.\ in }\Omega\,,\\\label{eq:admissibility.2}
                \langle z\cdot n,v\rangle_{\partial\Omega}-(\overline{z\cdot n},v)_{\Gamma_I}&=\langle g,v\rangle_{\Gamma_N}&&\quad \text{ for all }v\in \smash{H^1_D(\Omega)}\,.
			\end{alignat}
            %\textit{i.e.}, $z\in H(\textup{div}\!=\!-f,\Omega)$.
			In addition, there
			holds a \emph{strong duality relation}, \textit{i.e.}, we have that
			\begin{align}
				I(u) = D(z)\,;\label{eq:strong_duality}
			\end{align}
			\item[(iii)]\hypertarget{thm:duality.iii}{} There hold  \emph{convex optimality relations}, \textit{i.e.}, we have that
			\begin{align}
				z&=\nabla u\quad\text{ a.e.\ in }\Omega\,,\label{eq:optimality.1}\\
                -(\overline{z\cdot n},u)_{\Gamma_I}&=\smash{\tfrac{m}{2}\|\overline{z\cdot n}\|_{\infty,\Gamma_I}^2+\tfrac{1}{2m}\|u\|_{1,\Gamma_I}^2}\,.\label{eq:optimality.2}
			\end{align}
		\end{itemize}
	\end{theorem}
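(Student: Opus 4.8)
The plan is to obtain all three assertions from the Fenchel duality theorem applied to a suitable decomposition of the primal energy. Set $V\coloneqq H^1(\Omega)$, $Y\coloneqq (L^2(\Omega))^d\times L^1(\Gamma_I)$ (so that $Y^*=(L^2(\Omega))^d\times L^\infty(\Gamma_I)$), let $\Lambda\colon V\to Y$ be the bounded linear operator $\Lambda v\coloneqq(\nabla v,\textup{tr}(v)|_{\Gamma_I})$, and write $I=F\circ\Lambda+G$ with $F(q,w)\coloneqq\tfrac{1}{2}\|q\|_\Omega^2+\tfrac{1}{2m}\|w\|_{1,\Gamma_I}^2$ and $G(v)\coloneqq-(f,v)_\Omega-\langle g,v\rangle_{\Gamma_N}+I_{\{u_D\}}^{\Gamma_D}(v)$; here $F$ is convex, finite and continuous on all of $Y$, while $G$ is convex, proper and lower semi-continuous on $V$. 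The first step is to record the conjugate data: $F^*(y,\sigma)=\tfrac{1}{2}\|y\|_\Omega^2+\tfrac{m}{2}\|\sigma\|_{\infty,\Gamma_I}^2$, the second summand being the Legendre conjugate of $w\mapsto\tfrac{1}{2m}\|w\|_{1,\Gamma_I}^2$ on $L^1(\Gamma_I)$ (using that the $L^1(\Gamma_I)$- and $L^\infty(\Gamma_I)$-norms are dual and that $(\tfrac{1}{2m}(\cdot)^2)^*=\tfrac{m}{2}(\cdot)^2$), and $\langle\Lambda^*(y,\sigma),v\rangle=(y,\nabla v)_\Omega+(\sigma,v)_{\Gamma_I}$. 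The Fenchel dual problem is then $\sup_{(y,\sigma)\in Y^*}\{-F^*(y,\sigma)-G^*(-\Lambda^*(y,\sigma))\}$.

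The computational core of \hyperlink{thm:duality.i}{(i)} is the evaluation of $G^*(-\Lambda^*(y,\sigma))$. Writing an admissible competitor as $v=\widehat u_D+\varphi$ with $\varphi\in H^1_D(\Omega)$, the supremum defining $G^*$ is finite — and then equals $-(y,\nabla\widehat u_D)_\Omega-(\sigma,\widehat u_D)_{\Gamma_I}+(f,\widehat u_D)_\Omega+\langle g,\widehat u_D\rangle_{\Gamma_N}$ — if and only if $\varphi\mapsto(y,\nabla\varphi)_\Omega+(\sigma,\varphi)_{\Gamma_I}-(f,\varphi)_\Omega-\langle g,\varphi\rangle_{\Gamma_N}$ vanishes on $H^1_D(\Omega)$. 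Testing with $C_c^\infty(\Omega)$ gives $\textup{div}\,y=-f$ in $\Omega$; then \eqref{eq:pi_cont} recasts the remaining requirement as $\langle y\cdot n,\varphi\rangle_{\partial\Omega}-\langle g,\varphi\rangle_{\Gamma_N}=-(\sigma,\varphi)_{\Gamma_I}$ for all $\varphi\in H^1_D(\Omega)$, and since $\vert(\sigma,\varphi)_{\Gamma_I}\vert\le\|\sigma\|_{\infty,\Gamma_I}\|\varphi\|_{1,\Gamma_I}$, Lemma~\ref{lem:normal_trace} applies and forces $y\in\overline{H}_I(\textup{div};\Omega)$, $\sigma=-\overline{y\cdot n}$ (via density of $(\textup{tr}(\cdot)|_{\Gamma_I})(H^1_D(\Omega))$ in $L^1(\Gamma_I)$), and $\langle y\cdot n,v\rangle_{\partial\Omega}=\langle g,v\rangle_{\Gamma_N}$ for $v\in H^1_I(\Omega)\cap H^1_D(\Omega)$, i.e.\ exactly $y\in K^*$. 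Substituting $\sigma=-\overline{y\cdot n}$ and using \eqref{eq:pi_cont} once more to rewrite $(y,\nabla\widehat u_D)_\Omega-(f,\widehat u_D)_\Omega=\langle y\cdot n,\widehat u_D\rangle_{\partial\Omega}$, the Fenchel dual collapses to $\sup_{y\in\overline{H}_I(\textup{div};\Omega)}D(y)$, which proves \hyperlink{thm:duality.i}{(i)}.

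For \hyperlink{thm:duality.ii}{(ii)}–\hyperlink{thm:duality.iii}{(iii)}, note that the primal infimum is finite (a primal solution $u\in K$ exists) and the constraint qualification holds with $v_0=\widehat u_D$, since $G(\widehat u_D)<\infty$ and $F$ is finite and continuous at $\Lambda\widehat u_D$; hence the Fenchel duality theorem (\textit{cf}.~\cite[Ch.~III, Rem.~4.2, Prop.~4.1]{ET99}) yields $\inf_V I=\sup D$, attainment of the supremum at some $z\in\overline{H}_I(\textup{div};\Omega)$, and $I(u)=D(z)$, i.e.\ \eqref{eq:strong_duality}. (Attainment can alternatively be obtained by the direct method from the coercivity bound $-D(y)\ge\tfrac{1}{2}\|y\|_\Omega^2+\tfrac{m}{2}\|\overline{y\cdot n}\|_{\infty,\Gamma_I}^2-c(1+\|y\|_\Omega+\|\overline{y\cdot n}\|_{\infty,\Gamma_I})$ valid on $K^*$, together with weak lower semi-continuity of $-D$.) Viewed as the pair $(z,-\overline{z\cdot n})\in Y^*$, the dual optimizer lies in the effective domain of the abstract dual objective, which by the computation of the previous paragraph is precisely the set of pairs satisfying \eqref{eq:admissibility.1} and \eqref{eq:admissibility.2} with $z\in K^*$; hence \eqref{eq:admissibility.1}–\eqref{eq:admissibility.2} hold. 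The first extremality relation $F(\Lambda u)+F^*(z,-\overline{z\cdot n})=\langle(z,-\overline{z\cdot n}),\Lambda u\rangle$ rearranges to $\tfrac{1}{2}\|\nabla u-z\|_\Omega^2+\big[\tfrac{1}{2m}\|u\|_{1,\Gamma_I}^2+\tfrac{m}{2}\|\overline{z\cdot n}\|_{\infty,\Gamma_I}^2+(\overline{z\cdot n},u)_{\Gamma_I}\big]=0$; since both brackets are nonnegative (the second by Young and Hölder), each vanishes, giving $z=\nabla u$ — which is \eqref{eq:optimality.1} — and, in the second bracket, equality in Young's and Hölder's inequalities, which yields $-(\overline{z\cdot n},u)_{\Gamma_I}=\tfrac{m}{2}\|\overline{z\cdot n}\|_{\infty,\Gamma_I}^2+\tfrac{1}{2m}\|u\|_{1,\Gamma_I}^2$, i.e.\ \eqref{eq:optimality.2}. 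Finally, uniqueness of $z$ follows from strict convexity of $-D$ on the convex set $K^*$: the term $y\mapsto\tfrac{1}{2}\|y\|_\Omega^2$ is strictly convex on $(L^2(\Omega))^d$ and the remaining contributions to $-D$ are convex.

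The step I expect to be the main obstacle is the non-reflexive $L^1(\Gamma_I)$–$L^\infty(\Gamma_I)$ duality hidden in the boundary term $\tfrac{1}{2m}\|\cdot\|_{1,\Gamma_I}^2$: the perturbation must be taken in the $L^1(\Gamma_I)$-component so that the boundary dual variable is a genuine $L^\infty(\Gamma_I)$-function (not merely an element of $H^{-1/2}$), and this variable then has to be identified with the extended normal trace $\overline{y\cdot n}$. That identification is exactly the role of Lemma~\ref{lem:normal_trace}, but invoking it presupposes its hypothesis \eqref{lem:normal_trace.1}, which becomes available only because finiteness of $G^*(-\Lambda^*(y,\sigma))$ (respectively the extremality relation at the optimum) caps the dual variable in $L^\infty(\Gamma_I)$ and thereby supplies the constant $c$. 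A secondary technical point is to verify that the effective domain of $y\mapsto G^*(-\Lambda^*(y,\cdot))$ collapses \emph{exactly} onto $\{(z,-\overline{z\cdot n}):z\in K^*,\ \eqref{eq:admissibility.2}\text{ holds}\}$ — including the weak Neumann condition $\langle y\cdot n,v\rangle_{\partial\Omega}=\langle g,v\rangle_{\Gamma_N}$ on $H^1_I(\Omega)\cap H^1_D(\Omega)$ — which again relies on the density arguments underlying the proof of Lemma~\ref{lem:normal_trace}. A fully self-contained alternative, avoiding the abstract duality theorem, would derive the subdifferential Euler--Lagrange inclusion for the primal minimizer $u$, set $z\coloneqq\nabla u$, and run the same Lemma~\ref{lem:normal_trace} argument to land in $K^*$ with $D(z)=I(u)$.
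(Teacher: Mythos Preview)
Your argument is correct and arrives at the same conclusions, but it uses a genuinely different Fenchel decomposition from the paper. The paper takes $\Lambda=\nabla\colon H^1(\Omega)\to(L^2(\Omega))^d$ and keeps the boundary term $\tfrac{1}{2m}\|\cdot\|_{1,\Gamma_I}^2$ on the \emph{primal} side, i.e.\ inside the functional $F\colon H^1(\Omega)\to\mathbb{R}\cup\{+\infty\}$; the $L^1$--$L^\infty$ duality then emerges implicitly inside the computation of $F^*(-\nabla^*y)$ via a $\sup_{\rho\ge0}\{\rho\|\overline{y\cdot n}\|_{\infty,\Gamma_I}-\tfrac{1}{2m}\rho^2\}$ step, and Lemma~\ref{lem:normal_trace} is invoked at that point. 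You instead enlarge the image space to $(L^2(\Omega))^d\times L^1(\Gamma_I)$ and move the boundary term into $F$, so that the $L^\infty(\Gamma_I)$ dual variable $\sigma$ appears explicitly from the outset as a component of $Y^*$ and is subsequently identified with $-\overline{y\cdot n}$ via Lemma~\ref{lem:normal_trace}. Your route makes the non-reflexive boundary duality structurally transparent and separates the computation of $F^*$ (a one-line conjugate) from the constraint identification in $G^*(-\Lambda^*(\cdot))$; the paper's route keeps the perturbation space reflexive at the price of a more involved conjugate calculation. For part~(iii) there is a parallel difference: the paper reads off \eqref{eq:optimality.1}--\eqref{eq:optimality.2} from the two subdifferential inclusions $z\in\partial G(\nabla u)$ and $-\nabla^*z\in\partial F(u)$ followed by a density argument, whereas you obtain both at once by splitting the single extremality relation $F(\Lambda u)+F^*(z,-\overline{z\cdot n})=\langle(z,-\overline{z\cdot n}),\Lambda u\rangle$ into two nonnegative summands. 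Both approaches lean on Lemma~\ref{lem:normal_trace} at exactly the same decisive step, and your self-diagnosis of that step as the main obstacle is accurate.
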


    \begin{remark}[equivalent condition to \eqref{eq:optimality.2}]
        Note that,~by~the~standard equality condition in the Fenchel--Young inequality (\textit{cf}.\ \cite[Prop.\ 5.1,~p.~21]{ET99}) and the chain rule for the subdifferential (\textit{cf}.\ \cite[Thm.\ 4.19]{Clason2020}), the convex optimality relation \eqref{eq:optimality.2} is equivalent to\vspace{-1mm}
        \begin{align}\label{eq:optimality.3}
        -\overline{z\cdot n} \in \tfrac{1}{m}(\partial\vert\cdot\vert)(u)\|u\|_{1,\Gamma_I}\quad \text{ a.e.\ on }\Gamma_I\,.
        \end{align}
    \end{remark}
	
	\begin{proof}[Proof (of Theorem \ref{thm:duality}).]
		\textit{ad (\hyperlink{thm:duality.i}{i}).} 
		To begin with, we need to bring the primal energy functional~\eqref{eq:primal} into the form of a primal energy functional in  the sense of Fenchel (\textit{cf}.~\cite[Rem.\ 4.2, p.\ 60/61]{ET99}),~\textit{i.e.}, 
        \begin{align*}
			I(v)= G(\nabla v)+F(v)\,,
		\end{align*}
        where $G\colon (L^2(\Omega))^d\to \mathbb{R}\cup\{+\infty\}$ and $F\colon  H^1(\Omega)\to \mathbb{R}\cup\{+\infty\}$ should be proper, convex, and lower semi-continuous functionals. To this end, let us introduce the functionals $G\colon (L^2(\Omega))^d\to \mathbb{R}$ and ${F\colon  H^1(\Omega)\to \mathbb{R}\cup\{+\infty\}}$, for every $y\in  (L^2(\Omega))^d$ and $v\in H^1(\Omega)$, respectively,~defined~by
		\begin{align*}
			G(y)&\coloneqq \tfrac{1}{2}\|y\|_{\Omega}^2\,,\\
			F(v)&\coloneqq -(f,v)_{\Omega}-\langle g,v\rangle_{\Gamma_N}+\tfrac{1}{2m}\|v\|_{1,\Gamma_I}^2+\smash{I_{\{u_D\}}^{\Gamma_D}}(v)\,.
		\end{align*}
		Then, \hspace{-0.1mm}according \hspace{-0.1mm}to \hspace{-0.1mm}\cite[Rem.\ 4.2, p.\ 60/61]{ET99}, \hspace{-0.1mm}the \hspace{-0.1mm}(Fenchel) \hspace{-0.1mm}dual \hspace{-0.1mm}problem \hspace{-0.1mm}to \hspace{-0.1mm}the \hspace{-0.1mm}minimization~\hspace{-0.1mm}of~\hspace{-0.1mm}\eqref{eq:primal} is given via the maximization of $D\colon  (L^2(\Omega))^d\to \mathbb{R}\cup \{-\infty\}$, for every $y\in  (L^2(\Omega))^d$~defined~by 
		\begin{align}\label{prop:duality.1}
			D(y)\coloneqq -G^*(y)- F^*(-\nabla^*y)\,,
		\end{align}
		where we denote by $\nabla^*\colon  (L^2(\Omega))^d\to (H^1(\Omega))^*$ the adjoint operator to %the gradient operator 
        $\nabla \colon H^1(\Omega)\to (L^2(\Omega))^d$. 
		
		$\bullet$ First, resorting to \cite[Prop.\ 4.2, p.\ 19]{ET99}, for every $y\in (L^2(\Omega))^d$,  we find that
		\begin{align}\label{prop:duality.2}
			G^*(y)= \tfrac{1}{2}\|y\|_{\Omega}^2\,.
		\end{align} 
		
		$\bullet$  Second, using the integration-by-parts formula \eqref{eq:pi_cont}, for every $y\in (L^2(\Omega))^d$,
        it~turns~out~that
        \begin{align}\label{prop:duality.3}
			\begin{aligned}
				&F^*(-\nabla^*y)
				=\sup_{v\in H^1(\Omega)}{\left\{
                \begin{aligned}
                &-(y,\nabla v)_{\Omega}+(f,v)_{\Omega}+\langle g,v\rangle_{\Gamma_N}\\&-\tfrac{1}{2m}\|v\|_{1,\Gamma_I}^2-\smash{I_{\{u_D\}}^{\Gamma_D}}(v)
                \end{aligned}\right\}}\\&\quad
                =\sup_{v\in H^1_D(\Omega)}{\left\{\begin{aligned}&-(y,\nabla (v+\widehat{u}_D))_{\Omega}+(f,v+\widehat{u}_D)_{\Omega}+\langle g,v+\widehat{u}_D\rangle_{\Gamma_N}\\&-\tfrac{1}{2m}\|v+\widehat{u}_D\|_{1,\Gamma_I}^2 \end{aligned}\right\}} 
				\\&\quad=\begin{cases}
                \left.\begin{aligned}
				    &I_{\{-f\}}^{\Omega}(\textup{div}\,y)
                +I_{\{g\}}^{\Gamma_N}(y\cdot n)-\langle y\cdot n,\widehat{u}_D\rangle_{\partial\Omega}+\langle g,\widehat{u}_D\rangle_{\Gamma_N}\\&
    \quad+\sup_{v\in H^1_D(\Omega)}{\big\{\langle g,v\rangle_{\Gamma_N}-\langle y\cdot n,v\rangle_{\partial\Omega}-\tfrac{1}{2m}\|v+\widehat{u}_D\|_{1,\Gamma_I}^2\big\}}
    \end{aligned}\right\}&\text{ if }y\in H(\textup{div};\Omega)\,,\\[-1mm]
    +\infty&\text{ else}\,,
				\end{cases}  
			\end{aligned}\hspace{-7.5mm}
		\end{align}
        where, due to Lemma \ref{lem:normal_trace} as well as the density of  $(\textrm{tr}(\cdot)|_{\Gamma_I})(K)$ in $L^1(\Gamma_I)$, for every $y\in H(\textup{div};\Omega)$, we have that
        \begin{align}\label{prop:duality.4}
        \begin{aligned} 
            &\sup_{v\in H^1_D(\Omega)}{\big\{\langle g,v\rangle_{\Gamma_N}-\langle y\cdot n,v\rangle_{\partial\Omega}-\tfrac{1}{2m}\|v+\widehat{u}_D\|_{1,\Gamma_I}^2\big\}}
            \\&\quad= \sup_{\rho\ge 0}{ \sup_{\substack{v\in K\\ \|v\|_{1,\Gamma_I}=\rho}}{\big\{\langle g,v-\widehat{u}_D\rangle_{\Gamma_N}-\langle y\cdot n,v-\widehat{u}_D\rangle_{\partial\Omega}-\tfrac{1}{2m}\rho^2\big\}}}
            \\[-1mm]&\quad=\begin{cases} \underset{\rho\ge 0}{\sup}{\underset{\substack{v\in L^1(\Gamma_I)\\ \|v\|_{1,\Gamma_I}=\rho}}{\sup}{\big\{(\overline{y\cdot n},\widehat{u}_D-v)_{\Gamma_I}-\tfrac{1}{2m}\rho^2\big\}}}&\text{ if }y\in \overline{H}_I(\textup{div};\Omega)\,,\\[-1mm]
                +\infty&\text{ else}\,,
            \end{cases} 
            \\&\quad=\begin{cases}
                (\overline{y\cdot n},\widehat{u}_D)_{\Gamma_I}+\smash{\underset{\rho\ge 0}{\sup}{\big\{\rho\,\|\overline{y\cdot n}\|_{\infty,\Gamma_I}-\tfrac{1}{2m}\rho^2\big\}}}&\text{ if }y\in \overline{H}_I(\textup{div};\Omega)\,,\\
                +\infty&\text{ else}\,,
            \end{cases} 
            \\&\quad=\begin{cases}
               (\overline{y\cdot n},\widehat{u}_D)_{\Gamma_I}+\tfrac{m}{2}\|\overline{y\cdot n}\|_{\infty,\Gamma_I}^2&\text{ if }y\in \overline{H}_I(\textup{div};\Omega)\,,\\
                +\infty&\text{ else}\,.
            \end{cases} 
            \end{aligned}
        \end{align}
        Then, using \eqref{prop:duality.2} and \eqref{prop:duality.3} together with \eqref{prop:duality.4} in \eqref{prop:duality.1}, for every $y\in (L^2(\Omega))^d$, we arrive at 
        \begin{align}\label{prop:duality.5}
            D(y)=\begin{cases}
                \left.\begin{aligned} 
               &-\tfrac{1}{2}\|y\|_{\Omega}^2-\tfrac{m}{2}\|\overline{y\cdot n}\|_{\infty,\Gamma_I}^2 \\&+\langle y\cdot n,\widehat{u}_D\rangle_{\partial\Omega}-(\overline{y\cdot n},\widehat{u}_D)_{\Gamma_I}-\langle g,\widehat{u}_D\rangle_{\Gamma_N}\\
              &-I_{\{-f\}}^{\Omega}(\textup{div}\,y)-I_{\{g\}}^{\Gamma_N}(y\cdot n) 
                \end{aligned}\right\}&\text{ if } 
                y\in \overline{H}_I(\textup{div};\Omega)\,,\\
                +\infty&\text{ else}\,.
            \end{cases}
        \end{align} 
         Eventually, since $D= -\infty$ in $\smash{(L^2(\Omega))^d\setminus \overline{H}_I(\textup{div};\Omega)}$, it is enough to restrict \eqref{prop:duality.5} to $\smash{\overline{H}_I(\textup{div};\Omega)}$.
		
		\textit{ad (\hyperlink{thm:duality.ii}{ii}).} Since $G\colon (L^2(\Omega))^d\to \mathbb{R}$ and $F\colon H^1(\Omega)\to \mathbb{R}\cup\{+\infty\}$ are proper, convex,~and lower semi-continuous and  since 
		$G\colon  (L^2(\Omega))^d\to \mathbb{R}$  is continuous at
		$\nabla \widehat{u}_D\in \textup{dom}(G)$~with~${\widehat{u}_D\in \textup{dom}(F)}$, %~\textit{i.e.},  we have that
		%\begin{align*}
		%	G(y)\to G(0)\quad(y\to 0\quad\text{ in }(L^2(\Omega))^d)\,,
		%\end{align*}
        resorting to the Fenchel duality theorem (\emph{cf}.\ \cite[Rem.\ 4.2, (4.21), p.\ 61]{ET99}),  we obtain the existence of a maximizer $z\in  (L^2(\Omega))^d$ of  \eqref{prop:duality.1} and that a strong duality relation applies, \textit{i.e.},~we~have~that
		\begin{align}\label{strong}
			I(u)=D(z)\,.
		\end{align}
		Inasmuch as $D= -\infty$ in $\smash{(L^2(\Omega))^d\setminus \overline{H}_I(\textup{div};\Omega)}$, from \eqref{strong}, we infer that $z\in  \smash{\overline{H}_I(\textup{div};\Omega)}$ and that the admissibility conditions \eqref{eq:admissibility.1},\eqref{eq:admissibility.2} are satisfied. Furthermore, since \eqref{eq:dual}~is~strictly~concave,  the maximizer $z\in \smash{\overline{H}_I(\textup{div};\Omega)}$ is uniquely determined.
  
		\textit{ad (\hyperlink{thm:duality.iii}{iii}).} \hspace*{-0.1mm}By \hspace*{-0.1mm}the \hspace*{-0.1mm}standard \hspace*{-0.1mm}(Fenchel) \hspace*{-0.1mm}convex \hspace*{-0.1mm}duality \hspace*{-0.1mm}theory \hspace*{-0.1mm}(\emph{cf}.\ \hspace*{-0.1mm}\cite[Rem.\ \hspace*{-0.1mm}4.2,~\hspace*{-0.1mm}(4.24),~\hspace*{-0.1mm}(4.25),\hspace*{-0.1mm}~p.~\hspace*{-0.1mm}61]{ET99}), there hold the convex optimality relations\vspace{-0.5mm}
		\begin{align}
			-\nabla^*z &\in \partial F(u)\,,\label{prop:duality.6}\\
			z&\in \partial G(\nabla u)\,.\label{prop:duality.7}
		\end{align} 
		The inclusion \eqref{prop:duality.7} is equivalent to the convex optimality condition \eqref{eq:optimality.1}. The~inclusion~\eqref{prop:duality.6}, by the definition of the subdifferential and, then, using the integration-by-parts formula \eqref{eq:pi_cont}, is equivalent to that for every $v\in K$,~there~holds
        \begin{align*}
			%\begin{aligned} 
            \smash{\tfrac{1}{2m}\|v\|_{1,\Gamma_I}^2-\tfrac{1}{2m}\|u\|_{1,\Gamma_I}^2}&\ge (f,v-u)_{\Omega}+\langle g,v-u\rangle_{\Gamma_N}-(z,\nabla v-\nabla u)_{\Omega}\,.%\\
           % &=-(\overline{z\cdot n},v-u)_{\Gamma_I} \,.
            %\end{aligned}\label{prop:duality.8}
		\end{align*}
        %and 
        Then, by admissibility conditions \eqref{eq:admissibility.1},\eqref{eq:admissibility.2}, this is equivalent to that for every $v\in K$,~there~holds\enlargethispage{5mm}
		\begin{align}
			\begin{aligned} \smash{\tfrac{1}{2m}\|v\|_{1,\Gamma_I}^2-\tfrac{1}{2m}\|u\|_{1,\Gamma_I}^2}%&\ge (f,v-u)_{\Omega}+\langle g,v-u\rangle_{\Gamma_N}-(z,\nabla v-\nabla u)_{\Omega}\\
            %&
            \ge -(\overline{z\cdot n},v-u)_{\Gamma_I} \,.
            \end{aligned}\label{prop:duality.8}
		\end{align}
        Eventually, due to the density of 
        $(\textup{tr}(\cdot)|_{\Gamma_I})(K)$ in $L^1(\Gamma_I)$, from \eqref{prop:duality.8}, we infer that 
        \begin{align*}
            -\overline{z\cdot n}\in \smash{\partial (\tfrac{1}{2m}\|\cdot\|_{1,\Gamma_I}^2)(u)}\,,
        \end{align*}
        which, \hspace{-0.1mm}by \hspace{-0.1mm}the \hspace{-0.1mm}standard \hspace{-0.1mm}equality \hspace{-0.1mm}condition \hspace{-0.1mm}in \hspace{-0.1mm}the \hspace{-0.1mm}Fenchel--Young inequality \hspace{-0.1mm}(\textit{cf}.\ \hspace{-0.1mm}\cite[Prop.~\hspace{-0.1mm}5.1,~\hspace{-0.1mm}p.~\hspace{-0.1mm}21]{ET99}), is equivalent to \eqref{eq:optimality.2}.
	\end{proof}\newpage

 	\section{\emph{A posteriori} error analysis}\label{sec:aposteriori} 
    
	\hspace{5mm}In this section, following an  \emph{a posteriori} error analysis framework  based on convex duality arguments
    from \cite{ABKK2024} (see also \cite{BarGudKal24}), we derive an \emph{a posteriori} error identity for arbitrary admissible approximations of  the primal problem \eqref{eq:primal} and the~dual~problem~\eqref{eq:dual}.~To~this~end, we introduce the 
	\emph{primal-dual gap estimator} ${\eta^2_{\textup{gap}}\colon K\times K^*\to [0,+\infty)}$, for every $v\in K$ and $y\in K^*$ defined by 
	\begin{align}\label{eq:primal-dual.1}
		\begin{aligned}
			\eta^2_{\textup{gap}}(v,y)&\coloneqq I(v)-D(y)\,. 
		\end{aligned}
	\end{align}
	The primal-dual gap estimator \eqref{eq:primal-dual.1} measures  the accuracy of admissible approximations~of~the primal problem \eqref{eq:primal} and the dual problem \eqref{eq:dual} at the same time via measuring the respective violation of the strong duality relation \eqref{eq:strong_duality}.
    More precisely, the primal-dual~gap~\mbox{estimator}~\eqref{eq:primal-dual.1}~splits into \hspace{-0.1mm}two \hspace{-0.1mm}contributions \hspace{-0.1mm}that \hspace{-0.1mm}each \hspace{-0.1mm}measure \hspace{-0.1mm}the \hspace{-0.1mm}violation \hspace{-0.1mm}of \hspace{-0.1mm}the \hspace{-0.1mm}convex~\hspace{-0.1mm}optimality~\hspace{-0.1mm}relations~\hspace{-0.1mm}\eqref{eq:optimality.1},\eqref{eq:optimality.2}.\enlargethispage{6mm} %In fact, the convex optimality relations \eqref{eq:optimality.1},\eqref{eq:optimality.2} are equivalent to the strong duality relation \eqref{eq:strong_duality}.
	
	\begin{lemma}[representation of primal-dual gap estimator]\label{lem:primal_dual_gap_estimator}
		For every $v\in K$ and $y\in K^*$, we have that
		\begin{align*} 
			\eta_{\textup{gap}}^2(v,y)\coloneqq \eta_{\textup{gap},A}^2(v,y)+\eta_{\textup{gap},B}^2(v,y)\,,\hspace{2.225cm}\\
			\quad\text{where}\left\{\quad\begin{aligned}\eta_{\textup{gap},A}^2(v,y)&\coloneqq \tfrac{1}{2}\|\nabla v-y\|_{\Omega}^2\,,\\
			\eta_{\textup{gap},B}^2(v,y)&\coloneqq \tfrac{m}{2}\|\overline{y\cdot n}\|_{\infty,\Gamma_I}^2+(\overline{y\cdot n}, v)_{\Gamma_I}+\tfrac{1}{2m}\|v\|_{1,\Gamma_I}^2\,. 
			\end{aligned}\right.
		\end{align*}
	\end{lemma}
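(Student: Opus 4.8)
The plan is a direct computation of $I(v)-D(y)$ for arbitrary $v\in K$ and $y\in K^*$, followed by regrouping the resulting terms into the two announced contributions. First, since $v\in K=\widehat{u}_D+H^1_D(\Omega)$ and $y\in K^*=\textup{dom}(-D)$, all indicator terms in \eqref{eq:primal} and \eqref{eq:dual} vanish; in particular $\textup{div}\,y=-f$ a.e.\ in $\Omega$, and
\[
I(v)=\tfrac12\|\nabla v\|_{\Omega}^2+\tfrac1{2m}\|v\|_{1,\Gamma_I}^2-(f,v)_{\Omega}-\langle g,v\rangle_{\Gamma_N}\,,
\]
\[
D(y)=-\tfrac12\|y\|_{\Omega}^2-\tfrac m2\|\overline{y\cdot n}\|_{\infty,\Gamma_I}^2+\langle y\cdot n,\widehat{u}_D\rangle_{\partial\Omega}-(\overline{y\cdot n},\widehat{u}_D)_{\Gamma_I}-\langle g,\widehat{u}_D\rangle_{\Gamma_N}\,.
\]

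Next, I would eliminate the volume source term by writing $-(f,v)_{\Omega}=(\textup{div}\,y,v)_{\Omega}$ and invoking the integration-by-parts formula \eqref{eq:pi_cont}, which gives $(\textup{div}\,y,v)_{\Omega}=\langle y\cdot n,v\rangle_{\partial\Omega}-(\nabla v,y)_{\Omega}$. Substituting this into $I(v)-D(y)$, the purely volumetric terms $\tfrac12\|\nabla v\|_{\Omega}^2+\tfrac12\|y\|_{\Omega}^2-(\nabla v,y)_{\Omega}$ combine, by completing the square, into $\tfrac12\|\nabla v-y\|_{\Omega}^2=\eta_{\textup{gap},A}^2(v,y)$, while the remaining (boundary) terms are
\[
\tfrac1{2m}\|v\|_{1,\Gamma_I}^2+\tfrac m2\|\overline{y\cdot n}\|_{\infty,\Gamma_I}^2+\langle y\cdot n,v\rangle_{\partial\Omega}-\langle g,v\rangle_{\Gamma_N}-\langle y\cdot n,\widehat{u}_D\rangle_{\partial\Omega}+(\overline{y\cdot n},\widehat{u}_D)_{\Gamma_I}+\langle g,\widehat{u}_D\rangle_{\Gamma_N}\,.
\]

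To identify these with $\eta_{\textup{gap},B}^2(v,y)$, the key observation is that $v-\widehat{u}_D\in H^1_D(\Omega)$, so that Lemma~\ref{lem:normal_trace} applies with test function $v-\widehat{u}_D$ and, via \eqref{lem:normal_trace.2}, yields
\[
\langle y\cdot n,v-\widehat{u}_D\rangle_{\partial\Omega}-\langle g,v-\widehat{u}_D\rangle_{\Gamma_N}=(\overline{y\cdot n},v-\widehat{u}_D)_{\Gamma_I}\,.
\]
Adding $(\overline{y\cdot n},\widehat{u}_D)_{\Gamma_I}$ to both sides shows that the linear boundary contribution above collapses to $(\overline{y\cdot n},v)_{\Gamma_I}$, whence the boundary terms reduce to $\tfrac m2\|\overline{y\cdot n}\|_{\infty,\Gamma_I}^2+(\overline{y\cdot n},v)_{\Gamma_I}+\tfrac1{2m}\|v\|_{1,\Gamma_I}^2=\eta_{\textup{gap},B}^2(v,y)$, completing the proof.

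The computation itself is elementary; the only point requiring care is the applicability of Lemma~\ref{lem:normal_trace} to an \emph{arbitrary} $y\in K^*$, i.e., that the functional $v\mapsto\langle y\cdot n,v\rangle_{\partial\Omega}-\langle g,v\rangle_{\Gamma_N}$ is controlled by $\|v\|_{1,\Gamma_I}$ on the whole of $H^1_D(\Omega)$, so that the above boundary identity is valid for the full range of admissible $v$. This is precisely the bound \eqref{lem:normal_trace.1}, which is built into $K^*=\textup{dom}(-D)$: if it failed, the supremum in the evaluation \eqref{prop:duality.4} of $F^*(-\nabla^* y)$ would be $+\infty$ and hence $y\notin\textup{dom}(-D)$. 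I would record this one-line justification and otherwise present the identity as the short chain of equalities sketched above.
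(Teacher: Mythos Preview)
Your proposal is correct and follows essentially the same route as the paper: write out $I(v)-D(y)$, replace $-(f,v)_\Omega$ by $(\textup{div}\,y,v)_\Omega$, integrate by parts, complete the square to isolate $\tfrac12\|\nabla v-y\|_\Omega^2$, and then collapse the boundary terms using the identity $\langle y\cdot n,v-\widehat{u}_D\rangle_{\partial\Omega}-\langle g,v-\widehat{u}_D\rangle_{\Gamma_N}=(\overline{y\cdot n},v-\widehat{u}_D)_{\Gamma_I}$. The paper invokes this identity by citing the admissibility condition \eqref{eq:admissibility.2} (stated there for $z$, but valid for any $y\in K^*$), whereas you justify it explicitly via Lemma~\ref{lem:normal_trace} together with the finiteness of $D(y)$ established in \eqref{prop:duality.4}; the content is the same, and your extra care on this point is appropriate.
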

	
	\begin{remark}[interpretation of the components of the primal-dual gap estimator]\hphantom{                   }
		\begin{itemize}[noitemsep,topsep=2pt,leftmargin=!,labelwidth=\widthof{(iii)},font=\itshape]
			\item[(i)] The estimator $\eta_{\textup{gap},A}^2$ measures the violation of the convex optimality relation \eqref{eq:optimality.1};
			\item[(ii)] The estimator $\eta_{\textup{gap},B}^2$ measures the violation of the convex optimality relation \eqref{eq:optimality.2}. Moreover,  by the Fenchel--Young inequality (\textit{cf}.\ \cite[Prop.\ 5.1, p.\ 21]{ET99}), for every $v\in H^1(\Omega)$ and $y\in \overline{H}_I(\textup{div},\Omega)$, we have that
        \begin{align*}
            \tfrac{m}{2}\|\overline{y\cdot n}\|_{\infty,\Gamma_I}^2+(\overline{y\cdot n}, v)_{\Gamma_I}+\tfrac{1}{2m}\|v\|_{1,\Gamma_I}^2\ge 0\,.
        \end{align*}
		\end{itemize}
	\end{remark}
	
	\begin{proof}[Proof (of Lemma \ref{lem:primal_dual_gap_estimator}).]\let\qed\relax 
		For every $v\in K$ and $y\in K^*$, using the admissibility condition \eqref{eq:admissibility.1}, the integration-by-parts formula \eqref{eq:pi_cont},  the binomial formula, and the admissibility condition \eqref{eq:admissibility.2} together with $v-\widehat{u}_D\in H^1_D(\Omega)$, 
	       we find that
		\begin{align*}
			I(v)-D(y)&= \tfrac{1}{2}\| \nabla v\|_{\Omega}^2-(f,v)_{\Omega}-\langle g,v\rangle_{\Gamma_N}+\tfrac{1}{2}\| y\|_{\Omega}^2\\&\quad+\tfrac{m}{2}\|\overline{y\cdot n}\|_{\infty,\Gamma_I}^2-\langle
            y\cdot n,\widehat{u}_D\rangle_{\partial\Omega}+(\overline{y\cdot n},\widehat{u}_D)_{\Gamma_I}+\langle g,\widehat{u}_D\rangle_{\Gamma_N}+\tfrac{1}{2m}\|v\|_{1,\Gamma_I}^2\\&
			= \tfrac{1}{2}\| \nabla v\|_{\Omega}^2+(\textup{div}\,y,v)_{\Omega}+\tfrac{1}{2}\| y\|_{\Omega}^2-\langle g,v-\widehat{u}_D\rangle_{\Gamma_N}\\&\quad+\tfrac{m}{2}\|\overline{y\cdot n}\|_{\infty,\Gamma_I}^2-\langle
            y\cdot n,\widehat{u}_D\rangle_{\partial\Omega}+(\overline{y\cdot n},\widehat{u}_D)_{\Gamma_I}+\tfrac{1}{2m}\|v\|_{1,\Gamma_I}^2
			\\&
			= \tfrac{1}{2}\| \nabla v\|_{\Omega}^2-(y,\nabla v)_{\Omega}+\tfrac{1}{2}\| y\|_{\Omega}^2+\langle
            y\cdot n,v-\widehat{u}_D\rangle_{\partial\Omega}-\langle g,v-\widehat{u}_D\rangle_{\Gamma_N}\\&\quad+\tfrac{m}{2}\|\overline{y\cdot n}\|_{\infty,\Gamma_I}^2+(\overline{y\cdot n},\widehat{u}_D)_{\Gamma_I}+\tfrac{1}{2m}\|v\|_{1,\Gamma_I}^2
			\\&
			= \tfrac{1}{2}\| \nabla v-y\|_{\Omega}^2+(\overline{y\cdot n}, v-\widehat{u}_D)_{\Gamma_I}\\&\quad
            +\tfrac{m}{2}\|\overline{y\cdot n}\|_{\infty,\Gamma_I}^2+(\overline{y\cdot n}, \widehat{u}_D)_{\Gamma_I}+\tfrac{1}{2m}\|v\|_{1,\Gamma_I}^2 \,.
            \\&
			= \tfrac{1}{2}\| \nabla v-y\|_{\Omega}^2+\tfrac{m}{2}\|\overline{y\cdot n}\|_{\infty,\Gamma_I}^2+(\overline{y\cdot n}, v)_{\Gamma_I}+\tfrac{1}{2m}\|v\|_{1,\Gamma_I}^2 \,.
            \tag*{$\qedsymbol$}
		\end{align*}
	\end{proof}
	Next, \hspace{-0.1mm}as \hspace{-0.1mm}per \hspace{-0.1mm}\cite{BarGudKal24,ABKK2024}, \hspace{-0.1mm}as \hspace{-0.1mm}\emph{`natural'} \hspace{-0.1mm}error \hspace{-0.1mm}quantities \hspace{-0.1mm}in \hspace{-0.1mm}the \hspace{-0.1mm}primal-dual \hspace{-0.1mm}gap \hspace{-0.1mm}identity \hspace{-0.1mm}(\textit{cf}.~\hspace{-0.1mm}\mbox{Theorem}~\hspace{-0.1mm}\ref{thm:prager_synge_identity}), we employ the 
     \emph{optimal strong convexity measures} for the primal energy functional~\eqref{eq:primal} at a primal solution $u\hspace{-0.15em}\in\hspace{-0.15em} K$, \textit{i.e.}, 
	$\rho_I^2\colon \hspace{-0.15em}K\hspace{-0.15em}\to \hspace{-0.15em}[0,+\infty)$,  and for the negative of the dual~energy~\mbox{functional}~\eqref{eq:dual}~at~the dual solution $z\hspace{-0.15em}\in\hspace{-0.15em} K^*$, \textit{i.e.}, $\rho_{-D}^2\colon \hspace{-0.15em}K^*\hspace{-0.15em}\to\hspace{-0.15em}  [0,+\infty)$,  
    for every $v\hspace{-0.15em}\in\hspace{-0.15em} K$ and $y\hspace{-0.15em}\in \hspace{-0.15em}K^*$,~\mbox{respectively},~\mbox{defined}~by
	\begin{align}\label{def:optimal_primal_error}
		\rho_I^2(v)&\coloneqq I(v)-I(u)\,,\\
        \rho_{-D}^2(y)&\coloneqq- D(y)+D(z)\,.\label{def:optimal_dual_error}
	\end{align}
    As for the primal-dual gap estimator \eqref{eq:primal-dual.1} in Lemma \ref{lem:primal_dual_gap_estimator}, the optimal strong convexity~\mbox{measures} \eqref{def:optimal_primal_error},\eqref{def:optimal_dual_error} \hspace{-0.1mm}split \hspace{-0.1mm}into \hspace{-0.1mm}two \hspace{-0.1mm}contributions \hspace{-0.1mm}that \hspace{-0.1mm}each \hspace{-0.1mm}measure \hspace{-0.1mm}the \hspace{-0.1mm}accuracy \hspace{-0.1mm}of \hspace{-0.1mm}admissible~\hspace{-0.1mm}\mbox{approximations} in terms of the violation of the convex optimality relations \eqref{eq:optimality.1},\eqref{eq:optimality.2}.
    \pagebreak
	
	\begin{lemma}[representations of the optimal strong convexity measures]\label{lem:strong_convexity_measures}
		The following statements apply:
		\begin{itemize}[noitemsep,topsep=2pt,leftmargin=!,labelwidth=\widthof{(ii)}]
			\item[(i)]\hypertarget{lem:strong_convexity_measures.i}{} For every $v\in K$, we have that
			\begin{align*}
				\rho_I^2(v)=\tfrac{1}{2}\|\nabla v-\nabla u\|_{\Omega}^2+\tfrac{m}{2}\|\overline{z\cdot n}\|_{\infty,\Gamma_I}^2+(\overline{z\cdot n}, v)_{\Gamma_I}+\tfrac{1}{2m}\|v\|_{1,\Gamma_I}^2\,.
			\end{align*}
			\item[(ii)]\hypertarget{lem:strong_convexity_measures.ii}{} For every $y\in K^*$, we have that
			\begin{align*}
				\rho_{-D}^2(y)=\tfrac{1}{2}\|y-z\|_{\Omega}^2+\tfrac{m}{2}\|\overline{y\cdot n}\|_{\infty,\Gamma_I}^2+(\overline{y\cdot n}, u)_{\Gamma_I}+\tfrac{1}{2m}\|u\|_{1,\Gamma_I}^2\,.
			\end{align*}
		\end{itemize}
	\end{lemma}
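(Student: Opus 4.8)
The key observation is that both identities are immediate specializations of the primal--dual gap representation in Lemma~\ref{lem:primal_dual_gap_estimator}, combined with the strong duality relation \eqref{eq:strong_duality} and the convex optimality relation \eqref{eq:optimality.1}. So the plan is essentially to substitute the exact primal/dual solutions into the gap identity.

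For part (\hyperlink{lem:strong_convexity_measures.i}{i}), I would start from the definition $\rho_I^2(v)=I(v)-I(u)$ and use strong duality $I(u)=D(z)$ (\textit{cf}.\ \eqref{eq:strong_duality}) to write $\rho_I^2(v)=I(v)-D(z)$. Since $z\in K^*$ (\textit{cf}.\ Theorem~\ref{thm:duality}~(\hyperlink{thm:duality.ii}{ii})), Lemma~\ref{lem:primal_dual_gap_estimator} applies with the pair $(v,z)\in K\times K^*$ and yields
\begin{align*}
\rho_I^2(v)=\tfrac{1}{2}\|\nabla v-z\|_{\Omega}^2+\tfrac{m}{2}\|\overline{z\cdot n}\|_{\infty,\Gamma_I}^2+(\overline{z\cdot n}, v)_{\Gamma_I}+\tfrac{1}{2m}\|v\|_{1,\Gamma_I}^2\,.
\end{align*}
Finally, replacing $z$ by $\nabla u$ inside the first term via the optimality relation \eqref{eq:optimality.1} gives $\|\nabla v-z\|_{\Omega}^2=\|\nabla v-\nabla u\|_{\Omega}^2$, which is the claimed formula.

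For part (\hyperlink{lem:strong_convexity_measures.ii}{ii}), the argument is symmetric: from $\rho_{-D}^2(y)=-D(y)+D(z)$ and strong duality $D(z)=I(u)$ one obtains $\rho_{-D}^2(y)=I(u)-D(y)$, and since $u\in K$ and $y\in K^*$, Lemma~\ref{lem:primal_dual_gap_estimator} applied to the pair $(u,y)$ gives
\begin{align*}
\rho_{-D}^2(y)=\tfrac{1}{2}\|\nabla u-y\|_{\Omega}^2+\tfrac{m}{2}\|\overline{y\cdot n}\|_{\infty,\Gamma_I}^2+(\overline{y\cdot n}, u)_{\Gamma_I}+\tfrac{1}{2m}\|u\|_{1,\Gamma_I}^2\,.
\end{align*}
Using \eqref{eq:optimality.1} once more, $\|\nabla u-y\|_{\Omega}^2=\|z-y\|_{\Omega}^2=\|y-z\|_{\Omega}^2$, which completes the proof.

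There is no real obstacle here; the only points requiring (trivial) care are checking membership $z\in K^*$ and $u\in K$ so that Lemma~\ref{lem:primal_dual_gap_estimator} is applicable, and invoking the strong duality identity \eqref{eq:strong_duality} in both directions. All of these are already established in Theorem~\ref{thm:duality}. One could alternatively prove the two representations directly by expanding $I(v)-I(u)$ and $D(z)-D(y)$ and repeating the computation in the proof of Lemma~\ref{lem:primal_dual_gap_estimator}, but routing everything through the already-proven gap identity is cleaner and avoids redoing the integration-by-parts bookkeeping.
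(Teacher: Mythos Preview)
Your proof is correct and takes a genuinely different route from the paper. The paper proves both identities by direct expansion: for (\hyperlink{lem:strong_convexity_measures.i}{i}) it writes out $I(v)-I(u)$, replaces $-f$ by $\textup{div}\,z$ via \eqref{eq:admissibility.1}, integrates by parts via \eqref{eq:pi_cont}, uses \eqref{eq:optimality.1} and \eqref{eq:admissibility.2} to handle the boundary terms, applies the binomial formula, and finally invokes \eqref{eq:optimality.2} to rewrite the $\Gamma_I$-contribution; part (\hyperlink{lem:strong_convexity_measures.ii}{ii}) is treated analogously by expanding $-D(y)+D(z)$. Your approach instead factors everything through Lemma~\ref{lem:primal_dual_gap_estimator}: strong duality \eqref{eq:strong_duality} turns $\rho_I^2(v)$ into $\eta_{\textup{gap}}^2(v,z)$ and $\rho_{-D}^2(y)$ into $\eta_{\textup{gap}}^2(u,y)$, after which only \eqref{eq:optimality.1} is needed to pass between $z$ and $\nabla u$ in the quadratic term. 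This is cleaner and avoids repeating the integration-by-parts bookkeeping already done in Lemma~\ref{lem:primal_dual_gap_estimator}; the paper's direct computation, on the other hand, makes the role of each individual optimality relation (in particular \eqref{eq:optimality.2}) more visible in the derivation.
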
 
	
	\begin{proof}\let\qed\relax
		
		\emph{ad (\hyperlink{lem:strong_convexity_measures.i}{i}).} For every $v\in K$, using the admissibility condition \eqref{eq:admissibility.1}, the integration-by-parts formula \eqref{eq:pi_cont}, the convex optimality relation \eqref{eq:optimality.1},
        the admissibility condition \eqref{eq:admissibility.2} together with $v-u\in H^1_D(\Omega)$, 
  the binomial formula, and 
  the convex optimality relation \eqref{eq:optimality.2},
  we find that
		\begin{align*}
			I(v)-I(u)&=\tfrac{1}{2}\|\nabla v\|_{\Omega}^2-\tfrac{1}{2}\|\nabla u\|_{\Omega}^2-(f,v-u)_{\Omega}-\langle g,v-u\rangle_{\Gamma_N}\\&\quad+\tfrac{1}{2m}\|v\|_{1,\Gamma_I}^2-\tfrac{1}{2m}\|u\|_{1,\Gamma_I}^2
    \\&=\tfrac{1}{2}\|\nabla v\|_{\Omega}^2-\tfrac{1}{2}\|\nabla u\|_{\Omega}^2+(\textup{div}\,z,v-u)_{\Omega}\\&\quad+\tfrac{1}{2m}\|v\|_{1,\Gamma_I}^2-\tfrac{1}{2m}\|u\|_{1,\Gamma_I}^2-\langle g,v-u\rangle_{\Gamma_N}\\&
			=\tfrac{1}{2}\|\nabla v\|_{\Omega}^2-\tfrac{1}{2}\|\nabla u\|_{\Omega}^2-(z,\nabla v-\nabla u)_{\Omega}
            \\&\quad+\tfrac{1}{2m}\|v\|_{1,\Gamma_I}^2-\tfrac{1}{2m}\|u\|_{1,\Gamma_I}^2+\langle z\cdot n,v-u\rangle_{\partial\Omega}-\langle g,v-u\rangle_{\Gamma_N}
            \\&
			=\tfrac{1}{2}\|\nabla v\|_{\Omega}^2-\tfrac{1}{2}\|\nabla u\|_{\Omega}^2-(\nabla u,\nabla v-\nabla u)_{\Omega}
            \\&\quad+\tfrac{1}{2m}\|v\|_{1,\Gamma_I}^2-\tfrac{1}{2m}\|u\|_{1,\Gamma_I}^2+(\overline{z\cdot n},v-u)_{\Gamma_I}
			\\&
			=\tfrac{1}{2}\|\nabla v-\nabla u\|_{\Omega}^2+\tfrac{m}{2}\|\overline{z\cdot n}\|_{\infty,\Gamma_I}^2+(\overline{z\cdot n},v)_{\Gamma_I}+\tfrac{1}{2m}\|v\|_{1,\Gamma_I}^2\,.
		\end{align*}
		
		\emph{ad (\hyperlink{lem:strong_convexity_measures.ii}{ii}).} For every $y\in K^*$, using the binomial formula, the convex optimality relation \eqref{eq:optimality.1}, the integration-by-parts  formula \eqref{eq:pi_cont}, the admissibility condition \eqref{eq:admissibility.1}, the admissibility condition \eqref{eq:admissibility.2} together with $u-\widehat{u}_D\in H_D^1(\Omega)$,
        and the convex optimality relation \eqref{eq:optimality.2},~we~find~that
		\begin{align*}
			-D(y)+D(z)&=\tfrac{1}{2}\|y\|_{\Omega}^2-\tfrac{1}{2}\|z\|_{\Omega}^2+\tfrac{m}{2}\|\overline{y\cdot n}\|_{\infty,\Gamma_I}^2-\tfrac{m}{2}\|\overline{z\cdot n}\|_{\infty,\Gamma_I}^2\\&\quad-
            \langle y\cdot n-z\cdot n,\widehat{u}_D\rangle_{\partial\Omega}+(\overline{y\cdot n}-\overline{z\cdot n},\widehat{u}_D)_{\Gamma_I}\\&
			=\tfrac{1}{2}\|y-z\|_{\Omega}^2+(z,y-z)_{\Omega}+\tfrac{m}{2}\|\overline{y\cdot n}\|_{\infty,\Gamma_I}^2-\tfrac{m}{2}\|\overline{z\cdot n}\|_{\infty,\Gamma_I}^2
            \\&\quad-
            \langle y\cdot n-z\cdot n,\widehat{u}_D\rangle_{\partial\Omega}+(\overline{y\cdot n}-\overline{z\cdot n},\widehat{u}_D)_{\Gamma_I}
   \\&
			=\tfrac{1}{2}\|y-z\|_{\Omega}^2+(\nabla u,y-z)_{\Omega}+\tfrac{m}{2}\|\overline{y\cdot n}\|_{\infty,\Gamma_I}^2-\tfrac{m}{2}\|\overline{z\cdot n}\|_{\infty,\Gamma_I}^2\\&\quad-
            \langle y\cdot n-z\cdot n,\widehat{u}_D\rangle_{\partial\Omega}+(\overline{y\cdot n}-\overline{z\cdot n},\widehat{u}_D)_{\Gamma_I}
   \\&
			=\tfrac{1}{2}\|y-z\|_{\Omega}^2-(\textup{div}\,y-\textup{div}\,z,u)_{\Omega}+\langle y\cdot n- z\cdot n,u-\widehat{u}_D\rangle_{\partial\Omega}\\&\quad+\tfrac{m}{2}\|\overline{y\cdot n}\|_{\infty,\Gamma_I}^2-\tfrac{m}{2}\|\overline{z\cdot n}\|_{\infty,\Gamma_I}^2+(\overline{y\cdot n}-\overline{z\cdot n},\widehat{u}_D)_{\Gamma_I}
            \\&
			=\tfrac{1}{2}\|y-z\|_{\Omega}^2+(\overline{y\cdot n}-\overline{z\cdot n},u-\widehat{u}_D)_{\Gamma_I}\\&\quad+\tfrac{m}{2}\|\overline{y\cdot n}\|_{\infty,\Gamma_I}^2-\tfrac{m}{2}\|\overline{z\cdot n}\|_{\infty,\Gamma_I}^2
        +(\overline{y\cdot n}-\overline{z\cdot n},\widehat{u}_D)_{\Gamma_I}
			\\&
			=\tfrac{1}{2}\|y-z\|_{\Omega}^2+\tfrac{m}{2}\|\overline{y\cdot n}\|_{\infty,\Gamma_I}^2+(\overline{y\cdot n},u )_{\Gamma_I}+\tfrac{1}{2m}\|u\|_{1,\Gamma_I}^2 \,.\tag*{$\qedsymbol$}
		\end{align*}
	\end{proof}
	
	Eventually, we establish an \emph{a posteriori} error identity that identifies the \emph{primal-dual~total~error} $\rho_{\textup{tot}}^2\colon K\times K^*\to [0,+\infty)$, for every $v\in K$ and $y\in K^*$ defined by 
	\begin{align}\label{def:primal_dual_total_error}
		\rho_{\textup{tot}}^2(v,y)\coloneqq \rho_I^2(v)+\rho_{-D}^2(y)\,,
	\end{align}
	with the primal-dual gap estimator \eqref{eq:primal-dual.1}. \enlargethispage{5mm}

	\begin{theorem}[primal-dual gap identity]\label{thm:prager_synge_identity}
		For every $v\in K$ and $y\in K^*$, we have that
		\begin{align*}
			\rho_{\textup{tot}}^2(v,y)
			=\eta_{\textup{gap}}^2(v,y)\,.
		\end{align*}
	\end{theorem}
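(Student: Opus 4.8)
The plan is to obtain the identity directly from the definitions of the three error quantities together with the strong duality relation \eqref{eq:strong_duality} established in Theorem~\ref{thm:duality}; no new estimates are required. Concretely, fix $v\in K$ and $y\in K^*$. By the definition \eqref{def:primal_dual_total_error} of the primal-dual total error and the definitions \eqref{def:optimal_primal_error}, \eqref{def:optimal_dual_error} of the optimal strong convexity measures, one has $\rho_{\textup{tot}}^2(v,y)=\rho_I^2(v)+\rho_{-D}^2(y)=\bigl(I(v)-I(u)\bigr)+\bigl(D(z)-D(y)\bigr)$. Regrouping this as $\bigl(I(v)-D(y)\bigr)+\bigl(D(z)-I(u)\bigr)$ and invoking $I(u)=D(z)$ from \eqref{eq:strong_duality}, the second bracket vanishes, while the first bracket is precisely $\eta_{\textup{gap}}^2(v,y)$ by \eqref{eq:primal-dual.1}. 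This yields the claim.

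The only point deserving a word of care is that this rearrangement is legitimate: for $v\in K=\textup{dom}(I)$ and $y\in K^*=\textup{dom}(-D)$ all four quantities $I(v)$, $D(y)$, $I(u)$, $D(z)$ are finite real numbers (the latter two because $u\in K$ and $z\in K^*$ are the primal and dual solutions supplied by Theorem~\ref{thm:duality}), so all additions and cancellations above take place in $\mathbb{R}$. As a byproduct, since $I(u)=\min_{K}I$ and $D(z)=\max_{K^*}D$, the identity re-exhibits the nonnegativity of $\eta_{\textup{gap}}^2$ on $K\times K^*$ that is already apparent from Lemma~\ref{lem:primal_dual_gap_estimator} and the Fenchel--Young inequality.

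I do not anticipate any genuine obstacle here; the statement is an algebraic reorganization of definitions plus the already-proven strong duality. One could alternatively obtain it more computationally by summing the explicit representations of $\rho_I^2(v)$ and $\rho_{-D}^2(y)$ from Lemma~\ref{lem:strong_convexity_measures} and comparing with the representation of $\eta_{\textup{gap}}^2(v,y)$ from Lemma~\ref{lem:primal_dual_gap_estimator}; that route works but is strictly longer, since it re-uses the convex optimality relations \eqref{eq:optimality.1}, \eqref{eq:optimality.2} (in particular $z=\nabla u$) together with an integration by parts to cancel the $\Gamma_I$-boundary terms and to reconcile $\tfrac12\|\nabla v-\nabla u\|_\Omega^2+\tfrac12\|y-z\|_\Omega^2$ with $\tfrac12\|\nabla v-y\|_\Omega^2$, thereby effectively re-deriving strong duality. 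I would therefore present the one-line argument above.
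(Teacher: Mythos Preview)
Your proposal is correct and follows exactly the paper's approach: the paper's proof is the single sentence ``We combine the definitions \eqref{eq:primal-dual.1}--\eqref{def:primal_dual_total_error} using the strong duality relation \eqref{eq:strong_duality}.'' Your added remark on finiteness is a nice clarification but not needed for the argument to go through.
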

	
	\begin{proof}
		We combine the definitions \eqref{eq:primal-dual.1}--\eqref{def:primal_dual_total_error} using the strong duality relation \eqref{eq:strong_duality}.
	\end{proof}\newpage

    \section{A (Fenchel) duality framework for a discrete optimal insulation problem}\label{sec:discrete}\vspace{-1mm}
	
	\hspace{5mm}In this section, we propose approximations of
    the primal problem \eqref{eq:primal} using the Crouzeix--Raviart \hspace{-0.1mm}element \hspace{-0.1mm}(\textit{cf}.\ \eqref{def:CR})  \hspace{-0.1mm}and \hspace{-0.1mm}the \hspace{-0.1mm}dual \hspace{-0.1mm}problem \hspace{-0.1mm}  \hspace{-0.1mm}\eqref{eq:dual} \hspace{-0.1mm}using \hspace{-0.1mm}the \hspace{-0.1mm}Raviart--Thomas~\hspace{-0.1mm}\mbox{element}~\hspace{-0.1mm}(\textit{cf}.~\hspace{-0.1mm}\eqref{def:RT}).\medskip% By analogy with Section \ref{sec:continuous}, establish a (Fenchel) duality framework.\medskip
	
	\hspace*{-2.5mm}$\bullet$ \emph{Discrete primal problem.} Let $f_h\in  \mathcal{L}^0(\mathcal{T}_h)$, $g_h\in \mathcal{L}^0(\mathcal{S}_h^N)$, and $u_D^h\in \mathcal{L}^0(\mathcal{S}_h^D)$ be approximations of $f\in L^2(\Omega)$, $g\in H^{-\smash{\frac{1}{2}}}(\Gamma_N)$, and $u_D\in H^{\smash{\frac{1}{2}}}(\Gamma_D)$, respectively. Then, the \emph{discrete primal problem} is defined as the minimization of the \textit{discrete primal energy functional} $I_h^{cr}\colon \mathcal{S}^{1,cr}(\mathcal{T}_h)\to \mathbb{R}\cup\{+\infty\}$, for every $v_h\in \mathcal{S}^{1,cr}(\mathcal{T}_h)$ defined by
	\begin{align}
		\label{eq:discrete_primal}
		I_h^{cr}(v_h)\coloneqq 
        \left\{\begin{aligned}&\tfrac{1}{2}\| \nabla_hv_h\|_{\Omega}^2+\tfrac{1}{2m}\| \pi_hv_h\|_{1,\Gamma_I}^2\\&-(f_h,\Pi_hv_h)_{\Omega}-(g_h,\pi_hv_h)_{\Gamma_N}+\smash{I_{\{u_D^h\}}^{\Gamma_D}}(\pi_hv_h)\,,
		\end{aligned}\right.
	\end{align}
    where $\smash{I_{\{u_D^h\}}^{\Gamma_D}}\colon \mathcal{L}^0(\mathcal{S}_h^{\partial})\to \mathbb{R}\cup\{+\infty\}$, for every $\widehat{v}_h\in \mathcal{L}^0(\mathcal{S}_h^{\partial})$, is defined by 
	\begin{align*}
		\smash{I_{\{u_D^h\}}^{\Gamma_D}}(\widehat{v}_h)
		&\coloneqq 
		\begin{cases}
			0&\text{ if }\widehat{v}_h=u_D^h\text{ a.e.\ on }\Gamma_D\,,\\[-0.5mm]
			+\infty &\text{ else}\,.
		\end{cases}
	\end{align*}
    Then, the effective domain of the discrete primal energy functional \eqref{eq:discrete_primal} is given via
    \begin{align*}
       \smash{K_h^{cr}\coloneqq \textup{dom}(I_h^{cr})=\widehat{u}_D^h+\mathcal{S}^{1,cr}_D(\mathcal{T}_h)\,.}
    \end{align*}
	Since the functional \eqref{eq:discrete_primal} is proper, convex, weakly coercive, and lower semi-continuous, the direct method in the calculus of variations yields the existence of a  minimizer $u_h^{cr}\in K_h^{cr}$, called  \emph{discrete primal solution}. Here,   the weak coercivity is a consequence of the discrete~Friedrich~inequality (\textit{cf}.\ Lemma \ref{lem:poin_discrete}). 
    More precisely, for every $v_h\in \mathcal{S}^{1,cr}(\mathcal{T}_h)$, one uses that
    \begin{align*}
    \smash{\| \nabla_h v_h\|_{\Omega}^2+\tfrac{1}{m}\|\pi_hv_h\|_{1,\Gamma_I}^2}
    &\ge \smash{\min\big\{1,\tfrac{\vert \Gamma_I\vert }{m}\big\}\big\{\| \nabla_h v_h\|_{\Omega}^2+\vert\langle \pi_h v_h\rangle_{\Gamma_I}\vert^2\big\}}
    \\&\ge \smash{\tfrac{1}{2(c_{\mathrm{F}}^{cr})^2}}\min\big\{1,\tfrac{\vert \Gamma_I\vert }{m}\big\}\|v_h\|_{\Omega}^2
    \,.
    \end{align*}    
    In what follows, we always employ the notation $\smash{u_h^{cr}\in K_h^{cr}}$ for discrete primal solutions.~Note~that, if $ \Gamma_D\neq \emptyset$ or $\Omega$ is connected,  
the functional \eqref{eq:discrete_primal} is strictly convex and, consequently, the discrete primal solution $\smash{u_h^{cr}\in K_h^{cr}}$ is uniquely determined.\enlargethispage{10mm}\medskip

%$\bullet$ \emph{Discrete primal variational inequality.} A discrete primal solution $u_h^{cr}\in K_h^{cr}$ equivalently is a solution of the following discrete variational inequality: 
%for every $v_h\in K_h^{cr}$, it holds that
%\begin{align}
%\begin{aligned} 
%\tfrac{1}{2m}\|\pi_hu_h^{cr}\|_{1,\Gamma_I}^2+	(\nabla_h u_h^{cr},\nabla_h u_h^{cr}-\nabla_h v_h^{cr})_{\Omega}\leq \tfrac{1}{2m}\|\pi_hv_h\|_{1,\Gamma_I}^2&+(f_h,\Pi_hu_h^{cr}-\Pi_hv_h)_{\Omega}\\&\quad+(g_h,\pi_hu_h^{cr}-\pi_hv_h)_{\Gamma_N}\,.
%\end{aligned}\label{eq:discrete_variational_ineq}
%\end{align}
	
	\hspace*{-2.5mm}$\bullet$ \emph{Discrete dual problem.} A \emph{(Fenchel) dual problem} (in the sense of \cite[Rem.\ 4.2, p.\ 60/61]{ET99}) to the minimization of \eqref{eq:discrete_primal} is given via the maximization of the \textit{discrete dual energy functional} $D_h^{rt}\colon \mathcal{R}T^0(\mathcal{T}_h)\to \mathbb{R}\cup\{-\infty\}$, for every $y_h\in \mathcal{R}T^0(\mathcal{T}_h)$~defined~by 
	\begin{align}\label{eq:discrete_dual}
		D_h^{rt}(y_h)\coloneqq
        \left\{\begin{aligned}&-\tfrac{1}{2}\| \Pi_hy_h\|_{\Omega}^2-\tfrac{m}{2}\|y_h\cdot n\|_{\infty,\Gamma_I}^2+(y_h\cdot n,u_D^h)_{\Gamma_D}\\&-
			\smash{I_{\{-f_h\}}^{\Omega}} (\textup{div}\,y_h)-\smash{I_{\{g_h\}}^{\Gamma_N}} (y_h\cdot n)\,,
		\end{aligned}\right.
	\end{align}  
    where 
	$	\smash{I_{\{-f_h\}}^{\Omega}}\colon  \smash{\mathcal{L}^0(\mathcal{T}_h)}\to \mathbb{R}\cup \{+\infty\}$, for every $\smash{\widehat{v}_h\in  \mathcal{L}^0(\mathcal{T}_h)}$, is defined by
	\begin{align*}
		\smash{I_{\{-f_h\}}^{\Omega}}(\widehat{v}_h) \coloneqq
		\begin{cases}
			0&\text{ if }\widehat{v}_h=-f_h\text{ a.e.\ in }\Omega\,,\\[-0.5mm]
			+\infty& \text{ else}\,,
		\end{cases}
	\end{align*}
	and $	\smash{I_{\{g_h\}}^{\Gamma_N}}\colon  \smash{\mathcal{L}^0(\mathcal{S}_h^{\partial})}\to \mathbb{R}\cup \{+\infty\}$, for every $\smash{\widehat{v}_h\in \mathcal{L}^0(\mathcal{S}_h^{\partial})}$, is defined by
	\begin{align*}
		\smash{I_{\{g_h\}}^{\Gamma_N}}(\widehat{v}_h)&\coloneqq \begin{cases}
			0&\text{ if }\widehat{v}_h=g_h\text{ a.e.\ on }\Gamma_N\,,\\[-0.5mm]
			+\infty &\text{ else}\,.
		\end{cases}
	\end{align*}
    Then, the effective domain of the  negative of  the discrete dual energy functional \eqref{eq:discrete_dual} is~given~via
    \begin{align*}
        K^{rt,*}_h\coloneqq \textup{dom}(-D_h^{rt})
        = \left\{y_h\in \mathcal{R}T^0(\mathcal{T}_h)\;\bigg|\;  
        \begin{aligned}
            \textup{div}\,y_h&=-f_h&&\text{ a.e.\ in }\Omega\,,\\[-0.5mm]
            y_h\cdot n&=g_h&&\text{ a.e.\ on }\Gamma_N
        \end{aligned} \right\}\,.
    \end{align*}

    The following theorem proves that
    the maximization of \eqref{eq:discrete_dual} is truly the 
    (Fenchel)~dual~\mbox{problem} (in the sense of \cite[Rem.\ 4.2, p.\ 60/61]{ET99}) to the minimization of \eqref{eq:discrete_primal}. In addition, it establishes the existence of a unique discrete dual solution as well as the validity of a discrete strong duality relation and discrete convex optimality relations.
    
 	\begin{theorem}[strong duality and convex duality relations]\label{prop:discrete_duality} The following statements apply:
		\begin{itemize}[noitemsep,topsep=2pt,leftmargin=!,labelwidth=\widthof{(ii)}]
			\item[(i)]\hypertarget{prop:discrete_duality.i}{}  The (Fenchel) dual problem to the minimization of \eqref{eq:discrete_primal} is given via the~\mbox{maximization}~of~\eqref{eq:discrete_dual};  
			\item[(ii)]\hypertarget{prop:discrete_duality.ii}{}  There exists a unique maximizer $z_h^{rt}\in  \mathcal{R}T^0(\mathcal{T}_h)$ of \eqref{eq:discrete_dual} satisfying the \textup{discrete~admissibility conditions}
			\begin{alignat}{2}
				\textup{div}\,z_h^{rt}&=-f_h&&\quad\text{ a.e.\ in }\Omega\,,\label{eq:discrete_admissibility.1}\\
                z_h^{rt}\cdot n&=g_h&&\quad\text{ a.e.\ on }\Gamma_N\,.\label{eq:discrete_admissibility.2}
			\end{alignat}  
			In addition, there
			holds a \emph{discrete strong duality relation}, \textit{i.e.}, we have that
			\begin{align}
				I_h^{cr}(u_h^{cr}) = D_h^{rt}(z_h^{rt})\,;\label{eq:discrete_strong_duality}
			\end{align}
			\item[(iii)]\hypertarget{prop:discrete_duality.iii}{} There hold the \emph{discrete convex optimality relations}, \textit{i.e.}, we have that
			\begin{alignat}{2}
				\Pi_h z_h^{rt}&=\nabla_h u_h^{cr}\quad\text{ a.e.\ in }\Omega\label{eq:discrete_optimality.1}\,,\\
				-( z_h^{rt}\cdot n,\pi_hu_h^{cr})_{\Gamma_I}&=\smash{\tfrac{m}{2}\|\smash{z_h^{rt}\cdot n}\|_{\infty,\Gamma_I}^2+\tfrac{1}{2m}\|\pi_hu_h^{cr}\|_{1,\Gamma_I}^2}\label{eq:discrete_optimality.2}\,.
			\end{alignat}
		\end{itemize}
	\end{theorem}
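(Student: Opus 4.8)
The plan is to follow, step by step, the proof of Theorem~\ref{thm:duality}, replacing the integration-by-parts formula \eqref{eq:pi_cont} by its discrete counterpart \eqref{eq:pi} and the Sobolev trace analysis by the structure-preserving properties \eqref{eq:grad_preservation}, \eqref{eq:trace_preservation}, \eqref{eq:div_preservation}, \eqref{eq:normal_trace_preservation} of the canonical Crouzeix--Raviart and Raviart--Thomas interpolants. Since all spaces involved are finite-dimensional, no constraint qualification needs to be verified and the (finite-dimensional) Fenchel duality theorem (\textit{cf}.\ \cite[Rem.\ 4.2, p.\ 60/61]{ET99}) applies directly. Concretely, for part \emph{(i)} I would first write $I_h^{cr}(v_h)=G_h(\nabla_h v_h)+F_h(v_h)$ with $G_h\colon(\mathcal{L}^0(\mathcal{T}_h))^d\to\mathbb{R}$, $G_h(y_h)\coloneqq\tfrac{1}{2}\|y_h\|_\Omega^2$, and $F_h\colon\mathcal{S}^{1,cr}(\mathcal{T}_h)\to\mathbb{R}\cup\{+\infty\}$, $F_h(v_h)\coloneqq-(f_h,\Pi_h v_h)_\Omega-(g_h,\pi_h v_h)_{\Gamma_N}+\tfrac{1}{2m}\|\pi_h v_h\|_{1,\Gamma_I}^2+I_{\{u_D^h\}}^{\Gamma_D}(\pi_h v_h)$; both are proper, convex and lower semi-continuous, and $G_h$ is finite (hence continuous) everywhere. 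Denoting by $\nabla_h^*$ the adjoint of $\nabla_h$, the Fenchel dual is then the maximization of $y_h^*\mapsto-G_h^*(y_h^*)-F_h^*(-\nabla_h^* y_h^*)$ over $(\mathcal{L}^0(\mathcal{T}_h))^d$, and, as in \eqref{prop:duality.2}, $G_h^*=\tfrac{1}{2}\|\cdot\|_\Omega^2$.

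The heart of \emph{(i)} is then the computation of $F_h^*(-\nabla_h^* y_h^*)$, for which I would repeat the chain \eqref{prop:duality.3}--\eqref{prop:duality.5}: shift the test functions by the discrete Dirichlet lift $\widehat{u}_D^h$, insert the discrete integration-by-parts formula \eqref{eq:pi}, and then use \eqref{eq:div_preservation}, \eqref{eq:normal_trace_preservation} together with the surjectivity of $\textup{div}\colon\mathcal{R}T^0_N(\mathcal{T}_h)\to\mathcal{L}^0(\mathcal{T}_h)$ recorded after \eqref{eq:normal_trace_preservation} to see that the supremum over $\mathcal{S}^{1,cr}_D(\mathcal{T}_h)$ is $+\infty$ unless $y_h^*=\Pi_h y_h$ for some $y_h\in\mathcal{R}T^0(\mathcal{T}_h)$ with $\textup{div}\,y_h=-f_h$ in $\Omega$ and $y_h\cdot n=g_h$ on $\Gamma_N$; in that case the remaining supremum over $\pi_h v_h|_{\Gamma_I}$, which ranges over all of $\mathcal{L}^0(\mathcal{S}_h^I)$, is the conjugate of $\tfrac{1}{2m}\|\cdot\|_{1,\Gamma_I}^2$ evaluated at $-y_h\cdot n|_{\Gamma_I}$, i.e.\ $\tfrac{m}{2}\|y_h\cdot n\|_{\infty,\Gamma_I}^2$, while the lift-dependent contributions on $\Gamma_I$ and $\Gamma_N$ cancel (since $y_h\cdot n$ is side-wise constant, $\pi_h\widehat{u}_D^h=u_D^h$ on $\Gamma_D$, and $y_h\cdot n=g_h$ on $\Gamma_N$), leaving only $(y_h\cdot n,u_D^h)_{\Gamma_D}$. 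This identifies the Fenchel dual, on its effective domain, with the maximization of $D_h^{rt}$ over $\mathcal{R}T^0(\mathcal{T}_h)$ and shows $\textup{dom}(-D_h^{rt})=K_h^{rt,*}$. I expect this collapse of the abstract dual -- a priori posed over all element-wise constant vector fields -- onto the concrete functional $D_h^{rt}$ on $\mathcal{R}T^0(\mathcal{T}_h)$, which is the discrete counterpart of Lemma~\ref{lem:normal_trace} and of the identities \eqref{prop:duality.3}, \eqref{prop:duality.4}, to be the main obstacle.

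For \emph{(ii)}: since $G_h$ is continuous at $\nabla_h\widehat{u}_D^h\in\textup{dom}(G_h)$ with $\widehat{u}_D^h\in\textup{dom}(F_h)$, the Fenchel duality theorem provides a dual maximizer $z_h^*$ together with the strong duality relation; by \emph{(i)}, $z_h^*=\Pi_h z_h^{rt}$ for some $z_h^{rt}\in K_h^{rt,*}$, which yields the admissibility conditions \eqref{eq:discrete_admissibility.1}, \eqref{eq:discrete_admissibility.2} and the discrete strong duality relation \eqref{eq:discrete_strong_duality}. For uniqueness I would use the elementary fact that on a single simplex $T$ the conditions $\textup{div}\,y_h|_T=0$ and $\langle y_h\rangle_T=0$ force $y_h|_T=0$, so that $y_h\mapsto(\Pi_h y_h,\textup{div}\,y_h)$ is injective on $\mathcal{R}T^0(\mathcal{T}_h)$; consequently, on $K_h^{rt,*}$ the term $-\tfrac{1}{2}\|\Pi_h y_h\|_\Omega^2$ is already strictly concave, whereas $-\tfrac{m}{2}\|y_h\cdot n\|_{\infty,\Gamma_I}^2+(y_h\cdot n,u_D^h)_{\Gamma_D}$ is concave, so $D_h^{rt}$ is strictly concave on $K_h^{rt,*}$ and $z_h^{rt}$ is uniquely determined.

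Finally, for \emph{(iii)}: the extremality relations from the Fenchel theorem (\textit{cf}.\ \cite[Rem.\ 4.2, (4.24), (4.25), p.\ 61]{ET99}) read $z_h^*\in\partial G_h(\nabla_h u_h^{cr})$ and $-\nabla_h^* z_h^*\in\partial F_h(u_h^{cr})$. Since $\partial G_h(y_h)=\{y_h\}$ and $z_h^*=\Pi_h z_h^{rt}$, the first relation is exactly \eqref{eq:discrete_optimality.1}. For the second I would proceed as in \eqref{prop:duality.6}--\eqref{prop:duality.8}: unfolding the subdifferential of $F_h$ and using the discrete integration-by-parts formula \eqref{eq:pi} together with the admissibility conditions \eqref{eq:discrete_admissibility.1}, \eqref{eq:discrete_admissibility.2}, the defining variational inequality reduces to $\tfrac{1}{2m}\|\pi_h v_h\|_{1,\Gamma_I}^2-\tfrac{1}{2m}\|\pi_h u_h^{cr}\|_{1,\Gamma_I}^2\ge-(z_h^{rt}\cdot n,\pi_h v_h-\pi_h u_h^{cr})_{\Gamma_I}$ for all $v_h\in K_h^{cr}$; since $\pi_h v_h|_{\Gamma_I}$ runs over all of $\mathcal{L}^0(\mathcal{S}_h^I)$ (up to the fixed shift $\pi_h\widehat{u}_D^h|_{\Gamma_I}$), this means $-z_h^{rt}\cdot n\in\partial\bigl(\tfrac{1}{2m}\|\cdot\|_{1,\Gamma_I}^2\bigr)(\pi_h u_h^{cr})$, which by the equality condition in the Fenchel--Young inequality (\textit{cf}.\ \cite[Prop.\ 5.1, p.\ 21]{ET99}) is equivalent to \eqref{eq:discrete_optimality.2}.
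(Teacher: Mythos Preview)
Your proposal is correct and follows essentially the same approach as the paper's proof. The only notable difference is that for the key step in part~(i) --- showing that $F_h^*(-\nabla_h^*\overline{y}_h)=+\infty$ unless $\overline{y}_h=\Pi_h y_h$ for some $y_h\in\mathcal{R}T^0(\mathcal{T}_h)$ --- the paper invokes a ready-made lifting lemma \cite[Lem.~A.1]{BarGudKal24} rather than arguing via the surjectivity of the divergence, but this is precisely the step you flagged as the main obstacle and the two devices encode the same orthogonality between $\nabla_h(\mathcal{S}^{1,cr}_D(\mathcal{T}_h))$ and $\Pi_h(\mathcal{R}T^0(\mathcal{T}_h))$.
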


    \begin{remark}[equivalent condition to \eqref{eq:discrete_optimality.2}]
        Note that,~by~the~standard equality condition in the Fenchel--Young inequality (\textit{cf}.\ \cite[Prop.\ 5.1,~p.~21]{ET99}) and the chain rule for the subdifferential (\textit{cf}.\ \cite[Thm.\ 4.19]{Clason2020}), the discrete convex optimality relation \eqref{eq:discrete_optimality.2} is equivalent to%\enlargethispage{5mm}
        \begin{align}\label{eq:discrete_optimality.3}
        -z_h^{rt}\cdot n \in \tfrac{1}{m}(\partial\vert\cdot\vert)(\pi_hu_h^{cr})\|\pi_hu_h^{cr}\|_{1,\Gamma_I}\quad \text{ a.e.\ on }\Gamma_I\,.
        \end{align}
    \end{remark}
	
		\begin{proof}[Proof (of Theorem \ref{prop:discrete_duality}).]
		\emph{ad (\hyperlink{prop:discrete_duality.i}{i}).} To begin with, we need to bring the primal energy functional~\eqref{eq:discrete_primal} into the form of a primal energy functional in  the sense of Fenchel (\textit{cf}.~\cite[Rem.\ 4.2, p.\ 60/61]{ET99}),~\textit{i.e.}, 
        \begin{align*}
			I_h^{cr}(v_h)= G_h(\nabla_h v_h)+F_h(v_h)\,,
		\end{align*}
        where $G_h\colon (\mathcal{L}^0(\mathcal{T}_h))^d\to \mathbb{R}\cup\{+\infty\}$ and $F_h\colon \mathcal{S}^{1,cr}(\mathcal{T}_h)\to \mathbb{R}\cup\{+\infty\}$ should be proper, convex, and lower semi-continuous functionals.
        To this end, let us introduce 
         the functionals~${G_h\colon \hspace{-0.15em}(\mathcal{L}^0(\mathcal{T}_h))^d\hspace{-0.15em}\to \hspace{-0.15em}\mathbb{R}}$ and $F_h\colon \hspace{-0.15em}\mathcal{S}^{1,cr}(\mathcal{T}_h)\hspace{-0.15em}\to\hspace{-0.15em} \mathbb{R}\cup\{+\infty\}$, for every $\overline{y}_h\hspace{-0.15em}\in \hspace{-0.15em} (\mathcal{L}^0(\mathcal{T}_h))^d$ and $v_h\hspace{-0.15em}\in\hspace{-0.15em}\mathcal{S}^{1,cr}(\mathcal{T}_h)$,~respectively,~\mbox{defined}~by
		\begin{align*}
			G_h(\overline{y}_h)&\coloneqq \tfrac{1}{2}\|\overline{y}_h\|_{\Omega}^2\,,\\
			F_h(v_h)&\coloneqq -(f_h,\Pi_h v_h)_{\Omega}-(g_h,\pi_h v_h)_{\Gamma_N}+\tfrac{1}{2m}\|\pi_hv_h\|_{1,\Gamma_I}^2+\smash{I_{\{u_D^h\}}^{\Gamma_D}}(\pi_hv_h)\,.
		\end{align*}
		Then, according to \cite[Rem.\  4.2, p.\ 60]{ET99}, the (Fenchel) dual problem to the~\mbox{minimization}~of~\eqref{eq:discrete_primal} is given via the maximization of $D_h^0\colon (\mathcal{L}^0(\mathcal{T}_h))^d\to \mathbb{R}\cup\{-\infty\}$, 
		for every $\overline{y}_h\in (\mathcal{L}^0(\mathcal{T}_h))^d$~defined~by
		\begin{align}\label{helper_functional}
			\smash{D_h^0(\overline{y}_h)\coloneqq -G^*_h(\overline{y}_h)- F^*_h(-\nabla^*_h\overline{y}_h)\,,}
		\end{align}
		where $\nabla^*_h\colon  (\mathcal{L}^0(\mathcal{T}_h))^d\to (\mathcal{S}^{1,cr}(\mathcal{T}_h))^*$ denotes the adjoint operator to $\nabla_h \colon \mathcal{S}^{1,cr}(\mathcal{T}_h)\to (\mathcal{L}^0(\mathcal{T}_h))^d$. 
		
        $\bullet$ First, resorting to \cite[Prop.\ 4.2, p.\ 19]{ET99}, for every $\overline{y}_h\in (\mathcal{L}^0(\mathcal{T}_h))^d$, we have that
		\begin{align}\label{prop:discrete_duality.1}
			G^*_h(\overline{y}_h)=\tfrac{1}{2}\|\overline{y}_h\|_{\Omega}^2\,.
		\end{align}

        $\bullet$ Second, using a lifting lemma (\textit{cf}.\ \cite[Lem.\ A.1]{BarGudKal24}) and the discrete integration-by-parts~formula \eqref{eq:pi}, for every $\overline{y}_h\in (\mathcal{L}^0(\mathcal{T}_h))^d$,
        it turns out that  
		\begin{align} \label{prop:discrete_duality.3}
        \begin{aligned} 
				&F_h^*(-\nabla_h^*\overline{y}_h)
				=\sup_{v_h\in \mathcal{S}^{1,cr}(\mathcal{T}_h)}{\left\{
                \begin{aligned} 
                &-(\overline{y}_h,\nabla_h v_h)_{\Omega}+(f_h,\Pi_hv_h)_{\Omega}+(g_h,\pi_hv_h)_{\Gamma_N}\\[-0.5mm]&-\tfrac{1}{2m}\|\pi_hv_h\|_{1,\Gamma_I}^2
                -\smash{I_{\{u_D^h\}}^{\Gamma_D}}(\pi_hv_h)
                \end{aligned}
                \right\}}
                \\&\quad=\sup_{v_h\in \mathcal{S}^{1,cr}_D(\mathcal{T}_h)}{\left\{
                \begin{aligned} 
                &-(\overline{y}_h,\nabla_h (v_h+\widehat{u}_D^h))_{\Omega}+(f_h,\Pi_h(v_h+\widehat{u}_D^h))_{\Omega}+(g_h,\pi_h(v_h+\widehat{u}_D^h))_{\Gamma_N}\\[-0.5mm]&-\tfrac{1}{2m}\|\pi_h(v_h+\widehat{u}_D^h)\|_{1,\Gamma_I}^2 
                \end{aligned}
                \right\}}
				\\&\quad=
                \begin{cases}
                    \left.\begin{aligned} &I_{\{-f_h\}}^{\Omega}(\textup{div}\,y_h)+I_{\{g_h\}}^{\Gamma_N}(y_h\cdot n)-(y_h\cdot n,\widehat{u}_D^h)_{\Gamma_I\cup \Gamma_D}\\&+\underset{v_h\in \mathcal{S}^{1,cr}_D(\mathcal{T}_h)}{\sup}{\!\big\{-(y_h\cdot n,\pi_hv_h)_{\Gamma_I}-\tfrac{1}{2m}\|\pi_h(v_h+\widehat{u}_D^h)\|_{1,\Gamma_I}^2\big\}}
                    \end{aligned}\right\}&
                    \left\{\begin{aligned}
                        &\textup{ if }\overline{y}_h=\Pi_hy_h\\
                        &\textup{ for }y_h\in \mathcal{R}T^0(\mathcal{T}_h)\,,
                    \end{aligned}\right.\\
                    +\infty&\text{ else}\,,
                \end{cases} 
                \end{aligned}\hspace{-7.5mm}
		\end{align}
        where, due to $(\pi_h(\cdot)|_{\Gamma_I})(K_h^{cr})=\mathcal{L}^0(\mathcal{S}_h^I)$, for every $y_h\in \mathcal{R}T^0(\mathcal{T}_h)$, we have that
        \begin{align}\label{prop:discrete_duality.4}
        \begin{aligned} 
            &\sup_{v_h\in \mathcal{S}^{1,cr}_D(\mathcal{T}_h)}{\big\{-(y_h\cdot n,\pi_hv_h)_{\Gamma_I}-\tfrac{1}{2m}\|\pi_h(v_h+\widehat{u}_D^h)\|_{1,\Gamma_I}^2\big\}} 
            \\&=\sup_{v_h\in K_h^{cr}}{\big\{(y_h\cdot n,\widehat{u}_D^h-\pi_hv_h)_{\Gamma_I}-\tfrac{1}{2m}\|\pi_hv_h\|_{1,\Gamma_I}^2\big\}} 
            \\&=(y_h\cdot n,\widehat{u}_D^h)_{\Gamma_I}+\sup_{\rho\ge 0}{ \sup_{\substack{\overline{v}_h\in \mathcal{L}^0(\mathcal{S}_h^I)\\ \|\overline{v}_h\|_{1,\Gamma_I}=\rho}}{\big\{-(y_h\cdot n,\overline{v}_h)_{\Gamma_I}-\tfrac{1}{2m}\rho^2\big\}}}
            \\&=(y_h\cdot n,\widehat{u}_D^h)_{\Gamma_I}+\sup_{\rho\ge 0}{\big\{\rho\,\|y_h\cdot n\|_{\infty,\Gamma_I}-\tfrac{1}{2m}\rho^2\big\}}
            \\&=(y_h\cdot n,\widehat{u}_D^h)_{\Gamma_I}+\tfrac{m}{2}\|y_h\cdot n\|_{\infty,\Gamma_I}^2\,.
        \end{aligned}
        \end{align}
		Using \eqref{prop:discrete_duality.1} and \eqref{prop:discrete_duality.3} together with \eqref{prop:discrete_duality.4} in \eqref{helper_functional}, for every $\overline{y}_h\in (\mathcal{L}^0(\mathcal{T}_h))^d$, we arrive at
        \begin{align*}
            D_h^0(\overline{y}_h)=
            \begin{cases}
                \left.\begin{aligned}
                    &-\tfrac{1}{2}\|\Pi_h y_h\|_{\Omega}^2-\tfrac{m}{2}\|y_h\cdot n\|_{\infty,\Gamma_I}^2 +(y_h\cdot n,u_D^h)_{\Gamma_D}\\
                    &-I_{\{-f_h\}}^{\Omega}(\textup{div}\,y_h)-I_{\{g_h\}}^{\Gamma_N}(y_n\cdot n)
                \end{aligned}\right\}&
                 \left\{\begin{aligned}
                        &\textup{ if }\overline{y}_h=\Pi_hy_h\\
                        &\textup{ for }y_h\in \mathcal{R}T^0(\mathcal{T}_h)\,,
                    \end{aligned}\right.\\
                    -\infty&\text{ else}\,.
            \end{cases}
        \end{align*}
		Since $D_h^0= -\infty$ in $(\mathcal{L}^0(\mathcal{T}_h))^d\setminus \Pi_h(\mathcal{R}T^0(\mathcal{T}_h))$, we  restrict~\eqref{eq:discrete_dual}~to~$ \Pi_h(\mathcal{R}T^0(\mathcal{T}_h))$. More precisely, 
        we define $D_h^{rt}\colon \mathcal{R}T^0(\mathcal{T}_h)\to \mathbb{R}\cup\{+\infty\}$, for every $y_h\in\mathcal{R}T^0(\mathcal{T}_h)$, by $D_h^{rt}(y_h)\coloneqq D_h^0(\Pi_h y_h)$.
		
		\emph{ad (\hyperlink{prop:discrete_duality.ii}{ii}).}  Since $G_h\colon \hspace{-0.1em} (\mathcal{L}^0(\mathcal{T}_h))^d\hspace{-0.1em}\to\hspace{-0.1em} \mathbb{R}$ and $F_h\colon  \hspace{-0.1em}\mathcal{S}^{1,cr}(\mathcal{T}_h)\hspace{-0.1em}\to\hspace{-0.1em} \mathbb{R}\cup\{+\infty\}$ are proper,~convex,~and~lower semi-continuous \hspace{-0.15mm}and \hspace{-0.15mm}since \hspace{-0.15mm}$G_h\colon \hspace{-0.175em}(\mathcal{L}^0(\mathcal{T}_h))^d\hspace{-0.175em}\to\hspace{-0.175em}\mathbb{R}$ \hspace{-0.15mm}is \hspace{-0.15mm}continuous \hspace{-0.15mm}at \hspace{-0.15mm}${\nabla_h\widehat{u}_D^h\hspace{-0.175em}\in\hspace{-0.175em} \textup{dom}(G_h)}$ \hspace{-0.15mm}with \hspace{-0.15mm}${\widehat{u}_D^h\hspace{-0.175em}\in\hspace{-0.175em}  \textup{dom}(F_h)}$, %\textit{i.e.},
		%\begin{align*}
		%	 G_h(\overline{y}_h)\to G_h(0)\quad\big(\overline{y}_h\to 0\quad\text{ in }(\mathcal{L}^0(\mathcal{T}_h))^d\big)\,,
		%\end{align*}
		the Fenchel duality  theorem (\emph{cf}.\  \cite[Rem.\ 4.2, (4.21), p.\ 61]{ET99}) yields the existence of a maximizer   $\overline{z}_h^0\in  (\mathcal{L}^0(\mathcal{T}_h))^d$ of  \eqref{helper_functional} and that a discrete strong duality relation applies, \textit{i.e.},\vspace*{-0.25mm}
		\begin{align*}
			\smash{I_h^{cr}(u_h^{cr})=D_h^0(\overline{z}_h^0)\,.}
		\end{align*}
		Since $D_h^0= -\infty$ in $(\mathcal{L}^0(\mathcal{T}_h))^d\setminus \Pi_h(\mathcal{R}T^0(\mathcal{T}_h))$, there exists  $z_h^{rt}\in \mathcal{R}T^0(\mathcal{T}_h)$ satisfying the discrete admissibility conditions \eqref{eq:discrete_admissibility.1},\eqref{eq:discrete_admissibility.2} such that $\overline{z}_h^0=\Pi_h z_h^{rt}$~a.e.\ in $\Omega$.
		In particular, we have that  $D_h^0(\overline{z}_h^0)=D_h^{rt}(z_h^{rt})$, so that $z_h^{rt}\in \mathcal{R}T^0(\mathcal{T}_h)$ is a maximizer of \eqref{eq:discrete_dual} and the discrete strong duality relation \eqref{eq:discrete_strong_duality} applies.  By the~strict~\mbox{convexity}~of~${G_h^*\colon \hspace{-0.1em}(\mathcal{L}^0(\mathcal{T}_h))^d\hspace{-0.1em}\to\hspace{-0.1em}\mathbb{R}}$~and~the~\mbox{divergence}~\mbox{constraint} \eqref{eq:discrete_admissibility.1}, the maximizer $z_h^{rt}\in \mathcal{R}T^0(\mathcal{T}_h)$ is uniquely determined.
		
		\emph{ad (\hyperlink{prop:discrete_duality.iii}{iii}).} \hspace*{-0.15mm}By \hspace*{-0.15mm}the \hspace*{-0.15mm}standard \hspace*{-0.15mm}(Fenchel) \hspace*{-0.15mm}convex \hspace*{-0.15mm}duality \hspace*{-0.15mm}theory \hspace*{-0.15mm}(\emph{cf}.\ \hspace*{-0.15mm}\cite[Rem.\ \hspace*{-0.15mm}4.2, \hspace*{-0.15mm}(4.24),~\hspace*{-0.15mm}(4.25),~\hspace*{-0.15mm}p.~\hspace*{-0.15mm}61]{ET99}), there hold the convex optimality relations\vspace*{-0.25mm}\enlargethispage{5mm}
		\begin{align}
			-\nabla_h^*\Pi_h z_h^{rt}&\in \partial F_h(u_h^{cr})\,,\label{prop:discrete_duality.5}\\
			\Pi_h z_h^{rt}&\in \partial G_h(\nabla_h u_h^{cr})\,.\label{prop:discrete_duality.6}
		\end{align}
		The inclusion  \eqref{prop:discrete_duality.6} is equivalent to the discrete convex optimality relation \eqref{eq:discrete_optimality.1}. The~inclusion \eqref{prop:discrete_duality.5}, by the definition of the subdifferential and, then, using
		the discrete integration-by-parts formula \eqref{eq:pi}, is equivalent to that for every $v_h\in K_h^{cr}$, it holds that 
        \begin{align*}
			%\begin{aligned}
            \tfrac{1}{2m}\|\pi_hv_h\|_{1,\Gamma_I}^2-\tfrac{1}{2m}\|\pi_hu_h^{cr}\|_{1,\Gamma_I}^2&\ge (f_h,\Pi_hv_h-\Pi_h u_h)_{\Omega}+(g_h,\pi_hv_h-\pi_h u_h)_{\Gamma_N}\\&\quad-(\Pi_hz_h^{rt},\nabla_h v_h-\nabla_h u_h^{cr})_{\Omega}\,.
            %\\
            %&=-(z_h^{rt}\cdot n,\pi_hv_h-\pi_hu_h^{cr})_{\Gamma_I} \,.
            %\end{aligned}\label{prop:discrete_duality.7}
		\end{align*}
        By the discrete admissibility conditions \eqref{eq:discrete_admissibility.1},\eqref{eq:discrete_admissibility.2}, this is equivalent to that for every $v_h\in K_h^{cr}$, it holds that 
		\begin{align}
			\begin{aligned}\tfrac{1}{2m}\|\pi_hv_h\|_{1,\Gamma_I}^2-\tfrac{1}{2m}\|\pi_hu_h^{cr}\|_{1,\Gamma_I}^2\ge -(z_h^{rt}\cdot n,\pi_hv_h-\pi_hu_h^{cr})_{\Gamma_I} \,.
            \end{aligned}\label{prop:discrete_duality.7}
		\end{align}
        Since $(\pi_h|_{\Gamma_I})(K_h^{cr})=\mathcal{L}^0(\mathcal{S}_h^I)$, from \eqref{prop:discrete_duality.7}, we infer that
        \begin{align*}
            -z_h^{rt}\cdot n\in \partial (\tfrac{1}{2m}\|\cdot\|_{1,\Gamma_I}^2)(\pi_hu_h^{cr})\,,
        \end{align*}
        which, \hspace{-0.1mm}by \hspace{-0.1mm}the \hspace{-0.1mm}standard \hspace{-0.1mm}equality \hspace{-0.1mm}condition \hspace{-0.1mm}in \hspace{-0.1mm}the \hspace{-0.1mm}Fenchel--Young \hspace{-0.1mm}inequality \hspace{-0.1mm}(\textit{cf}.\ \hspace{-0.1mm}\cite[Prop.~\hspace{-0.1mm}5.1,~\hspace{-0.1mm}p.~\hspace{-0.1mm}21]{ET99}), is equivalent to \eqref{eq:discrete_optimality.2}.
	\end{proof}\newpage

    \section{\emph{A priori} error analysis}\label{sec:apriori}
	
	\qquad In this section, resorting to the discrete convex duality relations established in Section \ref{sec:discrete},~we derive an \emph{a priori} error identity for  the discrete primal problem \eqref{eq:discrete_primal} and the discrete~dual~problem \eqref{eq:discrete_dual} \hspace{-0.1mm}at \hspace{-0.1mm}the \hspace{-0.1mm}same \hspace{-0.1mm}time. \hspace{-0.1mm}From \hspace{-0.1mm}this \hspace{-0.1mm}\emph{a priori} \hspace{-0.1mm}error \hspace{-0.1mm}identity, \hspace{-0.1mm}in \hspace{-0.1mm}turn, \hspace{-0.1mm}we \hspace{-0.1mm}extract \hspace{-0.1mm}convergence \hspace{-0.1mm}under \hspace{-0.1mm}minimal \hspace{-0.1mm}regularity \hspace{-0.1mm}assumptions \hspace{-0.1mm}and \hspace{-0.1mm}explicit~\hspace{-0.1mm}error~\hspace{-0.1mm}decay~\hspace{-0.1mm}rates \hspace{-0.1mm}under \hspace{-0.1mm}fractional~\hspace{-0.1mm}\mbox{regularity}~\hspace{-0.1mm}assumptions. 
	To this end, we~proceed~analogously to the continuous setting (\textit{cf}.\ Section \ref{sec:aposteriori})~and~introduce~the \emph{discrete~primal-dual gap estimator} $\eta_{\textup{gap},h}^2\colon\hspace{-0.1em} K_h^{cr}\hspace{-0.1em}\times\hspace{-0.1em} K_h^{\smash{rt,*}}\hspace{-0.1em}\to \hspace{-0.1em}[0,+\infty)$, for every $v_h\in K_h^{cr}$~and~${y_h\hspace{-0.1em}\in\hspace{-0.1em} K_h^{\smash{rt,*}}}$ defined by 
	\begin{align}\label{eq:discrete_primal_dual_gap_estimator}
		\eta_{\textup{gap},h}^2(v_h,y_h)\coloneqq I_h^{cr}(v_h)-D_h^{rt}(y_h)\,.
	\end{align}
	The  discrete primal-dual gap estimator  \eqref{eq:discrete_primal_dual_gap_estimator} measures the accuracy of admissible approximations of the discrete primal problem \eqref{eq:discrete_primal} and the discrete dual problem \eqref{eq:discrete_dual}~at~the~same~time~via measuring the respective violation of the discrete strong duality relation \eqref{eq:discrete_strong_duality}. More precisely, discrete primal-dual gap estimator \eqref{eq:discrete_primal_dual_gap_estimator} 
    splits into two contributions~that~each~measure~the~violation of the discrete convex optimality relations \eqref{eq:discrete_optimality.1},\eqref{eq:discrete_optimality.2}.\enlargethispage{7.5mm}
	
	\begin{lemma}[representation of discrete primal-dual gap estimator]\label{lem:discrete_primal_dual_gap_estimator}
		For every $v_h\in K_h^{cr}$ and $y_h\in K_h^{\smash{rt,*}}$, we have that 
		\begin{align*}
			\eta_{\textup{gap},h}^2(v_h,y_h)\coloneqq  \eta_{A,\textup{gap},h}^2(v_h,y_h)+\eta_{B,\textup{gap},h}^2(v_h,y_h)\,,\hspace*{2.5cm}\\
			\text{where}\quad
            \left\{\quad\begin{aligned}
                \eta_{A,\textup{gap},h}^2(v_h,y_h)&\coloneqq 	\tfrac{1}{2} \|\nabla_h v_h-\Pi_h y_h\|_{\Omega}^2\,,\\
			\eta_{B,\textup{gap},h}^2(v_h,y_h)&\coloneqq 
            \tfrac{m}{2}\|y_h\cdot n\|_{\infty,\Gamma_I}^2+(y_h\cdot n,\pi_h v_h)_{\Gamma_I}+\tfrac{1}{2m}\|\pi_hv_h\|_{1,\Gamma_I}^2\,.
            \end{aligned}\right.
		\end{align*}
	\end{lemma}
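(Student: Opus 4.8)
The plan is to transcribe the proof of Lemma~\ref{lem:primal_dual_gap_estimator} to the discrete setting, with the integration-by-parts formula \eqref{eq:pi_cont} replaced by the discrete integration-by-parts formula \eqref{eq:pi} and the admissibility conditions \eqref{eq:admissibility.1},\eqref{eq:admissibility.2} replaced by the discrete admissibility conditions \eqref{eq:discrete_admissibility.1},\eqref{eq:discrete_admissibility.2}. First I would fix $v_h\in K_h^{cr}$ and $y_h\in K_h^{rt,*}$ and expand $I_h^{cr}(v_h)-D_h^{rt}(y_h)$ directly from the definitions \eqref{eq:discrete_primal},\eqref{eq:discrete_dual}. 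Since $v_h\in K_h^{cr}$ we have $\pi_hv_h=u_D^h$ a.e.\ on $\Gamma_D$, and since $y_h\in K_h^{rt,*}$ we have $\textup{div}\,y_h=-f_h$ a.e.\ in $\Omega$ and $y_h\cdot n=g_h$ a.e.\ on $\Gamma_N$; in particular all indicator terms in \eqref{eq:discrete_primal},\eqref{eq:discrete_dual} vanish and we are left with
\begin{align*}
	I_h^{cr}(v_h)-D_h^{rt}(y_h)&=\tfrac{1}{2}\|\nabla_hv_h\|_{\Omega}^2+\tfrac{1}{2}\|\Pi_hy_h\|_{\Omega}^2+\tfrac{1}{2m}\|\pi_hv_h\|_{1,\Gamma_I}^2+\tfrac{m}{2}\|y_h\cdot n\|_{\infty,\Gamma_I}^2\\
	&\quad-(f_h,\Pi_hv_h)_{\Omega}-(g_h,\pi_hv_h)_{\Gamma_N}-(y_h\cdot n,u_D^h)_{\Gamma_D}\,.
\end{align*}

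Next I would use $\textup{div}\,y_h=-f_h$ to replace $-(f_h,\Pi_hv_h)_{\Omega}$ by $(\textup{div}\,y_h,\Pi_hv_h)_{\Omega}$ and then invoke the discrete integration-by-parts formula \eqref{eq:pi} to write $(\textup{div}\,y_h,\Pi_hv_h)_{\Omega}=(\pi_hv_h,y_h\cdot n)_{\partial\Omega}-(\nabla_hv_h,\Pi_hy_h)_{\Omega}$. The three gradient terms $\tfrac{1}{2}\|\nabla_hv_h\|_{\Omega}^2+\tfrac{1}{2}\|\Pi_hy_h\|_{\Omega}^2-(\nabla_hv_h,\Pi_hy_h)_{\Omega}$ then collapse, by the binomial formula, to $\tfrac{1}{2}\|\nabla_hv_h-\Pi_hy_h\|_{\Omega}^2=\eta_{A,\textup{gap},h}^2(v_h,y_h)$. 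For the remaining terms $(\pi_hv_h,y_h\cdot n)_{\partial\Omega}-(g_h,\pi_hv_h)_{\Gamma_N}-(y_h\cdot n,u_D^h)_{\Gamma_D}$, I would split the boundary inner product over $\mathcal{S}_h^{\partial}=\mathcal{S}_h^I\dot{\cup}\mathcal{S}_h^D\dot{\cup}\mathcal{S}_h^N$: the $\Gamma_N$-contribution equals $(g_h,\pi_hv_h)_{\Gamma_N}$ by \eqref{eq:discrete_admissibility.2} and cancels, the $\Gamma_D$-contribution equals $(y_h\cdot n,u_D^h)_{\Gamma_D}$ because $\pi_hv_h=u_D^h$ on $\Gamma_D$ and cancels, and what survives is $(y_h\cdot n,\pi_hv_h)_{\Gamma_I}$. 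Combining this with the boundary norm terms $\tfrac{1}{2m}\|\pi_hv_h\|_{1,\Gamma_I}^2+\tfrac{m}{2}\|y_h\cdot n\|_{\infty,\Gamma_I}^2$ yields exactly $\eta_{B,\textup{gap},h}^2(v_h,y_h)$, which finishes the identity.

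I do not expect a genuine obstacle here: the argument is a finite-dimensional mirror of Lemma~\ref{lem:primal_dual_gap_estimator}. The only two points that need attention are \emph{(a)} applying \eqref{eq:pi} with the correct cell- and side-wise projections $\Pi_h,\pi_h$ in place --- this is precisely why the discrete dual functional \eqref{eq:discrete_dual} is phrased through $\Pi_hy_h$ and $y_h\cdot n$ rather than $y_h$ itself --- and \emph{(b)} the bookkeeping of the three boundary pieces, which relies on the standing compatibility assumption $\mathcal{S}_h^{\partial}=\mathcal{S}_h^I\dot{\cup}\mathcal{S}_h^D\dot{\cup}\mathcal{S}_h^N$ so that the side-wise trace $\pi_hv_h$ is well defined, coincides with $u_D^h$ on $\Gamma_D$, and so that $y_h\cdot n=g_h$ acts on $\Gamma_N$.
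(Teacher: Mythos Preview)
Your proposal is correct and follows essentially the same approach as the paper's proof: expand $I_h^{cr}(v_h)-D_h^{rt}(y_h)$ from the definitions, use the discrete admissibility conditions \eqref{eq:discrete_admissibility.1},\eqref{eq:discrete_admissibility.2} and $\pi_hv_h=u_D^h$ on $\Gamma_D$, apply the discrete integration-by-parts formula \eqref{eq:pi}, and collapse the quadratic gradient terms via the binomial formula while the boundary contributions on $\Gamma_D$ and $\Gamma_N$ cancel to leave exactly $(y_h\cdot n,\pi_hv_h)_{\Gamma_I}$. The paper performs these steps in the same order with only cosmetic differences in how the boundary terms are grouped.
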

	
	\begin{remark}[interpretation of the components of the discrete primal-dual gap estimator]\hphantom{                   }
		\begin{itemize}[noitemsep,topsep=2pt,leftmargin=!,labelwidth=\widthof{(iii)},font=\itshape]
			\item[(i)] The estimator $\eta_{A,\textup{gap},h}^2$ measures the violation of the discrete convex optimality~relation~\eqref{eq:discrete_optimality.1};
			\item[(ii)] The estimator $\eta_{B,\textup{gap},h}^2$ measures the violation of the discrete convex optimality~relation~\eqref{eq:discrete_optimality.2}. Moreover,  by the Fenchel--Young inequality (\textit{cf}.\ \cite[Prop.\ 5.1, p.\ 21]{ET99}), for every $v_h\in \mathcal{S}^{1,cr}(\mathcal{T}_h)$ and  $y_h\in \mathcal{R}T^0(\mathcal{T}_h)$, we have that
            \begin{align*}
                \tfrac{m}{2}\|y_h\cdot n\|_{\infty,\Gamma_I}^2+(y_h\cdot n, \pi_hv_h)_{\Gamma_I}+\tfrac{1}{2m}\|\pi_hv_h\|_{1,\Gamma_I}^2\ge 0\,.
            \end{align*}
		\end{itemize}
	\end{remark}
	
	\begin{proof}[Proof (of Lemma \ref{lem:discrete_primal_dual_gap_estimator})]\let\qed\relax
        For every $v_h\in K_h^{cr}$ and $y_h\in K^{rt,*}_h$, 
		using the admissibility conditions \eqref{eq:discrete_admissibility.1},\eqref{eq:discrete_admissibility.2},  $v_h=u_D^h$ a.e.\ on $\Gamma_D$, the integration-by-parts formula \eqref{eq:pi_cont}, and the~binomial~formula, we find that
		\begin{align*}
			I_h^{cr}(v_h)-D_h^{rt}(y_h)&= \tfrac{1}{2}\| \nabla_hv_h\|_{\Omega}^2-(f_h,\Pi_hv_h)_{\Omega}-(g_h,\pi_hv_h)_{\Gamma_N}+\tfrac{1}{2m}\|\pi_hv_h\|_{1,\Gamma_I}^2\\&\quad+\tfrac{1}{2}\|\Pi_hy_h\|_{\Omega}^2-(y_h\cdot n,v_h)_{\Gamma_D}+\tfrac{m}{2}\|y_h\cdot n\|_{\infty,\Gamma_I}^2\\&
			= \tfrac{1}{2}\| \nabla_hv_h\|_{\Omega}^2+(\textup{div}\,y_h,\Pi_hv_h)_{\Omega}+\tfrac{1}{2}\| \Pi_hy_h\|_{\Omega}^2\\&\quad+\tfrac{m}{2}\|y_h\cdot n\|_{\infty,\Gamma_I}^2-(y_h\cdot n,v_h)_{\Gamma_D\cup \Gamma_N}+\tfrac{1}{2m}\|\pi_hv_h\|_{1,\Gamma_I}^2
			\\&
			= \tfrac{1}{2}\| \nabla_h v_h\|_{\Omega}^2-(\Pi_hy_h,\nabla_h v_h)_{\Omega}+\tfrac{1}{2}\|\Pi_hy_h\|_{\Omega}^2\\&\quad+\tfrac{m}{2}\|y_h\cdot n\|_{\infty,\Gamma_I}^2+(y_h\cdot n, \pi_hv_h)_{\Gamma_I}+\tfrac{1}{2m}\|\pi_hv_h\|_{1,\Gamma_I}^2
			\\&
			= \tfrac{1}{2}\| \nabla_h v_h-\Pi_hy_h\|_{\Omega}^2\\&\quad+\tfrac{m}{2}\|y_h\cdot n\|_{\infty,\Gamma_I}^2+(y_h\cdot n, \pi_hv_h)_{\Gamma_I}+\tfrac{1}{2m}\|\pi_hv_h\|_{1,\Gamma_I}^2 \,.\tag*{$\qedsymbol$}
		\end{align*}
	\end{proof}

    Next, as \emph{`natural'} error quantities in the discrete primal-dual gap identity  (\textit{cf}.\ Theorem~\ref{thm:discrete_prager_synge_identity}), we employ the \emph{optimal strong convexity measures} for the discrete  primal energy functional~\eqref{eq:discrete_primal}~at~a discrete primal solution $u_h^{cr}\in  K_h^{cr}$, \textit{i.e.}, 
	$\smash{\rho_{I_h^{cr}}^2}\colon K_h^{cr}\to [0,+\infty)$, and the discrete~dual~\mbox{energy}~functional \eqref{eq:discrete_dual} at the discrete dual solution $z_h^{rt} \in K_h^{\smash{rt,*}}$, \textit{i.e.}, $\smash{\rho_{\smash{-D_h^{rt}}}^2}\colon  K_h^{\smash{rt,*}}\to [0,+\infty)$, for every $v_h\in K_h^{cr}$ and $y_h\in  K_h^{\smash{rt,*}}$, respectively, defined by
	\begin{align}\label{def:discrete_optimal_primal_error}
		\rho_{\smash{I_h^{cr}}}^2(v_h)&\coloneqq I_h^{cr}(v_h)-I_h^{cr}(u_h^{cr})\,,\\
        \rho_{\smash{-D_h^{rt}}}^2(y_h)&\coloneqq- D_h^{rt}(y_h)+D_h^{rt}(z_h^{rt})\,.
	\end{align}
	
	\begin{lemma}[discrete optimal strong convexity measures]\label{lem:discrete_strong_convexity_measures}
		The following statements apply:
		\begin{itemize}[noitemsep,topsep=2pt,leftmargin=!,labelwidth=\widthof{(ii)}]
			\item[(i)] \hypertarget{lem:discrete_strong_convexity_measures.i}{} For every $v_h\in K_h^{cr}$, we have that 
			\begin{align*}
				\hspace{-2.5mm}\smash{\rho_{I_h^{cr}}^2(v_h)=\tfrac{1}{2}\|\nabla_h v_h-\nabla_h u_h^{cr}\|_{\Omega}^2+\tfrac{m}{2}\|z_h^{rt}\cdot n\|_{\infty,\Gamma_I}^2+(z_h^{rt}\cdot n, \pi_hv_h)_{\Gamma_I}+\tfrac{1}{2m}\|\pi_hv_h\|_{1,\Gamma_I}^2\,;}
			\end{align*}
			\item[(ii)]\hypertarget{lem:discrete_strong_convexity_measures.ii}{}  For every $y_h\in K_h^{\smash{rt,*}}$, we have that 
			\begin{align*}
				\hspace{-2.5mm}\smash{\rho_{-D_h^{rt}}^2(y_h)=\tfrac{1}{2}\|\Pi_h y_h-\Pi_h z_h^{rt}\|_{\Omega}^2+\tfrac{m}{2}\|y_h\cdot n\|_{\infty,\Gamma_I}^2+(y_h\cdot n, \pi_hu_h^{cr})_{\Gamma_I}+\tfrac{1}{2m}\|\pi_hu_h^{cr}\|_{1,\Gamma_I}^2\,.}
			\end{align*}
		\end{itemize}
		\end{lemma}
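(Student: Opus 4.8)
The plan is to mimic the proof of Lemma~\ref{lem:strong_convexity_measures} line by line, replacing each continuous ingredient by its discrete counterpart: the integration-by-parts formula \eqref{eq:pi_cont} by the discrete integration-by-parts formula \eqref{eq:pi}, the admissibility conditions \eqref{eq:admissibility.1},\eqref{eq:admissibility.2} by the discrete admissibility conditions \eqref{eq:discrete_admissibility.1},\eqref{eq:discrete_admissibility.2}, and the convex optimality relations \eqref{eq:optimality.1},\eqref{eq:optimality.2} by the discrete convex optimality relations \eqref{eq:discrete_optimality.1},\eqref{eq:discrete_optimality.2}, all of which are supplied by Theorem~\ref{prop:discrete_duality}. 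Since the discrete primal energy functional \eqref{eq:discrete_primal} couples the data to $v_h$ only through $\Pi_hv_h$ and $\pi_hv_h$, and the discrete integration-by-parts formula \eqref{eq:pi} is itself phrased in terms of $\Pi_h$ and $\pi_h$, these projections propagate through the computation without additional bookkeeping.

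For part~(i), I would start from $\rho_{I_h^{cr}}^2(v_h)=I_h^{cr}(v_h)-I_h^{cr}(u_h^{cr})$, expand both energies via \eqref{eq:discrete_primal} (the Dirichlet indicator terms cancel since $v_h,u_h^{cr}\in K_h^{cr}$ both equal $u_D^h$ on $\Gamma_D$), substitute $-f_h=\textup{div}\,z_h^{rt}$ from \eqref{eq:discrete_admissibility.1} in the source term, and apply the discrete integration-by-parts formula \eqref{eq:pi} to $(\textup{div}\,z_h^{rt},\Pi_h(v_h-u_h^{cr}))_{\Omega}$. Using $z_h^{rt}\cdot n=g_h$ on $\Gamma_N$ from \eqref{eq:discrete_admissibility.2} to cancel the Neumann datum and $\pi_hv_h=\pi_hu_h^{cr}$ on $\Gamma_D$ to eliminate the Dirichlet contribution, the only surviving boundary term is $(z_h^{rt}\cdot n,\pi_h(v_h-u_h^{cr}))_{\Gamma_I}$. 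Inserting $\Pi_hz_h^{rt}=\nabla_hu_h^{cr}$ from \eqref{eq:discrete_optimality.1} and completing the square via the binomial formula turns the gradient contributions into $\tfrac{1}{2}\|\nabla_hv_h-\nabla_hu_h^{cr}\|_{\Omega}^2$; finally, rewriting $-(z_h^{rt}\cdot n,\pi_hu_h^{cr})_{\Gamma_I}-\tfrac{1}{2m}\|\pi_hu_h^{cr}\|_{1,\Gamma_I}^2=\tfrac{m}{2}\|z_h^{rt}\cdot n\|_{\infty,\Gamma_I}^2$ by means of \eqref{eq:discrete_optimality.2} collapses the remaining $\Gamma_I$-terms into $\tfrac{m}{2}\|z_h^{rt}\cdot n\|_{\infty,\Gamma_I}^2+(z_h^{rt}\cdot n,\pi_hv_h)_{\Gamma_I}+\tfrac{1}{2m}\|\pi_hv_h\|_{1,\Gamma_I}^2$, which is the claimed identity.

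For part~(ii), I would proceed symmetrically from $\rho_{-D_h^{rt}}^2(y_h)=-D_h^{rt}(y_h)+D_h^{rt}(z_h^{rt})$, expand via \eqref{eq:discrete_dual} (the indicator terms again vanish since $y_h,z_h^{rt}\in K_h^{rt,*}$), complete the square in the $\Pi_h$-quadratic term, replace $\Pi_hz_h^{rt}=\nabla_hu_h^{cr}$ using \eqref{eq:discrete_optimality.1}, and then apply \eqref{eq:pi} together with $\textup{div}\,(y_h-z_h^{rt})=0$ and $(y_h-z_h^{rt})\cdot n=0$ on $\Gamma_N$ (from \eqref{eq:discrete_admissibility.1},\eqref{eq:discrete_admissibility.2}) and $\pi_hu_h^{cr}=u_D^h$ on $\Gamma_D$ (the latter absorbing the $(y_h\cdot n,u_D^h)_{\Gamma_D}$-term of \eqref{eq:discrete_dual}); the remaining $\Gamma_I$-contribution is then resolved by \eqref{eq:discrete_optimality.2}, yielding the claimed representation. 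The computation is entirely routine and no genuine obstacle arises, every discrete ingredient having already been established in Theorem~\ref{prop:discrete_duality}. The only point deserving care is the bookkeeping of the three boundary parts $\Gamma_I$, $\Gamma_D$, $\Gamma_N$ when invoking \eqref{eq:pi}: one must exploit $v_h,u_h^{cr}\in K_h^{cr}$ (resp.\ $y_h,z_h^{rt}\in K_h^{rt,*}$) both to make the Dirichlet (resp.\ Neumann) boundary terms cancel and to legitimately substitute the admissibility constraints, so that the boundary integral that remains is genuinely supported on $\Gamma_I$ and amenable to the discrete convex optimality relation \eqref{eq:discrete_optimality.2}.
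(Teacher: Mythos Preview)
Your proposal is correct and follows essentially the same route as the paper's proof: expand the energies, substitute the discrete admissibility conditions \eqref{eq:discrete_admissibility.1},\eqref{eq:discrete_admissibility.2}, apply the discrete integration-by-parts formula \eqref{eq:pi} while tracking the three boundary parts, invoke the discrete convex optimality relations \eqref{eq:discrete_optimality.1},\eqref{eq:discrete_optimality.2}, and complete the square. The only difference is a harmless reordering in part~(ii)---the paper first rewrites the $\Gamma_D$-term as a full $\partial\Omega$-integral minus the $\Gamma_I$-contribution and then applies \eqref{eq:pi}, whereas you complete the square first---but the ingredients and logic are identical.
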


		\begin{proof}\let\qed\relax
		\emph{ad (\hyperlink{lem:discrete_strong_convexity_measures.i}{i}).} For every $v_h\in K_h^{cr}$, using the discrete admissibility conditions \eqref{eq:discrete_admissibility.1},\eqref{eq:discrete_admissibility.2}, the discrete integration-by-parts~formula \eqref{eq:pi} together with $\pi_hv_h=\pi_h u_h^{cr}$ a.e.\ on $\Gamma_D$, the discrete convex optimality relations \eqref{eq:discrete_optimality.1},\eqref{eq:discrete_optimality.2}, and 
  the binomial formula,  we find that
		\begin{align*}
			I_h^{cr}(v_h)-I_h^{cr}(u_h^{cr})&=\tfrac{1}{2}\|\nabla_hv_h\|_{\Omega}^2-\tfrac{1}{2}\|\nabla_hu_h^{cr}\|_{\Omega}^2-(f_h,\Pi_hv_h-\Pi_hu_h^{cr})_{\Omega}-(g_h,\pi_hv_h-\pi_hu_h^{cr})_{\Gamma_N}\\&\quad+\tfrac{1}{2m}\|\pi_h v_h\|_{1,\Gamma_I}^2-\tfrac{1}{2m}\|\pi_hu_h^{cr}\|_{1,\Gamma_I}^2
    \\&=\tfrac{1}{2}\|\nabla_h v_h\|_{\Omega}^2-\tfrac{1}{2}\|\nabla_h u_h^{cr}\|_{\Omega}^2+(\textup{div}\,z_h^{rt},\Pi_hv_h-\Pi_hu_h^{cr})_{\Omega}\\&\quad+\tfrac{1}{2m}\|\pi_h v_h\|_{1,\Gamma_I}^2-\tfrac{1}{2m}\|\pi_hu_h^{cr}\|_{1,\Gamma_I}^2-(z_h^{rt}\cdot n,\pi_hv_h-\pi_hu_h^{cr})_{\Gamma_N}\\&
			=\tfrac{1}{2}\|\nabla_h v_h\|_{\Omega}^2-\tfrac{1}{2}\|\nabla_h u_h^{cr}\|_{\Omega}^2+(\Pi_hz_h^{rt},\nabla_h v_h-\nabla_h u_h^{cr})_{\Omega}
            \\&\quad+\tfrac{1}{2m}\|\pi_hv_h\|_{1,\Gamma_I}^2-\tfrac{1}{2m}\|\pi_hu_h^{cr}\|_{1,\Gamma_I}^2+(z_h^{rt}\cdot n,\pi_hv_h-\pi_hu_h^{cr})_{\Gamma_I}
            \\&
			=\tfrac{1}{2}\|\nabla_h v_h\|_{\Omega}^2-\tfrac{1}{2}\|\nabla_h u_h^{cr}\|_{\Omega}^2+(\nabla_h u_h^{cr},\nabla_h v_h-\nabla_h u_h^{cr})_{\Omega}
            \\&\quad+\tfrac{m}{2}\|z_h^{rt}\cdot n\|_{\infty,\Gamma_I}^2+(z_h^{rt}\cdot n,\pi_h v_h)_{\Gamma_I}+\tfrac{1}{2m}\|\pi_h v_h\|_{1,\Gamma_I}^2
			\\&
			=\tfrac{1}{2}\|\nabla_h v_h-\nabla_h u_h^{cr}\|_{\Omega}^2\\&\quad+\tfrac{m}{2}\|z_h^{rt}\cdot n\|_{\infty,\Gamma_I}^2+(z_h^{rt}\cdot n,\pi_hv_h)_{\Gamma_I}+\tfrac{1}{2m}\|\pi_hv_h\|_{1,\Gamma_I}^2\,.
		\end{align*}
		
		\emph{ad (\hyperlink{lem:discrete_strong_convexity_measures.ii}{ii}).} For every $y_h\in K^{rt,*}_h$, using that $y_h\cdot n=z_h^{rt}\cdot n$ a.e.\ on $\Gamma_N$, that $\pi_h u_h^{cr}=u_D^h$ a.e.\ on $\Gamma_D$, the discrete  integration-by-parts  formula \eqref{eq:pi} together with the discrete admissibility conditions \eqref{eq:discrete_admissibility.1},\eqref{eq:discrete_admissibility.2}, the discrete convex optimality relation \eqref{eq:discrete_optimality.2}, and the binomial formula, we find that
		\begin{align*}
			\hspace{-1mm}-D_h^{rt}(y_h)+D_h^{rt}(z_h^{rt})&=\tfrac{1}{2}\|\Pi_hy_h\|_{\Omega}^2-\tfrac{1}{2}\|\Pi_hz_h^{rt}\|_{\Omega}^2
            +(z_h^{rt}\cdot n-y_h\cdot n, u_D^h)_{\Gamma_D}
            \\&\quad+\tfrac{m}{2}\|y_h\cdot n\|_{\infty,\Gamma_I}^2-\tfrac{m}{2}\|z_h^{rt}\cdot n\|_{\infty,\Gamma_I}^2
            \\&=\tfrac{1}{2}\|\Pi_hy_h\|_{\Omega}^2-\tfrac{1}{2}\|\Pi_hz_h^{rt}\|_{\Omega}^2
            +(z_h^{rt}\cdot n-y_h\cdot n, \pi_hu_h^{cr})_{\partial\Omega}
            \\&\quad+\tfrac{m}{2}\|y_h\cdot n\|_{\infty,\Gamma_I}^2-\tfrac{m}{2}\|z_h^{rt}\cdot n\|_{\infty,\Gamma_I}^2-(z_h^{rt}\cdot n-y_h\cdot n, \pi_hu_h^{cr})_{\Gamma_I}\\&
			=\tfrac{1}{2}\|\Pi_hy_h\|_{\Omega}^2-\tfrac{1}{2}\|\Pi_hz_h^{rt}\|_{\Omega}^2-(\Pi_hz_h^{rt},\Pi_hy_h-\Pi_hz_h^{rt})_{\Omega}\\&\quad+\tfrac{m}{2}\|y_h\cdot n\|_{\infty,\Gamma_I}^2+(y_h\cdot n, \pi_hu_h^{cr})_{\Gamma_I}+\tfrac{1}{2m}\|\pi_hu_h^{cr}\|_{1,\Gamma_I}^2
   \\&
			=\tfrac{1}{2}\|\Pi_hy_h-\Pi_h z_h^{rt}\|_{\Omega}^2
            \\&\quad+\tfrac{m}{2}\|y_h\cdot n\|_{\infty,\Gamma_I}^2+(y_h\cdot n, \pi_hu_h^{cr})_{\Gamma_I}+\tfrac{1}{2m}\|\pi_hu_h^{cr}\|_{1,\Gamma_I}^2 \,.\tag*{$\qedsymbol$}
		\end{align*}
		\end{proof}

		Eventually, we establish a discrete \textit{a posteriori} error identity that identifies~the~\emph{discrete~primal-dual total error} $\rho_{\textup{tot},h}^2\colon K_h^{cr}\times K_h^{\smash{rt,*}}\to [0,+\infty)$, for every $v_h\in K_h^{cr}$ and $y_h\in K_h^{\smash{rt,*}}$ defined~by\vspace{-0.5mm} 
		\begin{align}\label{eq:discrete_primal_dual_error}
			\smash{\rho_{\textup{tot},h}^2(v_h,y_h)\coloneqq \rho_{I_h^{cr}}^2(v_h)+\rho_{-D_h^{rt}}^2(y_h)\,,}
		\end{align}
		with the discrete primal-dual gap estimator  \eqref{eq:discrete_primal_dual_gap_estimator}.\enlargethispage{5mm}
		
	\begin{theorem}[discrete primal-dual gap identity]\label{thm:discrete_prager_synge_identity}
		For every $v_h\in  K_h^{cr}$ and $y_h\in K_h^{\smash{rt,*}}$,~we~have that\vspace{-1mm} 
		\begin{align*}
			\smash{\rho_{\textup{tot},h}^2(v_h,y_h)=\eta_{\textup{gap},h}^2(v_h,y_h)\,.}
		\end{align*}
	\end{theorem}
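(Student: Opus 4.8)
The plan is to mimic exactly the argument used for the continuous primal-dual gap identity in Theorem~\ref{thm:prager_synge_identity}: unfold the definitions \eqref{def:discrete_optimal_primal_error}--\eqref{eq:discrete_primal_dual_error} of the discrete optimal strong convexity measures and of the discrete primal-dual total error, and then eliminate the (unknown) optimal energy values by invoking the discrete strong duality relation \eqref{eq:discrete_strong_duality} already established in Theorem~\ref{prop:discrete_duality}\,(ii).

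Concretely, for arbitrary $v_h\in K_h^{cr}$ and $y_h\in K_h^{rt,*}$ I would write
\[
\rho_{\textup{tot},h}^2(v_h,y_h)=\rho_{I_h^{cr}}^2(v_h)+\rho_{-D_h^{rt}}^2(y_h)
=\bigl(I_h^{cr}(v_h)-I_h^{cr}(u_h^{cr})\bigr)+\bigl(-D_h^{rt}(y_h)+D_h^{rt}(z_h^{rt})\bigr),
\]
and then use $I_h^{cr}(u_h^{cr})=D_h^{rt}(z_h^{rt})$ from \eqref{eq:discrete_strong_duality} so that the two optimal energy values cancel; what remains is precisely $I_h^{cr}(v_h)-D_h^{rt}(y_h)=\eta_{\textup{gap},h}^2(v_h,y_h)$ by the definition \eqref{eq:discrete_primal_dual_gap_estimator}. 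This is purely algebraic and requires nothing beyond Theorem~\ref{prop:discrete_duality}.

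As a consistency check — and an alternative route that additionally reproduces the splitting into the $A$- and $B$-components — one can instead add the two representations from Lemma~\ref{lem:discrete_strong_convexity_measures} and compare the result with the representation in Lemma~\ref{lem:discrete_primal_dual_gap_estimator}; there the cross terms $(z_h^{rt}\cdot n,\pi_h v_h)_{\Gamma_I}$ and $(y_h\cdot n,\pi_h u_h^{cr})_{\Gamma_I}$, together with $\tfrac{m}{2}\|z_h^{rt}\cdot n\|_{\infty,\Gamma_I}^2$ and $\tfrac{1}{2m}\|\pi_h u_h^{cr}\|_{1,\Gamma_I}^2$, would have to be reorganized using the discrete convex optimality relation \eqref{eq:discrete_optimality.2} (equivalently \eqref{eq:discrete_optimality.3}) and the discrete integration-by-parts formula \eqref{eq:pi}, matching $\rho_{I_h^{cr}}^2+\rho_{-D_h^{rt}}^2$ term-by-term with $\eta_{A,\textup{gap},h}^2+\eta_{B,\textup{gap},h}^2$. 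I would not pursue this longer computation in the write-up.

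There is essentially no obstacle here, since the only ingredient is the discrete strong duality relation, which is already available. The one point to be careful about is that the identity is asserted for \emph{all} admissible $v_h\in K_h^{cr}$ and $y_h\in K_h^{rt,*}$, not merely for the discrete solutions: the proof must use only that $I_h^{cr}$ and $-D_h^{rt}$ are finite on their effective domains (so that the differences are well-defined reals) and must not implicitly specialize $v_h=u_h^{cr}$ or $y_h=z_h^{rt}$; the short argument above respects this automatically.
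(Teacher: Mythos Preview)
Your proposal is correct and follows exactly the paper's own proof: combine the definitions of the discrete optimal strong convexity measures and the discrete primal-dual total error, then cancel the optimal energy values via the discrete strong duality relation \eqref{eq:discrete_strong_duality}. The alternative route you sketch (via Lemmas~\ref{lem:discrete_primal_dual_gap_estimator} and~\ref{lem:discrete_strong_convexity_measures}) is sound but, as you note, unnecessary here.
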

	
	\begin{proof}
        We combine the definitions \eqref{eq:discrete_primal_dual_gap_estimator}--\eqref{eq:discrete_primal_dual_error} using the discrete strong duality \eqref{eq:discrete_strong_duality}.
	\end{proof}\newpage
    Inserting the canonical interpolants \eqref{CR-interpolant},\eqref{RT-interpolant} of a primal and the dual solution,~\mbox{respectively}, in the discrete primal-dual gap identity (\textit{cf}.\ Theorem \ref{thm:discrete_prager_synge_identity}), we arrive at an \textit{a priori}~error~\mbox{identity}, 
    which, depending on regularity assumptions, allows us to 
    extract convergence~or~error~\mbox{decay}~rates.\vspace{-0.5mm}

    \begin{theorem}[\textit{a priori} error identity, convergence,  error decay rates] \label{thm:apriori_identity}
		If $f_h\coloneqq \Pi_h f\in \mathcal{L}^0(\mathcal{T}_h)$, $g_h\coloneqq \pi_h g$ $\in \mathcal{L}^0(\mathcal{S}_h^N)$, and $u_D^h\coloneqq \pi_h u_D\in \mathcal{L}^0(\mathcal{S}_h^D)$, then~the~following~\mbox{statements}~apply:
		\begin{itemize}[noitemsep,topsep=2pt,leftmargin=!,labelwidth=\widthof{(ii)}]
			\item[(i)] \hypertarget{thm:apriori_identity.i}{} \emph{\textit{A priori} error identity and convergence:} If $z\in  \smash{(L^p(\Omega))^d}$, where $p>2$, then  $\Pi_h^{rt}z\in  \smash{K_h^{\smash{rt,*}}}$,~and
			\begin{align*}
				\rho_{\textup{tot},h}^2(\Pi_h^{cr}u,\Pi_h^{rt}z)&= 
    \tfrac{1}{2}\|\Pi_h z -\Pi_h \Pi_h^{rt} z\|_{\Omega}^2+(\pi_h(\overline{z\cdot n})-\overline{z\cdot n},u-\pi_hu)_{\Gamma_I}
    \\&\quad+
   \tfrac{m}{2} \big\{\| \pi_h(\overline{z\cdot n})\|_{\infty,\Gamma_I}^2-\|  \overline{z\cdot n}\|_{\infty,\Gamma_I}^2\big\}+\tfrac{1}{2m}\big\{\| \pi_h u \|_{1,\Gamma_I}^2-\|u \|_{1,\Gamma_I}^2\big\}\,. 
			\end{align*} 
            In particular, there holds\vspace{-0.5mm}
            \begin{align*}
                \rho_{\textup{tot},h}^2(\Pi_h^{cr}u,\Pi_h^{rt}z)\to 0\quad (h\to 0)\,;
            \end{align*} 
			\item[(ii)] \hypertarget{thm:apriori_identity.ii}{} \emph{Error decay rates I:} If $u\in H^{1+\nu}(\Omega)$ (\textup{i.e.}, $z\in (H^{\nu}(\Omega))^d$ due to \eqref{eq:optimality.1}), where $\nu \in (0,1]$,~then\vspace{-0.5mm}
			\begin{align*}
				\rho_{\textup{tot},h}^2(\Pi_h^{cr}u,\Pi_h^{rt}z)\lesssim\,\begin{cases}
                h^{\smash{\min\{2\nu,\frac{1}{2}\}}}&\text{ if }\nu\in (0,\frac{1}{2}]\,,\\
                h^{\smash{\frac{1}{2}+\nu}}&\text{ if }\nu\in (\frac{1}{2},1]\,;
				\end{cases}
			\end{align*}
            \item[(iii)] \hypertarget{thm:apriori_identity.iii}{} \emph{Error decay rates II:} If $u\in  \smash{H^{1+\nu}(\Omega)}$, where $\nu \in (0,1]$, and, in addition, $u\in  \smash{H^\alpha(\Gamma_I)} $ and $z\in (H^\beta(\Gamma_I))^d$, where $\alpha,\beta\in (0,1]$,
			then\vspace{-0.5mm}\enlargethispage{6mm}
            \begin{align*} \rho_{\textup{tot},h}^2(\Pi_h^{cr}u,\Pi_h^{rt}z)\lesssim h^{\smash{\min\{2\nu,\alpha+\beta\}}}\,. 
			\end{align*}
		\end{itemize} 
	\end{theorem}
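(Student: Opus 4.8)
The plan is to insert the canonical interpolants $\Pi_h^{cr}u \in K_h^{cr}$ of a primal solution $u \in K$ and $\Pi_h^{rt}z \in K_h^{rt,*}$ of the dual solution $z \in K^*$ into the discrete primal--dual gap identity (Theorem~\ref{thm:discrete_prager_synge_identity}) and to convert the resulting gap into a sum of pure interpolation errors by means of the structure-preservation of these interpolants together with the continuous convex optimality relations \eqref{eq:optimality.1},\eqref{eq:optimality.2}. The first step is to check admissibility: since $z \in (L^p(\Omega))^d$ with $p > 2$ and $\textup{div}\,z = -f \in L^2(\Omega)$ with $2 > \tfrac{2d}{d+2}$, the operator $\Pi_h^{rt}$ is applicable to $z$, and \eqref{eq:div_preservation},\eqref{eq:normal_trace_preservation} together with $f_h = \Pi_h f$ and $g_h = \pi_h g$ give $\textup{div}\,\Pi_h^{rt}z = -f_h$ in $\Omega$ and $\Pi_h^{rt}z\cdot n = g_h$ on $\Gamma_N$, i.e.\ $\Pi_h^{rt}z \in K_h^{rt,*}$; similarly \eqref{eq:trace_preservation} with $u_D^h = \pi_h u_D$ gives $\Pi_h^{cr}u \in K_h^{cr}$.

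Next I would evaluate the gap. By Theorem~\ref{thm:discrete_prager_synge_identity} and Lemma~\ref{lem:discrete_primal_dual_gap_estimator}, $\rho_{\textup{tot},h}^2(\Pi_h^{cr}u,\Pi_h^{rt}z) = \eta_{A,\textup{gap},h}^2(\Pi_h^{cr}u,\Pi_h^{rt}z) + \eta_{B,\textup{gap},h}^2(\Pi_h^{cr}u,\Pi_h^{rt}z)$. For the $A$-part I use $\nabla_h\Pi_h^{cr}u = \Pi_h\nabla u = \Pi_h z$ (by \eqref{eq:grad_preservation} and \eqref{eq:optimality.1}) to obtain $\eta_{A,\textup{gap},h}^2 = \tfrac{1}{2}\|\Pi_h z - \Pi_h\Pi_h^{rt}z\|_\Omega^2$. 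For the $B$-part I use $\pi_h\Pi_h^{cr}u = \pi_h u$ (by \eqref{eq:trace_preservation}) and $\Pi_h^{rt}z\cdot n = \pi_h(\overline{z\cdot n})$ on $\Gamma_I$ (by \eqref{eq:normal_trace_preservation}), expand $(\pi_h(\overline{z\cdot n}),\pi_h u)_{\Gamma_I} = (\overline{z\cdot n},u)_{\Gamma_I} + (\pi_h(\overline{z\cdot n}) - \overline{z\cdot n},\,u - \pi_h u)_{\Gamma_I}$ using the $L^2$-orthogonality of $\pi_h$, and substitute \eqref{eq:optimality.2} for $(\overline{z\cdot n},u)_{\Gamma_I}$. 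Collecting the terms reproduces exactly the identity claimed in (\hyperlink{thm:apriori_identity.i}{i}).

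For the convergence assertion in (\hyperlink{thm:apriori_identity.i}{i}) I would pass to the limit in this identity term by term: $\|\Pi_h z - \Pi_h\Pi_h^{rt}z\|_\Omega \le \|z - \Pi_h^{rt}z\|_\Omega \to 0$ by $L^2$-stability of $\Pi_h$ and the convergence of the Raviart--Thomas interpolant; $|(\pi_h(\overline{z\cdot n}) - \overline{z\cdot n},\,u - \pi_h u)_{\Gamma_I}| \le \|\overline{z\cdot n}\|_{\infty,\Gamma_I}\,\|u - \pi_h u\|_{1,\Gamma_I} \to 0$ since $u|_{\Gamma_I} \in L^1(\Gamma_I)$; $\|\pi_h u\|_{1,\Gamma_I} \to \|u\|_{1,\Gamma_I}$, so the $\tfrac{1}{2m}$-difference vanishes; and $\|\pi_h(\overline{z\cdot n})\|_{\infty,\Gamma_I} \to \|\overline{z\cdot n}\|_{\infty,\Gamma_I}$, where $\limsup \le$ is immediate from $\|\pi_h\cdot\|_{\infty,\Gamma_I}\le\|\cdot\|_{\infty,\Gamma_I}$ and $\liminf \ge$ follows from a Lebesgue-point argument on $\Gamma_I$.

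For the rates in (\hyperlink{thm:apriori_identity.ii}{ii}) and (\hyperlink{thm:apriori_identity.iii}{iii}) I would bound the four terms of the identity of (\hyperlink{thm:apriori_identity.i}{i}) separately, first noting that the two defect terms $\tfrac{m}{2}(\|\pi_h(\overline{z\cdot n})\|_{\infty,\Gamma_I}^2 - \|\overline{z\cdot n}\|_{\infty,\Gamma_I}^2)$ and $\tfrac{1}{2m}(\|\pi_h u\|_{1,\Gamma_I}^2 - \|u\|_{1,\Gamma_I}^2)$ are $\le 0$ (since $\pi_h$ increases neither the $L^\infty$- nor the $L^1$-norm) and may be discarded in an upper bound. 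The volume term is controlled by $\tfrac{1}{2}\|z - \Pi_h^{rt}z\|_\Omega^2 \lesssim h^{2\nu}$ via the (fractional) Raviart--Thomas interpolation estimate for $z \in (H^\nu(\Omega))^d$; the cross term by Cauchy--Schwarz on $\Gamma_I$ --- in (\hyperlink{thm:apriori_identity.iii}{iii}) by $\|\pi_h(\overline{z\cdot n}) - \overline{z\cdot n}\|_{2,\Gamma_I}\,\|u - \pi_h u\|_{2,\Gamma_I} \lesssim h^{\beta}h^{\alpha}$ using the $L^2$-projection error estimates on $\Gamma_I$, which combined with the volume bound yields $h^{\min\{2\nu,\alpha+\beta\}}$; in (\hyperlink{thm:apriori_identity.ii}{ii}), where only $u|_{\Gamma_I} \in H^{1/2+\nu}(\Gamma_I)$ (trace of $H^{1+\nu}(\Omega)$) is available, by $\|\overline{z\cdot n}\|_{\infty,\Gamma_I}\,\|u - \pi_h u\|_{1,\Gamma_I} \lesssim h^{\min\{1/2+\nu,1\}}$, which together with the Raviart--Thomas bound gives $h^{\min\{2\nu,1/2\}}$ for $\nu \le 1/2$ and $h^{1/2+\nu}$ for $\nu \in (1/2,1]$. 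The main obstacle is assembling the precise fractional interpolation estimates --- in particular the Raviart--Thomas interpolation error for $z$ of minimal regularity, where $\Pi_h^{rt}$ must be handled through its $L^p$-stability ($p > 2$) rather than by a Bramble--Hilbert argument on $H^\nu$, and the fractional $L^2$- and $L^1$-projection estimates on the boundary part $\Gamma_I$ --- and book-keeping the resulting orders so that the stated minima emerge; the remaining steps are routine manipulations resting on Theorem~\ref{thm:discrete_prager_synge_identity} and the structure-preservation properties \eqref{eq:grad_preservation}--\eqref{eq:normal_trace_preservation}.
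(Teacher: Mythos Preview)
Your derivation of the identity in (\hyperlink{thm:apriori_identity.i}{i}), the admissibility checks, and the argument for (\hyperlink{thm:apriori_identity.iii}{iii}) match the paper's proof. For the convergence statement in (\hyperlink{thm:apriori_identity.i}{i}) you work harder than necessary: since the third and fourth terms of the identity are non-positive (by the very $L^\infty$/$L^1$ non-expansiveness of $\pi_h$ that you invoke in (\hyperlink{thm:apriori_identity.ii}{ii})) and $\rho_{\textup{tot},h}^2\ge 0$, it suffices that the first two terms tend to zero; the Lebesgue-point argument for $\|\pi_h(\overline{z\cdot n})\|_{\infty,\Gamma_I}\to\|\overline{z\cdot n}\|_{\infty,\Gamma_I}$ is not needed.

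There is, however, a genuine gap in your treatment of (\hyperlink{thm:apriori_identity.ii}{ii}) for $\nu\in(\tfrac12,1]$. Your $L^\infty$--$L^1$ H\"older estimate for the cross term yields
\[
\big|(\pi_h(\overline{z\cdot n})-\overline{z\cdot n},\,u-\pi_hu)_{\Gamma_I}\big|\le \|\overline{z\cdot n}\|_{\infty,\Gamma_I}\,\|u-\pi_hu\|_{1,\Gamma_I}\lesssim h^{\min\{1/2+\nu,\,1\}}=h\,,
\]
because the projection error onto side-wise constants saturates at order $h$. Together with $I_1\lesssim h^{2\nu}$ this gives at best $h$, not the claimed $h^{1/2+\nu}$; your final bookkeeping (``gives $h^{1/2+\nu}$ for $\nu\in(\tfrac12,1]$'') does not follow from your own estimate. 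The paper obtains the extra factor by using Cauchy--Schwarz in $L^2(\Gamma_I)$ and exploiting trace regularity of \emph{both} factors: for $\nu>\tfrac12$ the standard trace theorem applied to $z=\nabla u\in (H^\nu(\Omega))^d$ gives $z|_{\partial\Omega}\in (H^{\nu-1/2}(\partial\Omega))^d$, hence $\|\pi_h(\overline{z\cdot n})-\overline{z\cdot n}\|_{2,\Gamma_I}\lesssim h^{\nu-1/2}$, while $u|_{\partial\Omega}\in H^1(\partial\Omega)$ yields $\|u-\pi_hu\|_{2,\Gamma_I}\lesssim h$, so the product is $h^{1/2+\nu}$. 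A second subtlety you miss is that one cannot simply invoke the trace map $H^{1+\nu}(\Omega)\to H^{1/2+\nu}(\partial\Omega)$ once $1+\nu\ge \tfrac32$, since this may fail on Lipschitz domains; the paper instead obtains $u|_{\partial\Omega}\in H^1(\partial\Omega)$ from the refined trace result of Behrndt--Gesztesy--Mitrea, which applies because additionally $\Delta u=\textup{div}\,z=-f\in L^2(\Omega)$.
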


   %\begin{remark}
   %    Using the interpolation operator from \cite{DemkowiczVohralik2024} instead of the canonical interpolation operator \eqref{RT-interpolant} in Theorem \ref{thm:apriori_identity}(\hyperlink{thm:apriori_identity.i}{i}), it should be possible to drop the assumption~${z\in (L^p(\Omega))^d}$~for~${p>2}$.
   %\end{remark}
    \if0
    \begin{remark}\label{rem:traces}
    \begin{itemize}[noitemsep,topsep=2pt,leftmargin=!,labelwidth=\widthof{(ii)}]
       \item[(i)] 
        According to \cite[Cor. 3.7]{BehrndtGesztesyMitrea22}, the trace operator 
			\begin{align*}
				\textup{Tr}\coloneqq (\textup{tr},\textup{tr}\circ\nabla)^\top 	\colon H_\Delta(\Omega)\coloneqq\big\{ v\in H^{\frac{3}{2}}(\Omega)\mid \Delta v\in L^2(\Omega)\big\}\to  H^1(\partial\Omega)\times (L^2(\partial\Omega))^d\,,
			\end{align*}
			is well-defined, linear, and continuous,
            where $H_\Delta(\Omega)$ is equipped with the norm $\|\cdot\|_{H_\Delta(\Omega)}\coloneqq \|\cdot\|_{\smash{\frac{3}{2}},\Omega}+\|\Delta(\cdot)\|_{\Omega}$
            Thus, if $u\in \smash{H^{\frac{3}{2}}(\Omega)}$, due to ${\Delta u=\textup{div}\, z=-f\in L^2(\Omega)}$, we have that $u\in \smash{H_\Delta(\Omega)}$ and, thus, the traces $u|_{\partial \Omega}\in H^1(\partial\Omega)$ and $(\nabla u)|_{\partial\Omega}\in (L^2(\Omega))^d$.
            
            \item[(ii)] %According to \cite[Rem.\ 3.14]{EG21I}, 
            If $u\in H^{1+s}(\Omega)$, where $s\in (\frac{1}{2},1]$, then $\nabla u\in H^s(\Omega)$, so that, applying the trace theorem component-wise, we find that $\nabla u\in H^{s-\frac{1}{2}}(\partial\Omega)$.
        \end{itemize}
    \end{remark} 
    \fi

    \begin{proof}\let\qed\relax
        \emph{ad (\hyperlink{thm:apriori_identity.i}{i}).} First, using \eqref{eq:grad_preservation},\eqref{eq:trace_preservation} and \eqref{eq:div_preservation},\eqref{eq:normal_trace_preservation}, respectively,
		we observe that
		%\begin{align*}
		%	\begin{aligned}  
		%		\nabla_h\Pi_h^{cr}u&=\Pi_h \nabla u&&\quad\text{ a.e.\ in }\Omega\,,\\
        %        \pi_h\Pi_h^{cr}u&= \pi_hu=\pi_hu_D=u_D&&\quad \text{ a.e.\ on }\Gamma_D\,,
		%	\end{aligned}
		%\end{align*}
		%\textit{i.e.}, it holds that 
        $\Pi_h^{cr}u\in K^{cr}_h$ and 
        %Second, using \eqref{eq:div_preservation} and \eqref{eq:normal_trace_preservation}, 
		%we observe that
		%\begin{align*}
		%	\begin{aligned}  
		%		\textup{div}\,\Pi_h^{rt}z&=\Pi_h \textup{div}\,z=-f_h&&\quad\text{ a.e.\ in }\Omega\,,\\
        %        (\Pi_h^{rt}z)\cdot n&= \pi_h(z\cdot n)=g_h&&\quad \text{ a.e.\ on }\Gamma_N\,,
		%	\end{aligned}
		%\end{align*}
		%\textit{i.e.}, it holds that 
        $\Pi_h^{rt}z\in K^{rt,*}_h$. Then, 
		using Theorem \ref{thm:discrete_prager_synge_identity} together with Lemma \ref{lem:discrete_primal_dual_gap_estimator}~and~Lemma~\ref{lem:discrete_strong_convexity_measures}  as well as the convex optimality relation \eqref{eq:optimality.1}, we find that 
          \begin{align}\label{thm:apriori_identity.1}
        \begin{aligned} 
            \rho_{\textup{tot},h}^2(\Pi_h^{cr}u,\Pi_h^{rt}z)&= 
    \tfrac{1}{2}\|\Pi_h z -\Pi_h \Pi_h^{rt} z\|_{\Omega}^2
   \\&\quad+
    \tfrac{m}{2}\|\pi_h(\overline{z\cdot n})\|_{\infty,\Gamma_I}^2+(\pi_h(\overline{z\cdot n}),u)_{\Gamma_I}+\tfrac{1}{2m}\|\pi_h u  \|_{1,\Gamma_I}^2 \,.
    \end{aligned}
        \end{align}
        Using in \eqref{thm:apriori_identity.1} the  convex optimality relation \eqref{eq:optimality.2} and that $ \smash{\pi_h(\overline{z\cdot n})-\overline{z\cdot n}\perp_{L^2} \pi_h u}$,~we~arrive~at
        \begin{align}\label{thm:apriori_identity.2}
            \begin{aligned} 
           \rho_{\textup{tot},h}^2(\Pi_h^{cr}u,\Pi_h^{rt}z) &= 
            \tfrac{1}{2}\|\Pi_h z -\Pi_h \Pi_h^{rt} z\|_{\Omega}^2+(\pi_h(\overline{z\cdot n})-\overline{z\cdot n},u-\pi_hu)_{\Gamma_I}
            \\&\quad+
             \tfrac{m}{2}\big\{\| \pi_h(\overline{z\cdot n}) \|_{\infty,\Gamma_I}^2-\|\overline{z\cdot n}\|_{\infty,\Gamma_I}^2\big\}+\tfrac{1}{2m}\big\{\| \pi_h u  \|_{1,\Gamma_I}^2-\|u\|_{1,\Gamma_I}^2\big\} \,.
            \end{aligned}
        \end{align}

    \textit{ad (\hyperlink{thm:apriori_identity.ii}{ii}).} Let us denote the four terms on the right-hand side of the \textit{a priori} error identity~\eqref{thm:apriori_identity.2}~by $I_i^h$, $i=1,\ldots,4$, respectively. It is left to extract the claimed error decay rates from~these~terms: 

    \textit{ad $I_1^h$.} Using the $L^2$-stability of $\Pi_h$ (with constant 1) and the fractional approximation~properties of $\Pi_h^{rt}$ (\textit{cf}.\ \cite[Thms. 16.4, 16.6]{EG21I}), we obtain $\smash{I_1^h\lesssim h^{2\nu}\,\| u\|_{1+\nu,\Omega}^2}$.
    %\begin{align*}
    %    I_1^h\lesssim c\, h^{2\nu}\,\| u\|_{1+\nu,\Omega}^2\,.
    %\end{align*}

    \textit{ad $I_3^h+I_4^h$.} Using the $L^\infty$- and $L^1$-stability of $\pi_h$ (with constant 1),  we obtain $I_3^h+I_4^h\leq 0$. 
    %\begin{align*}
    %    I_2^h+I_3^h\leq 0\,.
    %\end{align*}

    \textit{ad $I_2^h$.} %Using Hölder's inequality and the approximation properties of $\pi_h$, we obtain
    %\begin{align*}
    %   I_2^h\leq \|\pi_h(\overline{z\cdot n})-\overline{z\cdot n}\|_{2,\Gamma_I}\|u-\pi_h u\|_{2,\Gamma_I}
    %\end{align*}
    We distinguish the cases $\nu \in (0,\smash{\frac{1}{2}}]$ and $\nu \in (\smash{\frac{1}{2}},1]$: 

    $\bullet$ \textit{Case $\nu\hspace{-0.1em} \in\hspace{-0.1em} (0,\smash{\frac{1}{2}}]$.} In this case, by the standard trace theorem, we only have~that~${u|_{\partial\Omega}\hspace{-0.1em}\in\hspace{-0.1em} H^{\frac{1}{2}}(\partial\Omega)}$, so that, using Hölder's inequality, the $L^\infty$-stability  of $\pi_h$ (with constant 1), and the fractional approximation properties of $\pi_h$ (\textit{cf}.\ \cite[Rem.\ 18.17]{EG21I}), we find that
    \begin{align*}
        I_2^h%&\leq \|\pi_h(\overline{z\cdot n})-\overline{z\cdot n}\|_{\infty,\Gamma_I}\|u-\pi_h u\|_{1,\Gamma_I}
        %\\&
        \lesssim \smash{2\|\overline{z\cdot n}\|_{\infty,\Gamma_I}\vert u\vert_{\smash{\frac{1}{2}},\Gamma_I}h^{\smash{\frac{1}{2}}}}\,.
    \end{align*}

    $\bullet$ \textit{Case $\nu \hspace{-0.05em}\in\hspace{-0.05em} (\frac{1}{2},1]$.} In this case, due to $u\hspace{-0.05em}\in\hspace{-0.05em}  \smash{H^{\smash{\frac{3}{2}}}(\Omega)}$ and $\Delta u\hspace{-0.05em}=\hspace{-0.05em}\textup{div}\,z\hspace{-0.05em}\in\hspace{-0.05em}  \smash{L^2(\Omega)}$, by \cite[Cor. 3.7]{BehrndtGesztesyMitrea22},  we have that  $u|_{\partial\Omega}\in H^1(\partial\Omega)$. Moreover, due to $1+\nu >\frac{3}{2}$, by the standard trace theorem and the convex optimality relation \eqref{eq:optimality.1}, we have that
    $z|_{\partial\Omega}\in (H^{\smash{\nu-\frac{1}{2}}}(\partial\Omega))^d$. As a result, using Hölder's inequality and the fractional approximation properties of $\pi_h$ (\textit{cf}.\ \cite[Rem.\ 18.17]{EG21I}), we find that\vspace{-0.5mm} 
    \begin{align*}
        I_2^h%&\leq \|\pi_h(\overline{z\cdot n})-\overline{z\cdot n}\|_{2,\Gamma_I}\|u-\pi_h u\|_{2,\Gamma_I}
        %\\&
        \lesssim \smash{c\,h^{\smash{\nu -\frac{1}{2}}}\vert z\vert_{\smash{\nu -\frac{1}{2}},\Gamma_I}\vert \Gamma_I\vert^{\smash{\frac{1}{2}}}\vert u\vert_{1,\Gamma_I}h}\,.
    \end{align*}

    \textit{ad (\hyperlink{thm:apriori_identity.iii}{iii}).} We proceed as in the proof of (\hyperlink{thm:apriori_identity.ii}{ii}), except for the
    term $\smash{I_2^h}$. For the latter,
    using Hölder's inequality and the fractional approximation properties of $\pi_h$ (\textit{cf}.\ \cite[Rem.\ 18.17]{EG21I}),~we~obtain\vspace{-0.5mm} 
    \begin{align*}
        I_2^h%&\leq \|\pi_h(\overline{z\cdot n})-\overline{z\cdot n}\|_{2,\Gamma_I}\|u-\pi_h u\|_{2,\Gamma_I}
        %\\&
        \lesssim \smash{h^{\beta}\vert z\vert_{\beta,\Gamma_I} h^{\alpha}\vert u\vert_{\alpha,\Gamma_I}}\,.\tag*{$\qedsymbol$}
    \end{align*}
    \end{proof}\newpage

    %\begin{corollary}
    %If $f_h\coloneqq \Pi_h f\in \mathcal{L}^0(\mathcal{T}_h)$ and $u\in H^{1+\nu}(\Omega)$, where $\nu \in (0,1]$, then
    %\begin{align*}
    %    \|\nabla_h u_h^{cr}- \nabla u\|_{\Omega}^2+\|z_h^{rt}- z\|_{\Omega}^2\leq c\,h^{2\nu}\| u\|_{1+\nu,\Omega}\,.
    %\end{align*}
    %\end{corollary}

    \section{A semi-smooth Newton scheme}\vspace{-0.5mm}
    \label{s:implementation}    

    \hspace{5mm}The main challenge in the numerical approximation of the discrete primal problem \eqref{eq:discrete_primal} arises from its both non-local and non-smooth character. Since the degrees of freedom associated with the standard basis $(\psi_S)_{S\in \mathcal{S}_h}$ of $\mathcal{R}T^0(\mathcal{T}_h)$~are~given~via normal traces on mesh sides~(\textit{cf}.~\eqref{def:RT}), for every $y_h\in \mathcal{R}T^0(\mathcal{T}_h)$, we can construct an exact algebraic representation of $y_h \cdot n \in \mathcal{L}^0(\mathcal{S}_h)$. This together with the formula 
    \begin{align}\label{eq:augmentation_trick}
        -\tfrac{m}{2}\|y_h \cdot n\|_{\infty,\Gamma_I}^2=\sup_{\mu_h\in \mathbb{R}}{\big\{-\tfrac{m}{2}\mu_h^2-I_+^{\Gamma_I}(\mu_h-\vert y_h \cdot n\vert)\big\}}\,,
    \end{align}
    valid for all $y_h\in \mathcal{R}T^0(\mathcal{T}_h)$,
     where $\smash{I_+^{\Gamma_I}}\colon \mathcal{L}^0(\mathcal{S}_h^{\partial})\to \mathbb{R}\cup\{+\infty\}$, for every $\widehat{v}_h\in \mathcal{L}^0(\mathcal{S}_h^{\partial})$,~is~defined~by
    \begin{align*}
        I_+^{\Gamma_I}(\widehat{v}_h)\coloneqq \begin{cases}
            0&\text{ if }\widehat{v}_h\ge 0\text{ a.e.\ on }\Gamma_I\,,\\
            +\infty&\text{ else}\,,\\
        \end{cases}
    \end{align*}
    allows to convert the discrete dual problem \eqref{eq:discrete_dual} into an augmented problem that can be treated using a primal-dual active set strategy interpreted as a semi-smooth Newton~scheme~(\mbox{similar}~to~\cite{HintermullerItoKunisch2002}).\vspace{-0.5mm}

    \subsection{A reformulation of the discrete problem}
    \label{ss:discrete_reformulation}\vspace{-0.5mm}

   \hspace{5mm}Using formula \eqref{eq:augmentation_trick}, we reformulate the discrete dual problem \eqref{eq:discrete_dual} as an augmented problem. To \hspace{-0.1mm}this \hspace{-0.1mm}end, \hspace{-0.1mm}introduce \hspace{-0.1mm}the \hspace{-0.1mm}\textit{augmented \hspace{-0.1mm}discrete \hspace{-0.1mm}dual \hspace{-0.1mm}energy \hspace{-0.1mm}functional} \hspace{-0.1mm}${\Phi_h^{rt}\colon\hspace{-0.175em} \mathcal{R}T^0(\mathcal{T}_h)\hspace{-0.2em}\times \hspace{-0.2em}\mathbb{R}\hspace{-0.175em}\to\hspace{-0.175em} \mathbb{R}\hspace{-0.175em}\cup\hspace{-0.175em}\{-\infty\}}$, for every $(y_h,\mu_h)^\top\in \mathcal{R}T^0(\mathcal{T}_h)\times \mathbb{R}$ defined by
   \begin{align}\label{def:augmented_functional}
        \Phi_h^{rt}(y_h,\mu_h) \coloneqq \left\{\begin{aligned}&-\tfrac{1}{2}\| \Pi_h y_h\|_{\Omega}^2-\tfrac{m}{2}\mu_h^2
        %-I_{+}(\mu_h)
        - I_{+}^{\Gamma_I}(\mu_h - |y_h \cdot n|)
        \\& +(y_h\cdot n,u_D^h)_{\Gamma_D}-
			\smash{I_{\{-f_h\}}^{\Omega}} (\textup{div}\,y_h)-\smash{I_{\{g_h\}}^{\Gamma_N}} (y_h\cdot n)\, .
            \end{aligned}\right. 
    \end{align}
    Then, by definition \eqref{def:augmented_functional}, for every $y_h\in \mathcal{R}T^0(\mathcal{T}_h)$, we have that $D_h^{rt}(y_h)=\sup_{\mu_h \in \mathbb{R}}{\{\Phi_{h}^{rt}(y_h,\mu_h)\}}$.
Since the augmented discrete dual energy functional \eqref{def:augmented_functional} is proper, strictly convex, lower semi-continuous, \hspace{-0.1mm}the \hspace{-0.1mm}direct \hspace{-0.1mm}method \hspace{-0.1mm}in \hspace{-0.1mm}the \hspace{-0.1mm}calculus \hspace{-0.1mm}of \hspace{-0.1mm}variations \hspace{-0.1mm}yields \hspace{-0.1mm}the \hspace{-0.1mm}existence~\hspace{-0.1mm}of~\hspace{-0.1mm}a~\hspace{-0.1mm}unique~\hspace{-0.1mm}\mbox{minimizer} $(z_h^{rt},\mu_h)^\top \in \mathcal{R}T^0(\mathcal{T}_h) \times  \mathbb{R}$, where  the first entry in actual fact is the unique discrete dual solution.

The associated KKT system seeks $(z_h^{rt},\overline{u}_h,\mu_h,\lambda_h^+,\lambda_h^-)^\top \in \mathcal{R}T^0(\mathcal{T}_h) \times \mathcal{L}^0(\mathcal{T}_h) \times \mathbb{R} \times (\mathcal{L}^0(\mathcal{S}_h^I))^2$  with $z_h^{rt}\cdot n=g_h$ a.e.\ in $\Gamma_N$
such that for every $(y_h,\overline{v}_h,\eta_h)^\top \in \mathcal{R}T_N^0(\mathcal{T}_h) \times \mathcal{L}^0(\mathcal{T}_h) \times \mathbb{R}$,~there~holds\vspace{-4mm}
\begin{subequations}
\begin{alignat}{2} \label{eq:SS_KKT_a}
(\Pi_h z_h^{rt}, \Pi_h y_h)_\Omega + (\overline{u}_h, \text{div}\,y_h)_\Omega + ( \lambda_h^+ - \lambda_h^-, y_h \cdot n )_{\Gamma_I} &= (u_D^h, y_h \cdot n)_{\Gamma_D}\,,\\ \label{eq:SS_KKT_b}
(\text{div}\, z_h^{rt},\overline{v}_h)_\Omega &= -(f_h,\overline{v}_h)_\Omega\,, \\
m\mu_h\eta_h 
%+ \lambda_h^0)
 +(\lambda_h^+ + \lambda_h^-,\eta_h )_{\Gamma_I} &= 0\,, \label{eq:SS_KKT_c} \\
\mu_h \pm z_h^{rt} \cdot n & \ge 0\quad\text{ a.e.\ in }\Gamma_I\,, \label{eq:SS_KKT_d} \\
%\mu_h - z_h^{rt} \cdot n & \ge 0&&\quad\text{ a.e.\ in }\Gamma_I\,, \label{eq:SS_KKT_e}\\
%\mu_h &\ge 0&&\quad\text{ a.e.\ in }\Gamma_I\,, \label{eq:SS_KKT_f} \\
\lambda_h^{\pm}(\mu_h \pm z_h^{rt} \cdot n) &= 0\quad\text{ a.e.\ in }\Gamma_I\,, \label{eq:SS_KKT_g} \\
%\lambda_h^-(\mu_h - z_h^{rt} \cdot n) &= 0&&\quad\text{ a.e.\ in }\Gamma_I\,, \label{eq:SS_KKT_h}\\
% \lambda_h^0 \mu_h &= 0\quad\text{ a.e.\ in }\Gamma_I\,, \label{eq:SS_KKT_i} \\
\lambda_h^+,\lambda_h^-
%,\lambda_h^0 
& \le 0\quad\text{ a.e.\ in }\Gamma_I\,. \label{eq:SS_KKT_j}
\end{alignat}
\end{subequations}
The strict convexity of the augmented discrete dual energy functional \eqref{def:augmented_functional} guarantees that the KKT conditions \eqref{eq:SS_KKT_a}--\eqref{eq:SS_KKT_j} are not only necessary, but also sufficient optimality conditions.\vspace{-0.5mm}

\subsection{An inverse generalized Marini formula}\vspace{-0.5mm}

\hspace{5mm}Incorporating the additional information provided by the Lagrange multipliers in the
%The second entry of $(z_h^{rt},\overline{u}_h,\mu_h,\lambda_h^+,\lambda_h^-,\lambda_h^0)^\top \in \mathcal{R}T^0(\mathcal{T}_h) \times \mathcal{L}^0(\mathcal{T}_h) \times \mathbb{R} \times \mathcal{L}^0(\mathcal{S}_h^I) \times\mathcal{L}^0(\mathcal{S}_h^I) \times \mathbb{R}$ satisfying the 
KKT conditions \eqref{eq:SS_KKT_a}--\eqref{eq:SS_KKT_j} allows to reconstruct a discrete primal solution from the~discrete~dual~\mbox{solution}.\vspace{-0.5mm}% via an inverse generalized Marini formula:\vspace{-0.5mm}

\begin{lemma}[inverse generalized Marini formula]\label{lem:marini}
Let $(z_h^{rt},\overline{u}_h,\mu_h,\lambda_h^+,\lambda_h^-)^\top \hspace{-0.175em}\in\hspace{-0.175em} \mathcal{R}T^0(\mathcal{T}_h) \times \mathcal{L}^0(\mathcal{T}_h)$  $\times \mathbb{R} \times (\mathcal{L}^0(\mathcal{S}_h^I))^2$ be such that the KKT conditions \eqref{eq:SS_KKT_a}--\eqref{eq:SS_KKT_j}~are~satisfied.~Then,
%we have that $z_h^{rt}=z_h^{rt}$ and 
a discrete primal solution $u_h^{cr} \in \mathcal{S}^{1,cr}(\mathcal{T}_h)$ is available via the \emph{inverse \mbox{generalized}~Marini~\mbox{formula}}\vspace{-0.5mm}
\begin{align*} %\label{eq:inverse_marini}
    u_h^{cr} = \overline{u}_h + \Pi_h z_h^{rt} \cdot (\textup{id}_{\mathbb{R}^d} - \Pi_h \textup{id}_{\mathbb{R}^d})\in \mathcal{L}^1(\mathcal{T}_h)\,.
    \end{align*}
\end{lemma}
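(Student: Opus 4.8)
The plan is to verify that the explicitly constructed Crouzeix--Raviart function
\[
w_h \coloneqq \overline{u}_h + \Pi_h z_h^{rt}\cdot(\textup{id}_{\mathbb{R}^d}-\Pi_h\textup{id}_{\mathbb{R}^d})
\]
belongs to $K_h^{cr}$ and satisfies the two discrete convex optimality relations \eqref{eq:discrete_optimality.1}, \eqref{eq:discrete_optimality.2}, since by Theorem \ref{prop:discrete_duality} (together with strict convexity of $I_h^{cr}$ when applicable, and in general by the fact that those relations characterize discrete primal solutions paired with the unique $z_h^{rt}$) this identifies $w_h$ as a discrete primal solution. First I would record the elementary facts about the affine field $b_h \coloneqq \textup{id}_{\mathbb{R}^d}-\Pi_h\textup{id}_{\mathbb{R}^d}$: on each $T\in\mathcal{T}_h$ one has $b_h|_T(x)=x-\langle\textup{id}\rangle_T$, hence $\nabla(b_h|_T)=\mathrm{Id}$, $\langle b_h\rangle_T=0$, and therefore $w_h\in\mathcal{L}^1(\mathcal{T}_h)$ with $\nabla_h w_h = \Pi_h z_h^{rt}$ a.e.\ in $\Omega$ and $\Pi_h w_h = \overline{u}_h$ (the last because $\langle\Pi_h z_h^{rt}\cdot b_h\rangle_T = \Pi_h z_h^{rt}|_T\cdot\langle b_h\rangle_T=0$). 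This already gives the divergence/gradient optimality relation \eqref{eq:discrete_optimality.1} for free once we know $w_h\in\mathcal{S}^{1,cr}(\mathcal{T}_h)$, and it reduces the whole proof to (a) checking the jump condition $\pi_h\jump{w_h}=0$ across interior sides together with the Dirichlet trace $\pi_h w_h = u_D^h$ on $\Gamma_D$, and (b) checking \eqref{eq:discrete_optimality.2}.

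For step (a) the key observation is that $w_h$ is, up to the element-wise constants $\overline{u}_h$, exactly the Raviart--Thomas--type reconstruction whose side averages are governed by the momentum equation \eqref{eq:SS_KKT_a}. Concretely, one tests \eqref{eq:SS_KKT_a} with the basis fields $\psi_S$, $S\in\mathcal{S}_h$: using the discrete integration-by-parts formula \eqref{eq:pi} applied to the pair $(w_h,\psi_S)$ — legitimate once we note $(\nabla_h w_h,\Pi_h\psi_S)_\Omega+(\Pi_h w_h,\textup{div}\,\psi_S)_\Omega=(\Pi_h z_h^{rt},\Pi_h\psi_S)_\Omega+(\overline{u}_h,\textup{div}\,\psi_S)_\Omega$, which is precisely the left two terms of \eqref{eq:SS_KKT_a} — one reads off $(\pi_h w_h,\psi_S\cdot n)_{\partial\Omega}$ and hence $\langle w_h\rangle_S$ for every side $S$. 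On interior sides the right-hand side contributions vanish and one gets that $\langle w_h\rangle_S$ is single-valued, i.e.\ $\pi_h\jump{w_h}=0$; on $\Gamma_D$ one gets $\pi_h w_h = u_D^h$; on $\Gamma_I$ one picks up the Lagrange-multiplier term $\lambda_h^+-\lambda_h^-$, which will feed into step (b); and on $\Gamma_N$ the $\psi_S$ are not available as test functions (they violate $y_h\cdot n=0$ there) so one instead uses \eqref{eq:discrete_admissibility.2} and a density argument — but since we only need membership in $K_h^{cr}=\widehat{u}_D^h+\mathcal{S}^{1,cr}_D(\mathcal{T}_h)$, the $\Gamma_N$ sides are unconstrained and cause no trouble. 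This gives $w_h\in K_h^{cr}$, hence \eqref{eq:discrete_optimality.1}.

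For step (b), i.e.\ \eqref{eq:discrete_optimality.2} with $u_h^{cr}$ replaced by $w_h$, I would work from the complementarity and sign conditions \eqref{eq:SS_KKT_d}--\eqref{eq:SS_KKT_j} together with the identification $\pi_h w_h|_{\Gamma_I}$ in terms of $\lambda_h^+-\lambda_h^-$ obtained above (one expects $\pi_h w_h = -\tfrac{1}{?}(\lambda_h^+-\lambda_h^-)$ or the sign-flipped variant, the precise constant emerging from \eqref{eq:SS_KKT_a}–\eqref{eq:SS_KKT_c}). The mechanism: \eqref{eq:SS_KKT_c} gives $m\mu_h = -\langle\lambda_h^++\lambda_h^-\rangle$-type information linking $\mu_h$ to $\|\pi_h w_h\|_{1,\Gamma_I}$; \eqref{eq:SS_KKT_g} forces $\lambda_h^+$ to be supported where $z_h^{rt}\cdot n=-\mu_h$ and $\lambda_h^-$ where $z_h^{rt}\cdot n=+\mu_h$, hence on the support of $\lambda_h^+-\lambda_h^-$ one has $|z_h^{rt}\cdot n|=\mu_h=\|z_h^{rt}\cdot n\|_{\infty,\Gamma_I}$; and the sign condition \eqref{eq:SS_KKT_j} makes $-(z_h^{rt}\cdot n,\pi_h w_h)_{\Gamma_I}$ a sum of nonnegative terms each equal to $\mu_h|\pi_h w_h|$ side-wise. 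Assembling these pieces one recovers
\[
-(z_h^{rt}\cdot n,\pi_h w_h)_{\Gamma_I} = \mu_h\,\|\pi_h w_h\|_{1,\Gamma_I} = \tfrac{m}{2}\mu_h^2+\tfrac{1}{2m}\|\pi_h w_h\|_{1,\Gamma_I}^2 = \tfrac{m}{2}\|z_h^{rt}\cdot n\|_{\infty,\Gamma_I}^2+\tfrac{1}{2m}\|\pi_h w_h\|_{1,\Gamma_I}^2,
\]
where the middle equality is Young's inequality made into an equality precisely by $m\mu_h=\|\pi_h w_h\|_{1,\Gamma_I}$. I expect the bookkeeping in this last step — correctly tracking the signs of the two multipliers, the side-by-side support disjointness, and the exact constant relating $\pi_h w_h$ on $\Gamma_I$ to $\lambda_h^\pm$ — to be the main obstacle; everything else is structure-preservation of the finite element interpolants and the discrete integration-by-parts identity. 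Once \eqref{eq:discrete_optimality.1} and \eqref{eq:discrete_optimality.2} are verified for $w_h\in K_h^{cr}$, Theorem \ref{prop:discrete_duality}(iii) together with the uniqueness of $z_h^{rt}$ and the sufficiency of the discrete KKT/optimality system identifies $w_h$ as a discrete primal solution $u_h^{cr}$, which is the assertion.
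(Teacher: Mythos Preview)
Your proposal is correct in outline and matches the paper's overall strategy: show that $w_h$ lies in $\mathcal{S}^{1,cr}(\mathcal{T}_h)$ (in fact in $K_h^{cr}$), then verify the discrete convex optimality relations \eqref{eq:discrete_optimality.1}, \eqref{eq:discrete_optimality.2}, and conclude via Theorem~\ref{prop:discrete_duality} and discrete strong duality that $w_h$ is a discrete primal solution. Step~(b) is essentially identical to the paper's argument (which establishes $\pi_h w_h|_{\Gamma_I}=\lambda_h^--\lambda_h^+$, then $m\mu_h=\|\pi_h w_h\|_{1,\Gamma_I}$ from \eqref{eq:SS_KKT_c}, and $\mu_h=\|z_h^{rt}\cdot n\|_{\infty,\Gamma_I}$, combining these into \eqref{eq:discrete_optimality.2}).

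The one genuine difference is in step~(a). The paper does \emph{not} verify the interior jump conditions directly; instead it uses the already known discrete primal solution $u_h^{cr}$ from Theorem~\ref{prop:discrete_duality} together with \eqref{eq:discrete_optimality.1} to see that $w_h-u_h^{cr}\in\mathcal{L}^0(\mathcal{T}_h)$, then tests \eqref{eq:SS_KKT_a} only with $y_h\in\mathcal{R}T^0_0(\mathcal{T}_h)$ (vanishing normal trace on all of $\partial\Omega$) to obtain $(w_h-u_h^{cr})\perp_{L^2}\textup{div}(\mathcal{R}T^0_0(\mathcal{T}_h))=\mathcal{L}^0(\mathcal{T}_h)/\mathbb{R}$, hence $w_h-u_h^{cr}$ is a global constant and $w_h\in\mathcal{S}^{1,cr}(\mathcal{T}_h)$. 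Your route---testing with each $\psi_S$ to read off $\pi_h\jump{w_h}_S=0$---is more direct and avoids invoking the existence of $u_h^{cr}$, but note a small circularity: you cite \eqref{eq:pi}, which is stated only for $v_h\in\mathcal{S}^{1,cr}(\mathcal{T}_h)$. What you actually need is the element-wise (broken) integration-by-parts identity, valid for all $v_h\in\mathcal{L}^1(\mathcal{T}_h)$, which picks up exactly the interior jump terms $\sum_{S\in\mathcal{S}_h^i}(\pi_h\jump{w_h},y_h\cdot n_S)_S$; testing with $\psi_S$ then kills all but one of these and gives the conclusion. With that correction your argument goes through.
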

\begin{proof}
Let us introduce the function $\widehat{u}_h\coloneqq \overline{u}_h + \smash{\Pi_h z_h^{rt}} \cdot (\textup{id}_{\mathbb{R}^d} - \Pi_h \textup{id}_{\mathbb{R}^d})\in \mathcal{L}^1(\mathcal{T}_h)$,
%\begin{align}\label{eq:marini.1}
%   \widehat{u}_h\coloneqq \overline{u}_h + \Pi_h z_h^{rt} \cdot (\textup{id}_{\mathbb{R}^d} - \Pi_h \textup{id}_{\mathbb{R}^d})\in \mathcal{L}^1(\mathcal{T}_h)\,,
%\end{align}
which~satisfies\vspace{-4mm}
\begin{subequations}\label{eq:marini.2}
\begin{alignat}{2}
    \nabla_h \widehat{u}_h&= \Pi_hz_h^{rt}&&\quad \text{ a.e.\ in }\Omega\,,\label{eq:marini.2.1}\\
    \Pi_h \widehat{u}_h&=\overline{u}_h&&\quad \text{ a.e.\ in }\Omega\,.\label{eq:marini.2.2}
\end{alignat}
\end{subequations}
We establish that $\widehat{u}_h\in \mathcal{S}^{1,cr}(\mathcal{T}_h)$ is a discrete primal solution:\enlargethispage{6.5mm}

\emph{1. Step: ($\widehat{u}_h\hspace{-0.175em}\in\hspace{-0.175em} \mathcal{S}^{1,cr}(\mathcal{T}_h)$).} \hspace{-0.15mm}To \hspace{-0.15mm}begin \hspace{-0.15mm}with, \hspace{-0.15mm}due \hspace{-0.15mm}to \hspace{-0.15mm}\eqref{eq:marini.2.1} \hspace{-0.15mm}and \hspace{-0.15mm}\eqref{eq:discrete_optimality.1}, \hspace{-0.15mm}we \hspace{-0.15mm}have~\hspace{-0.15mm}that~\hspace{-0.15mm}${\widehat{u}_h-u_h^{cr}\hspace{-0.175em}\in \hspace{-0.175em}\mathcal{L}^0(\mathcal{T}_h)}$. Then, 
%Therefore, 
from the discrete integration-by-parts formula \eqref{eq:pi} and \eqref{eq:SS_KKT_a}, for every   $y_h\in \mathcal{R}T^0_0(\mathcal{T}_h)\coloneqq\{\widehat{y}_h\in \mathcal{R}T^0(\mathcal{T}_h)\mid \widehat{y}_h\cdot n= 0\text{ a.e.\ on }\partial\Omega\}$, it follows that
    \begin{align*}
        (\widehat{u}_h-u_h^{cr},\textup{div}\,y_h)_{\Omega}%&=(\Pi_h\widehat{u}_h-\Pi_hu_h^{cr},\textup{div}\,y_h)_{\Omega}
        %\\
        &
        =(\Pi_h\widehat{u}_h,\textup{div}\,y_h)_{\Omega}+(\nabla_hu_h^{cr},\Pi_h y_h)_{\Omega}\\&
        =(\overline{u}_h,\textup{div}\,y_h)_{\Omega}+(\Pi_h z_h^{rt},\Pi_h y_h)_{\Omega} 
        %\\&
        =0\,,
     \end{align*}
    \textit{i.e.}, $\widehat{u}_h-u_h^{cr}\hspace{-0.15em}\perp_{L^2}\hspace{-0.15em}\textup{div}\,(\mathcal{R}T^0_0(\mathcal{T}_h))\hspace{-0.15em}=\hspace{-0.15em}\mathcal{L}^0(\mathcal{T}_h)/\mathbb{R}$, which yields $\widehat{u}_h-u_h^{cr}\hspace{-0.15em}=\hspace{-0.15em}\textup{const}$~and,~thus,~${\widehat{u}_h\hspace{-0.15em}\in\hspace{-0.15em} \mathcal{S}^{1,cr}(\mathcal{T}_h)}$.

\emph{2. Step: ($I_h^{cr}(\widehat{u}_h)=I_h^{cr}(u_h^{cr})$).}
%Since already $\widehat{u}_h\in \mathcal{S}^{1,cr}(\mathcal{T}_h)$ with \eqref{eq:marini.2.2}, 
%It suffices to show that $\widehat{u}_h\in \mathcal{S}^{1,cr}(\mathcal{T}_h)$ satisfies the discrete convex optimality relation \eqref{eq:discrete_optimality.2}, in order to that $\widehat{u}_h\in \mathcal{S}^{1,cr}(\mathcal{T}_h)$ is a discrete primal solution.
In light of $\widehat{u}_h\in \mathcal{S}^{1,cr}(\mathcal{T}_h)$, we can use
the discrete integration-by-parts formula \eqref{eq:pi} together with
\eqref{eq:marini.2.1},\eqref{eq:marini.2.2}  in \eqref{eq:SS_KKT_a}, which yields that
\begin{subequations}\label{eq:marini.3}
\begin{alignat}{2}
    \pi_h \widehat{u}_h&= u_D^h&&\quad \text{ a.e.\ on }\Gamma_D\,,\label{eq:marini.3.1}\\[-0.5mm]
    \pi_h \widehat{u}_h&=\lambda_h^--\lambda_h^+&&\quad \text{ a.e.\ on }\Gamma_I\,.\label{eq:marini.3.2}
\end{alignat}
\end{subequations}
Then, using \eqref{eq:marini.3.2} together with $\smash{(\textrm{tr}(\cdot)\cdot n|_{\Gamma_I})(\mathcal{R}T^0(\mathcal{T}_h))=\mathcal{L}^0(\mathcal{S}_h^{I})}$, we find that
\begin{align}\label{eq:marini.4}
    \| \pi_h \widehat{u}_h \|_{1,\Gamma_I}
    %&= \sup_{\substack{y_h \in \mathcal{R}T_N^0(\mathcal{T}_h) \\ \|y_h \cdot n\|_{\infty, \Gamma_I} = 1}}
    %{(\pi_h u_h^{cr} , y_h \cdot n )_{\Gamma_I}}  \\
    %&
    = \sup_{\substack{y_h \in \mathcal{R}T_N^0(\mathcal{T}_h) \\ \|y_h \cdot n\|_{\infty, \Gamma_I} = 1}}{\big\{( \lambda_h^--\lambda_h^+ , y_h \cdot n )_{\Gamma_I}\big\}}\,.
\end{align}
Moreover, from \eqref{eq:SS_KKT_g}, it follows that
    \begin{align} \label{eq:marini.5} 
       \smash{ (\lambda_h^+ + \lambda_h^-) \mu_h = (\lambda_h^- - \lambda_h^+) z_h^{rt} \cdot n\quad\text{ a.e.\ in }\Gamma_I}\,.
    \end{align}

Next, we differentiate three cases depending on whether the constraints are active or inactive:

\emph{$\bullet$ Case 1:} If $\mu_h + z_h^{rt} \cdot n =\mu_h - z_h^{rt} \cdot n = 0$ on $S$, from \eqref{eq:marini.5}, it follows that $\lambda_h^+=\lambda_h^-=0$ and, thus, $ (\lambda_h^-- \lambda_h^+, y_h \cdot n )_{S} = 0 =  -(\lambda_h^+ + \lambda_h^-, 1)_S$;
%\begin{align*}
%    (\lambda_h^-- \lambda_h^+, z_h \cdot n )_{S} = 0 =  -(\lambda_h^+ + \lambda_h^-, 1)_S\,. 
%\end{align*}

\emph{$\bullet$ Case 2:} If $\mu_h + z_h^{rt} \cdot n,\mu_h - z_h^{rt} \cdot n > 0$ on $S$, from \eqref{eq:SS_KKT_g},\eqref{eq:SS_KKT_j}, it follows that $\lambda_h^+ = \lambda_h^- = 0$ and, thus, $(\lambda_h^-- \lambda_h^+, y_h \cdot n )_{S} = 0 =  -(\lambda_h^+ + \lambda_h^-, 1)_S$;
%\begin{align*}
%    (\lambda_h^-- \lambda_h^+, y_h \cdot n )_{S} = 0 =  -(\lambda_h^+ + \lambda_h^-, 1)_S\,. 
%\end{align*}

\emph{$\bullet$ Case 3:} If $\mu_h \pm z_h^{rt} \cdot n = 0$ on $S$ and $\mu_h \mp z_h^{rt} \cdot n > 0$ on $S$, from \eqref{eq:marini.5}, it follows that   $\lambda_h^{\mp} = 0$ and, thus, $(\lambda_h^-- \lambda_h^+, y_h \cdot n )_{S}  = -(\lambda_h^+ + \lambda_h^-, y_h \cdot n )_S$.
%\begin{align*}
%    (\lambda_h^-- \lambda_h^+, y_h \cdot n )_{S}  = -(\lambda_h^+ + \lambda_h^-, y_h \cdot n )_S\,.
%\end{align*} 

%\emph{$\bullet$ Case 3:} If $\mu_h + z_h^{rt} \cdot n > 0$ on $S$ and $\mu_h - z_h^{rt} \cdot n = 0$ on $S$, from  \eqref{eq:marini.5}, it follows that  $\lambda_h^+ = 0$ and, thus, $(\lambda_h^-- \lambda_h^+, y_h \cdot n )_{S}  = (\lambda_h^+ + \lambda_h^-, y_h \cdot n )_S$.
%\begin{align*}
%    (\lambda_h^-- \lambda_h^+, y_h \cdot n )_{S}  = (\lambda_h^+ + \lambda_h^-, y_h \cdot n )_S\,.
%\end{align*} 

In summary, the supremum in \eqref{eq:marini.4} is attained by any $y_h\in \mathcal{R}T_N^0(\mathcal{T}_h)$~with
\begin{align}\label{eq:marini.6} 
    y_h\cdot n|_S=\begin{cases} 
        \pm 1&\text{ if }\mu_h \pm z_h^{rt} \cdot n = 0\text{ and }\mu_h \mp z_h^{rt} \cdot n > 0\text{ on }S\,,\\[-0.5mm]
       % -1&\text{ if }\mu_h + y_h \cdot n > 0\text{ and }\mu_h - y_h \cdot n = 0\text{ on }S\,,\\
        0&\text{ else}\,
    \end{cases}\quad \text{ for all }S\in \mathcal{S}_h^{I}\,.
\end{align}
Therefore, for some $y_h\in \mathcal{R}T_N^0(\mathcal{T}_h)$ with \eqref{eq:marini.6}, also using \eqref{eq:SS_KKT_j}, we find that
\begin{align}\label{eq:marini.7} 
\smash{\| \pi_h \widehat{u}_h \|_{1,\Gamma_I} 
    =  - ( \lambda_h^+ + \lambda_h^-, 1 )_{\Gamma_I}
    =  \|\lambda_h^+ + \lambda_h^-\|_{1,\Gamma_I}\,,}
\end{align}
%In light of \todo{missing equation number} \eqref{eq:SS_KKT_i}, 
If we test \eqref{eq:SS_KKT_c} with $\eta_h \hspace{-0.1em}=\hspace{-0.1em} 1$, using \eqref{eq:marini.7}, we obtain $\mu_h \hspace{-0.15em}=\hspace{-0.15em} -\tfrac{1}{m} ( \lambda_h^+ + \lambda_h^-, 1 )_{\Gamma_I}\hspace{-0.15em}=\hspace{-0.15em}
        \tfrac{1}{m}\|\lambda_h^++\lambda_h^-\|_{1,\Gamma_I}$~and, thus, $\mu_h^2\hspace{-0.15em} =\hspace{-0.15em} -\tfrac{1}{m} ( \lambda_h^+ + \lambda_h^-, \mu_h)_{\Gamma_I}\hspace{-0.15em}=\hspace{-0.15em}
        \smash{\tfrac{1}{m^2}}\|\lambda_h^++\lambda_h^-\|_{1,\Gamma_I}^2$,
   % \begin{align}\label{eq:marini.8} 
   %     \mu_h = -\tfrac{1}{m} ( \lambda_h^+ + \lambda_h^-, 1 )_{\Gamma_I}=
   %     \tfrac{1}{m}\|\lambda_h^++\lambda_h^-\|_{1,\Gamma_I}\,,
   % \end{align}
    which~together~with~\eqref{eq:marini.5}~and~\eqref{eq:marini.3.2}~\mbox{implies}~that 
    \begin{align}\label{eq:marini.9} 
       \smash{-m\mu_h^2=( \lambda_h^+ + \lambda_h^-, \mu_h )_{\Gamma_I}=(\lambda_h^--\lambda_h^+,z_h^{rt} \cdot n)_{\Gamma_I}
        =(\pi_h\widehat{u}_h,z_h^{rt} \cdot n)_{\Gamma_I}}\,.
    \end{align}
Moreover, we have that\vspace{-0.5mm}
\begin{align}\label{eq:marini.10} 
    \smash{\mu_h=\|z_h^{rt} \cdot n \|_{\infty,\Gamma_I}}\,,
\end{align}
%and, thus,
%\begin{align*}
%    \tfrac{1}{m}\| \pi_h u_h^{cr} \|_{1,\Gamma_I}^2=m\|z_h^{rt}\cdot n\|_{\infty,\Gamma_I}^2\,.
%\end{align*}
since, from \eqref{eq:SS_KKT_d}, we infer that $\|z_h^{rt} \cdot n \|_{\infty,\Gamma_I} \hspace{-0.1em}\le\hspace{-0.1em} \mu_h$ and, if $\|z_h^{rt} \cdot n \|_{\infty,\Gamma_I}\hspace{-0.1em} <\hspace{-0.1em} \mu_h$, then %we have that
${\|z_h^{rt} \cdot n \|_{\infty,\Gamma_I} \hspace{-0.1em}< \hspace{-0.1em}\mu_h'}$ for some $\mu_h'> 0$, so that $\Phi_h^{rt}(z_h^{rt},\mu_h') > \Phi_h^{rt}(z_h^{rt},\mu_h)$, contradicting the maximality of $(z_h^{rt},\mu_h)^\top$. % $\in \mathcal{R}T^0(\mathcal{T}_h)\times \mathbb{R}$. 
Eventually, combining \eqref{eq:marini.7}--\eqref{eq:marini.10}, we conclude that
\begin{align*}
         \smash{-( z_h^{rt}\cdot n,\pi_h\widehat{u}_h)_{\Gamma_I}=\smash{\tfrac{m}{2}\|\smash{z_h^{rt}\cdot n}\|_{\infty,\Gamma_I}^2+\tfrac{1}{2m}\|\pi_h\widehat{u}_h\|_{1,\Gamma_I}^2}\,,}
\end{align*}
which together with \eqref{eq:marini.2.1} and  \eqref{eq:discrete_strong_duality} implies that %the discrete strong duality relation \eqref{eq:discrete_strong_duality} implies that 
$I_h^{cr}(\widehat{u}_h)=D_h^{rt}(z_h^{rt})=I_h^{cr}(u_h^{cr})$. %, \textit{i.e.}, $\widehat{u}_h\in \mathcal{S}^{1,cr}(\mathcal{T}_h)$ is a discrete primal solution.
\end{proof}

\subsection{A semi-smooth Newton method}\label{ss:SS_Newton}\vspace{-1mm}\enlargethispage{12.5mm}
\hspace{5mm}We approximate 
the  KKT conditions \eqref{eq:SS_KKT_a}--\eqref{eq:SS_KKT_j} by means of a primal-dual active set strategy interpreted as a semi-smooth Newton method (\textit{cf}.\ %\cite[Subsec. 5.3.1]{Bartels15} or 
\cite{HintermullerItoKunisch2002}), which we briefly outline~here.~To~this~end, we shift the KKT conditions \eqref{eq:SS_KKT_a}--\eqref{eq:SS_KKT_j} by $z_h^{g}\in \mathcal{R}T_N^0(\mathcal{T}_h)$ with $z_h^{g}\cdot n =g_h$~a.e.~$\Gamma_N$ and $z_h^{g}\cdot n =0$ a.e.\ $\partial\Omega\setminus\Gamma_N$ and seek $z_h^0\coloneqq z_h^{rt}-z_h^{g}\in \mathcal{R}T^0(\mathcal{T}_h)$.%\todo{AK: thanks for your corrections Keegan. I think this page is fine now :)}

%\todo{AK: This page needs to be cleaned up. In particular, we should check whether all matrices defined are also used.}
%\todo{KK: I believe we can remove any reference to matrices with $X = N$, $X = \partial$}
We define $N_h^{rt} \coloneqq \text{card}(\mathcal{S}_h \setminus \mathcal{S}_h^N)$, $N_h^{rt,0} \coloneqq \text{dim} \,(\Pi_h \mathcal{R}T_N^0(\mathcal{T}_h))$, %$N_h^{cr} \coloneqq \text{dim} \,( \mathcal{S}^{1,cr}(\mathcal{T}_h))$, 
$N_h^0 \hspace{-0.1em}\coloneqq\hspace{-0.1em} \text{dim}\, (\mathcal{L}^0(\mathcal{T}_h))$, and $N_h^{\mathrm{X}} \hspace{-0.15em}\coloneqq \hspace{-0.15em} \text{card} (\mathcal{S}_h^{\mathrm{X}})$, $\mathrm{X}\hspace{-0.15em}\in\hspace{-0.15em} \{I,D\}$,
%$\mathrm{X}\hspace{-0.15em}\in\hspace{-0.15em} \{\partial,I,D,N\}$ (\textit{i.e.}, ${N_h^\partial \hspace{-0.15em}=\hspace{-0.15em} N_h^I\hspace{-0.15em} + \hspace{-0.15em}N_h^D\hspace{-0.15em} + \hspace{-0.15em}N_h^N}$). 
%For notational~\mbox{convenience},~
introduce the index sets $\mathcal{I}_h^I \hspace{-0.12em}\coloneqq \hspace{-0.12em}\{1, \dots, N_h^I \}$ and ${\mathcal{I}_h^D \hspace{-0.12em}\coloneqq\hspace{-0.12em} N_h^I\hspace{-0.12em}+\hspace{-0.12em}\{1, \dots, N_h^D \}}$, and
%, and ${\mathcal{I}_h^N\hspace{-0.12em} \coloneqq \hspace{-0.12em} N_h^I\hspace{-0.12em}+\hspace{-0.12em} N_h^D\hspace{-0.12em}+\hspace{-0.12em}\{1, \dots, N_N \}}$. %, and $\mathcal{I}_\partial \coloneqq \mathcal{I}_I \cup \mathcal{I}_D \cup \mathcal{I}_N$. 
 fix orderings of the mesh elements $\{T_i\}_{i=1,\dots,\smash{N_h^0}}$ and mesh sides 
%$\{S_i\}_{i=1,\dots,\smash{N_h^\partial}}$ 
$\{S_i\}_{i=1,\dots,\smash{N_h^I + N_h^D}}$~such~that %$\mathcal{S}_h \cong \mathcal{S}_h^I \times \mathcal{S}_h^D \times \mathcal{S}_h^N \times \mathcal{S}_h^i$, and 
    \begin{align*}
     %   \text{span}\,(\{ \chi_{T_i} \, | \, i = 1,\dots,N_0 \}) &= \mathcal{L}^0(\mathcal{T}_h)\,, \\
     %   \text{span}(\{ \varphi_{S_i}\, | \, i = 1,\dots,N_{cr} \}) &= \mathcal{S}^{1,cr}(\mathcal{T}_h)\,, \\
     %   \text{span}\,(\{ \chi_{S_i} \, | \, i = 1,\dots, N_0 \}) &= \mathcal{L}^0(\mathcal{S}_h)\,, \\
        \text{span}\,(\{ \chi_{S_i} \, | \, i \in \mathcal{I}_{\mathrm{X}} \}) &= \mathcal{L}^0(\mathcal{S}_h^{\mathrm{X}})\quad\text{ for }\mathrm{X}\in 
        %\{I,D,N\}\,.
        \{I,D\}\,.
    \end{align*}
    %
    %Next, 
    %for  $\mathrm{X}\in \{\partial,I,D,N\}$
    For  $\mathrm{X}\hspace{-0.15em}\in \hspace{-0.15em}\{I,D\}$,~we~introduce the matrix representation of the normal trace operator 
    $\smash{\mathrm{T}_h^X\hspace{-0.15em}\in\hspace{-0.15em}  \mathbb{R}^{N_h^{\mathrm{X}}\times  N_{rt}}}$,  for every
    $i\hspace{-0.15em}\in\hspace{-0.15em}  \mathcal{I}_h^{\mathrm{X}}$, $j\hspace{-0.15em} \in\hspace{-0.15em} \{1, \dots, N_h^{rt}\}$ defined by  
    ${(\mathrm{T}_h^{\mathrm{X}})_{i,j} \hspace{-0.15em}\coloneqq\hspace{-0.15em} \smash{\frac{1}{|S_i|(d-1)}} \delta_{i,j}}$\footnote{The inclusion of the factor $(d-1)$ is basis-dependent and required for our implementation in NGSolve~(\textit{cf}.~\cite{ngsolve}).}.~For~${\mathcal{A}_h\hspace{-0.15em}\subseteq\hspace{-0.15em} \smash{\mathcal{I}_h^{I}}}$, we~introduce~the indicator matrix $\indicator_{\smash{\mathcal{A}_h}}\in \mathbb{R}^{N_h^{I}\times N_h^{I}}$, for every $i,j\in \{1,\ldots,N_h^I\}$~defined~by  ${(\indicator_{\smash{\mathcal{A}_h}})_{i,j} \coloneqq 1}$~if~$i=j \in \mathcal{A}_h$ and $(\indicator_{\smash{\mathcal{A}_h}})_{i,j} \coloneqq 0$ else.
    %We denote $I_\partial \coloneqq \text{diag}(I_I,I_D,I_N)$, $\mathrm{M}_\partial \coloneqq \text{diag}(\mathrm{M}_I,\mathrm{M}_D,\mathrm{M}_N)$, and $\mathrm{T}_\partial \coloneqq \text{diag}(\mathrm{T}_h^I,\mathrm{T}_D,\mathrm{T}_N)$. 
    Then, if we introduce the matrix representations of the~bilinear~forms 
    \begin{align*}
 \mathrm{A}_h &\coloneqq ((\Pi_h\psi_{S_i}, \Pi_h \psi_{S_j})_\Omega)_{i,j = 1,\dots, N_h^{rt,0}}\in \mathbb{R}^{\smash{N_h^{rt,0} \times N_h^{rt,0}}}\,, \\[-0.25mm]
   \mathrm{B}_h &\coloneqq ((\nabla \cdot \psi_{S_i}, \chi_{T_j})_{\Omega})_{i=1,\dots,N_h^{rt}, j = 1,\dots,N_h^{0}}\in \mathbb{R}^{\smash{N_h^{rt} \times N_h^0}}\,,\\
   \mathrm{M}_h^I &\coloneqq ((\chi_{S_i}, \chi_{S_j})_{\Gamma_I})_{i,j = 1,\dots,N_h^{I}}\in \mathbb{R}^{\smash{N_h^{I} \times N_h^{I}}}\,, \\[-0.25mm]
    \widetilde{\mathrm{M}}_h^I &\coloneqq ((1, \chi_{S_j})_{\Gamma_I})_{j = 1,\dots,N_h^{I}}\in \mathbb{R}^{\smash{1 \times N_h^{I}}} 
\end{align*}
%\todo{KK: $\widetilde{\mathrm{M}}_h^I$ (which was missing before, my bad) comes from the ``matrix" representation of the terms $\langle \lambda_h^\pm, \eta_h \rangle_{\Gamma_I}$. Technically a vector because $\eta_h \in \mathbb{R}$, but seems appropriate to leave here with rest of matrices. }
as well as the vector representations of the data
\begin{align*}
\mathrm{F}_h^{g} &\coloneqq ((f_h+\textup{div}\,z_h^g,\chi_{T_i})_\Omega)_{i=1,\dots,N_h^0} \in \mathbb{R}^{\smash{N_h^0}}\,, \\
\mathrm{Z}_h^{g} &\coloneqq ((\Pi_h z_h^{g},\Pi_h\chi_{S_i})_\Omega)_{i=1,\dots,N_h^{rt,0}} \in \mathbb{R}^{\smash{N_h^{rt,0}}}\,,\\[-0.25mm]
\mathrm{U}_D^h &\coloneqq ((\smash{u_D^h},\psi_{S_i} \cdot n)_{\Gamma_D})_{i=1,\dots,N_h^D} \in \mathbb{R}^{\smash{N_h^D}}\,,
\end{align*}
the shifted KKT conditions \eqref{eq:SS_KKT_a}--\eqref{eq:SS_KKT_j} in algebraic form equivalently seek $(\mathrm{Z}_h,\overline{\mathrm{U}}_h, \mu_h, \Lambda_h^+, \Lambda_h^-)^\top$  $\in \mathbb{R}^{\smash{N_h^{rt,0}}} \times \mathbb{R}^{\smash{N_h^0}} \times \mathbb{R} \times (\mathbb{R}^{\smash{N_h^I}})^2$ such that 
\begin{subequations}\label{eq:SS_KKT_algebraic}
\begin{align} \label{eq:SS_KKT_algebraic_a}
\mathrm{A}_h\mathrm{Z}_h + \mathrm{B}_h^{\top}\overline{\mathrm{U}}_h + (\mathrm{T}_h^I)^{\top}\mathrm{M}_h^I(\Lambda_h^{+} - \Lambda_h^{-}) &= (\mathrm{U}_D^h)^{\top} \mathrm{T}_h^D-\mathrm{A}\mathrm{Z}_h^g\,, \\
\mathrm{B}_h\mathrm{Z}_h &= -\mathrm{F}_h^{g}\,,\label{eq:SS_KKT_algebraic_b} \\
m\mu_h 
%+ \Lambda_{0} 
%+ \smash{\indicator_{N_h^I}^{\top}}\mathrm{M}_h^I
+ \smash{\widetilde{\mathrm{M}}_h^I}(\Lambda_h^{+} + \Lambda_h^{-}) &= 0\,,\label{eq:SS_KKT_algebraic_c} \\
\mu_h\indicator_{\smash{N_h^I}} \pm \mathrm{T}_h^I \mathrm{Z}_h & \ge 0\,. \label{eq:SS_KKT_algebraic_d}
\end{align} 
\end{subequations}
We approximate the shifted KKT conditions \eqref{eq:SS_KKT_a}--\eqref{eq:SS_KKT_j} in algebraic form \eqref{eq:SS_KKT_algebraic_a}--\eqref{eq:SS_KKT_algebraic_a}~using the following primal-dual active set strategy interpreted as a semi-smooth~Newton~scheme~(\textit{cf}.~\cite{HintermullerItoKunisch2002}):\vspace{-0.5mm}

\begin{algorithm}[Semi-smooth Newton method]\label{alg:semismooth_Newton}
Choose parameters $\alpha,\varepsilon_{\mathrm{STOP}} > 0$. Moreover, let $(\mathrm{Z}^{0}_h, \smash{\overline{\mathrm{U}}}_h^{0},  \mu_h^{0},  (\Lambda_h^+)^{0}, (\Lambda_h^-)^{0})^\top\hspace{-0.1em}\in\hspace{-0.1em} \mathbb{R}^{\smash{N_h^{rt,0}}} \times \mathbb{R}^{\smash{N_h^0}} \times \mathbb{R} \times (\mathbb{R}^{\smash{N_h^I}})^2$ and set $k\hspace{-0.1em}=\hspace{-0.1em}0$. Then, for~every~${k \hspace{-0.1em}\in\hspace{-0.1em} \mathbb{N}_0}$:
\begin{enumerate}[noitemsep,topsep=2pt,leftmargin=!,labelwidth=\widthof{(iii)}]
\item[(i)]\hypertarget{step.i}{} Define the most recent active sets
\begin{align*}
  \smash{\mathscr{A}_h^{\pm,k} \coloneqq\cbr{ i \in \cbr{1, \dots, N_h^{I}} \mid  ((\Lambda_h^\pm)^k + \alpha (\mu_h^k \mathrm{e}_i \pm \mathrm{T}_h^I \mathrm{Z}_h^k))\cdot \mathrm{e}_i < 0 }}\,;
\end{align*}
\item[(ii)]\hypertarget{step.ii}{} Abbreviating $\mathrm{T}_{\smash{\mathscr{A}_h^{\smash{\pm}}}}^k\coloneqq \indicator_{\smash{\mathscr{A}_h^{\pm,k}}} \mathrm{T}_h^I,\mathrm{T}_{\smash{(\mathscr{A}_h^{\smash{\pm}})^c}}^k\coloneqq \indicator_{\smash{(\mathscr{A}_h^{\pm,k})^c}} \mathrm{T}_h^I\in \mathbb{R}^{N_h^I\times N_{rt}}$, 
compute the next iterate $(\mathrm{Z}_h^{k+1}, \smash{\overline{\mathrm{U}}}_h^{k+1},  \mu_h^{k+1},  (\Lambda_h^+)^{k+1}, (\Lambda_h^-)^{k+1})^\top \in \mathbb{R}^{N_{rt}} \times \mathbb{R}^{N_0} \times \mathbb{R} \times (\mathbb{R}^{N_h^I})^2$ such that 
%\todo{AK: I replaced $(\mathrm{M}_h^I)^\top$ by $\mathrm{M}_h^I$ as it should be symmetric. Or am I wrong?}
%\todo{KK: Good catch - yes $M_h^I$ is just the mass matrix on $\mathcal{L}_h^0(\mathcal{S}^I_h)$}
\begin{align*}
\hspace{-1mm}\sbr{\begin{matrix}
\mathrm{A}_h & \mathrm{B}_h^\top & 0 &
(\mathrm{T}_h^I)^\top \mathrm{M}_h^I & -(\mathrm{T}_h^I)^\top \mathrm{M}_h^{I} \\
\mathrm{B}_h & 0 & 0 & 0 & 0 \\
0 & 0 & m & \widetilde{\mathrm{M}}_h^I & \widetilde{\mathrm{M}}_h^I \\
-\alpha \mathrm{T}_{\mathscr{A}_h^{\smash{+}}}^{k} & 0 & -\alpha \indicator_{\smash{\mathscr{A}_h^{+,k}}} & \indicator_{\smash{(\mathscr{A}_h^{+,k})^c}} & 0
\\
\alpha \mathrm{T}_{\mathscr{A}_h^{\smash{-}}}^k & 0 & -\alpha \indicator_{\smash{\mathscr{A}_h^{-,k}}} & 0 & \indicator_{\smash{(\mathscr{A}_h^{-,k})^c}}
\end{matrix}}\hspace{-1mm}
\sbr{\begin{matrix}
    \mathrm{Z}_h^{k+1} \\  \smash{\overline{\mathrm{U}}}_h^{k+1} \\  \mu_h^{k+1} \\  (\Lambda_h^+)^{k+1} \\ (\Lambda_h^-)^{k+1}
\end{matrix}}
=
\sbr{\begin{matrix}
   (\mathrm{U}_D^h)^{\top} \mathrm{T}_h^D -\mathrm{A}_h\mathrm{Z}_h^g \\ -\mathrm{F}_h^g \\ 0 \\ 0\\ 0
\end{matrix}}\,;
\end{align*} 
%\todo{KK: I've removed the $\delta$'s from the solution vector, since this is the linear system for the updated variables, i.e. $\mathrm{Z}_h^{k+1} = \mathrm{Z}_h^{k} + \delta\mathrm{Z}_h^{k} $, not just the increment}
\item[(iii)]\hypertarget{step.iii}{}  Stop if $\smash{|\mathrm{Z}_k^{k+1} - \mathrm{Z}_k^{k}|} \le \varepsilon_{\mathrm{STOP}}$; otherwise, set increase $k \to k+1$ and return to step (\hyperlink{step.i}{i}).
\end{enumerate}
\end{algorithm}\newpage
    \section{Numerical experiments}\label{sec:experiments}
    
    \hspace{5mm}In this section, we conduct a series of numerical experiments to review theoretical findings of the previous sections.  
    All numerical experiments were performed in the open source finite element library \texttt{NETGEN}/\texttt{NGSolve} (version v6.2.2406, \textit{cf}.\ \cite{netgen}/\cite{ngsolve}). All graphics were generated either using the \texttt{Matplotlib} library (version 3.5.1, \textit{cf}.\ \cite{Hunter07}) or   
    the~\texttt{ParaView}~\mbox{engine}~(\mbox{version}~\mbox{5.12.0-RC2},~\textit{cf}.~\cite{ParaView}).\enlargethispage{7.5mm}

\subsection{Numerical experiment concerning the \emph{a priori} error analysis}
    \label{ss:experiment_1}
\hspace{5mm}In this experiment, we consider a smooth manufactured solution to test the rates of convergence. For simplicity, we set $\Gamma_I = \partial \Omega$ in this experiment.
For $r>0$, set $\Omega_r\coloneqq B_r^2(0) \coloneqq \{x \in \mathbb{R}^2 \, | \, |x| < r\}$, and
consider the annular region $\Omega = \Omega_1 \setminus \Omega_{\smash{\frac12}}$. 
Moreover, we set $f\coloneqq -\smash{\frac{1}{|\cdot|^2}}\in C^\infty(\overline{\Omega})$, so that a primal solution and the dual solution, respectively, are given via
\begin{subequations}\label{expl1:solutions} 
\begin{align}
u &\coloneqq C_1 + C_2 \ln|\cdot| + \tfrac{1}{2} (\ln|\cdot|)^2\in C^\infty(\overline{\Omega})\,, \label{expl1:solutions.1} \\
z &\coloneqq \tfrac{1}{|\cdot|}(C_2 + \tfrac{1}{2} \ln|\cdot|^2) \mathrm{id}_{\smash{\mathbb{R}^2}}\in (C^\infty(\overline{\Omega}))^2\,, \label{expl1:solutions.2} 
\end{align}
\end{subequations}
where $C_1 = \frac{\ln 2 \ln 8}{54} - \frac{\ln 64}{27\pi}$ and $C_2 = \tfrac{2\ln 2}{3}$, so that 
\begin{align}\label{expl1:true_energy} 
    I(u) = -\tfrac{(\ln 2)^2 (2 + \pi \ln 2)}{9}  \approx -0.2230149\,.
\end{align} 
\if0
A short calculation reveals that the optimal choices of $C_1$ and $C_2$ minimizing functional \eqref{eq:primal} are
\[
C_1 = \tfrac{\ln 2 \ln 8}{54} - \tfrac{\ln 64}{27\pi}, \quad C_2 = \tfrac{2\ln 2}{3}, \quad
I(u) = -\tfrac{(\ln 2)^2 (2 + \pi \ln 2)}{9}  \approx -0.2230149.
\]
\fi

We generate a series  of triangulations $\mathcal{T}_{h_k}$, $k=0,\dots,5$, with $h_k \approx\frac{1}{2} h_{k-1}$~for~all~$k=1,\dots,5$ and $\Omega_{h_k} \coloneqq \text{int}(\cup \mathcal{T}_{h_k})\subseteq \Omega$ for all $k=0,\dots,5$. For this series of triangulations $\mathcal{T}_{h_k}$,~$k=0,\dots,5$, we compute the discrete dual solution $z_{h_k}^{rt}\in \mathcal{R}T^0(\mathcal{T}_{h_k})$
%solve the discrete dual problem \eqref{eq:discrete_dual} 
using the primal-dual active set strategy interpreted as a semi-smooth Newton scheme (\textit{cf}.\ Algorithm \ref{alg:semismooth_Newton}) and, subsequently, %and initial iterates $y_0 = 0$, ...
%Subsequently, we compute 
a discrete primal solution $u_{h_k}^{cr}\in \mathcal{S}^{1,cr}(\mathcal{T}_{h_k})$ using the inverse generalized~Marini~formula~(\textit{cf}.~Lemma~\ref{lem:marini}).

Due to the regularity of the primal solution \eqref{expl1:solutions.1} and the dual solution \eqref{expl1:solutions.2},~\mbox{Theorem}~\ref{thm:apriori_identity}(\hyperlink{thm:apriori_identity.iii}{iii}) suggests an error decay of order $\mathcal{O}(h_k^2)= \mathcal{O}(N_k^{-1})$, where $N_k\coloneqq\texttt{ndof}(\mathcal{R}T^0(\mathcal{T}_{h_k}))+\texttt{ndof}(\mathcal{L}^0(\mathcal{T}_{h_k}))$, ${k\in \mathbb{N}}$, for the discrete primal-dual total errors (\textit{cf}.\ \eqref{eq:discrete_primal_dual_error}). 
    In Figure \ref{a_priori_rates}(\textit{left}), we~report~the~expected  optimal error decay of order 
	$\mathcal{O}(h_k^2)=  \mathcal{O}(N_k^{-1})$, $k=1,\ldots, 5$, and that the \textit{a priori} error identity in Theorem~\ref{thm:apriori_identity}(\hyperlink{thm:apriori_identity.i}{i}) is  satisfied. In Figure \ref{a_priori_rates}(\textit{right}),
    we observe that the primal energies of the node-averaged discrete primal solutions $I(\overline{u}_{h_k})$, $k=0,\ldots,5$, where $\overline{u}_{h_k}^{cr}\coloneqq \Pi_{h_k}^{av} u_{h_k}^{cr}\in \mathcal{S}^{1,cr}(\mathcal{T}_{h_k})\cap H^1(\Omega)$ and $\Pi_{h_k}^{av}\colon \mathcal{S}^{1,cr}(\mathcal{T}_{h_k})\to \mathcal{S}^{1,cr}(\mathcal{T}_{h_k})\cap H^1(\Omega)$~is~the~node-averaging interpolation operator (\textit{cf}.\ \cite[Sec.\ 22.2]{EG21I}), and the dual energies of the discrete dual solutions $D(z_{h_k}^{rt})$, $k=0,\ldots,5$,~converge~to the true primal/dual energy functional value \eqref{expl1:true_energy}.

\if0
To estimate the rates of convergence, we compute the experimental order of convergence~(EOC), \textit{i.e.},
\[
\mathrm{EOC}_i (e_i) \coloneqq \frac{\log(e_i) - \log(e_{i-1})}{\log(h_i) - \log(h_{i-1})}\,, \quad i=1,\dots,5\,,
\]
where, for every $i=1,\dots,5$, we denote by $e_i$ a generic error quantity.
\fi

\begin{figure}[H]
\centering 
\includegraphics[width=\linewidth]{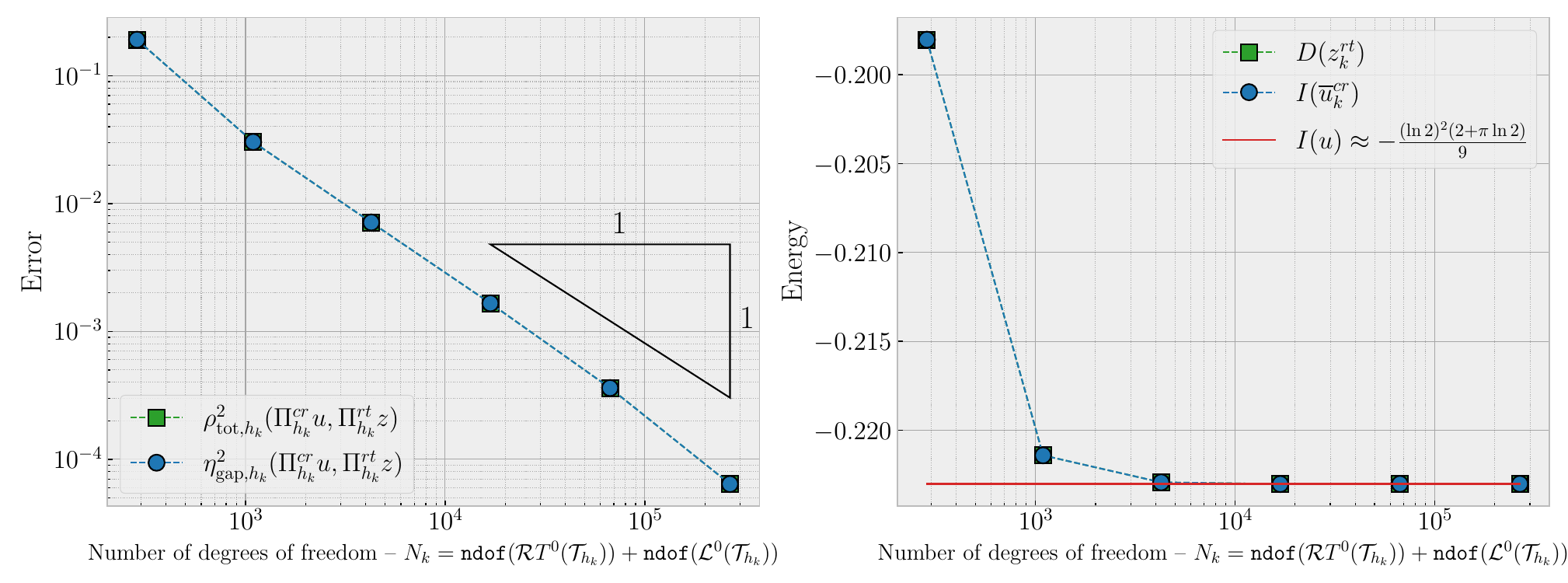}
\caption{\textit{left:} logarithmic plots of $\rho_{\textup{tot},h_k}^2(\Pi_{h_k}^{cr}u,\Pi_{h_k}^{rt}z)=\eta_{\textup{gap},h_k}^2(\Pi_{h_k}^{cr}u,\Pi_{h_k}^{rt}z)$, $k=0,\ldots,5$. We report the expected  optimal error decay of order 
	$\mathcal{O}(h_k^2)=  \mathcal{O}(N_k)$, $k=1,\ldots, 5$; \textit{right:} logarithmic plots of $I(\overline{u}_{h_k}^{cr})$, $k=0,\ldots,5$, and $D(z_{h_k}^{rt})$, $k=0,\ldots,5$, where $\overline{u}_{h_k}^{cr}\coloneqq \Pi_{h_k}^{av} u_{h_k}^{cr}\in \mathcal{S}^{1,cr}(\mathcal{T}_{h_k})\cap H^1(\Omega)$, $k=0,\ldots,5$. We report convergence to the true primal/dual energy functional value \eqref{expl1:true_energy}.}
\label{a_priori_rates}
\end{figure}

\newpage
\subsection{Numerical experiment concerning a posteriori error analysis} \vspace{-0.5mm}

\hspace{5mm}In this experiment, we review the theoretical findings of \Cref{sec:aposteriori}.\vspace{-1mm}\enlargethispage{5.5mm}

\subsubsection{Adaptive algorithm}\label{subsec:adaptive_algorithm}\vspace{-0.5mm}

\hspace{5mm}Even though the problem is non-local, in this subsection, we  propose an adaptive algorithm. 
It is based on the local mesh-refinement indicators 
$\eta_{\mathrm{gap},A,T}^2,\eta_{\mathrm{gap},B,S}^2  \colon K\hspace{-0.1em}\times\hspace{-0.1em} K^*\hspace{-0.1em}\to\hspace{-0.1em} \mathbb{R}_{\ge 0}$,~$T \hspace{-0.1em}\in\hspace{-0.1em} \mathcal{T}_h$,~$S\hspace{-0.1em}\in \hspace{-0.1em}\mathcal{S}_h^{I}$, 
for every $\smash{(v,y)^\top\in K\times K^*}$ defined by
\begin{subequations}\label{subsec:adaptive_algorithm.1}
        \begin{align}\label{subsec:adaptive_algorithm.1.1}
      \eta_{\mathrm{gap},A,T}^2(v,y) &\coloneqq \|\nabla v - y\|_T^2 \\
      \eta_{\mathrm{gap},B,S}^2(v,y) &\coloneqq  \tfrac{m}{2} \|\overline{y \cdot n}\|_{\infty,S}^2 + (\overline{y \cdot n}, v )_{S} +\tfrac{1}{2m} \|v\|_{1,S}^2\,.\label{subsec:adaptive_algorithm.1.2}
    \end{align} 
    \end{subequations}
        Then, for every $(v,y)^\top\in K\times K^*$, we have that
        \begin{subequations}\label{subsec:adaptive_algorithm.2}
        \begin{align}\label{subsec:adaptive_algorithm.2.1}
            \eta_{\mathrm{gap},A}^2(v,y)&=\sum_{T\in \mathcal{T}_h}{\eta_{\mathrm{gap},A,T}^2(v,y)}\,,\\
            \eta_{\mathrm{gap},B}^2(v,y)&\ge \sum_{S\in \mathcal{S}_h^{I}}{\eta_{\mathrm{gap},B,S}^2(v,y)}\,,\label{subsec:adaptive_algorithm.2.2}
        \end{align} 
    \end{subequations}
        where we used the embedding $\ell^1(\mathbb{N})\hookrightarrow \ell^2(\mathbb{N})$ with embedding constant $1$ in \eqref{subsec:adaptive_algorithm.2.2}.~Since~even~for element-wise \hspace{-0.175mm}affine \hspace{-0.175mm}functions, \hspace{-0.175mm}it \hspace{-0.175mm}is \hspace{-0.175mm}non-trivial \hspace{-0.175mm}to \hspace{-0.175mm}evaluate \hspace{-0.175mm}the \hspace{-0.175mm}local \hspace{-0.175mm}refinement \hspace{-0.175mm}indicator~\hspace{-0.175mm}\eqref{subsec:adaptive_algorithm.1.2}~\hspace{-0.175mm}\mbox{exactly}, we introduce the local mesh-refinement indicators 
    $\widehat{\eta}_{\mathrm{gap},B,S}^2  \colon K\cap \mathcal{L}^1(\mathcal{T}_h)\times K^*\cap \mathcal{R}T^0(\mathcal{T}_h)\to \mathbb{R}_{\ge 0}$, $S\in \mathcal{S}_h^{I}$, 
    for every $(v_h,y_h)^\top\in  K\cap \mathcal{L}^1(\mathcal{T}_h)\times K^*\cap \mathcal{R}T^0(\mathcal{T}_h)$ defined by 
    \begin{align}\label{subsec:adaptive_algorithm.3}
\widetilde{\eta}_{\mathrm{gap},B,S}^2(v_h,y_h) &\coloneqq  \tfrac{m}{2} \vert y_h \cdot n|_{S} \vert^2 + y_h \cdot n|_S \vert S\vert\langle v_h\rangle_{S} +\tfrac{1}{2m}\vert S\vert^2\vert \langle v_h\rangle_S\vert^2\,,
    \end{align} 
    which can be evaluated exactly and satisfy $\widetilde{\eta}_{\mathrm{gap},B,S}^2(v_h,y_h)\leq \eta_{\mathrm{gap},B,S}^2(v_h,y_h)$. %for every $(v_h,y_h)^\top\in  K\cap \mathcal{L}^1(\mathcal{T}_h)\times K^*\cap \mathcal{R}T^0(\mathcal{T}_h)$,~$S\in \mathcal{S}_h^{I}$~satisfy 
    %\begin{align*}
    %    \widetilde{\eta}_{\mathrm{gap},B,S}^2(v_h,y_h)\leq \eta_{\mathrm{gap},B,S}^2(v_h,y_h)\,.
    %\end{align*}
    Eventually, on the basis of %the local refinement indicators 
    \eqref{subsec:adaptive_algorithm.3}, we introduce the global estimator $\widetilde{\eta}_{\mathrm{gap},B}^2  \colon K\cap \mathcal{L}^1(\mathcal{T}_h)\times K^*\cap \mathcal{R}T^0(\mathcal{T}_h)\to \mathbb{R}_{\ge 0}$, 
    for every $(v_h,y_h)^\top\in  K\cap \mathcal{L}^1(\mathcal{T}_h)\times K^*\cap \mathcal{R}T^0(\mathcal{T}_h)$ defined by  
    \begin{align}\label{subsec:adaptive_algorithm.4}
        \widetilde{\eta}_{\mathrm{gap},B}^2(v_h,y_h)\coloneqq \sum_{S\in \mathcal{S}_h^{I}}{\widetilde{\eta}_{\mathrm{gap},B,S}^2(v,y)}\,.
    \end{align}

     The numerical experiments are based on the following \emph{adaptive algorithm}:
	 
	 \begin{algorithm}[AFEM]\label{alg:afem}
	 	Let $\varepsilon_{\textup{STOP}}>0$, $\theta_{\mathcal{T}},\theta_{\mathcal{S}}\in (0,1)$, and  $\mathcal{T}_0$ an initial  triangulation of $\Omega$. Then, for every $k\in \mathbb{N}_0$:
	 	\begin{description}[noitemsep,topsep=0.0pt,labelwidth=\widthof{\textit{('Estimate')}},leftmargin=!,font=\normalfont\itshape]
	 		\item[(`Solve')]\hypertarget{Solve}{}
            Compute $z_{h_k}^{rt}\in  K_{h_k}^{rt,*}$ using Algorithm \ref{alg:semismooth_Newton} and, then, $u_{h_k}^{cr}\in  K_{h_k}^{cr}$ using Lemma~\ref{lem:marini}. 
	 		 Post-process $u_{h_k}^{cr}\in  K_{h_k}^{cr}$ and $z_{h_k}^{rt}\in   K_{h_k}^{rt,*}$  to obtain admissible ${\overline{u}_{h_k}^{cr}\in  K}$and $\overline{z}_{h_k}^{rt}\in  K^*$;
	 		
	 		\item[(`Estimate')]\hypertarget{Estimate}{}  Compute \hspace{-0.15mm}$\smash{\{\eta^2_{\textup{gap},A,T}(\overline{u}_{h_k}^{cr},\overline{z}_{h_k}^{rt})\}_{T\in \mathcal{T}_{h_k}}}$ \hspace{-0.15mm}and \hspace{-0.15mm}${\{\widetilde{\eta}^2_{\textup{gap},B,S}(\overline{u}_{h_k}^{cr},\overline{z}_{h_k}^{rt})\}_{\smash{S\in \mathcal{S}_{h_k}^{I}}}}\!\!\!$.\ \hspace{-0.15mm}If \hspace{-0.15mm}$
	 		\smash{\eta^2_{\textup{gap},A}}(\overline{u}_{h_k}^{cr},\overline{z}_{h_k}^{rt})$ $+\smash{\widetilde{\eta}^2_{\textup{gap},B}}(\overline{u}_{h_k}^{cr},\overline{z}_{h_k}^{rt})\leq \varepsilon_{\textup{STOP}}$, then \textup{STOP}; otherwise, continue with step (\hyperlink{Mark}{`Mark'});
	 		\item[(`Mark')]\hypertarget{Mark}{} Choose minimal (in terms of cardinality) subsets $\tau_{h_k}\subseteq\mathcal{T}_{h_k}$ and $\sigma^I_{h_k} \subseteq \mathcal{S}^I_{h_k}$~such~that 
            \begin{align*}
                \sum_{T \in \tau_{h_k}} \eta_{\mathrm{gap},T}^2( \bar u_k^{cr},\overline{z}_k^{rt}) &\ge \theta_{\mathcal{T}} \sum_{T \in \mathcal{T}_{h_k}} \eta_{\mathrm{gap},T}^2( \bar u_k^{cr},\overline{z}_k^{rt})\,,\\ 
                \sum_{S \in \sigma_{h_k}} \eta_{\mathrm{gap},S}^2( \bar u_k^{cr},\overline{z}_k^{rt}) &\ge \theta_{\mathcal{S}} \sum_{S \in \mathcal{S}_{h_k}^I} \eta_{\mathrm{gap},S}^2( \bar u_k^{cr},\overline{z}_k^{rt})\,.
            \end{align*} 
	 		\item[(`Refine')]\hypertarget{Refine}{} Perform a conforming refinement of $\mathcal{T}_{h_k}$ to obtain $\mathcal{T}_{h_{k+1}}$ such that each $T\in \tau_{h_k}$ or $T\in \mathcal{T}_{h_k}$ with $S \subseteq \partial T$ for some $S\in \sigma_{h_k}^{I}$
            is \emph{`refined'} in $\mathcal{T}_{h_{k+1}}$.  
	 		Increase $k\mapsto k+1$ and proceed with step (\hyperlink{Solve}{'Solve'}).
	 	\end{description}
	 \end{algorithm} 

\begin{remark}[comments on Algorithm \ref{alg:afem}]
    \begin{itemize}[noitemsep,topsep=2pt,leftmargin=!,labelwidth=\widthof{(iii)}]
        \item[(i)] In step (\hyperlink{Solve}{'Solve'}), 
        if $\Gamma_D=\emptyset$, we can employ $\overline{u}_k^{cr} = \Pi^{av}_{h_k} u_k^{cr} $, and if $f=f_h\in \mathcal{L}^0(\mathcal{T}_h)$ and $g=g_h\in \mathcal{L}^0(\mathcal{S}_h^N)$, we can employ $\overline{z}_k^{rt} = z_k^{rt} \in K^*$;
        \item[(ii)] In \hspace{-0.1mm}step \hspace{-0.1mm}(\hyperlink{Mark}{'Mark'}), \hspace{-0.1mm}the \hspace{-0.1mm}minimal \hspace{-0.1mm}subsets  \hspace{-0.1mm}$\tau_{h_k}\hspace{-0.175em}\subseteq\hspace{-0.175em}\mathcal{T}_{h_k}$ \hspace{-0.1mm}and \hspace{-0.1mm}$\sigma^I_{h_k} \hspace{-0.175em}\subseteq\hspace{-0.175em} \mathcal{S}^I_{h_k}$ \hspace{-0.1mm}are \hspace{-0.1mm}found \hspace{-0.1mm}using~\hspace{-0.1mm}Dörfler~\hspace{-0.1mm}\mbox{marking};
        \item[(iii)] In step (\hyperlink{Refine}{'Refine'}),  newest-vertex-bisection is employed as conforming refinement routine;
    \end{itemize} 
\end{remark}

\subsubsection{Example with unknown primal and dual solution}

\hspace{5mm}In this example, let $m=3$,  $\Omega =(-1,1)^2 \setminus ([0,1] \times [-1,0])$ and $f=1 \in  L^2(\Omega)$.~Then, we distinguish two setups with regard to the insulation of boundary parts of $\Omega$:\enlargethispage{10mm}
\begin{itemize}[noitemsep,topsep=2pt,leftmargin=!,labelwidth=\widthof{$\bullet$}]
    \item[$\bullet$]\hypertarget{setup1}{} \emph{Setup 1: (pure insulation).} Let $\Gamma_D=\Gamma_N=\emptyset$ and $\Gamma_I =\partial\Omega$. In this case, we cannot make a statement about the regularity of the primal solution $u\in K$;
    \item[$\bullet$]\hypertarget{setup2}{} \emph{Setup 2: (mixed boundary conditions).} Let $\Gamma_D=[0,1]\times\{0\}$, $\Gamma_N \coloneqq  \{0\} \times [-1,0]$, and $\Gamma_I \coloneqq \partial \Omega \setminus (\Gamma_D\cup\Gamma_N)$. In this case, since at the origin two boundary conditions meet at the angle $\smash{\frac{\pi}{2}}$, regularity results for the Poisson problem on a polygonal~domain~(\textit{cf}.~\cite{Grisvard2011})~\mbox{imply}~that~${u\hspace{-0.1em}\in\hspace{-0.1em} H^{\smash{\frac{4}{3}}}(\Omega)}$.
\end{itemize}

In these two setups, we make the following observations:
\begin{itemize}[noitemsep,topsep=2pt,leftmargin=!,labelwidth=\widthof{$\bullet$}]
    \item[$\bullet$] \emph{Observation \hspace{-0.15mm}1: \hspace{-0.15mm}(Setup \hspace{-0.15mm}\hyperlink{setup1}{1}).} \hspace{-0.15mm}In \hspace{-0.15mm}Figure \hspace{-0.15mm}\ref{fig:adaptive_rate_example3}, \hspace{-0.15mm}we \hspace{-0.15mm}report \hspace{-0.15mm}the \hspace{-0.15mm}optimal \hspace{-0.15mm}error \hspace{-0.15mm}decay \hspace{-0.15mm}of \hspace{-0.15mm}order \hspace{-0.15mm}${\mathcal{O}(h_k^2)\hspace{-0.15em}=\hspace{-0.15em}\mathcal{O}(N_k^{-1})}$, $k=1,\ldots,30$, for both 
    uniform and adaptive mesh-refinement. For adaptive mesh-refinement, we select $\theta_\mathcal{T} = \frac{1}{4}$, $\theta_\mathcal{S} = 0$ or $\theta_\mathcal{T} = \theta_\mathcal{S} = \frac{1}{8}$. Moreover, in Figure \ref{fig:adaptive_solution}\textit{(left)}, we observe~that~the~adaptive algorithm (\textit{cf}.\ Algorithm \ref{alg:afem}) refines the almost uniformly. All this is an indication for that in Setup \hyperlink{setup1}{1}, %in the case of pure insulation, 
    the unique primal solution satisfies $u\in H^2(\Omega)$;
    
    \item[$\bullet$] \emph{Observation 2: (Setup \hyperlink{setup2}{2}).} In Figure \ref{fig:adaptive_rate_example3}, we report the reduced error decay rate $\mathcal{O}(h_k^{\frac{2}{3}})= \mathcal{O}(N_k^{\smash{-\frac{1}{3}}})$, $k=1,\ldots,6$, for 
    uniform mesh-refinement and the optimal error~decay~rate~${\mathcal{O}(h_k^2)= \mathcal{O}(N_k^{-1})}$, $k=1,\ldots,30$, for adaptive mesh-refinement. For adaptive mesh-refinement, we either select $\theta_\mathcal{T} = \frac{1}{4}$, $\theta_\mathcal{S} = 0$ or $\theta_\mathcal{T} = \theta_\mathcal{S} = \frac{1}{8}$. Moreover, in Figure \ref{fig:adaptive_solution}\textit{(right)}, we~observe~that~the adaptive algorithm (\textit{cf}.\ Algorithm \ref{alg:afem}) refines towards the origin, where we expect a singularity of the primal solution, due to the different touching (with angle $\frac{\pi}{2}$) boundary conditions.  All this is an indication for that in Setup \hyperlink{setup2}{2}, the unique primal solution~indeed~satisfies~$u\in H^{\smash{\frac{4}{3}}}(\Omega)$.\vspace{-1.75mm}
\end{itemize}
\begin{figure}[H]
    \centering
    \includegraphics[width=\linewidth]{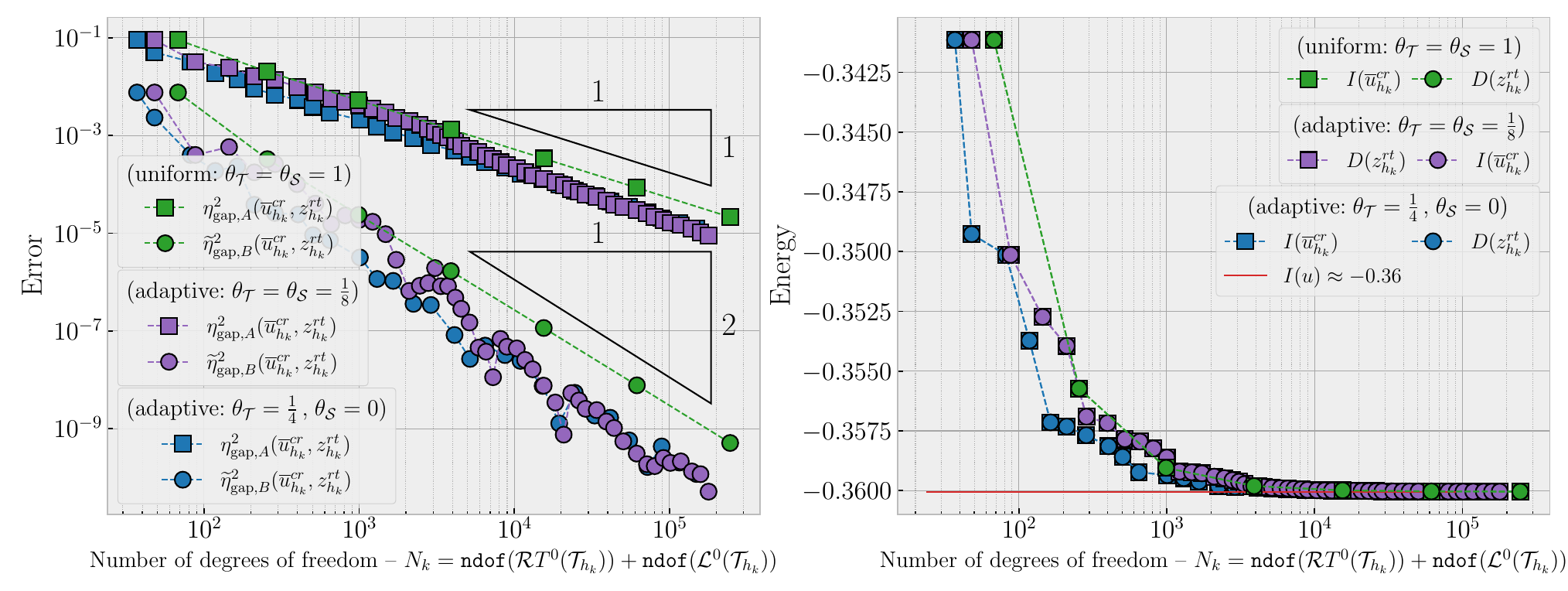}\\\includegraphics[width=\linewidth]{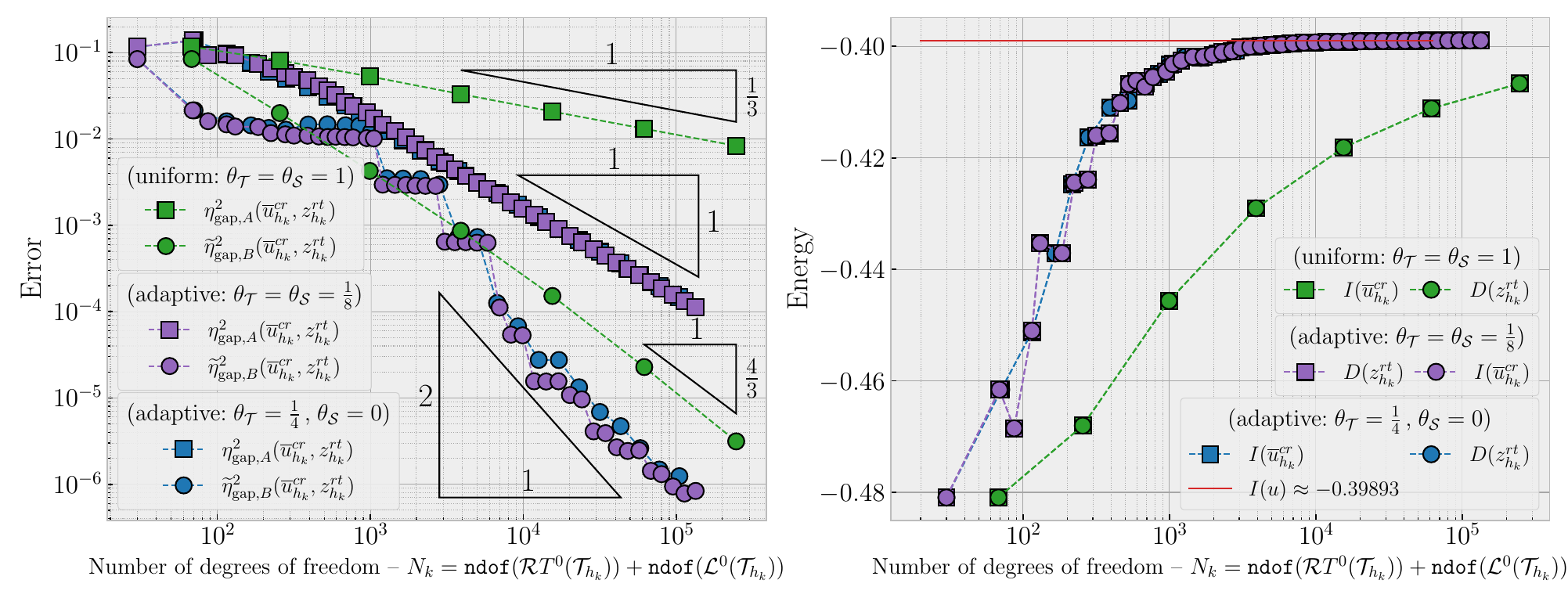}\vspace{-1mm}
    \caption{\textit{top row:} Setup \protect\hyperlink{setup1}{1} (pure insulation); \textit{bottom row:} Setup \protect\hyperlink{setup2}{2} (mixed boundary conditions);
    \textit{left column:} logarithmic plots of $\rho_{\textup{tot}}^2(\overline{u}_{h_k}^{cr},z_{h_k}^{rt})=\eta_{\textup{gap}}^2(\overline{u}_{h_k}^{cr},z_{h_k}^{rt})$; \textit{right column:} logarithmic plots of $I(\overline{u}_{h_k}^{cr})$ and $D(z_{h_k}^{rt})$, where $\overline{u}_{h_k}^{cr}\coloneqq \Pi_{h_k}^{av} u_{h_k}^{cr}\in \mathcal{S}^{1,cr}(\mathcal{T}_{h_k})\cap H^1(\Omega)$; each for $k=0,\ldots,30$, when using adaptive mesh-refinement, and for 
    $k=0,\ldots,6$, when using uniform mesh-refinement.}
    \label{fig:adaptive_rate_example3}
\end{figure}

%\begin{figure}[H]
%    \centering
%    \includegraphics[width=\linewidth]{figures/adaptive_rate_example2.pdf}
%    \caption{Example 3: mixed BC}
%    \label{fig:adaptive_rate_example2}
%\end{figure}

\begin{figure}[H]
    \centering
\includegraphics[width=0.45\linewidth]{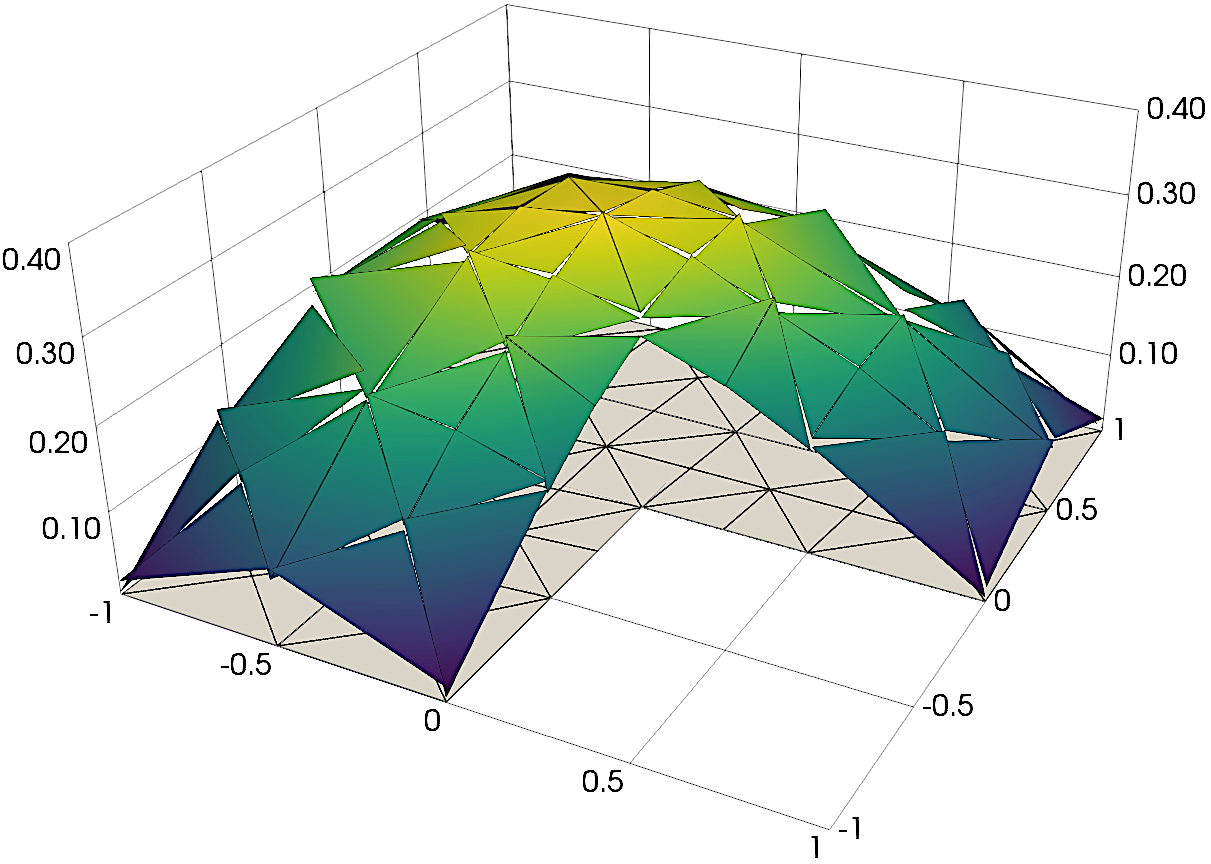}
\includegraphics[width=0.45\linewidth]{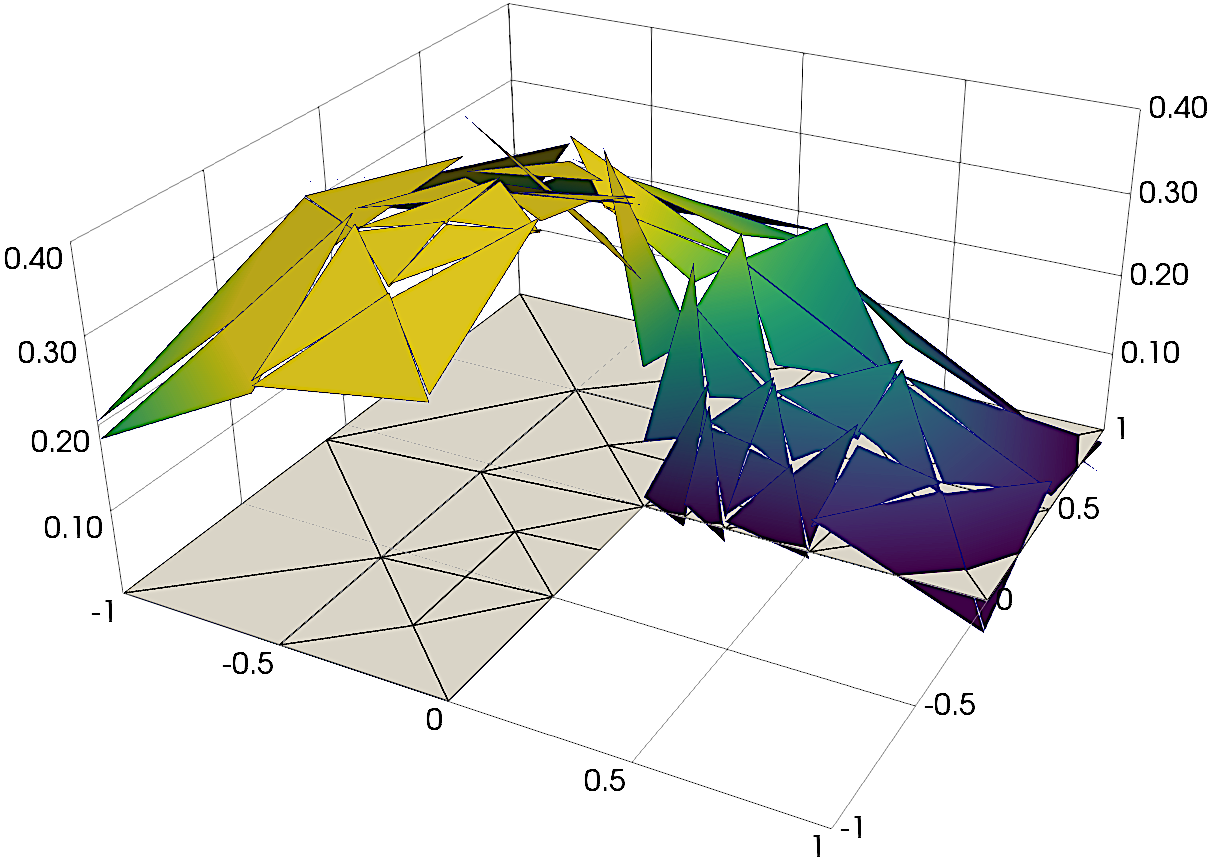} 
\includegraphics[width=0.45\linewidth]{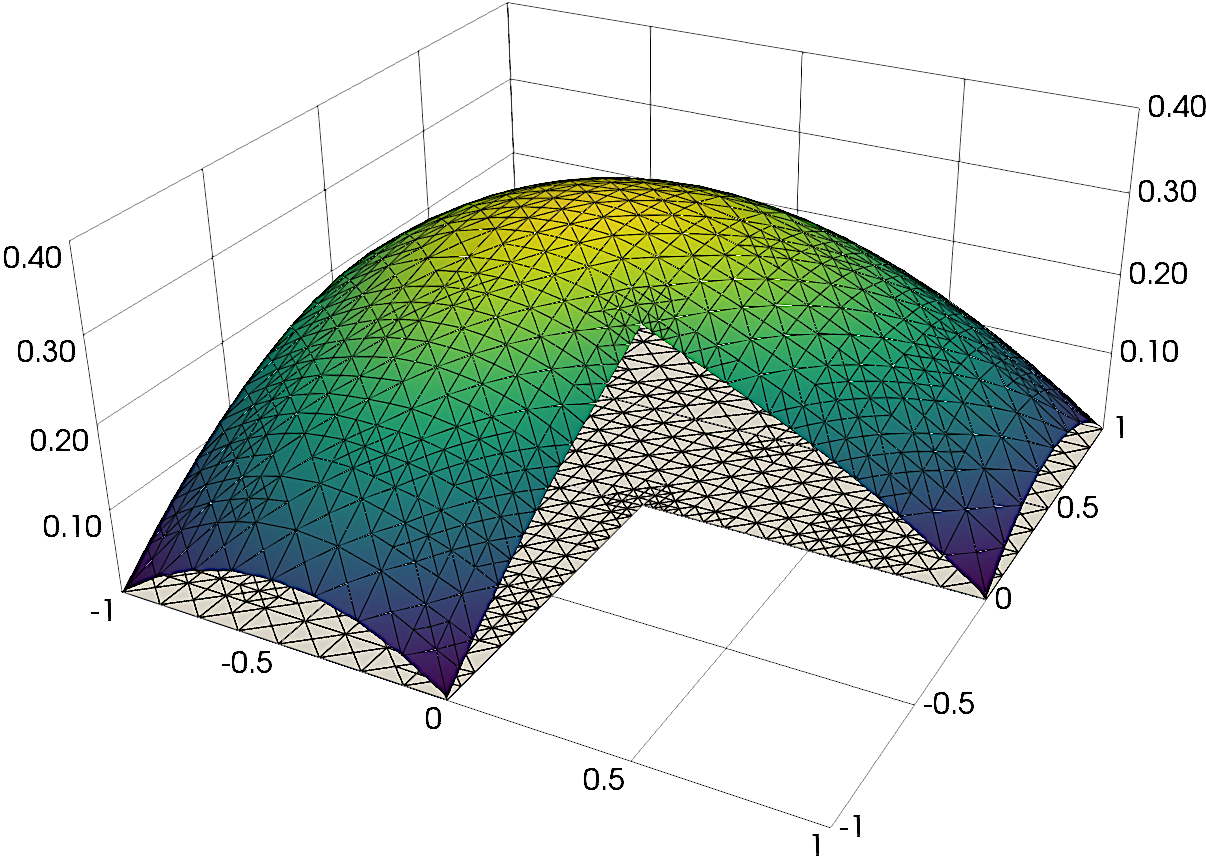}
\includegraphics[width=0.45\linewidth]{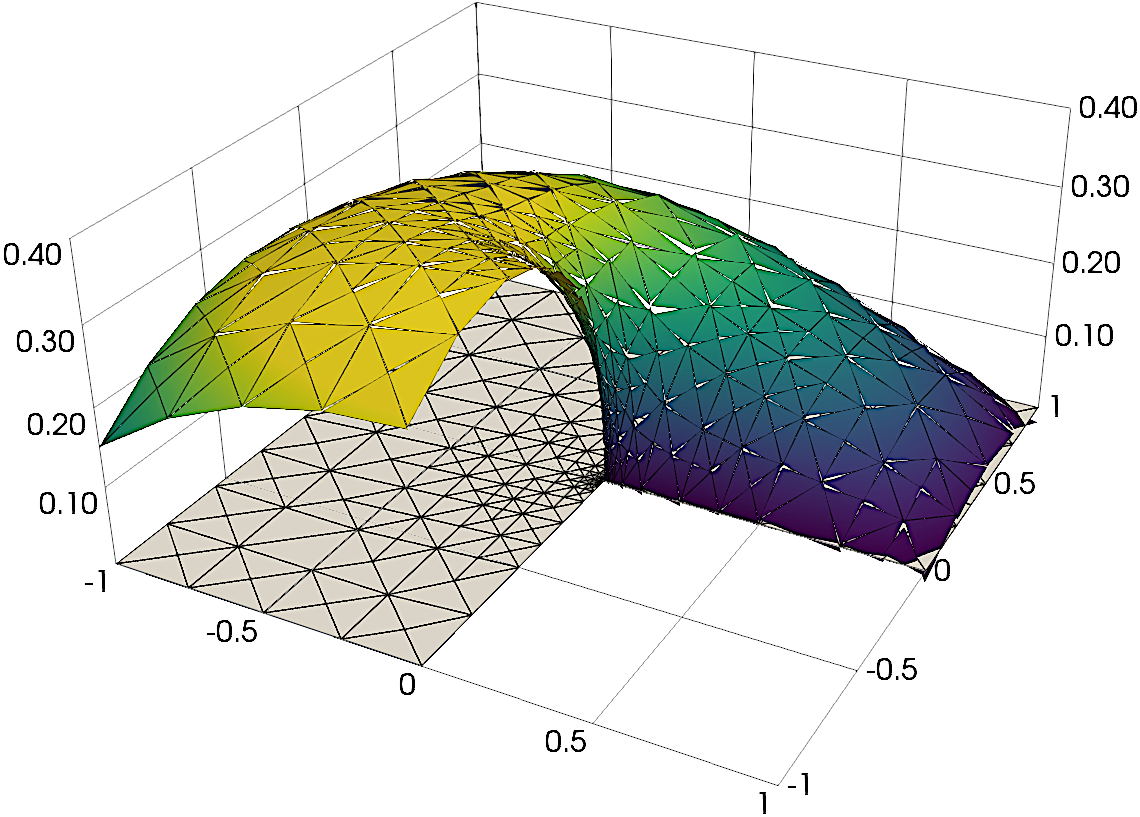}
\includegraphics[width=0.45\linewidth]{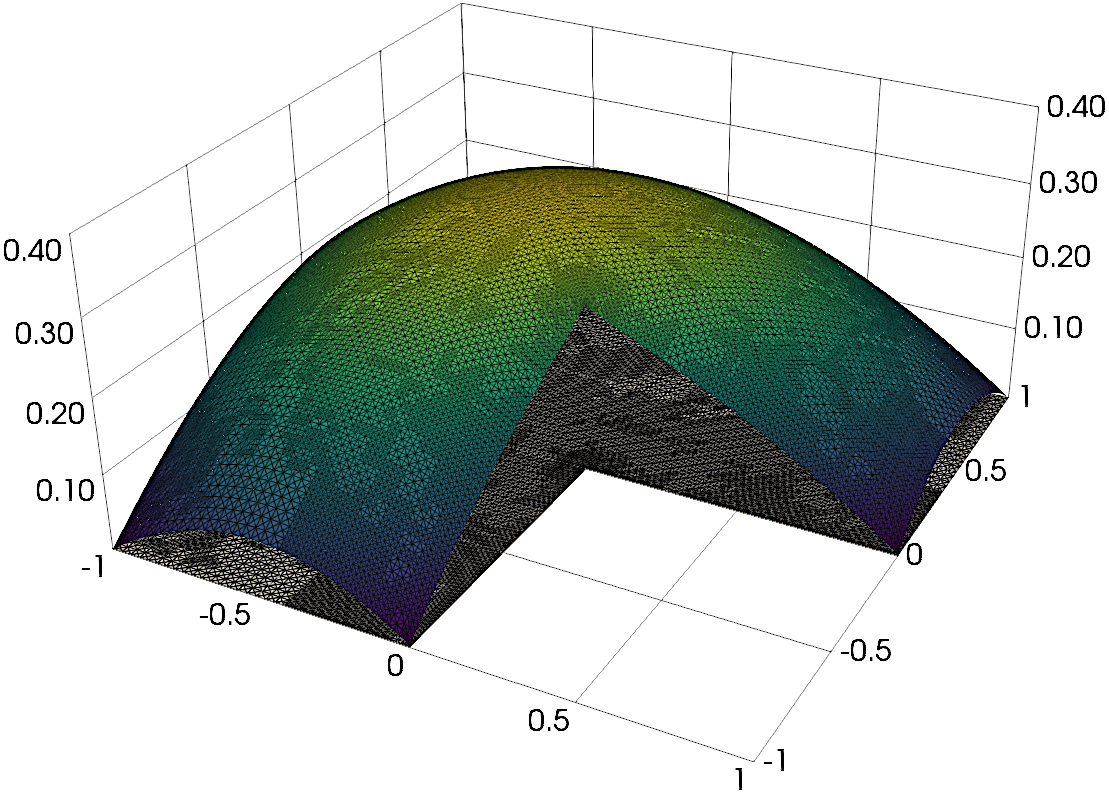}
\includegraphics[width=0.45\linewidth]{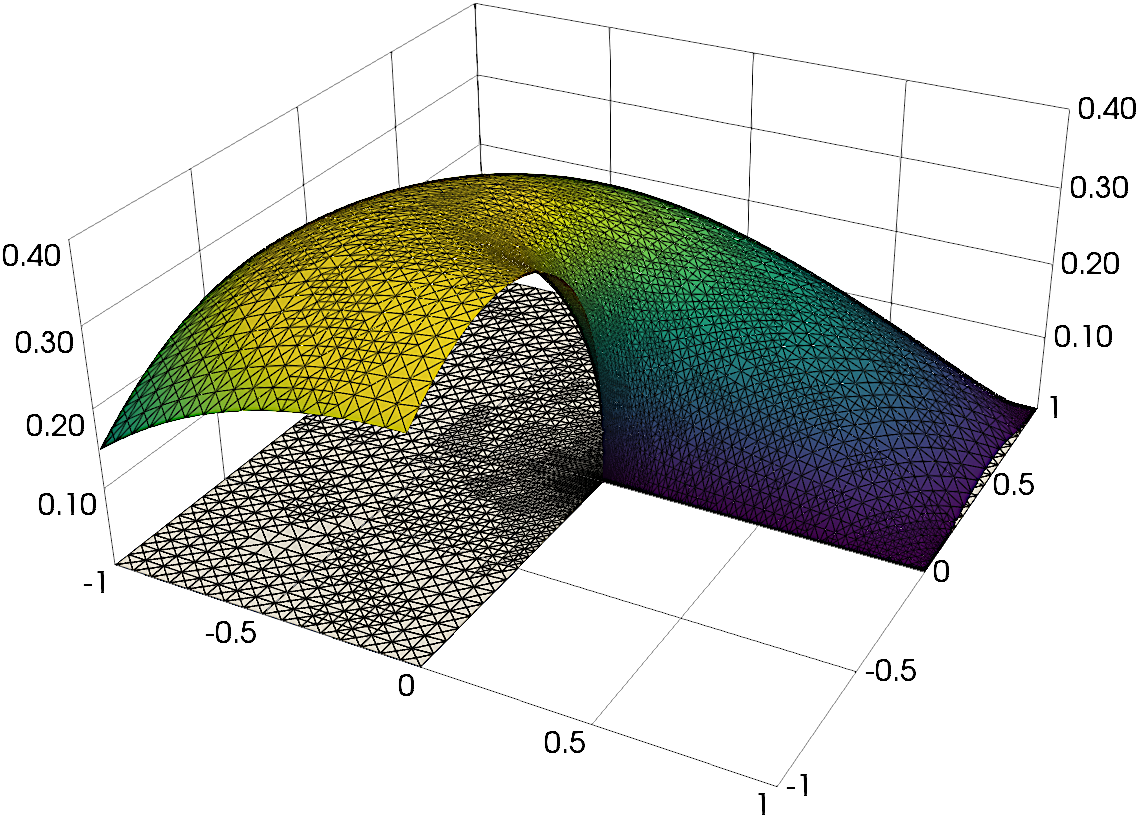}
    \caption{The discrete  primal solution $u_{h_k}^{cr}\in \mathcal{S}^{1,cr}(\mathcal{T}_{h_k})$ and the adaptively refined triangulation $\mathcal{T}_{h_k}$ %using the gap estimator $\eta_{\text{gap,A},h}^2$ 
    pictured at refinement level $k=5$ \textit{(top row)}, $k=15$ \textit{(middle row)}, and $k=25$ \textit{(bottom row)}. 
    The \textit{left column} corresponds to the test case with  purely insulated~\mbox{boundary}~(\textit{cf}.~Setup~\protect\hyperlink{setup1}{1}),~whereas the \textit{right column} corresponds to the test case with mixed~boundary~conditions~(\textit{cf}.~Setup~\protect\hyperlink{setup2}{2}).}
    \label{fig:adaptive_solution}
\end{figure}\enlargethispage{7.5mm}\vspace{-5mm}
 
\if0
\begin{figure}[H]
    \centering
\includegraphics[width=0.45\linewidth,trim={16cm 3cm 16cm 6cm},clip]{figures/marini5.png}
\includegraphics[width=0.45\linewidth,trim={16cm 3cm 16cm 6cm},clip]{figures/marini5_I.png} \\[-1mm]
\includegraphics[width=0.45\linewidth,trim={16cm 3cm 16cm 6cm},clip]{figures/marini15.png}
\includegraphics[width=0.45\linewidth,trim={16cm 3cm 16cm 6cm},clip]{figures/marini15_I.png}\\[-1mm]
\includegraphics[width=0.45\linewidth,trim={16cm 3cm 16cm 6cm},clip]{figures/marini25.png}
\includegraphics[width=0.45\linewidth,trim={16cm 3cm 16cm 6cm},clip]{figures/marini25_I.png}
    \caption{The reconstructed primal solution $u_{h_k}^{cr}$ and the adaptively refined mesh using the gap estimator $\eta_{\text{gap,A},h}^2$ pictured at refinement level $k=5$ (top row), $k=15$ (middle row), and $k=25$ (bottom row). The left column corresponds to the test case with mixed boundary conditions, whereas the right column corresponds to the test case with a purely insulated boundary. }
    \label{fig:adaptive_solution}
\end{figure}\enlargethispage{10mm}
\fi

\subsection{Optimal insulation of a house}\vspace{-1mm}
    \label{ss:experiment_3}

\hspace{5mm}In this experiment, we study the optimal distribution of a given amount of insulating material attached to an insulating body $\Omega\subseteq \mathbb{R}^3$  modelling a simple house with attached garage.~In~doing~so, we assume that the windows, doors, and floors of the house exhibit fixed insulating properties,~\textit{i.e.}, we assign  Neumann boundary conditions to the windows, doors, and floors of the house, on which we prescribe~an~outward~heat~flux. We believe this is a reasonable assumption, as these elements are typically standardized in the construction industry and provided by external manufacturers. 
We do not impose Dirichlet boundary conditions (\textit{i.e.}, $\Gamma_D=\emptyset$), so that the insulated boundary $\Gamma_I\coloneqq \partial\Omega\setminus \Gamma_N$ is given via the roofs and the walls without windows and doors. For~simplicity, we set $f=1\in L^2(\Omega)$ (\textit{i.e.}, the house is uniformly heated) and we prescribe a uniform outward heat flux $g = \frac{1}{5}\in H^{-\frac{1}{2}}(\Gamma_N)$. % on the Neumann boundary $\Gamma_N$ (\textit{i.e.}, the windows, doors, and floors). 
We set the %(normalized) 
total amount of insulating material to be%\textcolor{red}{total insulating power} to be
\begin{align*}
    m\coloneqq  \|h\|_{1,\Gamma_I} = \tfrac{1}{4}|\Gamma_I|\,.
\end{align*} 

In \Cref{fig:house}, the surface temperature field $\pi_h u_h^{cr}\in \mathcal{L}^0(\mathcal{S}_h^{\partial})$ of the house $\Omega$ %~at~thermal~equilibrium 
and the distribution of the insulating material (in the direction of %the outward unit normal vector field 
$n\colon \partial\Omega\to\mathbb{S}^2$ for visualization purposes), \textit{i.e.},
\begin{align}\label{def:discrete_distribution}
    \smash{\widetilde{h}_{u_h^{cr}}\coloneqq \tfrac{m}{\|\pi_hu_h^{cr}\|_{1,\Gamma_I}}\vert \pi_hu_h^{cr}\vert \in \mathcal{L}^0(\mathcal{S}_h^{I})}\,,
\end{align}
are depicted. The surface temperature of the insulated portion $\Gamma_I\subseteq \partial\Omega$ of the house is non-zero, indicating that the inclusion of the insulating material impedes heat transfer at the boundary $\partial\Omega$. Moreover, the  distribution of the insulating material \eqref{def:discrete_distribution} is not uniform, but instead tends to prioritize the placement of insulating material on the roof of the house. This appears physically reasonable in light of Fourier's law, which states that the rate of conductive heat transfer is proportional to the exposed surface area.\vspace{-2.5mm}

% Therefore, to mitigate conductive losses, our results suggest one should prioritize insulating regions with larger exposed surface~area.\vspace{-1.5mm}
% \todo{KK: solution is apparently also curvature dependent, as a uniformly heated annular region leads to more insulation in the interior cut out circle, so exposed surface area isn't the full story...}
\begin{figure}[H] 
    \centering
   \includegraphics[width=0.325\textwidth]{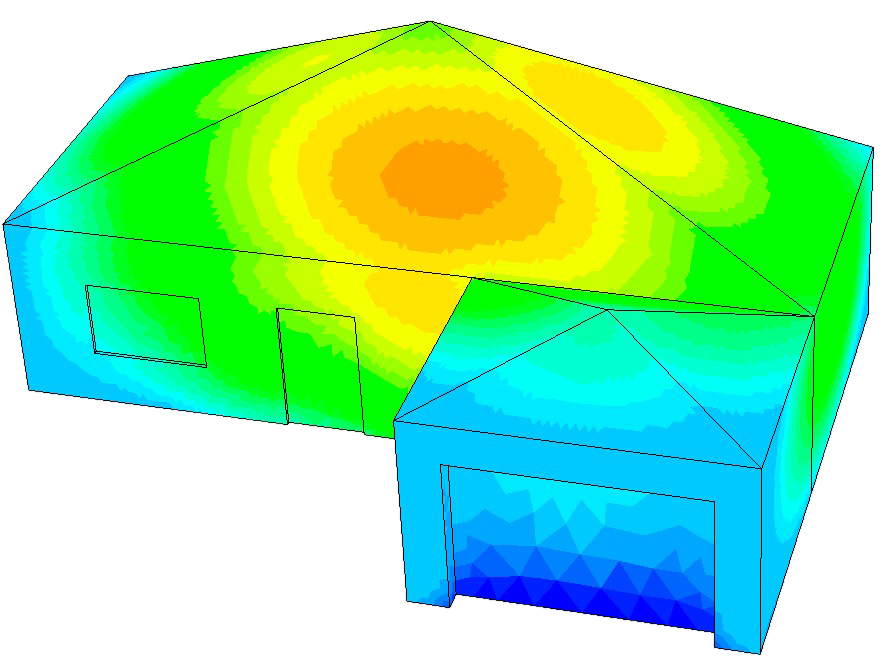} %\hspace{1em}
   \includegraphics[width=0.325\textwidth]{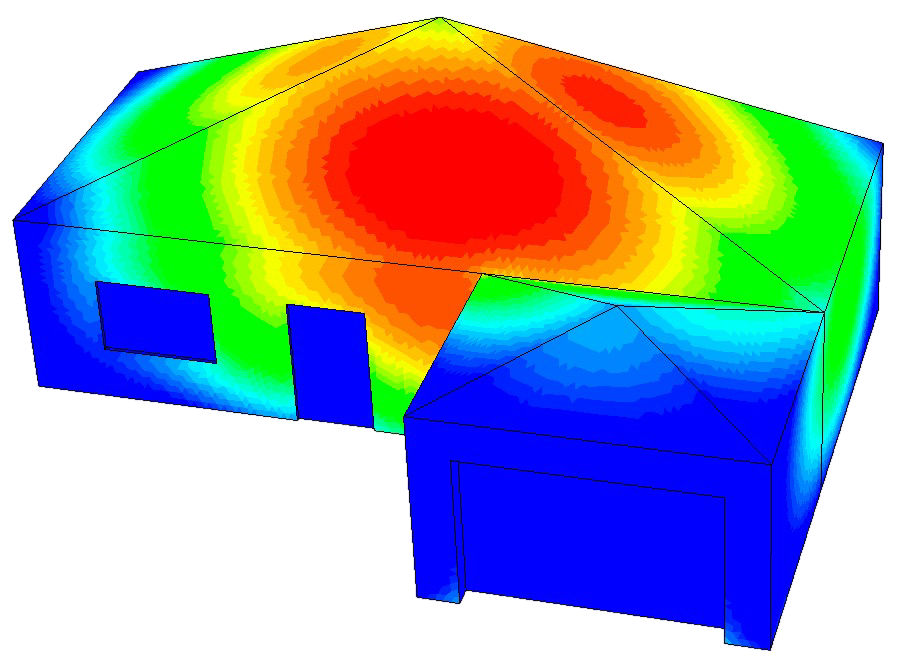} %\hspace{1em}
   \includegraphics[width=0.325\textwidth]{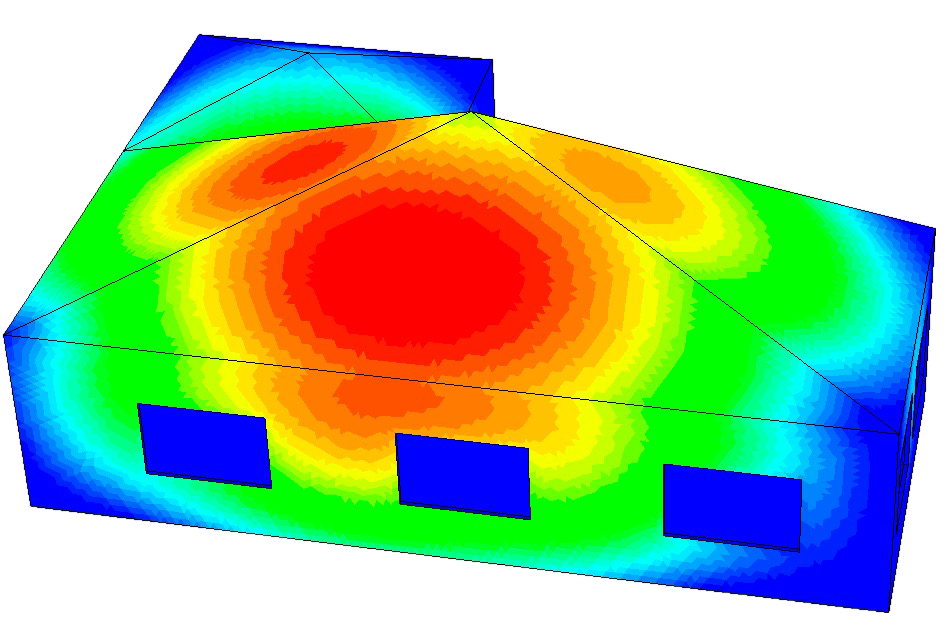}\vspace{-1mm}
    \caption{\textit{left:} surface temperature field $\pi_h u_h^{cr}\in \mathcal{L}^0(\mathcal{S}_h^{\partial})$; \textit{right:} distribution of the insulating material $\widetilde{h}_{u_h^{cr}}  \in \mathcal{L}^0(\mathcal{S}_h^{I})$ (\textit{cf}.\ \eqref{def:discrete_distribution}); each 
    for a uniformly heated home (\textit{i.e.}, $f=1$)  with insulating mass $m = \frac{1}{4}|\Gamma_I|$ and uniform outward heat flux (\textit{i.e.}, $g=\frac{1}{5}$) at the windows,~doors,~and~floors.  The triangulation $\mathcal{T}_h$ consists of $150,370$ tetrahedral elements and the semi-smooth Newton method (\textit{cf}.\ Algorithm \ref{alg:semismooth_Newton}) terminates after $8$ iterations (at the exact discrete solution).}
    \label{fig:house}
\end{figure}\enlargethispage{10mm}\vspace{-5mm}

	{\setlength{\bibsep}{0pt plus 0.0ex}\small 
		%\bibliographystyle{aomplain}
		%\bibliography{references}

        \providecommand{\bysame}{\leavevmode\hbox to3em{\hrulefill}\thinspace}
\providecommand{\noopsort}[1]{}
\providecommand{\mr}[1]{\href{http://www.ams.org/mathscinet-getitem?mr=#1}{MR~#1}}
\providecommand{\zbl}[1]{\href{http://www.zentralblatt-math.org/zmath/en/search/?q=an:#1}{Zbl~#1}}
\providecommand{\jfm}[1]{\href{http://www.emis.de/cgi-bin/JFM-item?#1}{JFM~#1}}
\providecommand{\arxiv}[1]{\href{http://www.arxiv.org/abs/#1}{arXiv~#1}}
\providecommand{\doi}[1]{\url{https://doi.org/#1}}
\providecommand{\MR}{\relax\ifhmode\unskip\space\fi MR }
% \MRhref is called by the amsart/book/proc definition of \MR.
\providecommand{\MRhref}[2]{%
  \href{http://www.ams.org/mathscinet-getitem?mr=#1}{#2}
}
\providecommand{\href}[2]{#2}

	}

\end{document}